\documentclass{amsart}

\usepackage{color}
\usepackage{soul}
\usepackage[pagewise]{lineno}

\usepackage{hyperref}
\hypersetup{
    colorlinks=true, 
    linktoc=all,     
    linkcolor=blue,  
}

\newcommand{\Q}{\mathbb{Q}}
\newcommand{\C}{\mathbb{C}}

\newcommand{\Z}{\mathbb{Z}}

\newcommand{\N}{\mathbb{N}}

\newcommand{\dom}{\operatorname{dom}}
\newcommand{\ran}{\operatorname{ran}}

\newcommand{\norm}[1]{\left\| #1 \right\|}
\newcommand{\Proj}{\operatorname{Proj}}
\newcommand{\unit}{\mathbf{1}}
\renewcommand{\star}{\ast}

\newcommand{\B}{\mathcal{B}}

\newcommand{\boldA}{\mathbf{A}}
\newcommand{\boldB}{\mathbf{B}}
\newcommand{\boldC}{\mathbf{C}}
\newcommand{\boldF}{\mathbf{F}}
\newcommand{\boldG}{\mathbf{G}}
\newcommand{\zerovec}{\mathbf{0}}

\newcommand{\cstar}{$\mathrm{C}^*$}
\newcommand{\BratD}{\mathcal{D}}
\newcommand{\calC}{\mathcal{C}}
\newcommand{\calE}{\mathcal{E}}

\newcommand{\calG}{\mathcal{G}}
\newcommand{\calH}{\mathcal{H}}

\newcommand{\calS}{\mathcal{S}}
\newcommand{\Id}{\operatorname{Id}}
\newcommand{\Ad}{\operatorname{Ad}}
\newcommand{\embed}{\mathcal{E}}
\newcommand{\uK}[1]{K_0^{\operatorname{sc}}(#1)}
\newcommand{\uKnoarg}{K_0^{\operatorname{sc}}}
\newcommand{\Cat}{\mbox{\bf Cat}}
\newcommand{\Tot}{\mbox{\bf Tot}}

\theoremstyle{plain}
\newtheorem{theorem}{Theorem}[section]
\newtheorem{lemma}[theorem]{Lemma}
\newtheorem{corollary}[theorem]{Corollary}
\newtheorem{proposition}[theorem]{Proposition}

\newtheorem{question}[theorem]{Question}

\newtheorem{maintheorem}{Main Theorem}

\theoremstyle{definition}
\newtheorem{definition}[theorem]{Definition}

\numberwithin{equation}{section}

\begin{document}
\title{Computable $K$-theory for \cstar-algebras II:  AF algebras}
\author[C. J. Eagle]{Christopher J. Eagle${}^1$} 
\address[C. J. Eagle]
{Department of Mathematics and Statistics, University of Victoria. PO BOX 1700 STN CSC, Victoria, British Columbia, Canada. V8W 2Y2}%
\email{eaglec@uvic.ca}
\urladdr{http://www.math.uvic.ca/~eaglec}
\thanks{${}^1$ Supported by NSERC Discovery Grant RGPIN-2021-02459}

\author[I. Goldbring]{Isaac Goldbring${}^2$}
\address[I. Goldbring]{University of California, Irvine}
\email{igoldbri@uci.edu}
\urladdr{https://www.math.uci.edu/~isaac}
\thanks{${}^2$  Supported by NSF grant DMS-2054477.}

\author[T. H. McNicholl]{Timothy H. McNicholl}
\address[T. H. McNicholl]{Iowa State University}
\email{mcnichol@iastate.edu}
\urladdr{https://faculty.sites.iastate.edu/mcnichol/}


\begin{abstract}
We continue the study of the effective content of $K$-theory for \cstar-algebras, with a focus on AF algebras.  We show that from a c.e. presentation of an AF algebra it is possible to compute a representation of the algebra as an inductive limit of finite-dimensional algebras.  Using this, and an analogous result for dimension groups, we show that the computable $K_0$ functor provides a computable equivalence of categories between c.e. presentations of AF algebras and c.e. presentations of unital (scaled) dimension groups, giving an effective version of Elliott's classification theorem.  We use our results to determine the complexity of the index set and isomorphism problems for various classes of AF algebras.
\end{abstract}
\maketitle

\tableofcontents

\section{Introduction}\label{sec:Intro} 

In our previous work \cite{UHFPaper} with R. Miller, we initiated the effective study of $K$-theory for \cstar-algebras.  More precisely, we established a computable functor which associated, to each computably enumerable (c.e.) presentation $\boldA^\#$ of a \cstar-algebra $\boldA$, a c.e. presentation $K_0(\boldA^\#)$ of its $K_0$ group $K_0(\boldA)$.  In addition, this functor maps 
computable $\star$-homomorphisms to computable group homomorphisms.  These statements are 
made precise in Section \ref{subsubsec:BackCompKThy} below.  
A similar such computable functor was also defined for $K_1$ as well.  As a proof of concept, the special case of the $K_0$ functor restricted to \emph{uniformly hyperfinite (UHF)} algebras was completely analyzed.

In this paper, we broaden our scope and consider our computable $K_0$ functor restricted to the class of \emph{approximately finite-dimensional (AF)} algebras, which are the \cstar-algebras isomorphic to those obtained as inductive limits of finite-dimensional \cstar-algebras.  Every AF algebra $\boldA$ has $K_1(\boldA) = 0$, so the $K$-theory for AF algebras is entirely focused on $K_0$.  Since AF algebras are stably finite, their $K_0$ group has a natural ordering which renders it a (partially) ordered abelian group.  
In addition, for a c.e. presentation $\boldA^\#$, the positive cone of this group, 
which we denote $K_0(\boldA)^+$, is a c.e. set of the presentation $K_0(\boldA^\#)$; 
accordingly, we refer to $(K_0(\boldA^\#), K_0(\boldA)^+)$ as the \emph{ordered presented group 
of $\boldA^\#$}.
The ordered abelian groups that arise as ordered $K_0$ groups of AF algebras are known as \emph{dimension groups}.
By the Effros-Handelman-Shen Theorem, these are precisely the ordered abelian groups isomorphic to an inductive limit of \emph{simplicial groups}, that is, groups of the form $\mathbb{Z}^n$ with their canonical ordering.  By distinguishing the image $K_0(\unit_\boldA)$ of the unit of $\boldA$ in $K_0(\boldA)$, one views $K_0(\boldA)$ as a \emph{unital (or scaled) group}, that is, as an ordered abelian group with a distinguished order unit.  Elliott's classification theorem \cite{Elliott.1976} states that AF algebras are characterized, up to isomorphism, by their unital ordered $K_0$ groups.  In fact, the $K_0$ functor yields an equivalence of categories between the category of AF algebras, with approximate unitary equivalence classes of $*$-algebra homomorphisms as morphisms, and the category of unital dimension groups, with order-unit preserving positive group homomorphisms as morphisms.

While $K_0$ groups provide an algebraic representation of AF algebras, Bratteli diagrams provide a combinatorial method of representing AF algebras, namely as 
infinite multi-graphs arranged in a sequence of levels.  
Labeled Bratteli diagrams assign numbers to the vertexes, and every AF algebra is completely described 
by a labeled Bratteli diagram.  

Our first overarching goal is to classify the c.e. presentations of AF algebras.  
To this end, we introduce the concept of an \emph{AF certificate} (defined in Section \ref{subsubsec:BackAF}
below).  These essentially describe an AF algebra as a specific inductive limit of finite dimensional 
algebras.  Our first main result is the following.

\begin{maintheorem}\label{mthm:CompAFCert} 
If a unital AF algebra has a c.e. presentation, then it also has a computable AF certificate. 
\end{maintheorem}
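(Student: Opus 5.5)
We would prove Main Theorem \ref{mthm:CompAFCert} by effectivizing the standard local characterization of AF algebras. The classical fact (Bratteli) is that a separable unital \cstar-algebra is AF iff for every finite set $F$ and every $\epsilon>0$ there is a finite-dimensional subalgebra $\boldB$ containing $\unit$ with $F\subseteq_\epsilon \boldB$. Let $\boldA^\#$ be the c.e. presentation, with rational points $p_0,p_1,\dots$. An AF certificate should consist of an increasing sequence of finite-dimensional subalgebras $\boldB_0\subseteq \boldB_1\subseteq\cdots$ with dense union, each given by computable systems of matrix units. The plan is to build these subalgebras by a search.

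\textbf{Step 1: searchable approximate matrix units.} At stage $n$ we search for a finite family of rational points $\{e^{(k)}_{ij}\}$ (block sizes $m_1,\dots,m_r$) that satisfies the matrix-unit relations, $\sum_{k,i} e^{(k)}_{ii}=\unit$, up to a small tolerance $\delta_n$. We also require:
\begin{itemize}
\item $p_0,\dots,p_n$ lie within $2^{-n}$ of the span of the family;
\item the previously built exact system $\{f_{ij}\}$ for $\boldB_{n-1}$ lies within $\delta_n$ of that span.
\end{itemize}
In a c.e. presentation, norms of rational points are upper semicomputable (right-c.e.). Every condition of the form ``$\|x\|<q$'' is therefore c.e., so the search halts whenever a witness exists. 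A witness exists by Bratteli's criterion applied to $\boldA$, using density of rational points and stability of the relations.

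\textbf{Step 2: perturb to exact matrix units, computably.} Since the relations for matrix units are weakly semiprojective/stable, there are standard explicit formulas: functional calculus on near-projections, followed by polar decomposition of near-partial-isometries. These turn $\delta$-approximate matrix units into exact ones at distance $O(\delta)$. The formulas use continuous functional calculus with computable functions, so the exact units are computable points of $\boldA$ relative to the presentation, uniformly in the data.

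To make $\boldB_{n-1}\subseteq\boldB_n$ exactly, we apply the standard lemma that two nearby finite-dimensional subalgebras can be conjugated: if $\boldB_{n-1}$ is $\delta$-contained in $\boldB_n'$, there is a unitary $u$ close to $\unit$ with $u\boldB_{n-1}u^*\subseteq \boldB'_n$. This lemma is proved via an explicit averaging/polar decomposition formula, hence computable. We then set $\boldB_n = u^*\boldB_n' u$. The tolerances $\delta_n$ are chosen summable so that $\bigcup_n\boldB_n$ is still dense: each $p_j$ stays within $2^{-n}+O(\delta_n)$ of $\boldB_n$. The inclusion maps $\boldB_{n-1}\to\boldB_n$ have integer multiplicities, which we read off by computing traces $\tau(f_{ii}e^{(k)}_{jj})$ to within $<1/2$. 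This gives the Bratteli diagram data of the certificate.

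\textbf{Main obstacle.} The main obstacle is Step 2. We must check that the perturbation lemmas are genuinely effective, with explicit Lipschitz-type bounds depending only on $\delta$ and the dimensions, and not merely existential. We must also check that the outputs are computable points even though the presentation is only c.e., so that norms are only approximable from above. I would handle this in two ways:
\begin{itemize}
\item use only polynomial and holomorphic functional calculus, whose outputs are limits of rational $\ast$-polynomials with computable rates;
\item take explicit error estimates from the literature (e.g.\ Glimm's or Davidson's proofs of the perturbation lemmas) and verify that each constant is computable.
\end{itemize}
Right-c.e. norms suffice for this, since all required estimates are upper bounds.
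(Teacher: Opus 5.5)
Your proposal is correct in outline, and its architecture matches the paper's. Both build an increasing chain of unital finite-dimensional subalgebras. At each stage you find a new finite-dimensional subalgebra that approximately contains the next generated points and the matrix units of the previous algebra. You then conjugate the previous algebra into it by a unitary close to $\unit$; this is the paper's Effective Glimm Lemma, Theorem~\ref{thm:EffGlimm}, whose admissible error depends only on the dimension of the previous algebra.

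Where you differ is in how effectiveness is secured.

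\textbf{The paper's route.} The paper never perturbs approximate matrix units by explicit formulas. It shows in Proposition~\ref{prop:MutOrthoGen} that exact matricial systems, and tuples of mutually orthogonal projections, are defined by computably weakly stable relations. Hence they form c.e.-closed sets. A computable point is then extracted from the intersection of such a set with a c.e.-open condition, via Lemma~\ref{intersection} and its companions from the UHF paper. So in Lemma~\ref{lm:FindF} the search is for exact matricial systems directly. In the Glimm lemma, Strung's perturbation results are used only existentially, each followed by the same extraction step. Explicit formulas appear only for the partial isometry in Lemma~\ref{lm:PrtlIso} and for the final unitary.

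\textbf{Your route.} You search for approximate units among generated points. You then apply explicit functional-calculus and polar-decomposition formulas. Computability of the outputs is certified by a priori spectral bounds, obtained from verified norm upper bounds (all a right-c.e. presentation provides), together with polynomial approximation. This is more self-contained and constructive. Doing the perturbation of the old units inside $\boldB'_n$ in matrix coordinates is pleasant, and it yields multiplicities directly. The paper's route is more modular: it avoids auditing every formula for effective error bounds, and it reuses the same c.e.-closed/c.e.-open machinery elsewhere, for example in the index-set results.

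\textbf{One point to tighten.} A single stage tolerance $\delta_n$ does not suffice as written, for two reasons:
\begin{itemize}
\item The constant hidden in your ``$O(\delta)$'' perturbation bound grows with the number of units in the candidate family.
\item Passing from ``within $\delta_n$ of the span of the approximate units'' to a bound on $d(f_{ij},\boldB'_n)$ costs roughly $\sum|c|\cdot\max\norm{\tilde e-e'}$, where $c$ are the witnessing coefficients.
\end{itemize}
Neither the family size nor the coefficients are bounded in advance over the search. The fix is easy: let the admissible relation-defect depend on the candidate type and on the witnessing coefficients, all of which are part of the search data. The paper sidesteps this issue by searching for exact systems from the start.
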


In classical mathematics, when an AF algebra is considered, a representation of it 
as an inductive limit of finite-dimensional algebras is assumed.  However, a c.e. 
presentation of an AF algebra, which merely provides information about the 
norm on a certain dense set, does not in and of itself readily provide such a description.  
Rather, from the information provided by a c.e. presentation of an AF algebra $\boldA$, one must algorithmically discern the finite-dimensional $\star$-subalgebras of $\boldA$ (which are not the same
as the finitely generated subalgebras).  Locating these $\star$-subalgebras turns out to be no 
mean feat, and as an intermediate step we go to great lengths to first prove an effective 
version of Glimm's Lemma (Theorem \ref{thm:EffGlimm} below).  We note that our proof of Main Theorem \ref{mthm:CompAFCert} is uniform in that it provides an algorithm for producing AF certificates
from c.e. presentations of AF algebras.

Our second main result, which will also contribute to our first overarching goal, 
is an effective version of the Effros-Handelman-Shen Theorem referred to above.

\begin{maintheorem}\label{mthm:ClsCePresDimGrp} 
Suppose $\calG^\#$ is a c.e. presentation of a dimension group.
\begin{enumerate}
    \item There is a c.e. presentation $\boldA^\#$ of an AF algebra so that 
the presented ordered $K_0$ group of $\boldA^\#$
is computably isomorphic to $\calG^\#$.\label{mthm:ClsCePresDimGrp::NotUntl}

    \item If $u$ is an order unit of $\calG$, 
    then there is a c.e. presentation $\boldA^\#$ of a
    unital AF algebra so that the presented unital ordered $K_0$ group of $\boldA^\#$
is computably isomorphic to $(\calG, u)^\#$.\label{mthm:ClsCePresDimGrp::Untl}
\end{enumerate}
\end{maintheorem}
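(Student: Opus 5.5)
The plan is to go through an effective version of the Effros--Handelman--Shen theorem: turn the c.e.\ presentation $\calG^\#$ into a computable inductive system of simplicial groups, then realize that system as a computable Bratteli diagram. Concretely, I will first show that from $\calG^\#$ (the group presentation together with a c.e.\ enumeration of the positive cone) one can compute a sequence $\Z^{n_1}\xrightarrow{\phi_1}\Z^{n_2}\xrightarrow{\phi_2}\cdots$ of simplicial groups with positive integer matrices $\phi_k$, together with computable positive homomorphisms $\psi_k\colon \Z^{n_k}\to\calG$ compatible with the $\phi_k$. The induced map $\varinjlim \Z^{n_k}\to\calG$ should be an isomorphism of ordered groups, and it should be computable in both directions relative to the presentations.

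The key step is effectivizing the Shen/Effros--Handelman--Shen factorization. Recall that a positive homomorphism $\Z^n\to\calG$ whose kernel element $x$ vanishes can be factored through another simplicial group $\Z^m$ so that $x$ is killed; this uses the Riesz interpolation property and unperforation. I will run a priority-free construction in stages. Enumerate the generators of $\calG^\#$, the elements of the positive cone, and all relations holding in $\calG^\#$. Each of these enumerations is c.e.\ because the presentation is c.e. At stage $k$ we have $\psi_k\colon\Z^{n_k}\to\calG$, and we handle the next task. That task is either (a) to add a new positive generator $g$, by letting $\Z^{n_k}\oplus\Z\to\calG$ send the new basis vector to $g$ and then refactoring so that the map becomes positive with positive image, or (b) to kill a kernel element $x\in\ker\psi_k$ once the relation $\psi_k(x)=0$ is enumerated. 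For (b), Shen's criterion gives a factorization $\Z^{n_k}\to\Z^{m}\to\calG$ in which $x$ maps to $0$. The factorization exists classically, and I will find it by unbounded search: we search for a matrix $\phi$ and a map $\psi'$ together with witnesses (enumerated positivity facts and relations) certifying $\psi'\circ\phi=\psi_k$, $\phi(x)=0$, and positivity of $\psi'$. Since every required fact is c.e., the search halts. Interleaving the tasks ensures that every positive element lies in the image and every kernel element eventually dies, so the limit map is an order isomorphism. The isomorphism is computable: the forward direction comes from the $\psi_k$, and the inverse comes from the search for a preimage at some level.

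Next, I will build the AF algebra. Interpreting the matrices $\phi_k$ as multiplicity matrices of a Bratteli diagram gives finite-dimensional algebras $\bigoplus_i M_{d_i}(\C)$ with standard embeddings, and these can be presented computably with rational matrix units. Their union is a computable, hence c.e., presentation $\boldA^\#$. For the non-unital part (1), take the sizes $d_i$ to be arbitrary positive values that are consistent with the diagram; for instance, choose them so that the embeddings are nonunital but well defined, say by propagating $d^{(k+1)}=\phi_k d^{(k)}$ plus slack. By earlier results on the computable $K_0$ functor, $K_0(\boldA^\#)$ is computed from the matrix-unit projections, and the map sending $[e^{(k)}_{ii}]$ to $\psi_k(e_i)$ is a computable isomorphism onto $\calG^\#$ that preserves the cones.

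For the unital part (2), I will additionally arrange that the unit $u$ lies in the image of the first level. Concretely, I will start with $\Z^{1}\to\calG$, $1\mapsto u$, and choose the sizes so that $d^{(k)}$ is the unique vector with $\psi_k(d^{(k)})=u$ propagated along the $\phi_k$; this makes every connecting map unital. Since $u$ is an order unit, the refactorizations can be chosen to keep the images order units, which makes the vertex labels positive. The main obstacle I expect is the Shen refactorization step (b): I must make sure that the classical existence proof involves only finitary data checkable from c.e.\ information. In particular, positivity of $\psi'$ requires positivity facts, which are c.e., and never nonpositivity facts, which are not; and interpolation is only used to guarantee existence, not to drive the search.
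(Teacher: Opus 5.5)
Your overall route is the paper's. You obtain an effective Shen property by unbounded search over finite data (Lemma~\ref{lm:EffShen}). You run a staged construction that alternately adjoins enumerated positive elements and kills kernel elements pushed forward from earlier levels (Lemma~\ref{lm:CompCertDim}). You then realize the simplicial system by finite-dimensional algebras whose block sizes dominate $\phi_k d^{(k)}$, and get the computable isomorphism from continuity of $K_0$ plus a search for preimages. Part (1) is fine at this level of detail, with one correction: your reason for calling the limit presentation \emph{computable} does not work. Summands can be killed (e.g.\ when an enumerated positive element is $\zerovec$), so the connecting maps need not be injective, and the limit norm of a level-$k$ element is only an infimum of its norms at later levels. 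You still get a c.e.\ presentation, which is all the theorem asks for.

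The genuine gap is in part (2). With $d^{(k)}=\phi_{0,k}(1)$ (this is what ``propagated along the $\phi_k$'' gives; it is not the unique vector with $\psi_k(d^{(k)})=u$), the block sizes must be strictly positive. Your construction does not keep them so. Step (a) already yields $d^{(k+1)}=(d^{(k)},0)$, and a Shen factorization can create coordinates of $\Z^m$ outside the support of $\phi(d^{(k)})$. The sentence ``since $u$ is an order unit, the refactorizations can be chosen to keep the images order units'' is exactly what needs proof, and you give no mechanism for it.

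The paper handles this after the fact, in Lemma~\ref{lm:CompUntlCertDim}:
\begin{enumerate}
\item It replaces $\Z^{n_s}$ by the convex subgroup $H_s$ generated by $u_s=\phi_{0,s}(1)$. This is spanned by the basis vectors in the support of $u_s$ and is computable (Lemma~\ref{lm:CompCnvxGen}).
\item Positivity of $\phi_s$ gives $\phi_s[H_s]\subseteq H_{s+1}$.
\item It checks that the limit is unchanged. If $x=\nu_s(y)\in\calG^+$ with $y\geq 0$ and $x\le tu$, then positivity of $tu-x$ together with the death of kernel elements gives a later level $s'$ with $0\le\phi_{s,s'}(y)\le t u_{s'}$. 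Hence $\phi_{s,s'}(y)\in H_{s'}^+$.
\end{enumerate}

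Alternatively, you can repair it inside your own construction:
\begin{enumerate}
\item In step (a), search for $t$ with $tu-g\in\calG^+$, which is a c.e.\ condition.
\item Adjoin two coordinates, sent to $g$ and to $tu-g$.
\item Use the effective Shen property to kill $(t d^{(k)},-1,-1)$.
\item Every basis image is now dominated by a multiple of $\phi(d^{(k)},0,0)$. The same holds in step (b), because $d^{(k)}$ is strictly positive by induction. So $\ran(\phi)$ lies in the coordinate subgroup on $\operatorname{supp}\phi(d^{(k)},0,0)$.
\item Corestrict $\phi$ and restrict $\theta'$ to that subgroup.
\end{enumerate}
Some such argument is needed before your unital connecting maps and vertex labels are even defined.
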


Again, the proof is uniform.  With these two results in hand, we then 
achieve our goal of classifying the c.e. presentations of unital AF algebras.

\begin{maintheorem}[Classification of computably presentable unital AF algebras]\label{mthm:ClsCompPrsntAF}
Suppose $\boldA$ is a unital AF algebra. Then, the following are equivalent.
\begin{enumerate}
    \item $\boldA$ has a c.e. presentation. \label{mthm:ClsCompPrsntAF::CePres}

    \item There is a computable inductive sequence 
    of finite dimensional algebras whose inductive limit is $\star$-isomorphic
    to $\boldA$ and so that each 
    bonding map is unital.\label{mthm:ClsCompPrsntAF::CompIndSeqFinDimAlg}

    \item $\boldA$ is computably presentatable. \label{mthm:ClsCompPrsntAF::CompPres}

    \item The unital ordered $K_0$ group of $\boldA$ is computably presentatable. \label{mthm:ClsCompPrsntAF::CompPresUntlOrdAbl}

    \item $\boldA$ has a computably presentable labeled Bratteli diagram.\label{mthm:ClsCompPrsntAF::CompBrat}
\end{enumerate}
\end{maintheorem}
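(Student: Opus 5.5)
We prove the equivalences by a cycle of implications, relying on Main Theorems \ref{mthm:CompAFCert} and \ref{mthm:ClsCePresDimGrp} together with the computable $K_0$ functor from \cite{UHFPaper}. The cycle is (\ref{mthm:ClsCompPrsntAF::CePres}) $\Rightarrow$ (\ref{mthm:ClsCompPrsntAF::CompIndSeqFinDimAlg}) $\Rightarrow$ (\ref{mthm:ClsCompPrsntAF::CompPres}) $\Rightarrow$ (\ref{mthm:ClsCompPrsntAF::CePres}), closed up by (\ref{mthm:ClsCompPrsntAF::CompIndSeqFinDimAlg}) $\Leftrightarrow$ (\ref{mthm:ClsCompPrsntAF::CompBrat}) and (\ref{mthm:ClsCompPrsntAF::CompIndSeqFinDimAlg}) $\Rightarrow$ (\ref{mthm:ClsCompPrsntAF::CompPresUntlOrdAbl}) $\Rightarrow$ (\ref{mthm:ClsCompPrsntAF::CePres}).

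\emph{Implications (\ref{mthm:ClsCompPrsntAF::CePres}) $\Rightarrow$ (\ref{mthm:ClsCompPrsntAF::CompIndSeqFinDimAlg}) $\Rightarrow$ (\ref{mthm:ClsCompPrsntAF::CompPres}) $\Rightarrow$ (\ref{mthm:ClsCompPrsntAF::CePres}).}
\begin{itemize}
\item For (\ref{mthm:ClsCompPrsntAF::CePres}) $\Rightarrow$ (\ref{mthm:ClsCompPrsntAF::CompIndSeqFinDimAlg}), apply Main Theorem \ref{mthm:CompAFCert} to obtain a computable AF certificate. Unpacking its definition should give, computably, an increasing chain of unital finite-dimensional subalgebras with explicit matrix-unit systems and inclusion multiplicities. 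These determine a computable inductive sequence $M_{k_1}\oplus\cdots\oplus M_{k_r}\to\cdots$ with unital bonding maps. The bonding maps can be taken in standard block-diagonal form up to unitary conjugacy, and conjugacy does not change the limit.
\item For (\ref{mthm:ClsCompPrsntAF::CompIndSeqFinDimAlg}) $\Rightarrow$ (\ref{mthm:ClsCompPrsntAF::CompPres}), present the limit using as rational points the $\Q(i)$-linear combinations of images of matrix units at each level. Norms are computable because each such element lives in a finite-dimensional algebra where the norm is the maximum of operator norms of blocks. These are computable since the bonding maps are isometric.
\item The step (\ref{mthm:ClsCompPrsntAF::CompPres}) $\Rightarrow$ (\ref{mthm:ClsCompPrsntAF::CePres}) is trivial.
\end{itemize}

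\emph{Equivalence with (\ref{mthm:ClsCompPrsntAF::CompBrat}).} For (\ref{mthm:ClsCompPrsntAF::CompIndSeqFinDimAlg}) $\Rightarrow$ (\ref{mthm:ClsCompPrsntAF::CompBrat}), read the labeled Bratteli diagram off the sequence: vertices are the blocks with labels $k_j$, and edge multiplicities are the partial multiplicities of the bonding maps. These multiplicities are computable as the traces of the images of minimal projections compressed to blocks, divided by block size. Conversely, a computable labeled Bratteli diagram determines the standard computable inductive sequence. Unitality is encoded by the label compatibility $k_v=\sum_w m_{wv}k_w$, which holds at all levels because the diagram comes from a unital algebra. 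If (\ref{mthm:ClsCompPrsntAF::CompBrat}) only guarantees an isomorphic diagram, we first telescope and discard the non-unital part. Alternatively, we note that the labels at the top level already sum correctly.

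\emph{Equivalence with (\ref{mthm:ClsCompPrsntAF::CompPresUntlOrdAbl}).}
\begin{itemize}
\item For (\ref{mthm:ClsCompPrsntAF::CompIndSeqFinDimAlg}) $\Rightarrow$ (\ref{mthm:ClsCompPrsntAF::CompPresUntlOrdAbl}), the $K_0$ group is the computable direct limit of $\Z^{r_n}$ under the multiplicity matrices. Its positive cone is the union of images of $\N^{r_n}$, and its unit is the image of $(k_1,\dots,k_r)$. Equality of elements requires care: two vectors are equal in the limit iff they agree at some later level, which is only c.e. Since multiplicity matrices are injective only modulo kernels, we would instead choose a computable presentation by taking as elements a computable transversal of the equivalence classes. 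Alternatively, we obtain only a c.e. presentation and then use that a c.e. presented dimension group yields a computable presentation. I expect this is the main obstacle, and would first try deciding equality via the fact that kernels of integer matrices are computable and stabilize. Concretely, $x\in\Z^{r_n}$ dies iff it lies in the kernel of the composite to some level. The union of these kernels is a subgroup of $\Z^{r_n}$, an increasing chain that stabilizes but not computably. So genuinely we rely on the presentation notion from \cite{UHFPaper}, where ``computably presentable'' for groups presumably means c.e. presentation in the sense used there. If it truly requires decidable equality, this step needs the known fact that c.e. presented dimension groups with order unit admit computable copies, which I would then prove by a Bratteli-diagram telescoping eliminating dying vectors.
\item For (\ref{mthm:ClsCompPrsntAF::CompPresUntlOrdAbl}) $\Rightarrow$ (\ref{mthm:ClsCompPrsntAF::CePres}), apply Main Theorem \ref{mthm:ClsCePresDimGrp}(\ref{mthm:ClsCePresDimGrp::Untl}) to get a c.e. presentation of a unital AF algebra $\boldB$ with $(K_0(\boldB),[1])\cong (K_0(\boldA),[1])$. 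Elliott's theorem then gives $\boldB\cong\boldA$, so $\boldA$ has a c.e. presentation.
\end{itemize}
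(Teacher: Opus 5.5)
Your decomposition uses the same engines as the paper: Main Theorem~\ref{mthm:CompAFCert} for (1)$\Rightarrow$(2), a directly built limit for (2)$\Rightarrow$(3), Main Theorem~\ref{mthm:ClsCePresDimGrp} plus Elliott for (4)$\Rightarrow$(1), and realizing or reading off labeled Bratteli diagrams. You arrange them around (2) instead of the paper's single cycle. However, the proposal does not establish (4) in the sense the paper intends, so (2)$\Rightarrow$(4) is a genuine gap.

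\textbf{The gap at (4).} In this paper a group presentation is computable when its kernel is decidable, and the paper's (3)$\Rightarrow$(4) is meant to deliver exactly that via Corollary~\ref{cor:CompK0AF}. Falling back on a c.e.\ reading changes the statement. Under that reading your cycle does close, and the paper's (4)$\Rightarrow$(5) indeed only uses a c.e.\ presentation.

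Your proposed telescoping repair cannot work. Telescoping does not change which vectors at a retained level eventually die, and that set can be undecidable. For example, let the $i$-th block be $\C\oplus\C$ until $i$ enters the halting set $K$, at which stage it is embedded diagonally into $M_2(\C)$, and add a unitizing summand. This gives a computable sequence with injective unital bonding maps. For the uniformly computable projections $p_i=(1,0)$, $q_i=(0,1)$ one has $[p_i]=[q_i]$ in $K_0$ iff $i\in K$. What is needed is a genuinely different computable copy of the group, and none is constructed.

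The paper does not actually close this gap either. Corollary~\ref{cor:CompK0AF} rests, through the corollary preceding it, on Proposition~\ref{prop:CompIndLimPresAbl}(2). That proposition's ``nearly identical'' proof needs $\bigcup_k\ker\phi_{s,s+k}$ to be uniformly decidable, which is exactly the point you flagged. The $K_0$-sequence of the example above is a computable sequence of simplicial groups for which a limit with uniformly computable $\nu_s$, correct kernels, and computable $G^\#$ would decide $K$.

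\textbf{The injectivity error in (2)$\Rightarrow$(3).} This step is wrong as written. Unital $\star$-homomorphisms between finite-dimensional algebras need not be injective (e.g.\ $\C\oplus\C\to\C$, $(a,b)\mapsto a$), so the bonding maps need not be isometric. Then $\norm{\nu_s(a)}=\lim_k\norm{\phi_{s,s+k}(a)}$ is only the limit of a non-increasing computable sequence, which yields a c.e.\ presentation, not a computable one. Likewise, the diagram you read off in (2)$\Rightarrow$(5) can then have dead-end vertices, so it is not the diagram of an AF certificate.

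The repair is to use an arbitrary sequence as in (2) only to obtain (1), via Corollary~\ref{cor:CompIndLimAlg2}(1). Then pass through Main Theorem~\ref{mthm:CompAFCert}, whose certificate has injective unital bonding maps, before computing norms (this is Corollary~\ref{cor:CompPresAF}) or reading off a diagram. The paper's own appeal to Corollary~\ref{cor:CompIndLimAlg2} at this step tacitly requires the same injectivity.
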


Once again, the proof is uniform.  Our next overarching goal is to strengthen this 
classification result by showing that computably isomorphic objects 
map to computably isomorphic objects.  This is the content of the following.

\begin{maintheorem}\label{mthm:CompIso}
Suppose $\boldA^\#$ and $\boldB^\#$ are c.e. presentations of unital AF algebras.  
Then, the following are equivalent.
\begin{enumerate}
    \item $\boldA^\#$ is computably $\star$-isomorphic to $\boldB^\#$.\label{mthm:CompIso::Pres}

    \item The presented unital ordered $K_0$ groups of $\boldA^\#$ and $\boldB^\#$ are
    computably isomorphic.\label{mthm:CompIso::Grp}

    \item Every Bratteli diagram of a computable AF certificate of 
    $\boldA^\#$ is computably equivalent to 
    every Brattelii diagram of a computable AF certificate of $\boldB^\#$. \label{mthm:CompIso::Brat}
\end{enumerate}
\end{maintheorem}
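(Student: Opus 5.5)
We prove the cycle (1)$\Rightarrow$(2)$\Rightarrow$(3)$\Rightarrow$(1), relying on Main Theorem \ref{mthm:CompAFCert} (computable AF certificates from c.e. presentations, uniformly) and on the computable functoriality of $K_0$ from \cite{UHFPaper}.

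\emph{(1)$\Rightarrow$(2).} Let $\phi:\boldA\to\boldB$ be a computable $\star$-isomorphism. Since $\phi$ and $\phi^{-1}$ are computable $\star$-homomorphisms, the computable $K_0$ functor sends them to computable group homomorphisms $K_0(\phi)$ and $K_0(\phi^{-1})$ between $K_0(\boldA^\#)$ and $K_0(\boldB^\#)$. These are mutually inverse by functoriality. They are positive, and they preserve the unit because $\phi(\unit_\boldA)=\unit_\boldB$. Hence they form a computable isomorphism of the presented unital ordered groups.

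\emph{(2)$\Rightarrow$(3).} Fix computable AF certificates for $\boldA^\#$ and $\boldB^\#$. Each certificate comes with a Bratteli diagram, that is, a computable sequence $\Z^{n_k}\to\Z^{n_{k+1}}$ of multiplicity matrices. The inductive limit of this sequence is computably isomorphic to the presented $K_0(\boldA^\#)$. This holds because the certificate gives computable matrix units, hence computable projections whose classes generate $K_0$; conversely, every projection in the presentation can be effectively approximated within distance $<1$ by a projection in some finite stage, and the two are then equivalent. Composing these identifications with the isomorphism from (2) yields a computable unital order-isomorphism $\theta$ between the two limit groups. We then run the classical Elliott intertwining effectively. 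Given a stage $\Z^{n_k}$ of $\boldA$, apply $\theta$ to the images of its finitely many standard generators. Search for a later stage $\Z^{m_l}$ of $\boldB$ and positive vectors there representing these images. Because the positive cone of an inductive limit of simplicial groups is c.e., such a witness can be found by searching. Positivity of the resulting map $\Z^{n_k}\to\Z^{m_l}$ is the key point: $\theta$ is an order isomorphism, so the positive generators map to positive elements, and these eventually lie in a simplicial positive cone. Next, push forward along $\boldB$ until the composite agrees with the $\boldA$ connecting map, which is a c.e. check that eventually succeeds. Alternating in the two directions with $\theta^{-1}$ produces a computable intertwining, i.e.\ a computable equivalence of the two Bratteli diagrams (telescoping to a common diagram). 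The unit is tracked by also requiring the images of the unit classes to match, which is again a finite c.e. check. This search argument is the main technical step. Its correctness rests on the c.e.-ness of the positive cones and of equality in the limit groups.

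\emph{(3)$\Rightarrow$(1).} A computable equivalence of Bratteli diagrams gives computable intertwining positive maps between telescopings $\Z^{n_{k_i}}\leftrightarrow\Z^{m_{l_i}}$ compatible with the labels/units. Each such map is realized by a $\star$-homomorphism between the corresponding finite-dimensional stages of the certificates, namely block-diagonal embeddings with the prescribed multiplicities, written in terms of the computable matrix units. The stages must be chosen compatibly, conjugating by explicitly computable unitaries (permutation unitaries among matrix units) so that the triangles commute exactly rather than only up to unitary equivalence. In the limit these maps assemble into mutually inverse $\star$-isomorphisms. Both are computable on the dense union of the finite stages, which the certificate relates computably to the presentations, and they are isometric. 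Hence they are computable $\star$-isomorphisms of $\boldA^\#$ and $\boldB^\#$.

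The step I expect to be most delicate is making the commuting-triangle corrections in (3)$\Rightarrow$(1) exact and computable; the search in (2)$\Rightarrow$(3) is conceptually the heart of the argument but routine once c.e.-ness is in place.
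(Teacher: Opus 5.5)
Your proposal is correct in outline, but it distributes the work differently from the paper. The paper proves (1)$\Leftrightarrow$(2) and (1)$\Leftrightarrow$(3) separately:
\begin{itemize}
\item For (2)$\Rightarrow$(1) it runs an Elliott intertwining directly on two computable AF certificates. It iterates Lemma \ref{lm:CertIntr} to get $K_0$-maps $\alpha_s,\beta_s$ between finite stages, lifts them to $\sigma_s',\tau_s'$, corrects by unitaries $u_s,v_s$, and takes the two reductions.
\item For (1)$\Rightarrow$(3) it reduces to two certificates of the same $\boldA^\#$ and interleaves the associated unital certificates of dimensionality of $K_0(\boldA^\#)$ via Lemma \ref{lm:CompIntr}.
\item For (3)$\Rightarrow$(1) it turns the chain of isomorphisms and telescopings into a chain of computable AF certificates of $\boldA^\#$. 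It reads off a computable isomorphism of the $K_0$ groups and falls back on (2)$\Rightarrow$(1).
\end{itemize}

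Your cycle does each ingredient only once. The group-level interleaving is done directly with the given $\theta$ rather than the identity. The algebra-level lifting with unitary corrections is done starting from the combinatorial intertwining that a computable equivalence yields once you compose telescopings via path-count matrices. This is more economical. It also avoids the paper's least-detailed step, namely realizing refinements in the chain as new computable AF certificates, which requires building intermediate finite-dimensional subalgebras.

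The paper's arrangement, in exchange, gives a direct proof of (2)$\Rightarrow$(1) that never touches Bratteli diagrams. That is the form invoked, via uniformity, when constructing $\calE_1$ in Section \ref{sec:CompEqvCat}. Your cycle also gives this uniformly, but only by passing through (3).

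Two small corrections:
\begin{itemize}
\item In the alternation you must push forward along the sequence you have returned to. For example, after an $\boldA\to\boldB\to\boldA$ round you push along $\boldA$, not $\boldB$.
\item Permutation unitaries suffice only if you first re-choose the matrix units of the certificate stages so that the connecting maps $\phi_{k+1}^{-1}\circ\phi_k$ become standard block embeddings. With the matrix units as given, the correcting unitaries are general. They are still computable, from partial isometries between projections of known rank in a finite-dimensional algebra.
\end{itemize}
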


Not surprisingly, the proof is uniform.  Our results so far suggest that 
it should be possible to demonstrate an effective version of Elliott's 
equivalence of categories, and in fact it is:

\begin{maintheorem}\label{mthm:CompEqvCat}
The following categories are computably equivalent:
\begin{enumerate}
    \item The category of c.e. presentations of unital AF algebras.

    \item The category of c.e. presentations of unital dimension groups.
\end{enumerate}
\end{maintheorem}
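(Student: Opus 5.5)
The plan is to use the computable $K_0$ functor from \cite{UHFPaper} as one half of the equivalence and to build a computable quasi-inverse from the earlier main theorems. The objects of the first category are c.e. presentations $\boldA^\#$ of unital AF algebras. Its morphisms are approximate unitary equivalence classes of unital $\star$-homomorphisms admitting a computable representative. The second category has as objects c.e. presentations $(\calG,u)^\#$ of unital dimension groups, and as morphisms computable positive unit-preserving homomorphisms. The functor $F=K_0$ sends $\boldA^\#$ to its presented unital ordered group $(K_0(\boldA^\#),K_0(\boldA)^+,[\unit_\boldA])$ and sends a computable $\phi$ to the computable homomorphism $K_0(\phi)$. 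Functoriality and computability on morphisms come from Section \ref{subsubsec:BackCompKThy}. Since approximately unitarily equivalent maps induce the same map on $K_0$, $F$ is well defined on morphism classes.

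For the inverse functor $G$, I would apply Main Theorem \ref{mthm:ClsCePresDimGrp}(\ref{mthm:ClsCePresDimGrp::Untl}) uniformly. Given $(\calG,u)^\#$, it produces $\boldA^\#=G(\calG,u)^\#$ together with a computable isomorphism $\eta_{\calG}\colon K_0(\boldA^\#)\to(\calG,u)^\#$. The essential step is defining $G$ on a computable morphism $\alpha\colon(\calG,u)\to(\calH,v)$. Invoking Main Theorem \ref{mthm:CompAFCert}, I first compute AF certificates $\boldA=\overline{\bigcup A_n}$ and $\boldB=\overline{\bigcup B_m}$. I then lift the composite $\eta_{\calH}^{-1}\alpha\eta_{\calG}$ by the standard Elliott intertwining argument in its one-sided form. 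Each $K_0(A_n)\cong\Z^{k_n}$ is a simplicial group with finitely many computable generators, namely the minimal projections. Their images under the group map are positive elements of $K_0(\boldB)$. We search until we find a level $m$ and a unital $\star$-homomorphism $A_n\to B_m$, given by multiplicity matrices, whose $K_0$ map agrees with the target after passing to $K_0(\boldB)$ and is compatible with the previous stage up to unitary conjugation. The conjugating unitaries are found by search, using computable partial isometries between equivalent projections in finite-dimensional algebras. Equality in $K_0(\boldB)$ is only c.e., but equivalence of projections in finite-dimensional subalgebras is witnessed at some later finite stage, so the search halts. The resulting maps form a computable $\star$-homomorphism $\phi\colon\boldA\to\boldB$ with $K_0(\phi)$ equal to the prescribed map. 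This lifting step is the main obstacle. Along the way we must keep norm control so that the limit map is computable on the rational dense set of the presentation, which is where the effective Glimm lemma and the certificate's approximation data would be used.

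For the natural isomorphisms, $\eta\colon FG\Rightarrow\Id$ is the family $\eta_{\calG}$; it is natural because $K_0(G\alpha)$ was constructed to equal $\eta^{-1}\alpha\eta$. For $\varepsilon\colon GF\Rightarrow\Id$, both $GF(\boldA^\#)$ and $\boldA^\#$ are c.e. presentations with computably isomorphic unital ordered $K_0$ groups. Main Theorem \ref{mthm:CompIso} (\ref{mthm:CompIso::Grp}$\Rightarrow$\ref{mthm:CompIso::Pres}) therefore yields a computable $\star$-isomorphism, uniformly in the data. To make $\varepsilon$ natural, we take $\varepsilon_\boldA$ to be the lift, via the procedure above, of the canonical $K_0$ isomorphism. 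Then $\varepsilon_\boldB\circ GF(\phi)$ and $\phi\circ\varepsilon_\boldA$ induce the same $K_0$ map. By the uniqueness half of Elliott's theorem, which requires no effectivity, they are approximately unitarily equivalent, hence equal as morphisms. Uniformity of every construction gives computable equivalence.
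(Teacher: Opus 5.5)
Your proposal is correct and follows essentially the paper's route, with the names of the two functors swapped. In one direction you use $K_0$. In the other you use the uniform construction of Main Theorem \ref{mthm:ClsCePresDimGrp}(\ref{mthm:ClsCePresDimGrp::Untl}) on objects and a computable lifting of $K_0$-homomorphisms (the paper's Lemma \ref{lm:CompHm}) on morphisms. The canonical $K_0$ identifications and Main Theorem \ref{mthm:CompIso} give the two natural isomorphisms, and uniqueness up to approximate unitary equivalence gives naturality.

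Your explicit unitary correction at each stage of the one-sided intertwining fills in a step that the paper's Lemma \ref{lm:CompHm} leaves implicit. Invoking the Effective Glimm Lemma at that point is unnecessary. Exactly compatible, uniformly computable $\star$-homomorphisms on the finite stages already give a computable limit map, because $\star$-homomorphisms are contractive.
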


We formally define the terminology in this theorem in Section \ref{sec:CompEqvCat} below; for now
let us say that computable equivalence means the required functors and natural isomorphisms 
can be computed.  The proof is very much based on the uniformity of the proofs of the prior 
theorems.

Our main theorems have several interesting consequences.  For one, 
every c.e. presentation of a unital AF algebra is computable (Corollary \ref{cor:CompPresAF}). 
Also, in contrast to UHF algebras, there is an AF algebra that is not computably categorical
(Corollary \ref{cor:NotCompCatAF}).  We can use this observation to prove 
similar statements for dimension groups and Bratteli diagrams. 
Finally, we use our results to determine the complexity of the index set and isomorphism 
problems for AF and UHF algebras.

This paper brings together material from several areas of mathematics: computability theory, 
category theory, group theory, and the theory of \cstar-algebras. 
In order to make the presentation fairly self-contained and navigable, we have chosen 
the following organization.  We first cover background material from these areas; this consists 
of information that is already in the literature but which we attempt to organize and summarize
so that the reader need not constantly consult an array of other sources. 
We divide this coverage into two components: the background from classical mathematics
(Section \ref{sec:BackClscl}) and from effective mathematics (Section \ref{sec:BackEff}).  
The latter presents a picture of the 
computability theory of the topics covered in the former with a parallel organization. 
We then attend to the development of preliminary matters from classical and effective mathematics 
(Sections \ref{sec:PrlmClscl} and \ref{sec:PrlmEff}). 
The Effective Glimm Lemma is proven in Section \ref{sec:EffGlimm}, and our main theorems are proven in Sections \ref{sec:CompAFCert} through \ref{sec:CompEqvCat}.
Section \ref{sec:IndxIso} contains our results on index sets

\section{Background from classical mathematics}\label{sec:BackClscl}

In this section, we summarize relevant background material from four 
subjects which will be intertwined throughout the rest of this paper: inductive limits in 
categories, ordered abelian groups, \cstar-algebras, and Bratteli diagrams.
We begin with inductive limits as this topic supports much of our work with regards to 
the second and third of these topics. 

\subsection{Inductive limits in categories}\label{subsec:BackIndLimCat}

Inductive limits are ubiquitous in mathematics by virtue of their ability to 
construct large objects from chains of smaller ones.  Here, we discuss how 
category theory provides a 
framework for unifying these constructions so that one need not repeat 
definitions and arguments that are essentially identical.
Accordingly, we now assume $\calC$ denotes a category with a zero object.

Suppose that for each $n \in \N$, $A_n$ is an object of $\calC$ 
and $\phi_n$ is a morphism from $A_n$ to $A_{n+1}$. 
The sequence $(A_n, \phi_n)_{n \in \N}$ is called an
\emph{inductive sequence of $\calC$}.
Through composition, these morphisms yield morphisms 
$\phi_{n, n'}$ from $A_n$ to $A_{n'}$ whenever $n' > n$.
Additionally, we define $\phi_{n,n}$ to be the identity morphism of $A_n$, and when $\ell < n$, let $\phi_{n, \ell}$ denote the zero morphism.

Now suppose $A$ is an object of $\calC$, and assume that for each $n \in \N$, 
$\nu_n$ is a morphism from $A_n$
to $A$ so that $\nu_n = \nu_{n + 1} \circ \phi_n$.  
We then refer to $(A, (\nu_n)_{n \in \N})$ as an
\emph{inductive upper limit} of $(A_n, \phi_n)_{n \in \N}$.
If $(B, (\mu_n)_{n \in \N})$ is also an inductive upper 
limit of 
$(A_n, \phi_n)_{n \in \N}$, then a \emph{reduction} of $(A, (\nu_n)_{n \in \N})$ to
$(B, (\mu_n)_{n \in \N})$ is a morphism $\lambda$ from $A$ to $B$ so that 
$\lambda \circ \nu_n = \mu_n$ for all $n \in \N$.
We say that $(A, (\nu_n)_{n \in \N})$ is an \emph{inductive limit} of 
$(A_n, \phi_n)_{n \in \N}$ if for every inductive upper limit $(B, (\mu_n)_{n \in \N})$ of 
$(A_n, \phi_n)_{n \in \N}$, there is a unique reduction from $(A, (\nu_n)_{n \in \N})$ 
to $(B, (\mu_n)_{n \in \N})$.  The following is a fairly direct consequence of the definitions.

\begin{proposition}\label{prop:IndLimUni}
Suppose $(A, (\nu_n)_{n \in \N})$ and $(B, (\mu_n)_{n \in \N})$
are inductive limits of $(A_n, \phi_n)_{n \in \N}$.  
Then the reduction from $(A, (\nu_n)_{n \in \N})$ to $(B, (\mu_n)_{n \in \N})$ is an isomorphism.
\end{proposition}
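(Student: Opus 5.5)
The plan is the standard universal-property argument. Let $\lambda$ be the unique reduction from $(A, (\nu_n)_{n \in \N})$ to $(B, (\mu_n)_{n \in \N})$, which exists because $A$ is an inductive limit and $B$ is an inductive upper limit. Symmetrically, since $(B, (\mu_n)_{n \in \N})$ is an inductive limit and $(A, (\nu_n)_{n \in \N})$ is an inductive upper limit, there is a unique reduction $\kappa$ from $(B,(\mu_n)_{n\in\N})$ to $(A,(\nu_n)_{n\in\N})$. So $\kappa \circ \mu_n = \nu_n$ for all $n$. I will show that $\kappa$ is a two-sided inverse of $\lambda$.

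First I check that $\kappa \circ \lambda$ is a reduction of $(A, (\nu_n)_{n \in \N})$ to itself: for every $n$,
\[
(\kappa \circ \lambda)\circ \nu_n = \kappa \circ \mu_n = \nu_n .
\]
The identity morphism $\Id_A$ is also such a reduction, since $\Id_A \circ \nu_n = \nu_n$. Because $(A,(\nu_n)_{n\in\N})$ is an inductive limit, it has a unique reduction to the inductive upper limit $(A,(\nu_n)_{n\in\N})$. Hence $\kappa\circ\lambda = \Id_A$. The identical argument with the roles of $A$ and $B$ swapped gives $\lambda \circ \kappa = \Id_B$. Therefore $\lambda$ is an isomorphism with inverse $\kappa$.

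There is no real obstacle here. The only point needing care is to apply the uniqueness clause to the pair $A$, $A$ (and $B$, $B$), not just to $A$, $B$. This in turn requires noting that an inductive limit is itself trivially an inductive upper limit, so the definition of "inductive limit" applies with $(A,(\nu_n))$ in both slots. The zero object and the conventions for $\phi_{n,\ell}$ play no role.
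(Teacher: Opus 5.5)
Your argument is correct and is the standard universal-property proof the paper has in mind; the paper gives no written proof and only remarks that the result is ``a fairly direct consequence of the definitions.'' You rightly observe that an inductive limit is in particular an inductive upper limit, so uniqueness can be applied with $(A,(\nu_n)_{n\in\N})$ in both slots; that observation is exactly what makes the argument go through.
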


As a result of the previous proposition, one may then speak of \emph{the} inductive limit of an inductive sequence (when it exists).

\subsection{Ordered abelian groups}\label{subsec:OrdAbl}
We now move on to discussing the second of our four topics, ordered abelian groups.
Our goal here is to summarize the pertinent concepts and results from the theory 
of ordered abelian groups.  We refer the reader to standard sources such as 
\cite{Goodearl.1986} for more comprehensive treatments.

\subsubsection{Basic definitions and principles}

Fix an abelian group $G$ which we write additively and whose identity element 
we denote $\zerovec_G$. 
Traditionally, the ordered abelian groups studied by computability theorists
are linearly ordered.  However, the investigation of AF algebras necessitates 
the consideration of partial orders that are consistent with the group operation. 
The most direct way to establish such an order on $G$ is via a \emph{positive cone}
which is defined to be a subset $P$ of $G$ that satisfies the following:
\begin{enumerate}
	\item $P$ is closed under addition. 
	
	\item $P \cap (-P) = \{\zerovec_G\}$.
	
	\item $P + (-P) = G$.
\end{enumerate}
Suppose $P$ is a positive cone of $G$.
This positive cone defines a partial order $\leq$ on $G$ so that
$x \leq y$ if and only if $y - x \in P$. 
Accordingly, we call the pair $(G,P)$ an \emph{ordered abelian group}.

We now set $\calG = (G, P)$; we may sometimes write $\calG^+$ instead of $P$. 
Suppose $\calG' = (G', P')$ is an ordered abelian group.  
We say that a map $\phi$ is a \emph{homomorphism} from $\calG$ to $\calG'$ if $\phi$ 
is a homomorphism from $G$ to $G'$ (as groups) so that $\phi[P] \subseteq P'$; we may also say that $\phi$ is a 
\emph{positive} morphism.  The ordered abelian groups form a category in which the morphisms are these
homomorphisms.
We refer to $\calG$ as a \emph{subgroup} of $\calG'$ if $G$ is a subgroup of 
$G'$ and $P \subseteq P'$.  
If $\calG$ is a subgroup of $\calG'$, 
and if every element of $\calG'$ between $g$ and $g'$ belongs to $\calG$ whenever $g$ and $g'$ belong 
to $\calG$,
then we say that $\calG$ is \emph{convex}.

A positive element $u$ of $\calG$ is called an \emph{order unit (or a scale) of $\calG$} if 
for every $g \in \calG$ there exists a positive integer $n$ so that 
$-n u \leq g \leq n u$. 
If $u$ is an order unit of $\calG$, then $(\calG, u)$ is referred to as 
a \emph{unital ordered abelian group}.
When $(\calG, u)$ and $(\calH, v)$ are unital ordered abelian groups, we say that 
a map $\phi$ is a \emph{homomorphism} from $(\calG,u)$ to $(\calH,v)$ if 
it is a homomorphism from $\calG$ to $\calH$ for which $\phi(u) = u'$.
The unital ordered abelian groups form a category in which the morphisms are these 
homomorphisms.

The following principle will be very useful in our proof of Main Theorem \ref{mthm:ClsCePresDimGrp}.
We include a proof for the sake of completeness.

\begin{lemma}\label{lm:UntlCnvx}
Suppose $u$ is a positive element of an ordered abelian group $\calG$. 
Then $u$ is an order unit of the convex subgroup of $\calG$ that is generated by $u$.
\end{lemma}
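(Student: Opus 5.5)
The plan is to identify the convex subgroup generated by $u$ explicitly and then check the order-unit condition directly. Let
\[
H = \{ g \in G : -nu \leq g \leq nu \text{ for some positive integer } n \},
\]
equipped with the positive cone $P \cap H$. The argument has three steps. First, show that $(H, P\cap H)$ is a convex subgroup of $\calG$ containing $u$. Second, show that it is contained in every convex subgroup of $\calG$ containing $u$, so that it is the convex subgroup generated by $u$. Third, observe that $u$ is an order unit of it, which is then immediate from the definition of $H$.

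For the first step:
\begin{itemize}
\item $H$ contains $\zerovec_G$, since $-u \leq 0 \leq u$ because $u \in P$.
\item $H$ contains $u$, since $-u \leq u \leq u$; here $2u \in P$ by closure of $P$ under addition.
\item $H$ is closed under negation, since $-nu \leq g \leq nu$ implies $-nu \leq -g \leq nu$.
\item $H$ is closed under addition: if $-nu \leq g \leq nu$ and $-mu\leq h\leq mu$, then adding these inequalities gives $-(n+m)u \leq g+h \leq (n+m)u$. This uses that $\leq$ is compatible with addition, which holds because $P$ is closed under addition.
\item $H$ is convex: if $g, g' \in H$ and $g \leq x \leq g'$, choose $n$ working for both $g$ and $g'$, which is possible since $ku \leq k'u$ whenever $k \leq k'$. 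Then $-nu \leq g \leq x \leq g' \leq nu$ by transitivity, so $x \in H$.
\end{itemize}
The cone $P\cap H$ is a positive cone of $H$ in the sense the paper requires for subgroups, at least for closure and $P\cap(-P)=\{\zerovec_G\}$. The condition $(P\cap H)+(-(P\cap H)) = H$ needs care: for $g \in H$ with $g \leq nu$, write $g = nu - (nu-g)$. Here $nu \in P\cap H$, and $nu - g \in P$ lies in $H$ because $H$ is a subgroup.

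For the second step, let $\calH'$ be any convex subgroup containing $u$. Then it contains $\pm nu$ for every $n$, so by convexity it contains every $g$ with $-nu\leq g\leq nu$. Hence $H\subseteq \calH'$, and $H$ is the convex subgroup generated by $u$.

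Finally, $u$ is positive in $H$ and satisfies the order-unit bound for every element of $H$ by definition, so $u$ is an order unit of $H$. There is no real obstacle. The only delicate point is being careful about which positive cone the convex subgroup carries, namely $P\cap H$, and verifying the generating condition of the cone, which is handled by the decomposition $g = nu-(nu-g)$ above.
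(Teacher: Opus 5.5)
Your proposal is correct and takes essentially the same route as the paper. Both arguments equip $\bigcup_{n}[-nu,nu]$ with the induced cone, show it is a convex subgroup, and read off the order-unit property; the difference is that the paper only needs the generated convex subgroup to be contained in this set, while you also prove the reverse inclusion (the characterization the paper re-derives in Lemma~\ref{lm:CompCnvxGen}) and explicitly check the subgroup and cone axioms that the paper leaves implicit.
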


\begin{proof}
Suppose $\calG = (G, P)$.  Let $\calH$ denote the convex subgroup of $\calG$ that is generated by $u$.  
Set
\[
H' = \bigcup_{n \in \N} [-n u, n u].
\]
Let $\calH' = (H', H' \cap P)$.  It follows that 
$\calH'$ is a subgroup of $\calG$.  

We claim that $\calH'$ is convex.  To see this, let $g,g' \in H'$.  
Assume $h \in G$ and $g \leq h \leq g'$.  
There exist $n,n' \in \N$ so that $-n u \leq g \leq n u$ and 
so that $-n' u \leq g' \leq n' u$.  Thus, $h \leq n' u \leq (n + n')u$
and $h \geq -n u \geq -(n + n') u$.
Thus, $\calH'$ is convex. Hence, $\calH$ is a subgroup of $\calH'$ and the lemma is proven.
\end{proof}

\subsubsection{Simplicial groups}

Fix a positive integer $n$.  There is a natural positive cone on the group
$\Z^n$, namely the set of all $n$-tuples of nonnegative integers.
We denote this positive cone by $\Z^n_{\geq 0}$, and we identify $\Z^n$ 
with $(\Z^n, \Z^n_{\geq 0})$.  These ordered groups are referred to as 
\emph{simplicial} and they are the building blocks for many of the ordered groups we 
wish to consider.

We also establish a bit of notation.  We let $e^n_1, \ldots, e^n_n$ denote the standard basis elements of $\Z^n$ and when $h \in \Z^n$ and $g = e^n_j$, we set $\pi_g(h) = h(j)$.

\subsubsection{Inductive limits}

It is fairly well-known 
that every inductive sequence of abelian groups has an inductive limit; see the proof of 
Lemma \ref{lm:IndLimAbl} below.
Similarly, every inductive sequence in the category of ordered abelian groups has an inductive limit; see, for example, 
\cite[Proposition 6.2.6]{Rordam.Larsen.Laustsen.2000}.  A similar argument establishes that every inductive 
sequence in the category of unital ordered abelian groups has an inductive limit.

\subsubsection{Dimension groups}
We have now established sufficient material for a formal discussion of dimension groups, which are 
treated extensively in \cite{Durand.Perrin.2022} and in \cite{Goodearl.1986}.
Fix an ordered abelian group $\calG$.  
A \emph{certificate of dimensionality} of $\calG$ is a sequence $(n_s, \phi_s, \nu_s)_{s \in \N}$, so 
that 
$(\calG, (\nu_s)_{s \in \N})$ is an inductive limit of $(\Z^{n_s}, \phi_s)_{s \in \N}$ in the category of ordered abelian groups.
A \emph{dimension group} is an ordered abelian group that has a certificate of dimensionality.

The Shen property, which we define momentarily, is a key property of dimension groups.
The definition is based on the concept of a factoring of a homomoprhism, 
which we define as follows.  
Suppose $\theta : \Z^n \rightarrow G$, $\phi : \Z^n \rightarrow \Z^p$, and $\theta' : \Z^p \rightarrow G$
are group homomorphisms and let $\alpha \in \ker(\theta)$.  
We say that $(\phi, \theta')$ \emph{factors $\theta$ at $\alpha$} if 
$\alpha \in \ker(\phi)$ and $\theta = \theta'\circ\phi$. 

Suppose $\calG$ is an ordered abelian group. 
We then say that $\calG$ has the \emph{Shen Property} if 
for every homomorphism $\theta : \Z^n \rightarrow \calG$ 
and every $\alpha \in \ker(\theta)$, there is a pair of ordered group
homomorphisms $\phi : \Z^n \rightarrow \Z^p$ and $\theta' : \Z^p \rightarrow G$ that factors $\theta$ at $\alpha$.  

We will need the following fact (see \cite[Lemma 3.4.4]{Durand.Perrin.2022}):

\begin{proposition}\label{prop:ShenPrp}
Every dimension group has the Shen property.
\end{proposition}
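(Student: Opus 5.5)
Let $\calG$ be a dimension group with a certificate of dimensionality $(n_s, \phi_s, \nu_s)_{s \in \N}$. Fix a homomorphism $\theta : \Z^n \to \calG$ and $\alpha \in \ker(\theta)$. We take $\theta$ to be positive, so $\theta(e^n_j) \in \calG^+$ for each $j$; this is the case relevant for the Shen property, and $\theta'$ is then sought as a positive map. The plan is to push $\theta$ back to some finite stage $\Z^{n_s}$, use the concrete inductive-limit structure to kill $\alpha$ there, and take $\phi$ to be that lifted map followed by enough bonding maps. Then $\theta' = \nu_t$ for a suitable $t$.

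First we record the concrete description of the inductive limit in ordered abelian groups, namely the one cited from \cite[Proposition 6.2.6]{Rordam.Larsen.Laustsen.2000}. Since inductive limits are unique up to isomorphism (Proposition \ref{prop:IndLimUni}), we may use this model. It gives three facts:
\begin{itemize}
\item[(a)] $G = \bigcup_s \nu_s[\Z^{n_s}]$;
\item[(b)] $\calG^+ = \bigcup_s \nu_s[\Z^{n_s}_{\geq 0}]$;
\item[(c)] if $\nu_s(x) = 0$ then $\phi_{s,t}(x) = 0$ for some $t \geq s$.
\end{itemize}
By (b), each $\theta(e^n_j)$ equals $\nu_{s_j}(x_j)$ with $x_j \geq 0$. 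Letting $s = \max_j s_j$ and replacing $x_j$ by $\phi_{s_j,s}(x_j)$, which is still positive because the bonding maps are positive, we obtain a single stage $s$.

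Define $\psi : \Z^n \to \Z^{n_s}$ by $\psi(e^n_j) = x_j$. Then $\psi$ is positive, since it sends basis vectors to positive vectors and $\Z^n_{\geq 0}$ is generated by them. Moreover $\nu_s \circ \psi = \theta$. Now $\nu_s(\psi(\alpha)) = \theta(\alpha) = 0$, so by (c) there is $t \geq s$ with $\phi_{s,t}(\psi(\alpha)) = 0$. Set $p = n_t$, $\phi = \phi_{s,t} \circ \psi$ and $\theta' = \nu_t$. Both maps are positive. We have $\alpha \in \ker(\phi)$, and $\theta' \circ \phi = \nu_t \circ \phi_{s,t} \circ \psi = \nu_s \circ \psi = \theta$. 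So $(\phi, \theta')$ factors $\theta$ at $\alpha$.

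The main obstacle is justifying (b) and (c) for an abstract inductive limit given only by the universal property. The approach is to build the explicit model: take the quotient of $\bigsqcup_s \Z^{n_s}$ by eventual equality, with positive cone the union of the images of the cones. We then check that it satisfies the universal property and invoke Proposition \ref{prop:IndLimUni}. The isomorphism obtained is an ordered-group isomorphism, so it transports (a)--(c), including the cone description, to $\calG$.
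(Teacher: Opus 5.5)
Your argument is correct. The paper does not prove this proposition itself; it only cites \cite[Lemma 3.4.4]{Durand.Perrin.2022}. What you have written is therefore a self-contained replacement for that citation. It is the standard argument for the easy half of Shen's characterization of dimension groups:
\begin{itemize}
\item lift the images $\theta(e^n_j)$ to positive vectors at a common finite stage;
\item kill $\psi(\alpha)$ further along the sequence;
\item take $\theta' = \nu_t$.
\end{itemize}
It uses only the inductive-limit definition, with no interpolation or unperforation.

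Two remarks. First, the facts (b) and (c), which you single out as the main obstacle, are exactly the two conditions of Lemma \ref{lm:IndLimOrdAbl} in the paper. That lemma does not depend on the Shen property, so you can simply invoke it instead of rebuilding the explicit model. If you do build the model yourself, also check that the union of the images of the cones is a positive cone in the paper's sense. It meets its negative only in $\zerovec$: pass to a common stage $t$ where $\phi_{s',t}(y)+\phi_{s'',t}(z)=\zerovec$ with $y,z\geq 0$, forcing both terms to vanish. It also generates the group. Only then is the model an object of the category, so that Proposition \ref{prop:IndLimUni} applies.

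Second, assuming $\theta$ positive is not an extra restriction. In the paper's conventions a homomorphism into the ordered group $\calG$ is positive by definition, and it has to be. Indeed, $\theta'\circ\phi$ is positive whenever $\phi$ and $\theta'$ are, so a map with $\theta(1)=-g$ for some $\zerovec\neq g\in\calG^+$ could never be factored.
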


\subsection{\cstar-algebras}
We now proceed to the third of our four background topics: \cstar-algebras.
We assume the reader is familiar with the basic definitions and results 
of the theory of \cstar-algebras which can be found in \cite{Davidson.1996}.
We begin by reviewing some additional general background information before
proceeding to the more specific topics of $K$-theory, matrix algebras, finite-dimensional algebras, 
and AF algebras.  We conclude this section with a collection of perturbation principles
which will be very useful in our proof of the Effective Glimm Lemma alluded to in the
introduction.

\subsubsection{General background}

If $U$ is a subset of a metric space $X$, and if $\epsilon > 0$, we let 
$B(U; \epsilon) = \{p \in X\ :\ d(p, U) < \epsilon\}$.  

Let $\boldA$ be a \cstar-algebra.  
If $\boldA$ is unital, we let $\unit_\boldA$ denote the unit of $\boldA$.
Let $\Proj(\boldA)$ denote the set of all projections of $\boldA$.
We note that, by the \cstar-identity, every element of $\Proj(\boldA)$ has norm $0$ or $1$.  
We say that $v \in \boldA$ is a \emph{partial isometry} if $v^*v \in \Proj(\boldA)$.   
Partial isometries will play a role in the perturbation principles we discusss below
and hence in the proof of the Effective Glimm Lemma.

One of the consequences of the \cstar-identity is that every $\star$-homomorphism of \cstar-algebras 
is $1$-Lipschitz.  This principle simplifies many arguments concerning the 
computability of these maps.

When $u \in \boldA$ is unitary, we let  $\Ad_u:\boldA\to \boldA$ denote the map given by $\Ad_u(a) = uau^*$ for all $a \in \boldA$.
These maps will prove advantageous when classifying $\star$-homomorphisms of 
finite-dimensional algebras.

An $n \times n$ array $(a_{r,s})_{r,s}$ of self-adjoint elements of $\boldA$ is an
\emph{$n \times n$ system of matrix units of $\boldA$} if $a_{r,s}a_{r',s'} = \delta_{s,r'} a_{r,s'}$
for all $r,s,r',s' \in \{1, \ldots, n\}$. 
If $\boldA$ is unital, then we call such a system \emph{unital} if $\sum_j a_{j,j} = \unit_\boldA$. 
The construction of these systems plays a key role in the computability theory of finite-dimensional algebras and AF algebras.

Suppose $\boldB$ is a \cstar-algebra.  
The direct sum $\boldA \bigoplus \boldB$ is also a \cstar-algebra; in particular, 
the norm is defined by $\norm{(a,b)} = \max\{\norm{a}, \norm{b}\}$.  
This algebra is also denoted by $\boldA \times \boldB$. 

Assume $\phi$ and $\psi$ are $\star$-homomorphisms 
from $\boldA$ to $\boldB$.  We say that $\phi$ and $\psi$ are 
\emph{approximately unitarily equivalent} if there is a sequence $(u_n)_{n \in \N}$ of 
unitaries of $B$ so that $(\Ad_{u_n} \circ \phi)_{n \in \N}$ converges pointwise to $\psi$.
We will rely on this equivalence relation when we discuss AF algebras as a category in the proof 
of Main Theorem \ref{mthm:CompEqvCat}.

\subsubsection{$K$-theory}\label{subsubsec:BackKthy}

We now provide a quick synopsis of the fundamental principles of $K$-theory which are 
expounded much more completely in \cite{Rordam.Larsen.Laustsen.2000}.  
The reader may also consult our previous paper \cite{UHFPaper} 
for an explanation of just the definitions and results we rely on here.
To start, $K$-theory defines a functor $K_0$ from the category of \cstar-algebras 
to the category of abelian groups.  If $\boldA$ is a separable \cstar-algebra, 
then $K_0(\boldA)$ is countable. 
If $\boldA$ is stably finite\footnote{Due to its complexity, we demur from precisely defining `stably finite'; the curious reader may find a definition in standard sources such as \cite{Rordam.Larsen.Laustsen.2000}.  Suffice it to say, AF algebras are stably finite.}, then $K$-theory also associates to $\boldA$ 
a canonical positive cone of $K_0(\boldA)$, which is denoted $K_0(\boldA)^+$.  
If $\boldA$, $\boldB$ are \cstar-algebras and
$\phi$ is a $\star$-homomorphism from $\boldA$ to $\boldB$, 
then $K_0(\phi)$ is a homomorphism from 
$K_0(\boldA)$ to $K_0(\boldB)$; if, in addition, $\boldA$ and $\boldB$ are stably finite, then $K_0(\varphi)$ is an ordered group homomorphism from $(K_0(\boldA),K_0(\boldA)^+)$ to $(K_0(\boldB),K_0(\boldB)^+)$.  
If $\boldA$ is stably finite and unital, then $(K_0(\boldA), K_0(\boldA)^+)$ has an order unit;  
in fact, $K$-theory associates $\boldA$ with a particular order unit of $(K_0(\boldA), K_0(\boldA)^+)$,
which we denote by $K_0(\unit_\boldA)$ (since it is in fact obtained from the unit of $\boldA$). 
Furthermore, if $\boldA$ and $\boldB$ are stably finite and unital, and if $\phi$ is a unital $\star$-homormorphism 
from $\boldA$ to $\boldB$, then $K_0(\phi)(K_0(\unit_\boldA)) = K_0(\unit_\boldB)$.
Hence, \it we have a functor from the category of stably finite unital \cstar-algebras (in which the 
morphisms are assumed to be unital) to the 
category of unital ordered abelian groups\rm; we denote this functor by $\uKnoarg$ 
($\operatorname{sc}$ stands for \emph{scaled} as the triple consisting of an ordered abelian group together with an order unit is often called a \emph{scaled group}).

\subsubsection{Matrix algebras}\label{subsubsec:BackMtrx}
We now discuss matrix algebras, which are the building blocks of AF algebras.
Fix a positive integer $n$.  
Let $M_n(\C)$ denote the \cstar-algebra of $n \times n$ complex matrices, which is 
a \cstar-algebra under the operator norm and in which the involution is the adjoint operation.

Suppose that $\ell$ is also a positive integer.  It is well-known that there is a nonzero $\star$-homomorphism
from $M_n(\C)$ to $M_\ell(\C)$ if and only if $\ell \geq n$ (see, for example, \cite{Davidson.1996}).  
Suppose $\ell \geq n$.  For $k$ a positive integer for which
$kn \leq \ell$ define, for each $A \in M_n(\C)$, $\embed_{n,\ell,k}(A)$ 
to be the $\ell \times \ell$ matrix obtained by repeating $A$ $k$ times along the 
diagonal.  It is well-known that if $\psi : M_n(\C) \rightarrow M_\ell(\C)$ is a nonzero 
$*$-homomorphism, then there is a unique positive integer $k$ and a unitary $U \in M_\ell(\C)$
so that $\psi(A) = \Ad_U(\embed_{n,\ell,k}(A))$ (again, see \cite{Davidson.1996}).
The number $k$ is called the 
\emph{multiplicity} of $\psi$.  

We now discuss the $K$-theory of these algebras.  To begin, there is a standard isomorphism $\zeta_n$ from 
$\uK{M_n(\C)}$
to $(\Z, n)$ (that is, the ordered group $\Z^n$ together with the order unit $n$).  
If $\phi$ is a homomorphism from $\uK{M_n(\C)}$ to $\uK{M_\ell(\C)}$, 
then there is a canonical homomorphism 
$\eta_{n, \ell}(\phi)$ from $(\Z,n)$ to $(\Z, \ell)$ so that 
$\eta_{n, \ell}(\phi) \circ \zeta_n = \zeta_\ell \circ \phi$.  

\subsubsection{Finite-dimensional \cstar-algebras}\label{subsubsec:BackFinDimAlg}

We state the following well-known result as a theorem for future reference; a
proof can be found in standard resources such as \cite{Davidson.1996}.

\begin{theorem}[Classification of finite-dimensional algebras]\label{thm:ClsFinDimAlg}
A \cstar-algebra $\boldA$ is finite-dimensional if and only if 
there is a finite multiset $F$ of positive integers so that 
$\boldA$ is $\star$-isomorphic to $\bigoplus_{n \in F} M_n(\C)$.  
\end{theorem}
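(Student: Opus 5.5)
The plan is to prove both directions, with the substantive direction being that every finite-dimensional \cstar-algebra is a finite direct sum of matrix algebras. The converse is immediate: $M_n(\C)$ has dimension $n^2$, so $\bigoplus_{n \in F} M_n(\C)$ has dimension $\sum_{n\in F} n^2 < \infty$, and a $\star$-isomorphism preserves dimension.

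For the forward direction, suppose $\boldA$ is finite-dimensional. First I will dispose of the unit question: I expect $\boldA$ is automatically unital, and I will show this after obtaining the minimal central projections. The central idea is the spectral theorem in finite dimensions. Each self-adjoint $a \in \boldA$ generates a finite-dimensional commutative \cstar-subalgebra $C^*(a) \cong C(\sigma(a))$. Hence $\sigma(a)$ is finite, and $a$ is a finite real linear combination of orthogonal projections in $\boldA$. So $\boldA$ is spanned by its projections.

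Next I will pass to the center $Z(\boldA)$. It is a finite-dimensional commutative \cstar-algebra, hence $\cong \C^k$ by Gelfand duality. It therefore has minimal projections $z_1,\dots,z_k$, pairwise orthogonal, with $\sum_i z_i$ a unit for $Z(\boldA)$. I will check that $\sum_i z_i$ is actually a unit for $\boldA$: if $e = \unit - \sum z_i$ (computed in the unitization) acted nontrivially, then $e\boldA e$ would be a nonzero ideal. Its finite-dimensionality would give it a nonzero central projection of $\boldA$ orthogonal to all $z_i$, which is a contradiction. Then $\boldA = \bigoplus_i z_i\boldA$, and each $z_i\boldA$ is a finite-dimensional \cstar-algebra with trivial center $\C z_i$. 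This reduces the theorem to showing that a finite-dimensional \cstar-algebra $\boldB$ with center $\C\unit_\boldB$ is $\star$-isomorphic to some $M_n(\C)$.

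For that factor case, I will pick a maximal family $p_1,\dots,p_n$ of pairwise orthogonal minimal projections. Minimal projections exist by finite-dimensionality, and the family sums to $\unit_\boldB$ by maximality. Minimality gives $p_i\boldB p_i = \C p_i$: any self-adjoint element of $p_i\boldB p_i$ has spectral projections below $p_i$.

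The key step, and the one I expect to be the main obstacle, is to show $p_i\boldB p_j \neq 0$ for all $i,j$. For this I will use triviality of the center. The element $\sum_{b} b\,p_i\, b^*$, or more precisely the projection onto the closed two-sided ideal $\boldB p_i\boldB$, is central. Since this ideal is finite-dimensional it is unital, and its unit is a central projection, which must therefore be $\unit_\boldB$. So $p_j \in \boldB p_i \boldB$, which forces $p_j \boldB p_i \ne 0$.

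Given a nonzero $x\in p_i\boldB p_1$, we have $x^*x \in p_1\boldB p_1 = \C p_1$, so after scaling $x^*x=p_1$, and $xx^*$ is a nonzero projection below $p_i$, hence equal to $p_i$. Setting $a_{i1}=x$, $a_{1i}=x^*$ and $a_{ij}=a_{i1}a_{1j}$ gives a unital $n\times n$ system of matrix units. The map $M_n(\C)\to\boldB$ sending $E_{ij}\mapsto a_{ij}$ is then a $\star$-homomorphism. It is injective because $M_n(\C)$ is simple. It is surjective because every $b\in\boldB$ decomposes as $b = \sum_{i,j} p_ibp_j$, and $p_ibp_j = a_{i1}(a_{1i}ba_{j1})a_{1j}$ with $a_{1i}ba_{j1}\in \C p_1$. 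Assembling over the $k$ central summands yields the multiset $F$.
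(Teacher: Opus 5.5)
Your overall architecture is the standard textbook proof; the paper gives no proof of its own and simply cites Davidson. The factor case is sound: minimal projections exist, $p_i\boldB p_i=\C p_i$, and the partial isometries with $x^*x=p_1$, $xx^*=p_i$ give a unital system of matrix units, so $M_n(\C)\to\boldB$ is a bijective $\star$-homomorphism for the reasons you give.

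The gap is that you never prove that a nonzero finite-dimensional \cstar-algebra is unital, yet you use this fact twice.

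\begin{itemize}
\item In the center step you say that if $e=\unit-\sum_i z_i$ acts nontrivially, then finite-dimensionality of the ideal $e\boldA$ ``would give'' a nonzero central projection. Nothing you have established yields that. For non-unital algebras the center can be $\{0\}$ (e.g.\ the compact operators). In the case $Z(\boldA)=\{0\}$, where $k=0$ and $e\boldA=\boldA$, your argument amounts to assuming $Z(\boldA)\neq\{0\}$, which is essentially what you set out to show.
\item In the factor step you assert outright that $\boldB p_i\boldB$ is unital ``since it is finite-dimensional.''
\end{itemize}

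So the argument is circular exactly where non-unital behaviour has to be ruled out. (Relatedly, before unitality is known, $\mathrm{C}^*(a)\cong C_0(\sigma(a)\setminus\{0\})$ rather than $C(\sigma(a))$, though your conclusion there is unaffected.)

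The repair is short and belongs at the start.

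\begin{itemize}
\item Strictly increasing chains of projections in a finite-dimensional $\boldA$ have length at most $\dim\boldA$, because successive differences are nonzero and mutually orthogonal, hence linearly independent. So some projection $p$ is maximal for $\leq$.
\item For a projection $q$, put $y=q-pq-qp+pqp=(\unit-p)q(\unit-p)\in\boldA$. Then $y\geq 0$ and $yp=0$.
\item If $y\neq 0$, take a spectral projection $r$ of $y$ at a nonzero point of its finite spectrum. It is a polynomial in $y$ without constant term, so $r\in\boldA$ and $rp=0$, and $p+r$ contradicts maximality.
\item Hence $\norm{q-qp}^2=\norm{y}=0$, so $q=qp=pq$. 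Since $\boldA$ is spanned by projections, $p=\unit_\boldA$.
\end{itemize}

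Alternatively, a cluster point of an approximate unit in the compact unit ball is a unit. With this lemma in place, $\sum_i z_i=\unit_\boldA$ is immediate because $\unit_\boldA\in Z(\boldA)$. Likewise $\boldB p_i\boldB$ is unital, and the rest of your argument goes through.

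You could also avoid ideals in the key step altogether. The relation ``$p_j\boldB p_i\neq 0$'' is an equivalence relation on the $p_i$; transitivity follows by composing your partial isometries. The sum of the $p_j$ equivalent to $p_i$ commutes with every $b=\sum_{k,l}p_kbp_l$. So it is a nonzero central projection and therefore equals $\unit_\boldB$.
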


Suppose $F$ is a finite multiset of positive integers and set $\boldB = \bigoplus_{n \in F} M_n(\C)$.  
When $\pi$ is the projection map associated with a summand 
of $\boldB$, we associate $\pi$ with an injection map $\iota_\pi :\ran(\pi) \rightarrow \boldB$ 
so that $\pi \circ \iota_\pi = \Id_{\ran(\pi)}$. 
In addition, we ensure that $\pi \circ \iota_{\pi'} = \zerovec$ if $\pi \neq \pi'$.

We now discuss the $K$-theory of finite-dimensional algebras.  
Let $k = $ the cardinality of $F$.  
Then, by the properties of the $K_0$ functor (see, e.g., \cite[Section 3.2]{Rordam.Larsen.Laustsen.2000}), it is possible to construct from the isomorphisms and order units  described in 
Section \ref{subsubsec:BackMtrx} an order unit 
$v_F$ for $\Z^k$ and
a canonical isomorphism $\zeta_F$ from $\uK{\boldB}$ to $(\Z^k, v_F)$.
Moreover, this construction provides a 
canonical bijective mapping $\xi_F$ from the projection maps of the 
summands of $\boldB$ to the generators of $\Z^k$.

Suppose $F'$ is a finite multiset of positive integers and set $\boldC = \bigoplus_{n \in F'} M_n(\C)$. 
Let $k' = $ the cardinality of $F'$.  
Suppose $\phi$ is a $\star$-homomorphism from $\boldB$ to $\boldC$.  
The properties of the $K_0$ functor allow one to construct, from the maps 
$\eta_{m,n}$ described in Section \ref{subsubsec:BackMtrx}, 
a canonical homomorphism $\eta_{F,F'}(\phi)$ from the ordered group $\Z^k$ to 
$\Z^{k'}$
so that $\zeta_{F'} \circ \phi  = \eta_{F,F'}(\phi)\circ\zeta_F$.
In addition, the construction of $\eta_{F,F'}$ ensures that whenever 
$\pi$ is the projection map for a summand of $\boldB$ and $\pi'$ is a projection 
map for a summand of $\boldC$, 
$\pi_{\xi_{F'}(\pi')}(\eta_{F,F'}(\phi)(\xi_F(\pi))) = $
the multiplicity of $\pi' \circ \phi \circ \iota_\pi$.
It follows form the discussion in Section \ref{subsubsec:BackMtrx}
that $\eta_{F, F'}(\phi)(v_F) \leq v_{F'}$ and so 
$K_0(\phi)(K_0(\unit_\boldB)) \leq K_0(\unit_\boldC)$.  
Conversely, if $\gamma : (K_0(\boldB), K_0(\boldB)^+) \rightarrow (K_0(\boldC), K_0(\boldC)^+)$
is a homomorphism and if $\gamma(K_0(\unit_\boldB)) \leq K_0(\unit_\boldC)$, then 
there is a $\star$-homomorphism $\delta$ from $\boldA$ to $\boldB$ so 
that $K_0(\delta) = \gamma$. 

\subsubsection{AF algebras}\label{subsubsec:BackAF}
We finally arrive at a formal treatment of AF algebras.
\begin{definition}\label{def:AFCert}
Suppose $\boldA$ is a \cstar-algebra.  
 An \emph{AF certificate} of $\boldA$ is a sequence 
$(F_j, \psi_j)_{j \in \N}$ that satisfies the following:
\begin{enumerate}
    \item Each $F_j$ is a finite multiset of positive integers.
 
    \item For each $j \in \N$, $\psi_j$ is a unital $\star$-embedding of $\bigoplus_{n \in F_j} M_n(\C)$
    into $\boldA$.

    \item For all $j$, $\ran(\psi_j) \subseteq \ran(\psi_{j+1})$.

    \item $\boldA = \overline{\bigcup_{j \in \N} \ran(\psi_j)}$.
\end{enumerate}
\end{definition}

A \cstar-algebra is an \emph{AF algebra} if it has an AF certificate.  
We remark that this definition is not, but nevertheless is equivalent to, 
the standard definition of an AF algebra as an inductive limit of 
finite-dimensional algebras.  Our choice of this definition is motivated by forthcoming 
computabilty considerations.

We can now state Elliott's Theorem formally, a proof of which can be found in 
\cite{Rordam.Larsen.Laustsen.2000}.

\begin{theorem}[Elliott's Theorem]\label{thm:Elliott}
Suppose $\boldA$ and $\boldB$ are AF algebras.  If 
$\uK{\boldA}$ and 
$\uK{\boldB}$ are isomorphic, 
then $\boldA$ and $\boldB$ are  $\star$-isomorphic.  
Moreover, if $\alpha$ is an isomorphism from $\uK{\boldA}$
to $\uK{\boldB}$, 
then there is a unital $\star$-isomorphism $\phi : \boldA \rightarrow \boldB$
so that $K_0(\phi) = \alpha$. 
\end{theorem}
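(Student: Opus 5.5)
The statement just given is Elliott's Theorem, which the paper cites from \cite{Rordam.Larsen.Laustsen.2000}; what follows is the standard intertwining argument. The existence claim is the special case of the ``moreover'' clause applied to any isomorphism $\alpha$, so I would prove only that clause.

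\textbf{Setup.} Fix AF certificates $(F_j,\psi_j)$ of $\boldA$ and $(F'_j,\psi'_j)$ of $\boldB$. Write $\boldA_j=\ran(\psi_j)$ and $\boldB_j=\ran(\psi'_j)$. The first step is continuity of $K_0$. Since $K_0$ commutes with inductive limits, $\uK{\boldA}$ is the inductive limit of the $\uK{\boldA_j}$ with the inclusion-induced maps, and each $\uK{\boldA_j}$ is simplicial, equal to $(\Z^{|F_j|},v_{F_j})$ via $\zeta_{F_j}$. The same holds for $\boldB$.

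\textbf{Intertwining diagram at the level of $K_0$.} Fix a finite set $S$ of generators of $\Z^{|F_j|}$. Because $\Z^{|F_j|}$ is finitely generated and the positive cone of an inductive limit is the union of the images of the positive cones, the image of $S$ under $\alpha\circ K_0(\iota_{\boldA_j})$ lifts to some $\uK{\boldB_k}$ by a positive map. Enlarging $k$ makes the map send the order unit to the order unit. This uses that $K_0(\unit)$ is the image of the unit at every stage, and that equalities and positivity in the limit are witnessed at a finite stage. Doing the same with $\alpha^{-1}$ and going further out makes the composite lifts agree with the connecting maps.

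Iterating gives subsequences $j_1<j_2<\cdots$ and $k_1<k_2<\cdots$ and unital positive homomorphisms
\[
\gamma_n:\uK{\boldA_{j_n}}\to\uK{\boldB_{k_n}},\qquad \delta_n:\uK{\boldB_{k_n}}\to\uK{\boldA_{j_{n+1}}}.
\]
These compose to the connecting maps and are compatible with $\alpha$ and $\alpha^{-1}$.

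\textbf{Lifting to $\star$-homomorphisms and correcting by unitaries.} By Section \ref{subsubsec:BackFinDimAlg}, each unital positive map between $K_0$ groups of finite-dimensional algebras is $K_0$ of a unital $\star$-homomorphism. Such homomorphisms are determined up to unitary conjugation by their multiplicities, because of the decomposition $\Ad_U\circ\embed_{n,\ell,k}$ on each block.

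Choose unital $\phi_n:\boldA_{j_n}\to\boldB_{k_n}$ and $\chi_n:\boldB_{k_n}\to\boldA_{j_{n+1}}$ realizing $\gamma_n$ and $\delta_n$. The composite $\chi_n\circ\phi_n$ has the same $K_0$ map as the inclusion $\boldA_{j_n}\subseteq\boldA_{j_{n+1}}$, so the two are unitarily equivalent in $\boldA_{j_{n+1}}$. Inductively replace $\chi_n$ by $\Ad_u\circ\chi_n$, and similarly for $\phi_{n+1}$, so that the triangles commute exactly. This exactness is available because the building blocks are finite-dimensional and the connecting maps are genuine inclusions; no approximate intertwining is needed.

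\textbf{Conclusion.} The $\phi_n$ are now compatible, isometric (injective $\star$-homomorphisms), and unital. They therefore define an isometric unital $\star$-homomorphism from the dense union $\bigcup_n\boldA_{j_n}$ into $\boldB$, which extends by continuity to $\phi:\boldA\to\boldB$. The $\chi_n$ likewise define $\chi:\boldB\to\boldA$. On the dense unions $\chi\circ\phi$ and $\phi\circ\chi$ are the identity, so $\phi$ is a $\star$-isomorphism. Finally, $K_0(\phi)$ agrees with $\alpha$ on the image of each $\uK{\boldA_{j_n}}$ by construction, and these images exhaust $\uK{\boldA}$, so $K_0(\phi)=\alpha$.

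\textbf{Main obstacle.} The delicate step is the bookkeeping in the lift to $\star$-homomorphisms and the unitary correction. Each correction must be made inside the next algebra so that earlier choices are not destroyed. This works by conjugating only the newly chosen map at each stage, never one already fixed.
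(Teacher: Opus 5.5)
Your proposal is correct and is the standard exact-intertwining proof of Elliott's theorem that the paper cites from \cite{Rordam.Larsen.Laustsen.2000}: intertwine at the level of $K_0$ as in \cite[Lemma 7.3.3]{Rordam.Larsen.Laustsen.2000}, lift to unital $\star$-homomorphisms of finite-dimensional algebras, and correct by unitaries so the triangles commute exactly; the paper runs the same scheme effectively in the proof of Main Theorem \ref{mthm:CompIso} via Lemma \ref{lm:CertIntr}. One point to tighten is that the finite set $S$ you lift should be the standard basis of $\Z^{|F_j|}$, because positivity of the lifted map relies on the standard basis generating the positive cone as a monoid.
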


\subsubsection{Some perturbation principles}

The results in this section will support our proof of the Effective Glimm Lemma.
They generally have the flavor of saying that if one or more elements of a \cstar-algebra
$\boldA$ are ``sufficently close'' to some closed $\star$-subalgebra $\boldB$, then 
there is a small unitary $u$ so that when the map $\Ad_u$ is applied to these elements, one
 obtains elements of $\boldB$ that have certain desired relations to each other. 
Our aim is to state these principles in such a way as to make precise 
how close is ``sufficiently close'' and how to obtain the required unitaries. 
This level of precision might not have much utility for purely classical considerations,
but for future computability developments it is essential.  
Our source for these results is \cite{Strung.2021}.

We begin with the following, which is essentially \cite[Lemma 8.4.1]{Strung.2021}.

\begin{lemma}\label{lm:841Strung}
Let $\boldA$ be a unital \cstar-algebra.  Assume $b \in \boldA$ is self-adjoint and 
$p \in B(b; 1/2)\cap \Proj(\boldA)$.  Then, there exists 
$q \in \Proj(\boldA) \cap \mbox{\cstar}(\{b\})$ so that 
$\norm{q - b} \leq 2\norm{p - b}$.  Moreover, 
$p$ and $q$ are equivalent. 
\end{lemma}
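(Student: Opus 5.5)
We argue with continuous functional calculus on the self-adjoint element $b$. Set $\delta = \norm{p - b} < 1/2$. The plan is to show that the spectrum of $b$ avoids the open interval $(\delta, 1-\delta)$ and, in particular, the point $1/2$. Then the characteristic function $\chi = \chi_{[1/2,\infty)}$ is continuous on $\sigma(b)$, and $q = \chi(b)$ is a projection lying in \cstar$(\{b\})$. Only the spectral gap needs justification.

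\textbf{Spectral gap.} Let $\lambda \in \R$ with $\min\{|\lambda|, |1-\lambda|\} > \delta$. The spectrum of the projection $p$ lies in $\{0,1\}$, so $p - \lambda$ is invertible and, since it is normal,
\[
\norm{(p-\lambda)^{-1}} = 1/\operatorname{dist}(\lambda, \{0,1\}) < 1/\delta .
\]
Write
\[
b - \lambda = (p - \lambda)\bigl(1 + (p-\lambda)^{-1}(b - p)\bigr).
\]
The second factor is invertible by a Neumann series, because $\norm{(p-\lambda)^{-1}(b-p)} < 1$. Hence $\lambda \notin \sigma(b)$, and therefore $\sigma(b) \subseteq [-\delta, \delta] \cup [1-\delta, 1+\delta]$. (This also uses that $\norm{b} \le 1 + \delta$.) Since $\delta < 1/2$, the point $1/2$ lies in the gap.

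\textbf{Norm estimate.} The element $q - b = (\chi - \mathrm{id})(b)$ has norm
\[
\sup_{t \in \sigma(b)} |\chi(t) - t| \le \delta ,
\]
because each point of $\sigma(b)$ is within $\delta$ of $0$ or $1$, and $\chi$ sends it to that nearer value. This gives $\norm{q - b} \le \delta \le 2\delta$, which is stronger than required.

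\textbf{Equivalence.} By the triangle inequality,
\[
\norm{p - q} \le \norm{p - b} + \norm{b - q} \le 2\delta < 1 .
\]
Two projections at distance less than $1$ are unitarily equivalent, and hence Murray--von Neumann equivalent. Concretely, set $z = qp + (1-q)(1-p)$. Then $\norm{z - 1} \le 2\norm{p - q} \cdot$(const) for the usual argument, or more precisely $z$ is invertible when $\norm{p-q}<1$, because
\[
z^*z = 1 - (p-q)^2 .
\]
Moreover $zp = qz$. Taking the polar part $u = z(z^*z)^{-1/2}$ gives a unitary with $upu^* = q$.

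\textbf{Main obstacle.} The only delicate step is this last one. One has to check that $z^*z$ commutes with $p$, so that $u$ still intertwines $p$ and $q$. This follows since
\[
z^*z = pqp + (1-p)(1-q)(1-p),
\]
and this expression commutes with $p$. Since the lemma is a restatement of \cite[Lemma 8.4.1]{Strung.2021}, we could alternatively cite that result directly, keeping the above as a verification of the explicit constant.
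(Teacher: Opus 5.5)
Your proof is correct. The paper gives no argument of its own here and simply defers to \cite[Lemma 8.4.1]{Strung.2021}. What you wrote is the standard self-contained functional-calculus proof: a spectral gap around $1/2$, then $q=\chi_{[1/2,\infty)}(b)$, then distance less than $1$ forces unitary equivalence.

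Writing it out gives you sharper constants than the printed statement, namely $\norm{q-b}\le\norm{p-b}$ and hence $\norm{q-p}\le 2\norm{p-b}$. The second bound is the one the paper actually uses later. In the proof of Lemma~\ref{lm:842Strung}, the base case needs $\norm{q_1-p_1}<\epsilon$ from a distance less than $\epsilon/2$. The inductive step needs $q_n\in B(p_n;\epsilon/n)$ from a distance less than $\epsilon/(2n)$. The printed bound $\norm{q-b}\le 2\norm{p-b}$ would only give $\norm{q-p}\le 3\norm{p-b}$ in both places.

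Your $\chi$ also vanishes on the lower spectral cluster, so $q$ lies in the non-unital \cstar-algebra generated by $b$. That is exactly what is needed when the lemma is applied to an element of a corner $(\unit_\boldA-r)\boldB(\unit_\boldA-r)$ in that proof.

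Two small repairs.

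First, $\norm{(p-\lambda)^{-1}}=1/\operatorname{dist}(\lambda,\sigma(p))$, which is only $\le 1/\operatorname{dist}(\lambda,\{0,1\})$; equality fails when $p\in\{0,\unit_\boldA\}$. The inequality is all the Neumann-series step needs, and the case $p=b$ is trivial.

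Second, in your explicit construction, invertibility of $z^*z=\unit_\boldA-(p-q)^2$ only makes $z$ left invertible. So $u=z(z^*z)^{-1/2}$ is a priori just an isometry, and $up=qu$ only yields $upu^*=quu^*$. In a general unital \cstar-algebra an isometry need not be unitary. There are two easy fixes:
\begin{itemize}
\item note that $zz^*=qpq+(\unit_\boldA-q)(\unit_\boldA-p)(\unit_\boldA-q)=\unit_\boldA-(p-q)^2$ as well; or
\item observe that $z-\unit_\boldA=(q-p)(2p-\unit_\boldA)$, so $\norm{z-\unit_\boldA}\le\norm{p-q}<1$.
\end{itemize}
The second is the precise form of your ``$\norm{z-1}\le 2\norm{p-q}\cdot(\mathrm{const})$''. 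Since you also invoke the standard fact that projections at distance less than $1$ are unitarily equivalent, the conclusion is unaffected.
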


Define $\delta_0 : (0, 1) \times (\N \setminus \{0\}) \rightarrow (0, \infty)$ by 
recursion as follows:
\begin{eqnarray*}
\delta_0(\epsilon,1) & := & \min\{\epsilon/2, 1/2\}\\
\delta_0(\epsilon, n+1) & := & \min\{4^{-1}\epsilon(n+1)^{-1}, \delta_0(12^{-1}\epsilon(n+1)^{-2}, n), 1\}\\
\end{eqnarray*}

The following is very similar to \cite[Lemma 8.4.2]{Strung.2021}, and our proof 
is based on the proof therein.

\begin{lemma}\label{lm:842Strung}
Let $\boldA$ be a unital \cstar-algebra, and assume $\boldB$ is a closed $\star$-subalgebra of $\boldA$. 
Suppose $p_1, \ldots, p_n$ are mutually orthogonal projections of $\boldA$ so that 
$\max_j d(p_j, \boldB) < \delta_0(\epsilon, n)$.  
Then, $\boldB$ contains mutually orthogonal projections $q_1, \ldots q_n$ so that \\
$\max_j \norm{p_j - q_j} < \epsilon$.  Furthermore, if 
$\sum_j p_j = \unit_\boldA$, then $\sum_j q_j = \unit_\boldA$.
\end{lemma}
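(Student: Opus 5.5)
Induction on $n$, matching the recursive definition of $\delta_0$.

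\emph{Base case $n=1$.} We have $d(p_1,\boldB)<\delta_0(\epsilon,1)\le 1/2$, so choose $b\in\boldB$ with $\norm{p_1-b}<\min\{\epsilon/2,1/2\}$. Replacing $b$ by $(b+b^*)/2$ keeps it in $\boldB$, keeps the distance estimate (since $p_1$ is self-adjoint), and makes $b$ self-adjoint. Lemma \ref{lm:841Strung} then gives a projection $q_1\in\mbox{\cstar}(\{b\})\subseteq\boldB$ with $\norm{q_1-b}\le 2\norm{p_1-b}$. The triangle inequality gives $\norm{p_1-q_1}\le 3\norm{p_1-b}$. To get the bound $<\epsilon$ with $\delta_0(\epsilon,1)=\epsilon/2$, I would instead use the sharper form of the functional-calculus argument behind Lemma \ref{lm:841Strung}: $q_1=\chi_{[1/2,\infty)}(b)$ satisfies $\norm{q_1-b}\le\norm{p_1-b}$, so $\norm{p_1-q_1}\le 2\norm{p_1-b}<\epsilon$. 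In the unital case $n=1$ we have $p_1=\unit_\boldA$, and I would take $q_1=\unit_\boldA$; this requires $\unit_\boldA\in\boldB$, which follows because $\boldB$ contains a projection within distance $<1$ of the unit, and such a projection must equal the unit.

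\emph{Inductive step.} Suppose $p_1,\dots,p_{n+1}$ are mutually orthogonal with distances $<\delta_0(\epsilon,n+1)$. Set $\epsilon'=12^{-1}\epsilon(n+1)^{-2}$. Since $\delta_0(\epsilon,n+1)\le\delta_0(\epsilon',n)$, the induction hypothesis applied to $p_1,\dots,p_n$ yields mutually orthogonal $q_1,\dots,q_n\in\boldB$ with $\norm{p_j-q_j}<\epsilon'$. Let $q=\sum_{j\le n}q_j$, a projection in $\boldB$. Then $\norm{p-q}<n\epsilon'$, where $p=\sum_{j\le n}p_j$.

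Next, take $b\in\boldB$ self-adjoint with $\norm{p_{n+1}-b}<\delta_0(\epsilon,n+1)$, and compress it: $b'=(1-q)b(1-q)$, which lies in the corner of $\boldB$ (or of $\widetilde\boldB$, though $b'$ itself lies in $\boldB$ after expanding the product). Since $p_{n+1}=(1-p)p_{n+1}(1-p)$, we can estimate
\[
\norm{p_{n+1}-b'}\le\norm{p_{n+1}-b}+2\norm{p-q}\le \delta_0+2n\epsilon' .
\]
Functional calculus on $b'$, via Lemma \ref{lm:841Strung}, then gives a projection $q_{n+1}\in\mbox{\cstar}(\{b'\})$. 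This projection lies in $\boldB$, is orthogonal to $q$ (since $b'q=qb'=0$ and the calculus uses a function vanishing at $0$), and is close to $p_{n+1}$. The constants $4^{-1}\epsilon(n+1)^{-1}$ and $12^{-1}(n+1)^{-2}$ should be exactly what is needed to make all these errors $<\epsilon$. I expect this bookkeeping to be the main obstacle, and I would track the constants carefully, possibly allowing a factor-$2$ loss from Lemma \ref{lm:841Strung}.

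\emph{Unital clause.} If $\sum_j p_j=\unit_\boldA$, then $\norm{\unit-\sum_j q_j}<1$ and $\sum_j q_j$ is a projection, so it equals $\unit_\boldA$. Alternatively, I would redefine $q_{n+1}=\unit-q$; this is legitimate since $\unit\in\boldB$ as above, and gives $\norm{p_{n+1}-q_{n+1}}=\norm{p-q}<n\epsilon'<\epsilon$.
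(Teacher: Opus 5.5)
Your proposal is correct and is essentially the paper's own argument (induction along the recursion for $\delta_0$, compressing a self-adjoint approximant of $p_{n+1}$ by $\unit_\boldA - q$ and applying Lemma \ref{lm:841Strung} in that corner, then the norm-less-than-one argument for the unital clause). The bookkeeping you left open closes comfortably: $\norm{p_{n+1}-b'} < \epsilon/(4(n+1)) + 2n\epsilon' \le 5\epsilon/(12(n+1))$, so your factor-$2$ functional-calculus bound gives $\norm{p_{n+1}-q_{n+1}} < 5\epsilon/(6(n+1)) < \epsilon$; moreover, your sharper estimate $\norm{\chi_{[1/2,\infty)}(b)-b}\le\norm{p_1-b}$ is exactly what the base case needs, since Lemma \ref{lm:841Strung} as literally stated only yields $\norm{p_1-q_1}\le 3\norm{p_1-b}<3\epsilon/2$.
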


\begin{proof}
We proceed by induction.  Suppose $n = 1$.  
Choose $b \in \boldB$ so that $\norm{p_1 - b} < \delta_0(\epsilon, 1)$. 
Since $p_1$ is a projection, $\norm{p_1 - \frac{1}{2}(b^* + b)} \leq \norm{p_1 - b}$. 
So, we can simply apply Lemma \ref{lm:841Strung} 
and if $\unit_\boldA \in \boldB$, we just set $p_1 = \unit_\boldA$.  

Suppose $n > 1$, and set $\delta = \delta_0(\epsilon, n)$.  
For each $j \in \{1, \ldots, n\}$, choose $b_j \in \boldB$ so that $\norm{p_j - b_j} < \delta$.  
By definition, $\delta \geq \delta_0(\epsilon/(12n^2), n - 1)$.  
Thus, by way of induction, there exist mutually orthogonal projections $q_1, \ldots, q_{n-1} \in \boldB$ 
so that $\max_{j < n} \norm{p_j - q_j} < \epsilon/(12n^2)$. 
Set $p = \sum_{j = 1}^{n-1} p_j$, and set $q = \sum_{j = 1}^{n-1} q_j$.  
We first estimate $\norm{p_n - (\unit_\boldA - q) b_n (\unit_\boldA - q)}$ as follows. 
We first note that
\begin{eqnarray*}
\norm{p_n - (\unit_\boldA - q) b_n (\unit_\boldA -q)} & = & \norm{(\unit_\boldA - p)p_n(\unit_\boldA - p) - (\unit_\boldA - q) b_n (\unit_\boldA -q)} \\
\end{eqnarray*}
Set 
\begin{eqnarray*}
E_1 & = & \norm{(\unit_\boldA - p)p_n(\unit_\boldA - p) - (\unit_\boldA - q) p_n (\unit_\boldA -p)}\\
E_2 & = & \norm{(\unit_\boldA - q) p_n (\unit_\boldA -p) - (\unit_\boldA - q) b_n (\unit_\boldA - p)}\\
E_3 & = & \norm{(\unit_\boldA - q) b_n (\unit_\boldA - p) - (\unit_\boldA - q) b_n (\unit_\boldA - q)}
\end{eqnarray*}
By the Triangle Inequality, 
\[
\norm{(\unit_\boldA - p)p_n(\unit_\boldA - p) - (\unit_\boldA - q) b_n (\unit_\boldA -q)} \leq E_1 + E_2 + E_3.
\]
However, by factoring out $p_n (\unit_\boldA - p)$, we see that $E_1 \leq \norm{p - q} < \epsilon / (12 n)$.
Similarly, by factoring out $\unit_\boldA - q$ and $\unit_\boldA - p$, 
we see that $E_2 \leq \norm{p_n - b_n} < \delta$.  
Finally, by factoring out $\norm{(\unit_\boldA - q)b_n}$, we 
obtain that 
\[
E_3 \leq \norm{b_n}\norm{p - q} \leq (\norm{p_n - b_n} + \norm{p_n}) \norm{p - q} < 2\norm{p - q}.
\]
Hence, we now have that 
\begin{eqnarray*}
\norm{p_n - (\unit_\boldA - q) b_n (\unit_\boldA -q)} & \leq & 3\norm{p - q} + \delta \\
& < &(\epsilon / (4n)) + \delta \\
& < & \epsilon /(2n).
\end{eqnarray*}
In addition, since $p_n, p, q$ are projections, we now have that 
\[
\norm{p_n - (\unit_\boldA - q) \frac{b_n + b_n^*}{2} (\unit_\boldA -q)} < \epsilon / (2n).
\]
Therefore, by Lemma \ref{lm:841Strung}, there is a projection 
$q_n \in (\unit_\boldA - q)\boldB(\unit_\boldA - q) \cap B(p_n; \epsilon / n)$.  
It follows that $q_n q_j = q_j q_n = \zerovec$ when $j < n$.  

Suppose 
$\sum_j p_j = \unit_\boldA$.  It then follows 
that $\norm{\unit_\boldA - \sum_j q_j} < \epsilon < 1$. 
Since $q_1, \ldots, q_n$ are mutually orthogonal, $\sum_j q_j$ is a projection.
Therefore $\unit_\boldA - \sum_j q_j$ is a projection. 
Hence, 
$\unit_\boldA = \sum_j q_j$.
\end{proof}

Define $\delta_1 : (0,1) \times (\N \setminus \{0\}) \rightarrow (0,1)$ by recursion as follows: 
\begin{eqnarray*}
\delta_1(\epsilon, 1) & := & \frac{1}{2}\\
\delta_1(\epsilon, n + 1) & := & \min\{3^{-1}, \epsilon (48 n)^{-1}, \delta_1(\epsilon (48n)^{-1}, n)\}.
\end{eqnarray*}
The following is similar to \cite[Lemma 8.4.4]{Strung.2021}, the only difference being our ``$\delta$ function" 
is defined somewhat differently for the sake of future developments.

\begin{lemma}\label{lm:844Strung}
Let $\boldA$ be a unital \cstar-algebra, and suppose $p_1, \ldots, p_n$ are projections of $\boldA$
so that $\max_{i\neq j} \norm{p_ip_j} < \delta_1(\epsilon,n)$. 
Then, there exist mutually orthogonal projections $q_1, \ldots, q_n$ of $\boldA$ so that 
$\max_j \norm{p_j - q_j} < \epsilon$.
\end{lemma}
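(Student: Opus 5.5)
We argue by induction on $n$, following the proof of \cite[Lemma 8.4.4]{Strung.2021} and the pattern of the proof of Lemma \ref{lm:842Strung}. For $n = 1$ the hypothesis is vacuous, and we take $q_1 = p_1$.

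For the inductive step, suppose the lemma holds for $n$, and let $p_1, \ldots, p_{n+1}$ satisfy $\max_{i \neq j}\norm{p_ip_j} < \delta_1(\epsilon, n+1)$. Set $\epsilon' = \epsilon(48n)^{-1}$. Since $\delta_1(\epsilon,n+1) \leq \delta_1(\epsilon', n)$, the induction hypothesis applied to $p_1, \ldots, p_n$ gives mutually orthogonal projections $q_1, \ldots, q_n$ with $\max_{j \leq n}\norm{p_j - q_j} < \epsilon'$. Put $q = \sum_{j \le n} q_j$, which is a projection. Then $(\unit_\boldA - q)p_{n+1}(\unit_\boldA - q)$ is a self-adjoint element of the corner $(\unit_\boldA - q)\boldA(\unit_\boldA - q)$, which is a unital \cstar-algebra with unit $\unit_\boldA - q$.

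The central estimate is the distance from $p_{n+1}$ to this compressed element. We write
\[
p_{n+1} - (\unit_\boldA - q)p_{n+1}(\unit_\boldA - q) = qp_{n+1} + p_{n+1}q - qp_{n+1}q .
\]
Each term is bounded using
\[
\norm{q_jp_{n+1}} \leq \norm{q_j - p_j} + \norm{p_jp_{n+1}} < \epsilon' + \delta_1(\epsilon,n+1) \leq 2\epsilon',
\]
where the last step uses $\delta_1(\epsilon,n+1) \leq \epsilon(48n)^{-1} = \epsilon'$. Summing over $j \leq n$ gives $\norm{qp_{n+1}} < 2n\epsilon' = \epsilon/24$. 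Since $\norm{p_{n+1}q} = \norm{(qp_{n+1})^*}$ and $\norm{qp_{n+1}q} \leq \norm{qp_{n+1}}$, we get
\[
\norm{p_{n+1} - (\unit_\boldA - q)p_{n+1}(\unit_\boldA - q)} < 3 \cdot \epsilon/24 = \epsilon/8 < 1/2 .
\]

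We now apply Lemma \ref{lm:841Strung} inside the corner algebra, with $b = (\unit_\boldA - q)p_{n+1}(\unit_\boldA - q)$ and the projection $p = p_{n+1}$. This requires care, because $p_{n+1}$ need not lie in the corner. We handle this by applying functional calculus to $b$ directly: $b$ is within $\epsilon/8$ of a projection, so its spectrum lies in $[-\epsilon/8, \epsilon/8] \cup [1 - \epsilon/8, 1 + \epsilon/8]$. The spectral projection $q_{n+1} = \chi_{[1/2,\infty)}(b)$ is therefore continuous functional calculus, and $q_{n+1} \in \mbox{\cstar}(\{b\})$. This is exactly the content of the proof of Lemma \ref{lm:841Strung}, and it gives
\[
\norm{q_{n+1} - b} \leq 2 \norm{p_{n+1} - b} < \epsilon/4 .
\]
Hence $\norm{q_{n+1} - p_{n+1}} < \epsilon/4 + \epsilon/8 < \epsilon$. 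Since $b$ lies in the corner $(\unit_\boldA - q)\boldA(\unit_\boldA - q)$, so does $q_{n+1}$. Therefore $q_{n+1}q = qq_{n+1} = \zerovec$, so $q_{n+1}q_j = q_jq_{n+1} = \zerovec$ for all $j \leq n$. Combined with $\norm{p_j - q_j} < \epsilon' < \epsilon$ for $j \leq n$, this completes the induction.

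The main obstacle is this last step: justifying the use of Lemma \ref{lm:841Strung} (or its spectral argument) when the nearby projection $p_{n+1}$ lies outside the corner algebra. The spectral-gap argument only needs the distance bound from $b$ to some projection of $\boldA$, not membership in the corner, so we will phrase it that way. The constants $3^{-1}$ and $48n$ in $\delta_1$ leave ample room in the estimates above.
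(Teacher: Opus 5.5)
Your proof is correct and follows the paper's argument essentially step for step. Both use induction with tolerance $\epsilon(48n)^{-1}$, the bound $\norm{p_{n+1} - (\unit_\boldA - q)p_{n+1}(\unit_\boldA - q)} \leq 3\norm{qp_{n+1}} < \epsilon/8$, and Lemma \ref{lm:841Strung} to produce $q_{n+1}$ in the corner. Your spectral-gap remark, that $q_{n+1}$ lies in the (non-unital) \cstar-algebra generated by $b$ and hence in $(\unit_\boldA - q)\boldA(\unit_\boldA - q)$, makes explicit a point the paper leaves implicit, and your summed estimate for $\norm{qp_{n+1}}$ is the careful version of the paper's.
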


\begin{proof}
We proceed by induction.  The case $n = 1$ is vacuous. 
So, suppose $n$ is a positive integer, and let $p_1, \ldots, p_{n+1}$ be projections of $\boldA$
so that $\max_{i\neq j} \norm{p_i p_j} < \delta_1(\epsilon, n + 1)$.  
By way of induction, there exist mutually orthogonal projections $q_1, \ldots, q_n$ of $\boldA$ 
so that $\max_{j \leq n} \norm{q_j - p_j} < \epsilon/(48n)$.  
Set $q = \sum_j q_j$.  Thus, $q \in \Proj(\boldA)$.  
We estimate $\norm{p_{n + 1} - (\unit_\boldA - q)p_{n + 1} (\unit_\boldA - q)}$ as follows. 
\begin{eqnarray*}
\norm{p_{n + 1} - (\unit_\boldA - q)p_{n + 1} (\unit_\boldA - q)} & = & 
\norm{qp_{n + 1} - qp_{n+1}q + p_{n + 1} q}\\
& \leq & 3 \norm{q p_{n + 1} } \\
& < & 3 \left(\sum_{j = 1}^n \norm{p_j p_{n +1}} + \epsilon/(48 n) \right)\\
& < & 3 (\epsilon/ 48 + \epsilon/48)\\
& < & \epsilon/8.
\end{eqnarray*}
By Lemma \ref{lm:841Strung}, there is a projection 
$q_{n + 1} \in (\unit_\boldA - q) \boldA (\unit_\boldA - q)$ so that 
$\norm{q_{n + 1} - p_{n + 1}} < \epsilon$.  
It follows from direct computation that $q_{n + 1} q_j = q_j q_{n + 1} = 0$ when $j \leq n$. 
\end{proof}

For all $\epsilon \in (0,1)$ and every positive integer $n$, set 
\[
\delta_2(\epsilon,n) = \min\{5^{-1}, \epsilon (8 - 5 \epsilon), \delta_1(\epsilon,n)\}.
\]
The following is essentially \cite[Lemma 8.4.7]{Strung.2021}, and there is very little difference 
in the proof which we therefore choose to omit.

\begin{lemma}\label{lm:847Strung}
Suppose $\boldA$ is a unital \cstar-algebra and suppose $\boldB$ is a closed $\star$-subalgebra of $\boldA$. 
Assume $(e_{i,j})_{i,j}$ is an $n \times n$ system of matrix units of $\boldA$ so that 
$\max_{i,j} d(e_{i,j}, \boldB) < \delta_2(\epsilon, n)$.  
Then, there is an $n \times n$ system of matrix units $(f_{i,j})_{i,j}$ of $\boldB$ 
so that $\max_{i,j} \norm{e_{i,j} - f_{i,j}} < \epsilon$.  
In addition, if $\unit_\boldA \in \boldB$, and if $\sum_i e_{i,i} = \unit_\boldA$, then $\sum_i f_{i,i} = \unit_\boldA$.
\end{lemma}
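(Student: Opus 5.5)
Following the proof of \cite[Lemma 8.4.7]{Strung.2021}, I will first move the diagonal projections into $\boldB$, then move the off-diagonal entries of the first column into $\boldB$ and turn them into partial isometries, and finally generate the whole system from that column.

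\textbf{Step 1: diagonal projections.} The projections $e_{1,1}, \ldots, e_{n,n}$ are mutually orthogonal and each lies within $\delta_2(\epsilon,n)$ of $\boldB$. I will apply Lemma \ref{lm:842Strung}, which requires its own $\delta_0$ bound; since $\delta_2$ is defined through $\delta_1$ rather than $\delta_0$, this bound has to be checked and may force a harmless change of constants. If the bound is not available, the fallback is to choose self-adjoint $b_i \in \boldB$ near $e_{i,i}$ and use Lemma \ref{lm:841Strung} to get projections $p_i \in \boldB$ close to the $e_{i,i}$. Then $\norm{p_ip_j}$ is small for $i \neq j$, and an orthogonalization inside $\boldB$ (the argument of Lemma \ref{lm:844Strung} carried out in $\boldB$) gives mutually orthogonal projections $f_{i,i} \in \boldB$ with $\norm{f_{i,i} - e_{i,i}}$ small.

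If $\sum_i e_{i,i} = \unit_\boldA$ and $\unit_\boldA \in \boldB$, the last clause of Lemma \ref{lm:842Strung} gives $\sum_i f_{i,i} = \unit_\boldA$. The same conclusion also follows directly: $\unit_\boldA - \sum_i f_{i,i}$ is a projection of norm less than $1$, so it is zero.

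\textbf{Step 2: the first column.} For each $i \ge 2$, pick $b \in \boldB$ with $\norm{b - e_{i,1}} < \delta_2$. Set $x_i = f_{i,i}\, b\, f_{1,1} \in \boldB$; it is close to $e_{i,i} e_{i,1} e_{1,1} = e_{i,1}$. Then $x_i^* x_i$ lies in $f_{1,1}\boldB f_{1,1}$ and is close to $f_{1,1}$. So $x_i^*x_i$ is invertible in the corner algebra $f_{1,1}\boldB f_{1,1}$, with spectrum near $1$ provided the error is below a constant such as $1/5$. Define the polar part
\[
v_i = x_i (x_i^* x_i)^{-1/2},
\]
with the inverse square root taken in that corner. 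Then $v_i^* v_i = f_{1,1}$, and $v_i v_i^*$ is a projection under $f_{i,i}$ that is close to $f_{i,i}$, hence equal to $f_{i,i}$. A continuous functional calculus estimate, $\norm{(x^*x)^{-1/2} - f_{1,1}} \lesssim \norm{x^*x - f_{1,1}}$, shows $v_i$ stays close to $e_{i,1}$. The term $\epsilon(8-5\epsilon)$ in $\delta_2$ presumably encodes exactly this estimate.

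\textbf{Step 3: the full system.} Set $f_{i,1} = v_i$, $f_{1,1}$ as already chosen, $f_{1,i} = v_i^*$, and $f_{i,j} = v_i v_j^*$. The matrix unit relations follow from $v_i^*v_j = \delta_{ij} f_{1,1}$; for $i \neq j$ this holds because the ranges $f_{i,i}$ are orthogonal. The distance estimate is
\[
\norm{f_{i,j} - e_{i,j}} \le \norm{v_i - e_{i,1}} + \norm{v_j - e_{j,1}},
\]
which is small.

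The main obstacle is the bookkeeping: showing that $\delta_2(\epsilon,n)$ as defined makes every one of these errors less than $\epsilon$, especially the polar decomposition estimate and the compatibility with the constants $\delta_0$ and $\delta_1$.
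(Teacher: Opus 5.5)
Your construction is the standard Glimm-type argument, i.e.\ essentially the route of \cite[Lemma 8.4.7]{Strung.2021}, to which the paper defers without writing out a proof. Its qualitative steps are all sound:
\begin{itemize}
\item $x_i^*x_i$ is invertible in the corner $f_{1,1}\boldB f_{1,1}$;
\item $v_iv_i^*\le f_{i,i}$ together with $\norm{v_iv_i^*-f_{i,i}}<1$ forces $v_iv_i^*=f_{i,i}$;
\item $v_j^*v_j=f_{1,1}$ and $v_j^*v_k=0$ for $j\neq k$ give the matrix-unit relations;
\item $\norm{f_{i,j}-e_{i,j}}\le\norm{v_i-e_{i,1}}+\norm{v_j-e_{j,1}}$.
\end{itemize}

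The gap is exactly the part you set aside as bookkeeping. The lemma is a quantitative statement about the specific threshold $\delta_2(\epsilon,n)=\min\{1/5,\epsilon(8-5\epsilon),\delta_1(\epsilon,n)\}$. No tracking of constants can establish it, because that threshold is too large.

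For $n=1$ we have $\delta_1(\epsilon,1)=1/2$, so $\delta_2(\epsilon,1)=\min\{1/5,\epsilon(8-5\epsilon)\}\approx 8\epsilon$. Take $\boldA=M_2(\C)$, let $e_{1,1}$ be the projection onto $\C(1,0)$, let $q$ be the projection onto $\C(\cos\theta,\sin\theta)$ with $\sin\theta=1/25$, and let $\boldB=\C q$. Then
\[
d(e_{1,1},\boldB)\le\norm{e_{1,1}-q}=\sin\theta=0.04<0.0795=\delta_2(1/100,1).
\]
But the only projections of $\boldB$ are $0$ and $q$, at distances $1$ and $0.04$ from $e_{1,1}$. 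So no $f_{1,1}\in\boldB$ lies within $\epsilon=1/100$, and the statement as written fails. In particular, the term $\epsilon(8-5\epsilon)$ cannot encode your polar-decomposition estimate, since it exceeds $\epsilon$; it is presumably a misprint.

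Even for $n\ge 2$, your Step 1 does not close with the displayed constants if Lemmas \ref{lm:841Strung}, \ref{lm:842Strung} and \ref{lm:844Strung} are used as black boxes. Take $n=2$, where $\delta_2(\epsilon,2)=\epsilon/48$.
\begin{itemize}
\item Lemma \ref{lm:842Strung} with target $\epsilon'$ needs distances below $\delta_0(\epsilon',2)=\epsilon'/96$. This forces $\epsilon'\ge 2\epsilon$, which is already too large for the diagonal entries.
\item Your fallback only gives $\norm{p_1p_2}<4\delta_2=\epsilon/12$. Lemma \ref{lm:844Strung} with target $\epsilon''$ needs $\norm{p_1p_2}<\delta_1(\epsilon'',2)=\epsilon''/48$, i.e.\ $\epsilon''\ge 4\epsilon$.
\end{itemize}

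What your three steps actually prove, once the errors are tracked, is the lemma for some other (smaller) explicit, computable threshold. That is what the later effective arguments use. However, it means redefining $\delta_2$ and carrying out the estimates for the new definition, not checking the formula as given.
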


\subsection{Bratteli diagrams}
We now arrive at our final background topic: Bratteli diagrams.
Bratteli diagrams were introduced in \cite{Bratteli.1972}, and 
they provide a combinatorial way to visualize the structure of 
AF algebras.  They also provide a visualization of the structure of 
dimension groups \cite{Durand.Perrin.2022}. 
Our goal here is to provide a definition of this concept 
and to explain the association of Bratteli diagrams with AF algebras by 
means of AF certificates.  We also discuss the relation between these diagrams
and dimension groups.

\subsubsection{Fundamental definitions}

\begin{definition}\label{def:Brat}
A \emph{Bratteli diagram} consists of a set $V$ of \emph{vertices}, a \emph{level function}
$L : V \rightarrow \N$, and 
an \emph{edge function} $E : V \times V \rightarrow \N$ that satisfy the following:
\begin{enumerate}
    \item For all $u,v \in V$, if $E(u,v) > 0$, then $L(v) = L(u) + 1$.

    \item For each $n \in \N$, $L^{-1}[\{n\}]$ is finite.  
\end{enumerate}
\end{definition}

When $\BratD$ is a Bratteli diagram, let $V_\BratD$ denote its vertex set, 
$E_\BratD$ its edge function, and $L_\BratD$ its level function.
We refer to $L_\BratD^{-1}[\{n\}]$ as the \emph{$n$-th level} of $\BratD$. 
We conceive of a Bratteli diagram as representing a multigraph where 
the edge function counts the number of edges between two vertexes; 
there is no need to formally represent the edges themselves.  
Bratteli diagrams provide most of the information required to represent 
an AF algebra.  To give a complete representation, one considers labelings, which 
we define as follows. In the case of AF algebras, these will assist in representing the dimensions 
of the summands, and in the case of unital dimension groups they will describe the order units.

\begin{definition}\label{def:LblBrat}

\

\begin{enumerate}
    \item A \emph{labeling} of a Bratteli diagram $\BratD$ is a function $\Lambda : V_\BratD \rightarrow \N$. 

    \item A \emph{labeled Bratteli diagram} consists of a Bratteli diagram $\BratD$ and 
    a labelling of $\BratD$.
\end{enumerate}
\end{definition}

We regard a labeled Bratteli diagram as a kind of Bratteli diagram.  
When $\BratD$ is a labeled Bratteli diagram, let $\Lambda_\BratD$ denote its 
labeling.

There is an obvious notion of isomorphism for Bratteli diagrams (labeled and unlabeled) which
is often too restrictive.  The definition of a more useful equivalence relation 
requires the following. 

\begin{definition}\label{def:BratPth}
Let $\BratD$ be a Bratteli diagram.  For all $u,v \in V_\BratD$, let 
\[
P_\BratD(u,v) = 
\left\{\begin{array}{cc}
0 & L_\BratD(u) \geq L_\BratD(v)\\
E(u,v) & L_\BratD(v) = L_\BratD(u) + 1\\
\sum_{w \in L_\BratD^{-1}[\{L(u) + 1\}]} E(u,w) P_\BratD(w,v) & L_\BratD(v) > L_\BratD(u) + 1\\
\end{array}
\right.
\]
\end{definition}

Informally speaking, $P_\BratD(u,v)$ counts the number of paths from $u$ to $v$ in $\BratD$.
By means of this path-counting function we introduce the concept of a telescoping.

\begin{definition}\label{def:Tel}
Suppose $\BratD$ and $\BratD'$ are Bratteli diagrams. 
We say that $\BratD$ is a \emph{telescoping} of $\BratD'$ if there is an increasing sequence 
$(n_k)_{k \in \N}$ of integers so that $n_0 = 0$, 
$L_\BratD^{-1}[\{k\}] = L_{\BratD'}^{-1}[\{n_k\}]$, and $E_\BratD(u,v) = P_{\BratD'}(u,v)$. 
If $\BratD$ and $\BratD'$ are labelled, then we also require that 
$\Lambda_\BratD(v) = \Lambda_{\BratD'}(v)$.
\end{definition}

We say that two Bratteli diagrams $\BratD$ and $\BratD'$ are \emph{equivalent} 
if there is a sequence $\BratD_0 = \BratD$, $\ldots$, $\BratD_n = \BratD'$
so that for each $j < n$, $\BratD_j$ and $\BratD_{j + 1}$ are isomorphic or one 
is a telescoping of the other.

\subsubsection{Bratteli diagrams of AF algebras}

We now give a formal definition of the relationship between Bratteli diagrams 
and AF algebras by means of AF certificates.  We start with the Bratteli diagram of an inductive sequence of finite-dimensional algebras.

\begin{definition}\label{def:BratIndSeq}
Suppose $(\bigoplus_{n \in F_k} M_n(\C), \psi_k)_{k \in \N}$ is an inductive sequence of 
finite-dimensional \cstar-algebras.
We define the \emph{standard labeled Bratteli diagram of} $(\bigoplus_{n \in F_k} M_n(\C), \psi_k)_{k \in \N}$
as follows:
\begin{enumerate}
    \item The vertices are the pairs of the form $(k, \pi)$, where $\pi$ is the projection map of a summand of
$\bigoplus_{n \in F_k} M_n(\C)$.

    \item The edge function is defined by defining  
    $E((k,\pi), (k+1, \pi'))$ to be the multiplicity of 
$\pi' \circ \psi_k \circ \iota_\pi$.

    \item The level function is defined by setting $L(k, \pi) = k$. 

    \item The labeling function is defined by defining $\Lambda(k, \pi)$ to be  
    $\sqrt{\dim(\dom(\pi))}$.
\end{enumerate}
We say that a labeled Bratteli diagram $\BratD$ is a \emph{Bratteli diagram of $(\bigoplus_{n \in F_k} M_n(\C), \psi_k)_{k \in \N}$} if it is isomorphic to the standard Bratteli diagram of 
this inductive sequence. 
\end{definition}

Now let $\BratD$ be a Bratteli diagram. 
We define $\BratD$ to be a Bratteli diagram of 
an AF certificate $(F_s, \phi_s)_{s \in \N}$ if it is a Bratteli diagram of 
the inductive sequence 
$(\bigoplus_{n \in F_k} M_n(\C), \phi_{k+1}^{-1} \circ \phi_k)_{k \in \N}$.
Then we define $\BratD$ to be a \emph{Bratteli diagram of $\boldA$} if 
it is a Bratteli diagram of an AF certificate of $\boldA$.

\subsubsection{Bratteli diagrams of inductive sequences of simplicial groups}

In the proof of Main Theorem \ref{mthm:CompIso}, it will be useful to have a 
notion of a labeled Bratteli diagram of an inductive sequence of unital simplicial groups. 
Given such a sequence $((\Z^{n_s}, u_s), \phi_s)_{s \in \N}$, we define its 
\emph{standard Bratteli diagram} as follows:
\begin{enumerate}
\item The vertex set is 
$\{(s, e^{n_s}_j)\ :\ s \in \N\ \text{ and }\ j \in \{1, \ldots, n_s\} \}.$
\item We define the level function by setting $L(s,g) = s$. 
\item We define the edge function by setting 
$E((s, g), (s + 1, g')) = \pi_{g'}(\phi_s(g))$.
\item Finally, we define the labeling function by setting $\Lambda(s, e^{n_s}_j) = u_s(j)$.
\end{enumerate}
We then say that a Bratteli diagram $\BratD$ is a Bratteli diagram of 
$((\Z^{n_s}, u_s), \phi_s)_{s \in \N}$ if it is isomorphic to the standard 
Bratteli diagram of this inductive sequence. 

This completes our picture of the classical material to be considered in this paper.
We now discuss its computability theory.

\section{Background from effective (computable) mathematics}\label{sec:BackEff}

Our goal in this section is to summarize relevant prior work from the computability theory
of groups and \cstar algebras.
We assume knowledge of the fundamentals of computability theory as expounded in \cite{Cooper.2004}.

\subsection{Algorithmic group theory}\label{subsec:CompGrp}

The objective of this section is to review the concepts of group presentation and associated
computable maps, c.e. sets, and computable sets.  
We follow the framework that we used in \cite{UHFPaper}.

Fix a countably infinite set $X = \{x_0, x_1, \ldots\}$ of indeterminates
and let $F_\omega$ denote the free group generated by $X$. 
We implicitly view $F_\omega$ as equipped with a computable bijection with $\N$, 
so that it makes sense to speak of, for example, c.e.\ and computable subsets of $F_\omega$.

Fix a group $G$, and assume $\nu$ is an epimorphism from 
$F_\omega$ onto $G$.  The pair $(G, \nu)$ is called a \emph{presentation} of $G$. 
Set $G^\# = (G, \nu)$.
If $w \in F_{\omega}$ and $\nu(w) = a$, then we call $w$ an \emph{$G^\#$ label} of $a$.  The \emph{kernel} of $G^\#$ is the kernel of $\nu$.

Some groups admit ``standard presentations''.  For example, we always view the group $\mathbb{Z}^n$ as equipped with its standard presentation induced by the labeling $\nu:F_\omega\to \mathbb{Z}^n$ given by sending the first $n$ elements of $X$ to the standard generators of $\mathbb{Z}^n$ and mapping the remaining elements of $X$ to the identity in $\mathbb{Z}^n$.  In the sequel, we identify groups which admit standard presentations with said standard presentation.

Suppose $G^\#$ is a presentation of a group.
We say that $G^\#$ is \emph{c.e.}\ (resp. \emph{computable}) if its kernel is a c.e.\ (resp. computable) subset of $F_{\omega}$.
We remark that the class of computably presentable groups is closed under isomorphism as is 
the class of groups with c.e. presentations.

Suppose $S \subseteq G$.  
We say that $S$ is a \emph{computable (resp. c.e.)} set of $G^\#$
if the set of all $G^\#$ labels of elements of $S$ is computable (resp. c.e.).
If $S$ is a c.e. set of $G^\#$, then we say $e \in \N$ is a \emph{$G^\#$ index}
of $S$ if $e$ is a code of a Turing machine that enumerates all $G^\#$ labels of elements 
of $S$.

Suppose $H^\#$ is a group presentation.  A map $\phi : G \rightarrow H$ 
is a \emph{computable map from $G^\#$ to $H^\#$} if 
there is an algorithm that, given a $G^\#$ label of $g \in G$, computes an 
$H^\#$ label of $\phi(g)$.
A \emph{$(G^\#, H^\#)$ index} of $\phi$ is an index of such an algorithm. 

The following simple observation will streamline our discussion of the computability theory 
of dimension groups.

\begin{proposition}\label{prop:CompMapZn}
If $G^\#$ is a group presentation, then every homomorphism from 
the abelian group $\Z^n$ to $G$ is a computable map from $\Z^n$ to $G^\#$.
\end{proposition}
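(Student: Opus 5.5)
A homomorphism $\phi : \Z^n \rightarrow G$ is completely determined by the finitely many values $g_i = \phi(e^n_i)$, $i = 1, \ldots, n$. Since $\nu$ is onto, each $g_i$ has at least one $G^\#$ label $w_i \in F_\omega$. We fix such labels $w_1, \ldots, w_n$ once and for all. This choice is non-uniform, but it is only finite information, so it may be hard-coded into the algorithm we are about to describe.

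The algorithm works as follows. Under the standard presentation of $\Z^n$, a $\Z^n$ label of an element $h$ is a word $v \in F_\omega$ in the indeterminates $x_0, x_1, \ldots$. Its image is computed by sending the first $n$ indeterminates to the standard generators and the rest to $\zerovec$. Given such a $v$, we replace each occurrence of $x_{i-1}^{\pm 1}$ (for $i \leq n$) by $w_i^{\pm 1}$, and each occurrence of any other indeterminate by the empty word. Call the resulting word $v'$; it is computed from $v$ by a straightforward substitution on words.

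It remains to check that $v'$ is a $G^\#$ label of $\phi(h)$. Let $\sigma : F_\omega \rightarrow F_\omega$ be the substitution homomorphism just described, and let $\nu_0 : F_\omega \rightarrow \Z^n$ be the standard labeling. Then $\nu \circ \sigma$ and $\phi \circ \nu_0$ are both group homomorphisms $F_\omega \rightarrow G$. They agree on the free generators: $x_{i-1}$ is sent by both to $g_i$ when $i \leq n$, and every other indeterminate is sent by both to $\zerovec_G$. Hence $\nu \circ \sigma = \phi \circ \nu_0$ on all of $F_\omega$. Therefore $\nu(v') = \phi(\nu_0(v)) = \phi(h)$, as required.

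There is no genuine obstacle here. The only point needing care is that the labels $w_i$ are not found effectively, but computability of a single map does not require uniformity in $\phi$. Note also that abelianness of $G$ is not even used, since substitution is a homomorphism of free groups.
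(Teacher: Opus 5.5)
Your proposal is correct and follows the same approach as the paper, whose one-line sketch notes that such a map is completely described by $G^\#$ labels of the images of the standard basis of $\Z^n$. Your substitution homomorphism $\sigma$, together with the check that $\nu\circ\sigma$ and $\phi\circ\nu_0$ agree on the free generators, supplies the verification the paper leaves implicit.
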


\begin{proof}[Proof sketch]
Such a map is completely described by $G^\#$ labels of its values on the standard
basis for $\Z^n$.
\end{proof}

\subsection{\cstar-algebras}\label{subsec:BackCompAlg}

\subsubsection{Basic background}
We recall the setting for studying \cstar-algebras from the perspective of computability theory.  Much of this basic material is taken from our earlier work \cite{UHFPaper}.  The reader seeking more details on this background material may also consult \cite{fox2022computable} and \cite{EMST}.
\begin{definition}
Let $\boldA$ be a \cstar-algebra.  A \emph{presentation} of $\boldA$ is a sequence $(a_0, a_1, \ldots)$ of points in $\boldA$ such that the *-subalgebra of $\boldA$ generated by $\{a_i : i \in \mathbb{N}\}$ is dense in $\boldA$.  The points $a_0, a_1, \ldots$ are called the \emph{special points} of the presentation, while the points of the form $p(a_0,\ldots,a_n)$, where $p(x_0,\ldots,x_n)$ is a $*$-polynomial with coefficients from $\mathbb{Q}(i)$ (as $n$ varies), are called the \emph{generated points} of the presentation.  We typically denote a presentation of $\boldA$ by $\boldA^\#$ or $\boldA^{\dagger}$.
\end{definition}

In earlier work, such as \cite{burton2024computable} and \cite{UHFPaper}, the generated points of a presentation are referred to as \emph{rational points}; we now prefer to use the term generated points, following the convention established in \cite{EMST}.

By standard techniques (see, for example, \cite[Section 2]{burton2024computable}), we can obtain an effective (but typically non-injective) list of the generated points of a presentation of a \cstar-algebra.  When we say, for example, that an algorithm takes a generated point $q$ of $\boldA^\#$ as input, we really mean that we have fixed an effective list $(q_i)_{i \in \mathbb{N}}$ of generated points of $\boldA^\#$ and the algorithm takes input an index $i$ for which $q = q_i$.

By the Church-Turing thesis, every algorithm can be represented as a Turing machine.  An \emph{index} (or \emph{code}) of an 
algorithm is a natural number that codes a representation of the algorithm as a Turing machine.

There is a natural notion of a (finite) product of presentations.  When we have a presentation $\boldA^\#$ of $\boldA$, if we need a presentation on $\boldA^n$ we use this product presentation, though the specific details of its construction will not be crucial for us.  See \cite[Subsection 1.1]{UHFPaper} for details.

\begin{definition}\label{def:pres.comp.etc}
Fix a presentation $\boldA^\#$.
\begin{enumerate}

    \item $\boldA^\#$ is \emph{computable} if there is an algorithm that, given a generated point $q$ and a $k \in \N$, outputs a rational number $r$ such that $\left\vert\norm{q} - r\right\vert < 2^{-k}$; an index of such an algorithm is an \emph{index} of $\boldA^\#$.
    

    \item $\boldA^\#$ is \emph{(right) c.e.}\ if there is an algorithm that, given a generated point $q$, enumerates a decreasing sequence $(r_n)_{n \in \mathbb{N}}$ of rational numbers such that $\lim_{n\to\infty}r_n = \norm{q}$; an index of such an algorithm is called a 
    \emph{(right) c.e.\ index} of $\boldA^\#$.
\end{enumerate}
\end{definition}

There is a corresponding notion of left c.e. presentation, but we will not make use of it here.


With each presentation of $\boldA$, there is an associated class of computable points.  These are defined as follows.

\begin{definition}
An element $a\in \boldA$ is a \emph{computable point of $\boldA^\#$} if there is an algorithm such that, given $k \in \N$,
returns a generated point $b$ of $\boldA^\#$ such that $\|a-b\|< 2^{-k}$; an index of such an algorithm is a 
\emph{$\boldA^\#$-index for $a$}. 
\end{definition}

In other words, a computable point of $\boldA^\#$ is one that can be effectively approximated by generated points of 
$\boldA^\#$ with arbitrarily good precision.

We remark that the class of computably presentable \cstar-algebras is closed under $\star$-isomorphism, that is, if $\boldA$ is a computably presentable \cstar-algebra and 
$\boldB$ is $\star$-isomorphic to $\boldA$, then $\boldB$ is computably presentable. 
The same principle holds for the class of \cstar-algebras that have c.e. presentations.

A \emph{rational open ball} of a presentation $\boldA^\#$ is an open ball in $\boldA$ that is centred at a generated point of $\boldA^\#$ and has rational radius.  Each rational open ball of $\boldA^\#$ has a \emph{code} consisting of the radius and a code for the centre of the ball.
If $C \subseteq \boldA$ is closed, we say that $C$ is a \emph{c.e.-closed subset of $\boldA^\#$} if 
the set of all (codes for) open rational balls of $\boldA^\#$ that intersect $C$ is c.e.
If $U \subseteq \boldA$ is open, we say that $U$ is a \emph{c.e.-open set of $\boldA^\#$} if 
there is a c.e.\ set $\mathcal{S}$ of rational open balls so that 
$U = \bigcup \mathcal{S}$.

By an index of a c.e.-open set we mean an index for the c.e.\ set of open balls whose union comprises the set; the index of a c.e.-closed set is defined analogously.

The following is \cite[Lemma 1.10]{UHFPaper}:

\begin{lemma}\label{intersection}
Suppose that $\boldA^\#$ is a c.e.\ presentation.  Further suppose that $U$ is a c.e.-open subset of $\boldA^\#$ and $C$ is a c.e.-closed 
subset of $\boldA^\#$ such that $U\cap C\neq\emptyset$.  Then $U\cap C$ contains a computable point.  Moreover, an index of this computable
point can be computed from indexes of $U$ and $C$.
\end{lemma}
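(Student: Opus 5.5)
The plan is a dovetailed search over rational balls. Its key tool is the characterization of points of a c.e.\ presentation that lie in a c.e.-open set, using the right c.e.\ norm. Fix c.e.\ indices for $\mathcal{S}$, a c.e.\ set of rational open balls with $U = \bigcup \mathcal{S}$, and for the c.e.\ set $\mathcal{T}$ of rational open balls meeting $C$. Since $U \cap C \neq \emptyset$, some ball $B(q_0; r_0) \in \mathcal{S}$ meets $C$. The idea is to build a sequence of generated points $q_0, q_1, \ldots$ and rationals $r_0 > r_1 > \cdots$ so that the closed balls $\overline{B}(q_k; r_k)$ are nested and each meets $C$, with $r_k < 2^{-k}$. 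Then $(q_k)$ is Cauchy, with limit $a \in C$ because $C$ is closed. The limit also satisfies $a \in \overline{B}(q_1; r_1) \subseteq B(q_0;r_0) \subseteq U$.

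First, dovetail the enumerations of $\mathcal{S}$ and $\mathcal{T}$ to find a ball $B(q_0; r_0)$ that lies in $\mathcal{S}$ and also appears in $\mathcal{T}$. This search terminates: pick $x \in U \cap C$ and a ball of $\mathcal{S}$ containing $x$; this ball meets $C$, so it appears in $\mathcal{T}$. If necessary, shrink first to a rational ball around a generated point close to $x$.

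At stage $k+1$, given $B(q_k; r_k) \in \mathcal{T}$, search for a pair $(q_{k+1}, r_{k+1})$ satisfying three conditions: $r_{k+1} < \min\{2^{-(k+1)}, r_k/2\}$, $B(q_{k+1}; r_{k+1}) \in \mathcal{T}$, and $\norm{q_{k+1} - q_k} + r_{k+1} < r_k$. The last condition guarantees $\overline{B}(q_{k+1}; r_{k+1}) \subseteq B(q_k; r_k)$. It is a c.e.\ condition, because $q_{k+1} - q_k$ is itself a generated point and the presentation gives computable rational upper approximations to its norm converging from above. So the search is a c.e.\ search, and it halts when a witness exists.

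The main point to verify is that a witness exists at every stage. Since $B(q_k;r_k)$ meets $C$, choose $y \in C$ with $\norm{y - q_k} < r_k$. By density of generated points, choose a generated point $q'$ close enough to $y$ and a small rational $r'$ with $\norm{y - q'} < r'$, such that $\norm{q' - q_k} + r' < r_k$ and $r' < \min\{2^{-(k+1)}, r_k/2\}$. Then $B(q'; r')$ contains $y$, so it meets $C$ and lies in $\mathcal{T}$; and it satisfies the inequality strictly, which the right c.e.\ approximation eventually certifies.

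Finally, $\norm{a - q_k} \le r_k < 2^{-k}$, so $k \mapsto q_k$ computes $a$, and $a$ is a computable point lying in $U \cap C$. The whole procedure is uniform in the given indices, so an index for $a$ is computable from indices of $U$ and $C$.
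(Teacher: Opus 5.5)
Your proposal is correct: the dovetailed search for nested rational balls $\overline{B}(q_{k+1};r_{k+1})\subseteq B(q_k;r_k)$ that meet $C$ is the standard proof of this fact, with the containment certified by the condition $\norm{q_{k+1}-q_k}+r_{k+1}<r_k$, which is c.e.\ in a right c.e.\ presentation. The paper does not reprove the lemma but imports it from the earlier UHF paper (Lemma 1.10 there), and the ``shrink first'' remark in your initial step is unnecessary, since the balls enumerated in $\mathcal{S}$ are already rational balls lying in $U$.
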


The following is folklore and is a simple consequence of the pertinent definitions. 

\begin{proposition}\label{prop:PreimCeOpen}
Suppose $\boldA^\#$ and $\boldB^\#$ are c.e. presentations of \cstar-algebras, and let 
$U$ be a c.e. open set of $\boldB^\#$.  If $f$ is a computable map from 
$\boldA^\#$ to $\boldB^\#$, then $f^{-1}[U]$ is a c.e. open set of $\boldA^\#$. 
Moreover, it is possible to compute an $\boldA^\#$ index of $f^{-1}[U]$ from a $\boldB^\#$ index 
of $U$ and an $(\boldA^\#, \boldB^\#)$ index of $f$.
\end{proposition}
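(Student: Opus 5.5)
The statement to prove is Proposition \ref{prop:PreimCeOpen}. The plan is to enumerate rational open balls of $\boldA^\#$ that are certified to lie inside $f^{-1}[U]$, and to show that their union is exactly $f^{-1}[U]$.

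Fix a c.e.\ set $\mathcal{S}$ of rational open balls of $\boldB^\#$ whose union is $U$, and fix an algorithm that, on input a generated point $q$ of $\boldA^\#$ and $k\in\N$, outputs a generated point of $\boldB^\#$ within $2^{-k}$ of $f(q)$. Here I read ``computable map'' in the usual sense for presentations. If a modulus of continuity is needed, I would use that the maps of interest are $\star$-homomorphisms and hence $1$-Lipschitz. In general, computability of $f$ on computable points yields (uniformly) a modulus of continuity on rational balls, and I would make that explicit as a lemma.

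\textbf{Enumeration.} Dovetail over all rational balls $B(p;r)$ of $\boldA^\#$, all $k$, and all $B(c;s)\in\mathcal{S}$ enumerated so far. Let $b$ be the output approximating $f(p)$ to within $2^{-k}$. Accept $B(p;r)$ when the following condition is witnessed:
\[
\norm{b-c}+2^{-k}+ \omega(r) < s .
\]
Here $\omega$ is the modulus of continuity; for Lipschitz $f$ we take $\omega(r)=r$. Since $\boldA^\#,\boldB^\#$ are only right c.e., we have effective upper approximations to $\norm{b-c}$, since $b-c$ is a generated point. So the strict inequality is semi-decidable, and this is exactly why right c.e.\ suffices.

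\textbf{Soundness.} Suppose $B(p;r)$ is accepted and $x\in B(p;r)$. Then
\[
\norm{f(x)-c}\le \norm{f(x)-f(p)}+\norm{f(p)-b}+\norm{b-c}<s,
\]
so $f(x)\in B(c;s)\subseteq U$.

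\textbf{Completeness.} Suppose $x\in f^{-1}[U]$, so $f(x)\in B(c;s)$ for some ball in $\mathcal{S}$ with slack $\eta=s-\norm{f(x)-c}>0$. Choose a generated point $p$ close to $x$, a small rational $r$ with $\norm{x-p}<r$, and $k$ large, so that continuity keeps all the error terms below $\eta$. Then $B(p;r)$ containing $x$ is accepted.

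The index of the resulting c.e.\ set is built uniformly from the indexes of $\mathcal{S}$ and $f$. The main obstacle is the modulus-of-continuity issue when $f$ is not Lipschitz. For the applications here every relevant map is a $\star$-homomorphism or a polynomial map on bounded sets, so I would first handle it by Lipschitz bounds.
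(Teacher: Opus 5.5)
Your enumeration scheme and your use of right c.e.-ness are right: $b-c$ is a generated point, so upper approximations to $\norm{b-c}$ make the strict inequality semi-decidable. The gap is where continuity comes from.

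You read ``computable map'' as ``computable on generated points'' and then supply a modulus $\omega(r)$ bounding $\norm{f(x)-f(p)}$ on \emph{every} ball $B(p;r)$. You assert that such a modulus follows in general from computability on computable points, and that is false.
\begin{itemize}
\item Computability on computable points says nothing about $f$ elsewhere. On $\C$ presented by the special point $1$, the map equal to $0$ at computable points and $1$ elsewhere is computable in your sense. Yet $f^{-1}[B(1;1/2)]$, the set of noncomputable points, is not open.
\item Even for maps computable in any reasonable sense, a modulus depending only on $r$ need not exist. The map $x\mapsto x^*x$ is not uniformly continuous on a nonzero C$^*$-algebra, so no finite $\omega(r)$ makes your acceptance test sound.
\end{itemize}
As written, you have proved the proposition only for maps with a computable Lipschitz constant, not the stated result. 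Even there you need $\omega(r)=Lr$ rather than $r$: the affine maps $f_{j,\vec q}$ in Lemma \ref{lm:FindF} have Lipschitz constant $\sum_j|q_j|$.

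The missing idea is that the standard notion of computable map between presentations already builds in effective continuity. This is presumably why the paper calls the proposition folklore and a simple consequence of the definitions, and gives no proof. For instance, in the ball formulation there is a partial computable $P$ from rational balls of $\boldA^\#$ to rational balls of $\boldB^\#$ such that:
\begin{itemize}
\item $f[B_1]\subseteq P(B_1)$ whenever $P(B_1)$ is defined;
\item for each $a$ and each neighborhood $V$ of $f(a)$, some $B_1\ni a$ has $P(B_1)\subseteq V$.
\end{itemize}
Enumerate $B_1$ whenever $P(B_1)=B(c';r')$ converges and $\norm{c'-c}+r'<s$ for some $B(c;s)\in\mathcal{S}$. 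This is c.e.\ by your right c.e.\ observation, and soundness is immediate.

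For completeness, suppose $f(x)\in B(c;s)$ with slack $\eta$, and take $V=B(f(x);\eta/2)$. In a nonzero normed space, $B(c';r')\subseteq B(y;\rho)$ forces $\norm{c'-y}+r'\le\rho$. Hence $\norm{c'-c}+r'\le \eta/2+s-\eta<s$, so the ball containing $x$ is enumerated.

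No modulus is needed. With the name-to-name definition the same argument goes through via the use principle, and the index is obtained uniformly from those of $\mathcal{S}$ and $P$.
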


Suppose $\boldA^\#$ is a c.e. presentation of a stably finite unital \cstar-algebra. 
In \cite{UHFPaper}, it is shown that $\unit_\boldA$ is a computable point of 
$\boldA^\#$.  
We say that $e \in \N$ is a \emph{unital index of $\boldA^\#$} if
$e$ is a code of a pair $(e_0, e_1)$ that consists of an index of $\boldA^\#$ and 
an $\boldA^\#$ index of $\unit_\boldA$.  Our motivation for introducing such indexes
is that even for the class of stably finite unital \cstar-algebras, an $\boldA^\#$-index of $\unit_\boldA$ may not be computable from an index of 
$\boldA^\#$; see \cite{mcnicholl2024evaluative}.

We now discuss prior developments regarding the computability of $K$-theory.

\subsubsection{$K$-theory}\label{subsubsec:BackCompKThy}

In \cite{UHFPaper}, it is shown that there is a \emph{computable} functor from the 
category of c.e. presentations of unital \cstar-algebras (where the morphisms are 
computable unital $\star$-morphisms) to the category of c.e. presentations of abelian groups.
This functor is denoted $K_0$. 
The functor is computable in the sense that it is possible to compute an index of $K_0(\boldA^\#)$ from 
an index of $\boldA^\#$.  and from an $(\boldA^\#, \boldB^\#)$ index of a computable $\star$-homomorphism $\phi$, it is possible to compute a $(K_0(\boldA^\#), K_0(\boldB^\#))$-index of $K_0(\phi)$. 

The application of computable $K$-theory to UHF algebras relied heavily on the 
concept of computable weak stability, which will also support our proof of the Effective Glimm Lemma.
Hence, it is our next topic for review.

\subsubsection{Computable weak stability}

The concept of computable weak stability was introduced by Fox, Goldbring, and Hart \cite
{FoxGoldbringHart.2024+} based on the corresponding notion of weak stability in continuous 
model theory, which was introduced in \cite{FarahEtAll.2021}.  
In \cite{UHFPaper}, we presented computable weak stability in a way that illuminated 
its connection with c.e. closedness.  
For the sake of referencing some of its details, we recall this definition now. 

\begin{definition}\label{def:CompWklyStbl}
Suppose that $\vec{x} = (x_1, \ldots, x_N)$ is a tuple of variables, $C_1, \ldots, C_N \in \N$, 
and $p_1(\vec{x}), \ldots, p_M(\vec{x})$ are rational $*$-polynomials.  Consider the set of relations
$$\mathcal{R} = \{p_i(\vec{x})=0 \ : \ i=1,\ldots,M\}\cup\{\norm{x_j} \leq C_j \ : \ j=1,\ldots,N\}$$.
\begin{enumerate}
    \item For a \cstar-algebra $\boldB$ and $w_1, \ldots, w_N \in \boldB$, 
    let $\mathcal{R}^{\boldB}$ denote the quantity 
    $$
    \max(\{\norm{p_j(\vec{w})}\ :\ j \in \{1, \ldots, M\}\} \cup \{\norm{w_j}- C_j\ : j \in \{1, \ldots, N\}\}$$
    and write $\boldB \models \mathcal{R}(\vec{w})$ if 
    $p_j(\vec{w}) = 0$ for all $j \in \{1, \ldots, M\}$ and $\norm{w_j} \leq C_j$ for all $j \in \{1, \ldots, N\}$.

    \item A function $g : \N \rightarrow \N$ is a \emph{modulus of weak stability} for 
    $\mathcal{R}$ provided that, for every $k \in \N$, every \cstar-algebra $\boldB$, 
    and every $w_1, \ldots, w_N \in \boldB$, if $\mathcal{R}^\boldB(\vec{w}) < 2^{-g(k)}$, 
    then there exists $z_1, \ldots, z_N \in \boldB$ so that 
    $\max_j \norm{w_j - z_j} < 2^{-k}$ and $\mathbf{B} \models \mathcal{R}(\vec{z})$.

    \item We say that $\mathcal{R}$ is \emph{weakly stable} if it has a modulus of weak stability and \emph{computably weakly stable} if it has a computable modulus of weak stability.
\end{enumerate}
\end{definition}

In \cite[Theorem 1.12]{UHFPaper} we showed that computably weakly stable relations define c.e. closed sets in c.e. presentations of \cstar-algebras.

We have now completed our description of relevant prior developments in 
classical and computable mathematics.  We now attend to some purely prelminary matters 
in these realms which will support our main efforts later.

\section{Preliminaries from classical mathematics}\label{sec:PrlmClscl}

\subsection{Ordered abelian groups}

\subsubsection{Inductive limits}

Our goal here is to arrive at a more concrete understanding of inductive limits 
of ordered abelian groups which will be valuable when developing their computability 
theory. We begin with abelian groups.

\begin{lemma}\label{lm:IndLimAbl}
Suppose $(G_s, \phi_s)_{s \in \N}$ is an inductive sequence of abelian groups and let 
$(G, (\nu_s)_{s \in \N})$ be an inductive limit of $(G_s, \phi_s)_{s \in \N}$.  Then:
\begin{enumerate}
    \item $G = \bigcup_{s \in \N} \ran(\nu_s)$.\label{lm:IndLimAbl:ran}

    \item For each $s \in \N$, $\ker(\nu_s) = \bigcup_{k \in \N} \ker(\phi_{s, s+k})$.\label{lm:IndLimAbl:ker}
\end{enumerate}
\end{lemma}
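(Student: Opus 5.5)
The plan is to exploit uniqueness of inductive limits (Proposition \ref{prop:IndLimUni}) by building an explicit concrete limit and transporting the two properties along the isomorphism. Let $H = \bigsqcup_{s}G_s/\!\sim$, where $(s,x)\sim(t,y)$ if and only if $\phi_{s,m}(x) = \phi_{t,m}(y)$ for some $m \geq \max(s,t)$. First check that $\sim$ is an equivalence relation; transitivity uses the fact that once two images agree at stage $m$ they agree at every later stage, by applying $\phi_{m,m'}$. Then define addition by $[(s,x)]+[(t,y)] = [(m,\phi_{s,m}(x)+\phi_{t,m}(y))]$ with $m\geq \max(s,t)$. Well-definedness follows because the $\phi$'s are homomorphisms and $\phi_{m,m'}\circ\phi_{s,m}=\phi_{s,m'}$. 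Finally set $\mu_s(x)=[(s,x)]$. These maps are homomorphisms, and $\mu_{s+1}\circ\phi_s=\mu_s$ holds by definition of $\sim$.

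Next we verify the universal property. Given an upper limit $(B,(\beta_s))$, define $\lambda([(s,x)])=\beta_s(x)$. This is well defined because $\beta_s=\beta_m\circ\phi_{s,m}$, so equivalent pairs have the same image. It is a homomorphism and satisfies $\lambda\circ\mu_s=\beta_s$. Uniqueness holds because every element of $H$ lies in some $\ran(\mu_s)$. So $(H,(\mu_s))$ is an inductive limit, and both (\ref{lm:IndLimAbl:ran}) and (\ref{lm:IndLimAbl:ker}) hold for it directly. Every element of $H$ is $\mu_s(x)$ for some $s$ and $x$. Moreover, $\mu_s(x)=0=\mu_s(0)$ if and only if $\phi_{s,m}(x)=0$ for some $m\geq s$, that is, $x\in\ker(\phi_{s,s+k})$ for some $k$; note $\phi_{s,s}=\Id$, so the case $k=0$ is covered.

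To transfer to the given limit $(G,(\nu_s))$, Proposition \ref{prop:IndLimUni} provides an isomorphism $\lambda:H\to G$ with $\lambda\circ\mu_s=\nu_s$. Hence $\ran(\nu_s)=\lambda[\ran(\mu_s)]$, and the union of these is $\lambda[H]=G$. Also $\ker(\nu_s)=\ker(\mu_s)$ because $\lambda$ is injective.

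The only step needing genuine care is the well-definedness bookkeeping: transitivity of $\sim$, the group operation, and $\lambda$. Each of these reduces to the compatibility identity $\phi_{m,m'}\circ\phi_{s,m}=\phi_{s,m'}$ for $s\le m\le m'$, which holds by the definition of the composites $\phi_{n,n'}$. The paper's convention that $\phi_{n,\ell}$ is the zero morphism for $\ell<n$ is never used, since we always take $m\geq\max(s,t)$.
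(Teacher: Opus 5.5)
Your proof is correct and follows the same strategy as the paper: build a concrete inductive limit in which both properties are immediate, then transfer them to $(G,(\nu_s)_{s\in\N})$ through the isomorphism given by Proposition \ref{prop:IndLimUni}. The difference is only in the model. You use the disjoint union modulo eventual agreement, and you also check the universal property explicitly. The paper instead realizes the limit inside $\prod_n G_n$ modulo eventual equality via $\rho_s(a)=(\phi_{s,n}(a))_{n\in\N}$, which is where its zero-morphism convention is used.
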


\begin{proof}[Proof sketch]
We first review a standard construction of 
an inductive limit of $(G_s, \phi_s)_{s \in \N}$. For each $s \in \N$, let $\rho_s : G_s \rightarrow \prod_n G_n$ be the homomorphism defined by 
setting $\rho_s(a) = (\phi_{s,n}(a))_{n \in \N}$.
When $f,g \in \prod_n G_n$, write $f \sim g$ if there exists $k \in \N$
so that $f(k+n) = g(k + n)$ for all $n \in \N$.  Thus, $\sim$ is a congruence relation on 
$\prod_n G_n$.  Let $\pi : \prod_n G_n \rightarrow \prod_n G_n/ \sim$ denote the canonical epimorphism, 
and set $\mu_s = \pi \circ \rho_s$ for all $s \in \N$.

Define $H$ to be $\bigcup_{s \in \N} \ran(\mu_s)$.  By construction, 
$\ran(\mu_s) \subseteq \ran(\mu_{s+1})$.  Hence, $H$ is an abelian group.
It is well-known, and easy to verify, that $(H, (\mu_n)_{n \in \N})$ is
an inductive limit of $(G_s, \phi_s)_{s \in \N}$.  Furthermore, 
for each $s \in \N$, $\ker(\mu_s) = \bigcup_{k \in \N} \ker(\phi_{s,s+k})$. 

Let $\lambda$ be the reduction of $(G, (\nu_s)_{s \in \N})$ to 
$(H, (\mu_s)_{s \in \N})$.  
By Proposition \ref{prop:IndLimUni}, $\lambda$ is an isomorphism. 
Since $(H, (\mu_s)_{s \in \N})$ satisfies (\ref{lm:IndLimAbl:ran}) and 
(\ref{lm:IndLimAbl:ker}), it follows that $(G, (\nu_s)_{s \in \N})$ does as well.
\end{proof}

We now proceed to describe inductive limits of ordered abelian groups.

\begin{lemma}\label{lm:IndLimOrdAbl}
Suppose $(\calG_s, \phi_s)_{s \in \N}$ is an inductive sequence of ordered abelian groups.
Assume $(\calG, (\nu_s)_{s \in \N})$ is an inductive upper limit of $(\calG_s, \phi_s)_{s \in \N}$. 
Then $(\calG, (\nu_s)_{s \in \N})$ is an inductive limit of $(\calG_s, \phi_s)_{s \in \N}$
if and only if it satisfies the following:
\begin{enumerate}
    \item $\calG^+ = \bigcup_{s \in \N} \nu_s[\calG_s^+]$.\label{lm:IndLimOrdGrp:ran}

    \item For each $s \in \N$, $\ker(\nu_s) = \bigcup_{k \in \N} \ker(\phi_{s,s+k})$. \label{lm:IndLimOrdGrp:ker}
\end{enumerate}
\end{lemma}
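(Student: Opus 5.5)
We prove both directions by comparison with an explicit concrete inductive limit, mirroring the proof of Lemma \ref{lm:IndLimAbl}.

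First we build the concrete model. Let $(H, (\mu_s)_{s\in\N})$ be the abelian-group inductive limit constructed in the proof of Lemma \ref{lm:IndLimAbl}, so that $H = \bigcup_s \ran(\mu_s)$ and $\ker(\mu_s) = \bigcup_k \ker(\phi_{s,s+k})$. Equip $H$ with $H^+ = \bigcup_s \mu_s[\calG_s^+]$. We check that this is a positive cone.
\begin{enumerate}
\item Closure under addition: two elements of $H^+$ can be pushed to a common stage $\mu_t[\calG_t^+]$, because $\mu_s = \mu_t\circ\phi_{s,t}$ and $\phi_{s,t}$ is positive.
\item $H^+ + (-H^+) = H$: this follows from $H = \bigcup_s \ran(\mu_s)$ and $\calG_s = \calG_s^+ - \calG_s^+$.
\item $H^+\cap(-H^+) = \{0\}$: suppose $\mu_s(a) = -\mu_t(b)$ with $a,b$ positive. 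Push both to a common stage $t'$. Then $\phi_{s,t'}(a)+\phi_{t,t'}(b) \in \ker(\mu_{t'})$, so it is killed by some $\phi_{t',t''}$. Hence $\phi_{s,t''}(a) = -\phi_{t,t''}(b)$ lies in $\calG_{t''}^+\cap -\calG_{t''}^+ = \{0\}$.
\end{enumerate}
This third point is the only place where the cone axioms of the $\calG_s$ are really used, and it is the step requiring care. With $H^+$ in hand, $(H,H^+)$ with the maps $\mu_s$ is an inductive upper limit.

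Next we show that $(H,(\mu_s))$ is an inductive limit in the ordered category. Let $(B,(\beta_s))$ be an ordered upper limit. Forgetting the order, Lemma \ref{lm:IndLimAbl} gives a unique group homomorphism $\lambda:H\to B$ with $\lambda\circ\mu_s=\beta_s$. Uniqueness holds because $H=\bigcup\ran(\mu_s)$. Positivity holds because $\lambda[\mu_s[\calG_s^+]] = \beta_s[\calG_s^+]\subseteq B^+$.

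For the forward direction, suppose $(\calG,(\nu_s))$ is an inductive limit. By Proposition \ref{prop:IndLimUni}, the reduction $\lambda:\calG\to H$ is an isomorphism of ordered groups. Note that $\lambda^{-1}$ is also positive, being the reduction in the other direction. Hence $\lambda$ maps $\calG^+$ onto $H^+$ and satisfies $\lambda\circ\nu_s=\mu_s$. Conditions (\ref{lm:IndLimOrdGrp:ran}) and (\ref{lm:IndLimOrdGrp:ker}) transfer from $H$ to $\calG$.

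For the converse, assume $(\calG,(\nu_s))$ satisfies (\ref{lm:IndLimOrdGrp:ran}) and (\ref{lm:IndLimOrdGrp:ker}). Condition (\ref{lm:IndLimOrdGrp:ran}) together with $\calG=\calG^+-\calG^+$ gives $\calG=\bigcup\ran(\nu_s)$. Now define $\lambda:H\to\calG$ by $\lambda(\mu_s(a))=\nu_s(a)$.
\begin{enumerate}
\item It is well defined and injective, since by (\ref{lm:IndLimOrdGrp:ker}) and the analogous description of $\ker(\mu_s)$ both sides have the same kernel, namely $\bigcup_k\ker(\phi_{s,s+k})$, after passing to a common stage.
\item It is surjective, since $\calG=\bigcup\ran(\nu_s)$.
\item It maps $H^+$ onto $\calG^+$, by (\ref{lm:IndLimOrdGrp:ran}).
\end{enumerate}
So $\lambda$ is an isomorphism of upper limits. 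The universal property transfers from $H$ to $\calG$ via composition with $\lambda^{-1}$, so $(\calG,(\nu_s))$ is an inductive limit.
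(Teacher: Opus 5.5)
Your proof is correct and takes essentially the same route as the paper: you compare $(\calG,(\nu_s))$ with the standard model $H$ carrying the cone $\bigcup_s\mu_s[\calG_s^+]$, then transfer conditions (1)--(2) and the universal property along the reduction isomorphism. The only differences are that you check directly that this cone is a positive cone and that $(H,H^+)$ is an ordered inductive limit, where the paper cites \cite[Proposition 6.2.6]{Rordam.Larsen.Laustsen.2000}, and that in the converse you explicitly verify $\lambda[H^+]=\calG^+$, which the paper leaves implicit when it passes from bijectivity of $\lambda$ to its being an isomorphism.
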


\begin{proof}  
Assume $\calG = (G, P)$ and $\calG_s = (G_s, P_s)$ for all $s \in \N$. 
Let $(H, (\mu_s)_{s \in \N})$ be an inductive limit of 
$(G_s, \phi_s)_{s \in \N}$, and let $\lambda$ be the reduction of 
$(H, (\mu_s)_{s \in \N})$ to $(G, (\nu_s))_{s \in \N}$.  
Let $P' = \bigcup_{s \in \N} \mu_s[P_s]$ and set $\calH = (H, P')$.  
By \cite[Proposition 6.2.6]{Rordam.Larsen.Laustsen.2000}, 
$(\calH, (\mu_s)_{s \in \N})$ is an inductive limit of $(\calG_s, \phi_s)_{s \in \N}$.
It also follows that $\lambda$ is positive and so 
$\lambda$ is the reduction of $(\calH, (\mu_s)_{s \in \N})$ to $(\calG, (\nu_s)_{s \in \N})$.
Furthermore, by construction, $\calH^+ = \bigcup_s \mu_s[\calG^+_s]$, and by 
Lemma \ref{lm:IndLimAbl}, $\ker(\mu_s) = \bigcup_{k \in \N} \ker(\phi_{s,s+k})$ for all $s \in \N$.

Suppose $(\calG, (\nu_s)_{s \in \N})$ is an inductive limit of 
$(\calG_s, \phi_s)_{s \in \N}$.  
Then $\lambda$ is an isomorphism.  Since $(\calH, (\mu_s)_{s \in \N})$ satisfies 
(\ref{lm:IndLimOrdGrp:ran}) and (\ref{lm:IndLimOrdGrp:ker}), 
it follows that $(\calG, (\nu_s)_{s \in \N})$ does as well.

Finally, suppose $(\calG, (\nu_s)_{s \in \N})$ satisfies (\ref{lm:IndLimOrdGrp:ran}) and (\ref{lm:IndLimOrdGrp:ker}).  It follows from (\ref{lm:IndLimOrdGrp:ran}) that $\lambda$
is surjective.  Let $h \in \ker(\lambda)$.  Then, there exists $s$ and $g \in \calG_s$ so 
that $\mu_s(g) = h$.  Hence, $g \in \ker(\nu_s)$, and so there exists $k \in \N$
so that $g \in \ker(\phi_{s,s+k})$.  
Thus, $h = \mu_{s+k}(\phi_{s,s+k}(g)) = \zerovec_G$, and so $\lambda$ is an isomorphism.
Hence, $(\calG, (\nu_s)_{s \in \N})$ is an inductive limit of $(\calG_s, \phi_s)_{s \in \N}$.
\end{proof}

\subsubsection{Unital dimension groups}

Unital dimension groups are central to the $K$-theory of unital AF algebras, and 
accordingly we elaborate on our definition of certificate of dimensionality 
as follows.

\begin{definition}\label{def:UntlCertDim}
Suppose $(\calG, u)$ is a unital dimension group.  We say that $(n_s, u_s \nu_s, \phi_s)_{s \in \N}$
is a \emph{unital certificate of dimensionality} of $(\calG, u)$ if $u_s$ is an order unit of 
$\Z^{n_s}$ and if 
$((\calG, u), (\nu_s)_{s \in \N})$ is an inductive limit of 
$((\Z^{n_s}, u_s), \phi_s)_{s \in \N}$ in the category of unital ordered abelian groups. 
\end{definition}

These certificates will form the backbone of our proof of Main Theorem \ref{mthm:ClsCePresDimGrp::Untl}.

\subsection{\cstar-algebras}

\subsubsection{Inductive limits}

As in the previous section, we give a concrete description of 
inductive limits of \cstar-algebras.

\begin{lemma}\label{lm:IndLimAlg}
Suppose $(\boldA_s, \phi_s)_{s \in \N}$ is an inductive sequence of \cstar-algebras, 
and let $(\boldA, (\nu_s)_{s \in \N})$ be an inductive upper limit of 
$(\boldA_s, \phi_s)_{s \in \N}$.  Then, $(\boldA, (\nu_s)_{s \in \N})$ is an inductive limit of 
$(\boldA_s, \phi_s)_{s \in \N}$ if and only if it satisfies the following:
\begin{enumerate}
    \item $\boldA = \overline{\bigcup_{s \in \N} \ran(\nu_s)}$.\label{lm:IndLimAlg::ran}

    \item For every $s \in \boldA_s$, $\norm{\nu_s(a)} = \lim_k \norm{\phi_{s, s + k}(a)}$.\label{lm:IndLimAlg::norm}

    \item For every $s \in \N$, $\ker(\nu_s) = \{a \in A_s\ : \lim_k \norm{\phi_{s,s+k}(a)} = 0\}$.\label{lm:IndLimAlg::ker}
\end{enumerate}
\end{lemma}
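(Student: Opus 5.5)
Following the pattern of Lemmas \ref{lm:IndLimAbl} and \ref{lm:IndLimOrdAbl}, I will compare $(\boldA, (\nu_s)_{s \in \N})$ with a concrete model of the inductive limit. The model is the standard one. Let $\prod_n \boldA_n$ denote the bounded sequences with the sup norm, and let $\bigoplus_n \boldA_n$ be the ideal of sequences whose norms tend to $0$. Put $\rho_s(a) = (\phi_{s,n}(a))_{n}$, using the convention $\phi_{s,n} = 0$ for $n < s$. Let $\pi$ be the quotient map onto $\prod_n \boldA_n / \bigoplus_n \boldA_n$, set $\mu_s = \pi \circ \rho_s$, and let $\boldB = \overline{\bigcup_s \ran(\mu_s)}$. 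It is standard that $(\boldB, (\mu_s)_{s})$ is an inductive limit. By the formula for the quotient norm, $\norm{\mu_s(a)} = \limsup_k \norm{\phi_{s,s+k}(a)}$. Since $\star$-homomorphisms are contractive, the sequence $\norm{\phi_{s,s+k}(a)}$ is nonincreasing in $k$, so this limsup is an honest limit. Thus $\boldB$ satisfies (\ref{lm:IndLimAlg::ran}) by construction and satisfies (\ref{lm:IndLimAlg::norm}). Condition (\ref{lm:IndLimAlg::ker}) follows immediately from (\ref{lm:IndLimAlg::norm}).

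For the forward direction, suppose $(\boldA,(\nu_s)_{s})$ is an inductive limit. By Proposition \ref{prop:IndLimUni}, the reduction $\lambda : \boldB \to \boldA$ is a $\star$-isomorphism with $\lambda \circ \mu_s = \nu_s$. Injective $\star$-homomorphisms between \cstar-algebras are isometric, so $\norm{\nu_s(a)} = \norm{\mu_s(a)}$, and all three conditions transfer from $\boldB$ to $\boldA$.

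For the converse, assume $(\boldA,(\nu_s)_{s})$ is an inductive upper limit satisfying (\ref{lm:IndLimAlg::ran})--(\ref{lm:IndLimAlg::ker}). I will build an inverse reduction rather than argue abstractly.
\begin{enumerate}
    \item Let $\lambda : \boldB \to \boldA$ be the unique reduction from $\boldB$, which exists because $\boldB$ is an inductive limit, so $\lambda \circ \mu_s = \nu_s$.
    \item By (\ref{lm:IndLimAlg::norm}) for $\boldA$ and the norm formula for $\boldB$, $\norm{\lambda(\mu_s(a))} = \norm{\mu_s(a)}$. Hence $\lambda$ is isometric on the dense $\star$-subalgebra $\bigcup_s \ran(\mu_s)$, and by continuity it is isometric on all of $\boldB$.
    \item An isometry has closed range. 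That range contains $\bigcup_s \ran(\nu_s)$, which is dense in $\boldA$ by (\ref{lm:IndLimAlg::ran}). So $\lambda$ is a surjective isometric $\star$-homomorphism, that is, a $\star$-isomorphism.
    \item Given any inductive upper limit $(\boldC,(\kappa_s)_{s})$, let $\theta : \boldB \to \boldC$ be its reduction from $\boldB$. Then $\theta \circ \lambda^{-1}$ is a reduction from $\boldA$ to $\boldC$.
    \item For uniqueness, any two reductions from $\boldA$ agree on each $\ran(\nu_s)$. They are continuous, so they agree on the closure, which is $\boldA$ by (\ref{lm:IndLimAlg::ran}).
\end{enumerate}

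Condition (\ref{lm:IndLimAlg::ker}) is not actually needed in the converse, since it follows from (\ref{lm:IndLimAlg::norm}); it is listed only for later use. I expect the main technical point to be showing that the model $\boldB$ really is an inductive limit, namely that reductions out of it exist. To do this I will define $\theta(\mu_s(a)) = \kappa_s(a)$ on $\bigcup_s \ran(\mu_s)$. This is well defined and contractive because $\norm{\kappa_s(a)} = \norm{\kappa_{s+k}(\phi_{s,s+k}(a))} \le \norm{\phi_{s,s+k}(a)} \to \norm{\mu_s(a)}$. It then extends by continuity to all of $\boldB$. This is the same standard fact that was cited for the group case, so I will cite it (e.g.\ \cite{Rordam.Larsen.Laustsen.2000}) and only sketch the argument above.
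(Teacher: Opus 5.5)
Your proof is correct, but it takes a different route from the paper's. For the forward direction, the paper simply cites \cite[Proposition 6.2.4]{Rordam.Larsen.Laustsen.2000}. For the converse, it never builds a model. Given an arbitrary inductive upper limit $(\boldB,(\mu_s)_{s})$, the paper defines the reduction directly on $\bigcup_s \ran(\nu_s)$ by $\nu_s(a) \mapsto \mu_s(a)$. Well-definedness comes from $\ker(\nu_s) \subseteq \ker(\mu_s)$: if $\nu_s(a)=0$, then by (2) $\norm{\phi_{s,s+k}(a)} \to 0$, so $\mu_s(a) = \mu_{s+k}(\phi_{s,s+k}(a)) = 0$. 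Contractivity follows from (2) in the same way, and the map is extended by density using (1).

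You instead compare $\boldA$ with the concrete limit inside $\prod_n \boldA_n/\bigoplus_n \boldA_n$. You show the canonical comparison map is isometric by (2) and onto by (1), so the universal property transports along an isomorphism. This gives a treatment parallel to Lemmas \ref{lm:IndLimAbl} and \ref{lm:IndLimOrdAbl} and a self-contained forward direction. It also isolates the fact that (1) and (2) are exactly what make the comparison map an isomorphism.

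The cost is a detour. The argument you sketch for why reductions out of the model exist ($\mu_s(a) \mapsto \kappa_s(a)$, contractive because $\norm{\kappa_s(a)} \le \norm{\phi_{s,s+k}(a)}$) uses only the norm formula and density. Those are exactly what $\boldA$ is assumed to satisfy in the converse, so you could run that argument on $\boldA$ directly and skip the model there; that is precisely the paper's proof. Your remark that (3) is redundant given (2) is right, and the paper's converse likewise invokes only (2).
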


\begin{proof}
The forward direction is \cite[Proposition 6.2.4]{Rordam.Larsen.Laustsen.2000}.

Suppose (\ref{lm:IndLimAlg::ran}), (\ref{lm:IndLimAlg::norm}), and (\ref{lm:IndLimAlg::ker}) are satisfied.
Let $(\boldB, (\mu_s)_{s \in \N})$ be an inductive upper limit of 
$(\boldA_s, \phi_s)_{s \in \N}$.  

We first demonstrate that $\ker(\nu_s) \subseteq \ker(\mu_s)$.  
Let $a \in \ker(\nu_s)$.  
Then, by (\ref{lm:IndLimAlg::norm}), $\lim_k \norm{\phi_{s, s+k}(a)} = 0$. 
Since every $\star$-homomorphism is $1$-Lipschitz, 
we have $\lim_k \norm{\mu_{s + k}(\phi_{s, s+k}(a))} = 0$.  However, since 
$(\boldB, (\mu_s)_{s \in \N})$ is an inductive upper limit of $(\boldA_s, \phi_s)_{s \in \N}$,
it follows that $\mu_{s+k}(\phi_{s,s+k}(a)) = \mu_s(a)$ and so $a \in \ker(\mu_s)$.

It now follows that if $\nu_s(a_0) = \nu_{s + k}(a_1)$, then 
$\mu_s(a_0) = \mu_{s+k}(a_1)$.

We now construct a $\star$-homomorphism $\lambda'$ from $\bigcup_{s \in \N} \ran(\nu_s)$ to $\boldB$.  
To begin, let $a \in \ran(\nu_s)$, and choose any $a' \in \boldA_s$ so that $\nu_s(a') = a$. 
Define $\lambda'(a)$ to be $\mu_s(a')$.  It follows from what has just been shown that 
$\lambda'$ is well defined. Since each $\nu_s$ and each $\mu_s$ is a $\star$-homomorphism,
it also follows that $\lambda'$ is a $\star$-homomorphism. 

We now claim that $\lambda'$ is $1$-Lipschtiz.  Let $a \in \ran(\nu_s)$, and suppose 
$\nu_s(a') = a$.  By (\ref{lm:IndLimAlg::norm}), $\norm{\nu_s(a')} = \lim_k \norm{\phi_{s, s+k}(a')}$.
Since each $\mu_{s+k}$ is $1$-Lipschitz, it follows that 
$\norm{\nu_s(a')} \geq \lim_k \norm{\mu_{s + k}(\phi_{s, s+k}(a'))}$. But, 
$\mu_{s + k}(\phi_{s, s + k}(a')) = \mu_s(a') = \lambda'(a)$.

It now follows that $\lambda'$ has a unique continuous extension $\lambda$ to 
$\boldA$.  Furthermore, $\lambda$ is a $\star$-homomorphism and 
by construction $\lambda \circ \nu_s = \mu_s$ for all $s \in \N$.

Finally, it directly follows from the definitions that any two reductions of 
$(\boldA, (\nu_s)_{s \in \N})$ to $(\boldB, (\mu_s)_{s \in \N})$ agree on 
$\bigcup_{s \in \N} \ran(\nu_s)$ and hence must be identical.
\end{proof}

An immediate consequence of Lemma \ref{lm:IndLimAlg} is the following connection
 between AF certificates and inductive limits.

\begin{corollary}\label{cor:IndLimAFCert}
If $(F_j,\psi_j)_{j \in \N}$ is an AF certificate of $\boldA$, 
then $(\boldA, (\psi_j)_{j \in \N})$ is an inductive limit of 
$(\bigoplus_{n \in F_j} M_n(\C), (\psi_{j + 1}^{-1} \circ \psi_j)_{j \in \N})$.
\end{corollary}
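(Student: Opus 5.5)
The plan is to verify the three conditions of Lemma \ref{lm:IndLimAlg} for the inductive sequence $(\boldA_j, \phi_j)_{j\in\N}$, where $\boldA_j = \bigoplus_{n \in F_j} M_n(\C)$ and $\phi_j = \psi_{j+1}^{-1}\circ\psi_j$. First we check that this is an inductive sequence and that $(\boldA, (\psi_j)_{j\in\N})$ is an inductive upper limit. Each $\psi_{j+1}$ is an injective $\star$-homomorphism, so it is a $\star$-isomorphism onto its range, and the inverse $\psi_{j+1}^{-1} : \ran(\psi_{j+1}) \rightarrow \boldA_{j+1}$ is a $\star$-homomorphism. Since $\ran(\psi_j) \subseteq \ran(\psi_{j+1})$ by Definition \ref{def:AFCert}, the composite $\phi_j$ is a well-defined $\star$-homomorphism from $\boldA_j$ to $\boldA_{j+1}$; it is also unital. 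By construction $\psi_{j+1}\circ\phi_j = \psi_j$, so the upper-limit compatibility holds.

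Next we verify the three conditions. Condition (\ref{lm:IndLimAlg::ran}) is exactly clause (4) of Definition \ref{def:AFCert}. For conditions (\ref{lm:IndLimAlg::norm}) and (\ref{lm:IndLimAlg::ker}), an easy induction gives $\psi_{j+k}\circ\phi_{j,j+k} = \psi_j$, which yields $\phi_{j,j+k} = \psi_{j+k}^{-1}\circ\psi_j$. Injective $\star$-homomorphisms between \cstar-algebras are isometric, so
\[
\norm{\phi_{j,j+k}(a)} = \norm{\psi_{j+k}(\phi_{j,j+k}(a))} = \norm{\psi_j(a)}
\]
for every $k$. The sequence $\norm{\phi_{j,j+k}(a)}$ is therefore constant and equal to $\norm{\psi_j(a)}$, which establishes (\ref{lm:IndLimAlg::norm}). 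For (\ref{lm:IndLimAlg::ker}), $\ker(\psi_j) = \{0\}$, and the limit $\lim_k \norm{\phi_{j,j+k}(a)} = \norm{\psi_j(a)}$ vanishes only when $a = 0$, so the two sets coincide.

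Lemma \ref{lm:IndLimAlg} then yields the conclusion. The only step requiring any care is the observation that injective $\star$-homomorphisms are isometric; this is standard \cstar-algebra theory and presents no real obstacle.
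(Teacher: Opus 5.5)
Your proposal is correct and takes the same approach as the paper, which records this statement as an immediate consequence of Lemma \ref{lm:IndLimAlg}. You supply the short verification the paper leaves implicit: the compatibility $\psi_{j+1}\circ\phi_j=\psi_j$, density of $\bigcup_j \ran(\psi_j)$, and constancy of $\norm{\phi_{j,j+k}(a)}=\norm{\psi_j(a)}$ because injective $\star$-homomorphisms are isometric.
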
 

\subsubsection{Matricial systems}

Our only item here is to present some terminology which will support 
our proof of the Effective Glimm Lemma.

\begin{definition}\label{def:CompMat}
Suppose $\boldA$ is a \cstar-algebra and let $n_1, \ldots, n_m$ be positive integers.   
An array $(e^s_{i,j})_{s,i,j}$ is a \emph{type-$(n_1, \ldots, n_m)$ matricial system of $\boldA$}
if it satisfies the following: 
\begin{enumerate}
    \item For each $s \in \{1, \ldots, m\}$, 
    $(e^s_{i,j})_{i,j}$ is an $n_s \times n_s$ system of matrix units of $\boldA$. 

    \item When $s,s' \in \{1, \ldots, m\}$ are distinct, then $e^s_{i,j} e^{s'}_{i',j'} = 0$
    for all $i,j \in \{1, \ldots, n_s\}$ and all $i',j' \in \{1, \ldots, n_{s'}\}$.
\end{enumerate}
If, in addition, $\boldA$ is unital and $\sum_{s,i} e^s_{i,i} = \unit_\boldA$, then we say that the system $(e^s_{i,j})_{s,i,j}$ is 
\emph{unital}.
\end{definition}

If $\boldF$ is a \cstar-algebra that is generated by a matricial system $(e^s_{i,j})_{s,i,j}$, 
then
we call $(e^s_{i,j})_{s,i,j}$ a \emph{matricial generating system for $\boldF$}.
Thus a \cstar-algebra is finite-dimensional if and only if it has a matricial generating system.

\section{Preliminaries from effective mathematics}\label{sec:PrlmEff}

Herein, we present our frameworks for the computability of inductive limits 
in a computably indexed category and for the computability of ordered 
abelian groups.

\subsection{Inductive limits in computably indexed categories}

We now present a computable picture of the material discussed in \ref{subsec:BackIndLimCat}.
Thus, we again assume $\calC$ is a category with a zero object.  
However, we assume we have fixed an \emph{indexing} of $\calC$.  This consists of 
an assignment of natural numbers to the objects of $\calC$.
The number assigned to an object is referred to as one of its indexes. 
Each object has at least one index.  A number may not index more the one object.
Each morphism is indexed by at least one triple of natural numbers.
If $(e_0, e_1,e)$ indexes a morphism $\gamma$, then $e_0$ must index its 
domain and $e_1$ must index its co-domain.

We also assume this indexing is \emph{computable} in the sense that there is 
an algorithm that given an index $(e_0,e_1,e)$ of $\phi : A \rightarrow B$ and
an index $(e_1, e_2,e')$ of $\psi : B \rightarrow C$
computes an index of $\psi \circ \phi$ of the form $(e_0, e_2, e'')$.
Furthermore, we require that from an index $e$ of an object $A$ it is possible to 
compute an index of its identity morphism.

Now that the computable indexing is fixed, our remaining definitions are 
straightforward modifications of those in Section \ref{subsec:BackIndLimCat}.
Fix an inductive sequence $(A_n, \phi_n)_{n \in \N}$ of $\calC$.
We say that $(A_n, \phi_n)_{n \in \N}$ is \emph{computable} if there is an algorithm
that given $n \in \N$ computes indexes of $A_n$ and $\phi_n$.
Now suppose 
$(A, (\nu_n)_{n \in \N})$ is an inductive upper limit of $(A_n, \phi_n)_{n \in \N}$, 
and assume it is possible to compute an index of $\nu_n$ from $n$.
We say that an upper inductive limit $(A, (\nu_n)_{n \in \N})$ 
of $(A_n, \phi_n)_{n \in \N}$ is \emph{computable} with index $e \in \N$ if $e$ is the code of a pair $(e_0, e_1)$ so that 
$e_0$ is an index of $A$ and $e_1$ is an index of an algorithm that computes an index of 
$\nu_n$ from $n$.
We say that $(A, (\nu_n)_{n \in \N})$ is \emph{a computable inductive limit} if 
it is a computable inductive upper limit and if it is possible to compute from an index of a computable inductive upper limit 
$(B, (\mu_n)_{n \in \N})$ of $(A_n, \phi_n)_{n \in \N}$ an index of the reduction from $(A, (\nu_n)_{n \in \N})$
to $(B, (\mu_n)_{n \in \N})$.

\subsection{Abelian groups}

\begin{lemma}\label{lm:IndLimPresAbl}
Suppose $(G, (\nu_s)_{s \in \N})$ is an inductive limit of 
$(G_s, \phi_s)_{s \in \N}$ in the category of 
abelian groups. 
\begin{enumerate}
    \item If $(G_s^\#, \phi_s)_{s\in \N}$ is an inductive sequence in the category 
    of c.e. presentations of abelian groups, and if 
    $(G^\#, (\nu_s)_{s \in \N})$ is a computable inductive upper limit 
    of this sequence, then $(G^\#, (\nu_s)_{s \in \N})$ is 
    a computable inductive limit of $(G^\#, (\nu_s)_{s \in \N})$ in the category 
    of c.e. presentations of abelian groups.

    \item If $(G_s^\#, \phi_s)_{s\in \N}$ is an inductive sequence in the category 
    of computable presentations of abelian groups, and if 
    $(G^\#, (\nu_s)_{s \in \N})$ is a computable inductive upper limit 
    of this sequence, then $(G^\#, (\nu_s)_{s \in \N})$ is 
    a computable inductive limit of $(G^\#, (\nu_s)_{s \in \N})$ in the category 
    of computable presentations of abelian groups.
\end{enumerate}
\end{lemma}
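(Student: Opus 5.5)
The plan is to reduce the effective statement to the classical universal property and then show that the reduction map is computable, uniformly in the relevant indexes. Fix a computable inductive upper limit $(H^\#, (\mu_s)_{s \in \N})$ of $(G_s^\#, \phi_s)_{s \in \N}$. Since $(G, (\nu_s)_{s\in\N})$ is classically an inductive limit, there is a unique reduction $\lambda : G \to H$ with $\lambda \circ \nu_s = \mu_s$. Only the computability of $\lambda$ needs proof, together with the fact that a $(G^\#, H^\#)$ index for it can be computed from an index of $(H^\#, (\mu_s)_s)$ and the fixed index of $(G^\#, (\nu_s)_s)$.

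To evaluate $\lambda$ on a $G^\#$ label $w$ of some $g \in G$, I will use Lemma \ref{lm:IndLimAbl}(\ref{lm:IndLimAbl:ran}): there are $s$ and $a \in G_s$ with $\nu_s(a) = g$. The algorithm searches over pairs $(s, v)$, where $v$ is a $G_s^\#$ label of some $a$. Using the algorithm computing an index of $\nu_s$ from $s$, it computes a $G^\#$ label $w'$ of $\nu_s(a)$. It then waits until $w^{-1}w'$ (in additive notation, $w' - w$) is enumerated into the kernel of $G^\#$. This kernel is c.e. in both parts (1) and (2): in part (2) computability gives this, and in part (1) the presentation is c.e. by hypothesis. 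So the search is a legitimate dovetailed enumeration, and it halts because a suitable pair exists. Once $(s, v)$ is found, the algorithm outputs the $H^\#$ label of $\mu_s(a)$ obtained by running the computable map $\mu_s$ on $v$. Correctness follows from $\mu_s(a) = \lambda(\nu_s(a)) = \lambda(g)$, so the output does not depend on which witness is found. Every step is uniform in the indexes, which gives the required index of $\lambda$.

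The only delicate point is confirming that nothing beyond c.e.-ness of the kernel of $G^\#$ is used. Equality of group elements need only be verified, never refuted, which is why the same argument serves both the c.e. and the computable category. In the computable case we must also note that $\lambda$ is a morphism of that category, which holds because morphisms there are just computable homomorphisms. Part (2) therefore follows verbatim from part (1). I expect no serious obstacle beyond bookkeeping with indexes. In particular, Lemma \ref{lm:IndLimAbl}(\ref{lm:IndLimAbl:ker}) is not needed, since we never have to decide membership in $\ker(\nu_s)$.
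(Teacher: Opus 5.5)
Your proposal is correct and takes essentially the same approach as the paper. You take the classical reduction $\lambda$ and compute it by searching for $s$ and a $G_s^\#$ label $v$ whose image under $\nu_s$ is confirmed, via the c.e. kernel of $G^\#$, to equal $g$, then output an $H^\#$ label of $\mu_s$ applied to it. Your version is slightly more direct: it skips the paper's intermediate $\ran(\nu_{s_0})^\#$ presentations and states explicitly that only the c.e.-ness of the kernel of $G^\#$ is needed.
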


\begin{proof}
Suppose $(G_s^\#, \phi_s)_{s\in \N}$ is an inductive sequence in the category 
    of c.e. presentations of abelian groups, and assume 
    $(G^\#, (\nu_s)_{s \in \N})$ is a computable inductive upper limit 
    of this sequence.  
    Let $(H^\#, (\mu_s)_{s \in \N}$ be a computable inductive upper limit of 
    $(G_s^\#, \phi_s)_{s \in \N}$ in the category of c.e. presentations of abelian groups.
    Thus, $(H, (\mu_s)_{s \in \N})$ is an inductive upper limit of 
    $(G_s, \phi_s)_{s \in \N}$ in the category of abelian groups. 
    Hence, there is a unique reduction $\lambda$ of  
$(G, (\nu_s)_{s \in \N})$ to $(H, (\mu_s)_{s \in \N})$.  

It only remains to show that $\lambda$ is a computable map from $G^\#$ to $H^\#$.  
Suppose $w$ is a $G^\#$ label of $g \in G$.  Then, there exists $s_0 \in \N$ so that 
$g \in \ran(\nu_{s_0})$ and so there is a $\ran(\nu_{s_0})^\#$ label $w'$ of $g$. 
Since the inclusion map is a computable map from $\ran(\nu_s)^\#$ to 
$G^\#$ uniformly in $s$, $s_0$ and $w'$ can be found by a search procedure. 
Then, by a search procedure, we can compute a $G_{s_0}^\#$ label $w''$ of a 
$g' \in G_{s_0}$ so that $\nu_{s_0}(g') = g$ (since $\nu_s$ is a computable map 
from $G_s^\#$ to $G^\#$ uniformly in $s$).  Since $\lambda(g) = \mu_{s_0}(g')$, 
it follows that we can now compute an $H^\#$ label of $\lambda(g)$ from $w$.

The proof of the second part is almost identical.
\end{proof}

\begin{proposition}\label{prop:CompIndLimPresAbl}
\begin{enumerate}
    \item Every computable inductive sequence of c.e. presentations of abelian 
    groups has a computable inductive limit.

    \item Every computable inductive sequence of computable presentations of abelian 
    groups has a computable inductive limit.
\end{enumerate}
\end{proposition}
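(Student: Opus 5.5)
The plan is to build the limit concretely, following the construction in the proof of Lemma \ref{lm:IndLimAbl}, and then to invoke Lemma \ref{lm:IndLimPresAbl} to get the universal property for free. Let $(G_s^\#, \phi_s)_{s \in \N}$ be a computable inductive sequence. I would fix a computable bijection $\langle \cdot, \cdot \rangle : \N^2 \rightarrow \N$. Let $H$ be the group-theoretic inductive limit with canonical maps $\mu_s$, as in the proof of Lemma \ref{lm:IndLimAbl}. Define $\nu : F_\omega \rightarrow H$ on generators by sending $x_{\langle s, i\rangle}$ to $\mu_s(\nu_{G_s}(x_i))$, where $\nu_{G_s}$ is the labeling map of $G_s^\#$. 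By item (\ref{lm:IndLimAbl:ran}) of Lemma \ref{lm:IndLimAbl}, $H = \bigcup_s \ran(\mu_s)$, so $\nu$ is onto and $H^\# = (H, \nu)$ is a presentation.

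The key step is to effectively translate a word into a single stage. Given $w \in F_\omega$, let $s$ be the largest first coordinate of a generator occurring in $w$. Each generator $x_{\langle t, i\rangle}$ with $t \leq s$ is replaced by a $G_s^\#$ label of $\phi_{t,s}(\nu_{G_t}(x_i))$. This is computable uniformly, because indexes of the $\phi_t$ are computable from $t$ and compositions of indexed morphisms can be computed. The result is a $G_s^\#$ label $\hat w$ with $\nu(w) = \mu_s(\nu_{G_s}(\hat w))$. Then item (\ref{lm:IndLimAbl:ker}) of Lemma \ref{lm:IndLimAbl} gives
\[
w \in \ker(\nu) \iff \exists k\ \ \phi_{s,s+k}(\nu_{G_s}(\hat w)) = \zerovec .
\]
For each fixed $k$, the inner condition asks whether a computable map applied to $\hat w$ lands in the kernel of $G_{s+k}^\#$. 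That kernel is c.e. uniformly in $s+k$, so the whole right-hand side is c.e. Hence $H^\#$ is a c.e. presentation. Each $\mu_s$ is a computable map from $G_s^\#$ to $H^\#$: it sends $x_i$ to $x_{\langle s,i\rangle}$, and this extends to words. An index of $\mu_s$ is computable from $s$, so $(H^\#, (\mu_s)_{s \in \N})$ is a computable inductive upper limit. Lemma \ref{lm:IndLimPresAbl}(1) then upgrades it to a computable inductive limit, which proves part (1).

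For part (2) the same construction is used, and the only new requirement is that $\ker(\nu)$ be decidable. Since each $G_{s+k}^\#$ is now computable, the inner condition is decidable for each fixed $k$. The kernel is therefore the projection of a computable relation, but this only makes it $\Sigma^0_1$. This is the step I expect to be the main obstacle. The first thing I would try is to bound the quantifier $\exists k$ by a computable function of $s$. Such a bound exists if the kernels $\ker(\phi_{s,s+k})$ stabilize at a stage that is computable from $s$; in particular it exists if the sequence has computably bounded "delay", for instance if every element that eventually dies already dies under $\phi_s$. With that bound the membership test becomes a finite search and $\ker(\nu)$ is computable. If no such bound can be extracted from the hypotheses, I would instead re-present $H$. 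Its labels would be the pairs $(s, \hat w)$, enumerated only once their equality class relative to all earlier labels has been settled; then only the resulting quotient would need to be computable. Both routes require care, and this is where I expect the real work to lie. If neither route works in general, I would restrict part (2) to sequences with such a computable stabilization bound, since the ordered-group applications later in the paper depend on this being resolved. Once $H^\#$ is known to be computable, Lemma \ref{lm:IndLimPresAbl}(2) finishes the proof exactly as in part (1).
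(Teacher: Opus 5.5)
Your part (1) is the paper's argument. You label elements of the limit by stage-$s$ labels, observe that the kernel is $\bigcup_k \ker(\phi_{s,s+k})$ read through those labels and is therefore c.e., and let Lemma~\ref{lm:IndLimPresAbl} supply the universal property. You make explicit what the paper calls ``standard techniques.''

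For part (2) your proposal does not reach a proof, and none is possible: part (2) is false as stated. The paper's ``nearly identical'' passes over exactly the point you flagged. Here is a counterexample.
\begin{itemize}
\item Let $S$ be a c.e. noncomputable set with a computable enumeration by finite sets $S_0 \subseteq S_1 \subseteq \cdots$.
\item Let $G_s = \Z^{s+1}$ with basis $e_0, \ldots, e_s$. Set $\phi_s(e_n) = \zerovec$ if $n \in S_{s+1}$ and $\phi_s(e_n) = e_n$ otherwise. This is a computable inductive sequence of computable presentations, in fact of simplicial groups with positive maps.
\item Suppose $(A^\#, (\nu_s)_{s \in \N})$ is any computable inductive upper limit with $A^\#$ computable. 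Then $D = \{n : \nu_n(e_n) = \zerovec\}$ is decidable, because a label of $\nu_n(e_n)$ is computable from $n$ and the kernel of $A^\#$ is decidable. Also $S \subseteq D$, so there is some $n_0 \in D \setminus S$.
\item Since $n_0 \notin S$, we have $\phi_s(e_{n_0}) = e_{n_0}$ for all $s \geq n_0$. Hence the maps $\mu_s : \Z^{s+1} \to \Z$ given by $\mu_s(e_{n_0}) = 1$ and $\mu_s(e_n) = 0$ for $n \neq n_0$ form a computable inductive upper limit.
\item A reduction $\lambda$ would then give $1 = \mu_{n_0}(e_{n_0}) = \lambda(\nu_{n_0}(e_{n_0})) = \lambda(\zerovec) = 0$.
\end{itemize}
So this sequence has no computable inductive limit, in the category of computable presentations or indeed with any computable limit presentation.

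The same argument shows your second route cannot work. The limit $H \cong \bigoplus_{n \notin S} \Z \cong \bigoplus_{n \in \N} \Z$ is computably presentable as an abstract group. However, no computable presentation of it makes the canonical maps uniformly computable, since that would decide $\{n : \nu_n(e_n) = \zerovec\} = S$. In your relabeling scheme you can never ``settle'' that two labels are distinct, because inequality is only $\Pi^0_1$.

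Your first route is the correct repair. Assume a computable bound on when $\bigcup_k \ker(\phi_{s,s+k})$ stabilizes, for instance injective bonding maps. That is the hypothesis the paper itself imposes in Corollary~\ref{cor:CompIndLimAlg2}(\ref{cor:CompIndLimAlg2::Comp}). Under it, your argument for (1) yields (2) verbatim.

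This restricted version does not rescue the paper's later use of part (2) for certificates of dimensionality, whose bonding maps are not injective. Apply your own part (1) construction to the example above, with positive cone $\bigcup_s \nu_s[\Z^{s+1}_{\geq 0}]$, which is c.e. The result is a c.e. presentation of a dimension group whose kernel is not computable. This contradicts the corollary following the proof of Main Theorem~\ref{mthm:ClsCePresDimGrp}.
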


\begin{proof}
Suppose $(G_s^\#, \phi_s)_{s \in \N}$ is an inductive sequence of c.e. presentations of 
abelian groups.  Let $(G, (\nu_s)_{s \in \N})$ be an inductive limit of 
$(G_s, \phi_s)_{s \in \N}$ in the category of abelian groups.  
We then construct, for each $s \in \N$, a c.e. presentation 
$\ran(\nu_s)^\#$ so that $\nu_s$ is a computable map 
from $\calG_s^\#$ to $\ran(\nu_s)^\#$ and so that 
the inclusion map is a computable map from $\ran(\nu_s)^\#$
to $\ran(\nu_{s+1})^\#$; namely, we declare 
$w$ to be a $\ran(\nu_s)^\#$ label of $g$ if it is a 
label of a $\nu_s$ preimage of $g$.  
Since $G = \bigcup_{s \in \N} \ran(\nu_s)$, by standard techniques
it is possible to construct a c.e. presentation $G^\#$ 
so that the inclusion map from $\ran(\nu_s)^\#$ to $G^\#$ is computable
uniformly in $s$.

By construction, $(G^\#, (\nu_s)_{s \in \N})$ is a computable inductive upper limit 
of $(G_s^\#, \phi_s)_{s \in \N}$ in the category of c.e. presentations of abelian groups. 
It now follows from Lemma \ref{lm:IndLimPresAbl} that 
$(G^\#, (\nu_s)_{s \in \N})$ is a computable inductive limit 
of $(G_s^\#, \phi_s)_{s \in \N}$.

The proof of the second part is nearly identical.
\end{proof}

\begin{lemma}\label{lm:IndLimPresAbl2}
Suppose $(G^\#, (\nu_s)_{s \in \N})$ is a computable 
inductive limit of $(G_s^\#, \phi_s)_{s \in \N}$ in either the category 
of c.e. presentations of abelian groups in or in the category of 
computable presentations of abelian groups. 
Then, $(G, (\nu_s)_{s \in \N})$ is an inductive limit of 
$(G_s, \phi_s)_{s \in \N}$ in the category of abelian groups. 
\end{lemma}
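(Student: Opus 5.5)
The plan is to exploit the fact that a computable inductive limit already has the universal property against all \emph{computable} upper limits. The classical construction from the proof of Lemma \ref{lm:IndLimAbl} supplies an inductive limit $(H, (\mu_s)_{s \in \N})$, and Proposition \ref{prop:CompIndLimPresAbl} says this can be presented computably. So the comparison between $(G^\#, (\nu_s)_{s\in\N})$ and $H$ should take place inside the presented category.

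First, invoke Proposition \ref{prop:CompIndLimPresAbl} in the appropriate category (c.e.\ or computable) to obtain a computable inductive limit $(H^\#, (\mu_s)_{s \in \N})$ of $(G_s^\#, \phi_s)_{s \in \N}$. I expect to be able to arrange, from the construction in that proof, that $(H, (\mu_s)_{s\in\N})$ is also an inductive limit in the category of abelian groups. Its underlying group was taken to be a classical inductive limit, and the presentation is merely put on top of it.

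Next, both $(G^\#, (\nu_s))$ and $(H^\#, (\mu_s))$ are computable inductive limits in the presented category. Each is therefore a computable upper limit of the other's sequence, so we get reductions $\lambda: G \to H$ and $\lambda': H \to G$. The compositions $\lambda' \circ \lambda$ and $\lambda \circ \lambda'$ are reductions of each object to itself, and so is the identity (computable in both directions). Uniqueness of reductions in the definition of computable inductive limit, which is inherited from the definition of inductive limit applied in the presented category, then forces both compositions to be identities. Hence $\lambda$ is a group isomorphism with $\lambda \circ \nu_s = \mu_s$.

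Finally, transport the universal property along $\lambda$. Let $(B, (\beta_s))$ be any classical upper limit. The $H$-reduction $\rho$ gives $\rho \circ \lambda$ as a reduction of $G$. If $\rho'$ is another reduction of $G$, then $\rho' \circ \lambda^{-1}$ is a reduction of $H$, so $\rho' \circ \lambda^{-1} = \rho$, and uniqueness follows.

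The main obstacle is the uniqueness step in the presented category. The definition of computable inductive limit literally only demands that an index of ``the reduction'' be computable, so I must read it as including that the reduction is unique among computable morphisms. If that reading is contested, I would fall back on a direct argument via Lemma \ref{lm:IndLimAbl}. For the range condition, the identity-of-$G$ trick is unavailable, so I would instead compare with the subgroup $\bigcup_s \ran(\nu_s)$. For the kernel condition, I would map into $H^\#$ and use $\ker(\mu_s)=\bigcup_k\ker(\phi_{s,s+k})$ together with $\lambda\circ\nu_s=\mu_s$ to get $\ker(\nu_s) \subseteq \ker(\mu_s)$; the reverse inclusion is automatic from $\nu_s = \nu_{s+k} \circ \phi_{s,s+k}$.
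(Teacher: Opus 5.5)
Your proposal is correct and follows essentially the paper's route. You equip a classical inductive limit $H$ with a presentation $H^\#$ as in the proof of Proposition \ref{prop:CompIndLimPresAbl}, note that $G^\#$ and $H^\#$ are then both computable inductive limits of the same sequence, and conclude that the reduction between them is an isomorphism; the paper asserts this in one line, and you spell it out via the two reductions, uniqueness, and transport of the universal property.

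Your reading of the definition as requiring uniqueness of reductions among computable morphisms is exactly what the paper tacitly uses: it speaks of \emph{the} reduction, and calls its definitions ``straightforward modifications'' of the classical ones. The fallback, however, could not work. Without uniqueness the statement is false: take $G = H \oplus \Z$ with each $\nu_s$ landing in the first summand, and use reductions that kill the $\Z$ summand.
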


\begin{proof}
For the moment, assume $(G^\#, (\nu_s)_{s \in \N})$ is a computable 
inductive limit of $(G_s^\#, \phi_s)_{s \in \N}$ in the category 
of c.e. presentations of abelian groups.
By definition, $(G, (\nu_s)_{s \in \N})$ is an inductive upper limit
of $(G_s, \phi_s)_{s \in \N}$.  Suppose $(H, (\mu_s)_{s \in \N})$ is 
an inductive limit of this sequence, and let $\lambda$ be the reduction 
of $(H, (\mu_s)_{s \in \N})$ to $(G, (\nu_s)_{s \in \N})$.  
As in the proof of Proposition \ref{prop:CompIndLimPresAbl}, 
it is possible to define a c.e. presentation $H^\#$ of $H$ so that 
$(H^\#, (\mu_s)_{s \in \N})$ is an inductive limit of 
$(G_s^\#, \phi_s)_{s \in \N}$ in the category of c.e. presentations of
abelian groups.  Hence, $\lambda$ is an isomorphism. 
It follows that $(G, (\nu_s)_{s \in \N})$ is an inductive limit of 
$(G_s, \phi_s)_{s \in \N}$.

The case for the category of computable presentations of abelian groups
is identical.
\end{proof}

\subsection{Ordered abelian groups}

We now discuss c.e. presentations of ordered abelian groups 
and related concepts and results.

\subsubsection{Presentations}

\begin{definition}\label{def:CePresOrdAbl}
Suppose $\calG = (G, P)$ is an ordered abelian group.
\begin{enumerate}
    \item We say that $(G^\#, P)$ is  a 
    \emph{presentation} of $\calG$ if $G^\#$ is a presentation of $G$.

    \item We say that a presentation $(G^\#, P)$ of $\calG$ is \emph{c.e.}
    if $G^\#$ is c.e. and $P$ is a c.e. set of $G^\#$.
\end{enumerate}
\end{definition}

If $\calG^\# = (G^\#, P)$ is a c.e. presentation of an ordered abelian group, 
we say that $e \in \N$ is an \emph{index} of $\calG^\#$ if 
it is the code of a pair $(e_0, e_1)$ that consists of an index of $G^\#$
and a $G^\#$ index of $P$.  If $g \in G$ and $w \in F_\omega$, we say that 
$w$ is a \emph{$\calG^\#$ label} of $g$ if it is a $G^\#$ label of $g$.

\begin{definition}\label{def:CePresUntlOrdAbl}
Suppose $\calG$ is an ordered abelian group and let $u$ be a unit of $\calG$.
\begin{enumerate}
    \item We say that $(\calG^\#, u)$ is  a 
    \emph{presentation} of $(\calG,u)$ if $\calG^\#$ is a presentation of $\calG$.

    \item We say that a presentation $(\calG^\#, u)$ of $(\calG,u)$ is \emph{c.e.}
    if $\calG^\#$ is c.e.
\end{enumerate}
\end{definition}

If $(\calG^\#, u)$ is a c.e. presentation of $(\calG, u)$, then we say that 
$e \in \N$ is an \emph{index} of $(\calG^\#, u)$ if it codes a pair that 
consists of an index of $\calG^\#$ and a code of a $\calG^\#$ label of $u$.
When $\boldA$ is stably finite and unital, we let $\uK{\boldA^\#}= ((K_0(\boldA^\#), K_0(\boldA)^+), K_0(\unit_\boldA))$.

The following is a linchpin for our proof of Main Theorem \ref{mthm:ClsCePresDimGrp}.\ref{mthm:ClsCePresDimGrp::Untl}.

\begin{lemma}\label{lm:CompCnvxGen}
If $\calH = (H, H\cap \Z^n_{\geq 0})$ is the convex subgroup of $\Z^n$ generated by an element $x$ of $\Z^n$, then $H$ is a computable set of $\Z^n$.
\end{lemma}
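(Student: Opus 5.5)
The plan is to avoid a detailed description of $H$ for arbitrary $x$ and instead use that every subgroup of $\Z^n$ is a computable set of the standard presentation of $\Z^n$. Note that $H$ is, by definition, a subgroup of the group $\Z^n$; this is the only fact about $H$ the main argument needs.

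The key steps, in order, are as follows. First, since $\Z^n$ is a finitely generated free abelian group, every subgroup of it is free of rank at most $n$. So $H$ has a finite basis $h_1,\ldots,h_k \in \Z^n$ with $k \le n$, which we fix non-uniformly. Second, given a $\Z^n$ label, we compute the vector $y \in \Z^n$ it names; this is immediate in the standard presentation, since we just count exponents of $x_0,\ldots,x_{n-1}$. Third, we decide whether $y \in H$, that is, whether the integer linear system $\sum_i c_i h_i = y$ has a solution $c \in \Z^k$. This is decidable. One route is to put the integer matrix with columns $h_1,\ldots,h_k$ into Smith normal form $D = UMV$ with $U,V$ unimodular; then $y \in H$ if and only if each coordinate of $Uy$ is divisible by the corresponding diagonal entry of $D$, with coordinates beyond the rank required to vanish. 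A cruder alternative is to note that any solution $c$ is unique and is given by Cramer's rule over $\Q$ on a maximal nonsingular minor, so we can compute the rational solution and check integrality and consistency. Either way the set of labels of elements of $H$ is computable.

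For the case of main interest, $x$ positive (as in Lemma \ref{lm:UntlCnvx}), I would also record the explicit description, since it gives uniformity in $x$. Let $S = \{j : x(j) \neq 0\}$. Then $H = \{y \in \Z^n : y(j) = 0 \text{ for all } j \notin S\}$. To see this, one inclusion uses the description $H' = \bigcup_m [-mx, mx]$ from the proof of Lemma \ref{lm:UntlCnvx}: any $y$ supported in $S$ satisfies $-mx \le y \le mx$ with $m = \max_j |y(j)|$, and conversely every element of $[-mx,mx]$ vanishes off $S$. The other inclusion holds because $H'$ is a convex subgroup containing $x$ and any convex subgroup containing $x$ contains $[-mx,mx]$. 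Membership is then decided simply by checking the coordinates outside $S$.

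I expect the main obstacle to be the identification of $H$ itself, not the decision procedure. For non-positive $x$ the convex subgroup generated by $x$ can be strictly smaller than the support description suggests. For example, with $x = (1,-1)$ the cyclic group $\Z x$ is already convex. This is why I would rely on the general fact that every subgroup of $\Z^n$ is computable, which sidesteps the question entirely, and use the support description only as a uniform refinement when $x \geq \zerovec$.
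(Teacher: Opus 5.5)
Your proof is correct, but your main argument takes a genuinely different route from the paper's. The paper's proof is essentially your ``refinement''. It combines Lemma~\ref{lm:UntlCnvx} ($x$ is an order unit of $\calH$) with convexity to get that $g \in H$ if and only if $-mx \le g \le mx$ for some positive integer $m$. It then decides this condition by inspecting components, which amounts to your check that $g$ vanishes off the support of $x$. That proof is uniform in $x$, and the paper needs this later in Lemma~\ref{lm:CompUntlCertDim} to get $H_s$ computable uniformly in $s$. However, it tacitly assumes $x \ge \zerovec$, because Lemma~\ref{lm:UntlCnvx} requires positivity: for your example $x=(1,-1)$, no $g$ at all satisfies $-mx \le g \le mx$, yet $H = \Z x$. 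Your primary argument is that every subgroup of $\Z^n$ is decidable, via Smith normal form or Cramer's rule on a non-uniformly fixed basis. It uses nothing about $H$ beyond its being a subgroup, so it proves the lemma as literally stated for every $x$, at the price of uniformity. Your support description then recovers uniformity in exactly the case the paper uses ($u_s = \phi_{0,s}(1) \ge \zerovec$). It also gives directly the fact, which the paper imports from Goodearl, that the standard basis vectors lying in $H$ form a basis of $H$. You can in fact have both generality and uniformity. If $x$ has coordinates of both signs, then $mx \ge \zerovec$ forces $m=0$, so $\Z x$ is already convex: if $ax \le h \le bx$, then $(b-a)x \ge \zerovec$, so $a=b$ and $h=ax$. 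Hence $H$ is the set of vectors vanishing off the support of $x$ when $x \ge \zerovec$ or $x \le \zerovec$, and $H=\Z x$ otherwise. In both cases membership is decidable uniformly in $x$.
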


\begin{proof}
We first show that for every $g \in \Z^n$, 
$g \in H$ if and only if there is a positive integer $n$ so that 
$-nx \leq g \leq n x$.  Let $g \in \Z^n$.  
On the one hand, if such a positive integer exists, then the convexity of $\calH$
ensures that $g \in H$.  Conversely, since by Lemma \ref{lm:UntlCnvx} $x$ is an order 
unit of $\calH$, if $g \in H$, then such a positive integer $n$ must exist.

Given $g \in \Z^n$, by inspection of components, it is possible to determine if 
there is a positive integer $n$ so that $-n x \leq g \leq n x$.  
Thus, by what has just been shown, $H$ is a computable set of $\Z^n$.
\end{proof}

We note that the proof of Lemma \ref{lm:CompCnvxGen} is uniform.

\subsubsection{Inductive limits}\label{subsubsec:PrlmCompIndLimOrdAbl}

We now apply our framework for computable inductive limits to the category of 
ordered abelian groups.  We have already discussed how we computably index this 
category.  The following not only tell us that computable inductive sequences 
in this category have inductive limits, but also how to find them.

The following is an immediate consequence of Lemma \ref{lm:IndLimPresAbl}.

\begin{corollary}\label{cor:CompIndLimOrdAbl1}
Suppose $(\calG_s^\#, \phi_s)_{s \in \N}$ is a computable inductive sequence of 
c.e. presentations of ordered abelian groups.  Assume $(\calG, (\nu_s)_{s\in \N})$ is an 
inductive limit of $(\calG_s, \phi_s)_{s \in \N}$, and assume also that 
$\calG^\#$ is a c.e. presentation of $\calG$ so that 
$(\calG^\#, (\nu_s)_{s \in \N})$ is a computable upper limit of 
$(\calG_s^\#, \phi_s)_{s \in \N}$.  Then $(\calG^\#, (\nu_s)_{s \in \N})$ is 
a computable inductive limit of $(\calG_s^\#, \phi_s)_{s \in \N}$.
\end{corollary}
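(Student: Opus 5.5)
The plan is to reduce to the abelian-group statement, Lemma~\ref{lm:IndLimPresAbl}, and then check by hand that positivity is preserved. Write $\calG_s^\# = (G_s^\#, P_s)$ and $\calG^\# = (G^\#, P)$. By hypothesis $(G^\#, (\nu_s)_{s\in\N})$ is a computable inductive upper limit of $(G_s^\#, \phi_s)_{s \in \N}$ in the category of c.e.\ presentations of abelian groups, since forgetting the positive cones leaves the indexes of the maps unchanged. Let $(\calH^\#, (\mu_s)_{s\in\N})$ be a computable inductive upper limit of $(\calG_s^\#,\phi_s)_{s\in\N}$ in the ordered category, with $\calH^\# = (H^\#, Q)$. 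I must compute, from its index, an index of a reduction $\lambda$ and show that this reduction is unique.

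First, uniqueness and existence come from the classical side. $(\calG,(\nu_s)_{s\in\N})$ is an inductive limit of ordered abelian groups, so there is a unique positive reduction $\lambda : \calG \to \calH$ with $\lambda\circ\nu_s = \mu_s$. By Lemma~\ref{lm:IndLimOrdAbl}, $(G,(\nu_s)_{s\in\N})$ satisfies $G = \bigcup_s \ran(\nu_s)$; this follows from $P + (-P) = G$ together with $P = \bigcup_s \nu_s[P_s]$. It also satisfies the kernel condition. So, by the argument in the proof of Lemma~\ref{lm:IndLimOrdAbl}, $(G,(\nu_s)_{s\in\N})$ is an inductive limit in the category of abelian groups, and the group reduction coincides with $\lambda$.

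Second, computability. I follow the search procedure in the proof of Lemma~\ref{lm:IndLimPresAbl}. Given a $G^\#$ label $w$ of $g$, I search for $s_0$ and a $G_{s_0}^\#$ label $w''$ of some $g'$ such that $\nu_{s_0}(g') = g$. This equality is c.e., because $G^\#$ is c.e.\ and $\nu_s$ is computable uniformly in $s$. I then output an $H^\#$ label of $\mu_{s_0}(g')$. Well-definedness is automatic, since $\lambda(g) = \mu_{s_0}(g')$ for any preimage $g'$. The whole procedure is uniform in the index of $(\calH^\#,(\mu_s)_{s\in\N})$, which gives the required computable index of $\lambda$.

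The only point needing care, and the mild obstacle, is that Lemma~\ref{lm:IndLimPresAbl} is phrased for groups without cones. I have to confirm that the abelian-group inductive-limit property really holds for the underlying groups, which I do with the identity $G = P - P$ above, and that $\lambda$ is positive, which holds because $\lambda[\nu_s[P_s]] = \mu_s[P_s] \subseteq Q$. Once these are checked, the corollary follows directly.
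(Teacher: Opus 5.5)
Your proof is correct and takes the same route as the paper, which simply calls the corollary an immediate consequence of Lemma \ref{lm:IndLimPresAbl}. You also supply the two routine checks that this invocation tacitly needs: that the underlying groups form an inductive limit of abelian groups (via $G = P - P$ and Lemma \ref{lm:IndLimOrdAbl}), and that the computable group reduction is positive.
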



\begin{corollary}\label{cor:CompIndLimOrdAbl2}
Every computable inductive sequence of c.e. presentations of ordered abelian groups 
has a computable inductive limit.
\end{corollary}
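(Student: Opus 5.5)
Let $(\calG_s^\#, \phi_s)_{s \in \N}$ be a computable inductive sequence of c.e. presentations of ordered abelian groups, with $\calG_s^\# = (G_s^\#, P_s)$. The idea is to build the limit at the level of groups first, then equip it with the positive cone described by Lemma \ref{lm:IndLimOrdAbl}, and finally invoke Corollary \ref{cor:CompIndLimOrdAbl1}.

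First, forget the orders. Since each $\phi_s$ is a computable map from $G_s^\#$ to $G_{s+1}^\#$ uniformly in $s$, Proposition \ref{prop:CompIndLimPresAbl} applies. It gives a c.e. presentation $G^\#$ and maps $(\nu_s)_{s \in \N}$ such that $(G^\#, (\nu_s)_{s\in\N})$ is a computable inductive limit in the category of c.e. presentations of abelian groups. By Lemma \ref{lm:IndLimPresAbl2}, $(G, (\nu_s)_{s\in\N})$ is then an inductive limit in the category of abelian groups. Moreover, Lemma \ref{lm:IndLimAbl} gives $\ker(\nu_s) = \bigcup_k \ker(\phi_{s,s+k})$.

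Next, set $P = \bigcup_s \nu_s[P_s]$. We need $P$ to be a positive cone.
\begin{enumerate}
\item Closure under addition holds because the images $\nu_s[P_s]$ increase with $s$, since $\phi_s[P_s] \subseteq P_{s+1}$.
\item $P + (-P) = G$ follows from $G_s = P_s - P_s$ and $G = \bigcup_s \ran(\nu_s)$.
\item For $P \cap (-P) = \{\zerovec_G\}$: if $\nu_s(a) = -\nu_t(b)$ with $a \in P_s$, $b \in P_t$, push both to a common level $r$. Then $\phi_{s,r}(a) + \phi_{t,r}(b)$ lies in $\ker(\nu_r)$. So at some later level $r'$ we get $x + y = 0$ with $x, y \in P_{r'}$. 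Hence $x = -y \in P_{r'} \cap (-P_{r'}) = \{0\}$, and so $\nu_s(a) = 0$.
\end{enumerate}
With this, $(\calG, (\nu_s)_{s\in\N})$ with $\calG = (G,P)$ is an inductive upper limit, since each $\nu_s$ is positive by construction. Both conditions of Lemma \ref{lm:IndLimOrdAbl} hold, so it is an inductive limit of ordered abelian groups.

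Then check that $P$ is a c.e. set of $G^\#$. To enumerate $G^\#$ labels of elements of $P$, dovetail over $s$, over enumerated $G_s^\#$ labels $w$ of elements of $P_s$ (uniformly c.e. by the indices), and over $G^\#$ labels $v$. Output $v$ once we confirm that $v$ and a label computed from $w$ via $\nu_s$ name the same element. This confirmation is c.e. because the kernel of $G^\#$ is c.e. Uniformity in $s$ of the indices for $P_s$ and $\nu_s$ makes the whole procedure effective. Thus $\calG^\# = (G^\#, P)$ is a c.e. presentation, and $(\calG^\#, (\nu_s)_{s\in\N})$ is a computable inductive upper limit. Corollary \ref{cor:CompIndLimOrdAbl1} then makes it a computable inductive limit.

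I expect the main obstacle to be the antisymmetry check $P\cap(-P)=\{\zerovec\}$ together with confirming that the reduction maps are positive. The latter is needed so that Corollary \ref{cor:CompIndLimOrdAbl1} genuinely applies in the ordered category. It holds because any upper limit's maps send $\nu_s[P_s]$ into the target cone.
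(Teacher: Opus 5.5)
Your proposal is correct and follows the paper's proof essentially step for step. It builds the group-level limit via Proposition \ref{prop:CompIndLimPresAbl} and Lemma \ref{lm:IndLimPresAbl2}, sets $P=\bigcup_s \nu_s[P_s]$, applies Lemma \ref{lm:IndLimOrdAbl}, observes that $P$ is uniformly c.e., and concludes with Corollary \ref{cor:CompIndLimOrdAbl1}. Your explicit checks that $P$ is a positive cone (notably antisymmetry via $\ker(\nu_r)=\bigcup_k\ker(\phi_{r,r+k})$) and that reductions are positive fill in details the paper's sketch leaves implicit.
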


\begin{proof}[Proof sketch]
Suppose $(\calG_s^\#, \phi_s)_{s \in \N}$ is a computable inductive sequence of 
c.e. presentations of ordered abelian groups.  
Let $\calG_s^\# = (G_s^\#, P_s)$.
By Proposition \ref{prop:CompIndLimPresAbl}, 
$(G_s^\#, \phi_s)_{s \in \N}$ has a computable inductive limit $(G^\#, (\nu_s)_{s \in \N}$ 
in the 
category of c.e. presentations of abelian groups. 

Set $P = \bigcup_{s \in \N} \nu_s[P_s]$.   
Let $\calG = (G, P)$, and let $\calG^\# = (G^\#, P)$.  
By Lemma \ref{lm:IndLimPresAbl2}, $(G, \nu_s)_{s \in \N}$ is 
an inductive limit of $(G_s, \phi_s)_{s \in \N}$. 
It then follows from Lemma \ref{lm:IndLimOrdAbl} that $(\calG, (\nu_s)_{s \in \N})$
is an inductive limit of $(\calG_s, \phi_s)_{s \in \N}$.

We claim that $\calG^\#$ is c.e.  It is only required to prove that $P$ is a c.e.
set of $G^\#$.  For each $s$, $P_s$ is a c.e. set of $G_s^\#$ uniformly in $s$. 
At the same time, $\nu_s$ is a computable map from $G_s^\#$ to $G^\#$ 
uniformly in $s$.  Hence, $\nu_s[P_s]$ is a c.e. set of $G^\#$ uniformly in $s$, and so 
$P$ is a c.e. set of $G^\#$.

Finally, since $\nu_s$ is a computable map from $G_s^\#$ to $G^\#$ uniformly in $s$, 
it follows that $(\calG^\#, (\nu_s)_{s \in \N})$ is a computable inductive upper 
limit of $(\calG_s^\#, \phi_s)_{s \in \N}$.  Hence, by Corollary 
\ref{cor:CompIndLimOrdAbl1}, $(\calG^\#, (\nu_s)_{s \in \N})$ is 
a computable inductive limit of $(\calG_s^\#, \phi_s)_{s \in \N}$.
\end{proof}

\subsubsection{Dimension groups}

The material in this section will be used in our proof of 
Main Theorem \ref{mthm:ClsCePresDimGrp}.

\begin{definition}\label{def:EffShen}
Suppose $\calG^\#$ is a presentation of an ordered abelian group. 
 We say that $\calG^\#$ has the \emph{effective Shen property} if 
     from a $(\Z^n, \calG^\#)$ index of a homomorphism $\theta$ from $\Z^n$ to $\calG^\#$
    and an $\alpha \in \ker(\theta)$, it is possible to compute a positive integer $p$, an index of 
   an ordered group homomorphism $\varphi:\Z^n\to \Z^p$, and a $(\Z^p, \calG^\#)$ index of
    a homomorphism $\theta':\Z^p\to \calG$
    so that $(\phi, \theta')$ factors $\theta$ at $\alpha$. 
\end{definition}

\begin{lemma}\label{lm:EffShen}
Every c.e. presentation of a dimension group has the effective Shen property.
\end{lemma}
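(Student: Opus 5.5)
The plan is an unbounded search whose termination is guaranteed by the classical Shen property (Proposition \ref{prop:ShenPrp}). Write $\calG^\# = (G^\#, P)$. Given a $(\Z^n, \calG^\#)$ index of $\theta$ and $\alpha \in \ker(\theta)$, we search over candidate triples $(p, \phi, \theta')$. Here $p$ is a positive integer and $\phi : \Z^n \to \Z^p$ is given by a $p \times n$ integer matrix with nonnegative entries, so $\phi$ is an ordered group homomorphism. The map $\theta' : \Z^p \to G$ is specified by a $p$-tuple $(w_1, \ldots, w_p)$ of words in $F_\omega$, with $\theta'(e^p_j)$ the element labeled by $w_j$. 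By Proposition \ref{prop:CompMapZn}, such a tuple yields a $(\Z^p, \calG^\#)$ index of $\theta'$, computed uniformly from the tuple. The set of candidates is computably enumerable.

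For each candidate we must confirm three things.
\begin{enumerate}
\item $\phi(\alpha) = 0$. This is a decidable computation in $\Z^p$.
\item $\theta'$ is positive, i.e.\ each $\theta'(e^p_j)$ lies in $P$. Since $P$ is a c.e.\ set of $G^\#$, this is a c.e.\ condition: wait until each $w_j$ is enumerated as a label of an element of $P$. Positivity on the generators gives positivity on all of $\Z^p_{\geq 0}$, because $P$ is closed under addition.
\item $\theta = \theta' \circ \phi$. It suffices to check this on the basis $e^n_1, \ldots, e^n_n$. For each $i$, a $G^\#$ label of $\theta(e^n_i)$ is obtained from the index of $\theta$, and a label of $\theta'(\phi(e^n_i))$ is the product of the $w_j$ raised to the $(j,i)$ entries of the matrix. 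Two labels name the same element exactly when the first times the inverse of the second lies in the kernel of $G^\#$. That kernel is c.e. because $G^\#$ is c.e., so this equality is also a c.e.\ condition.
\end{enumerate}

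The three conditions together form a $\Sigma^0_1$ property of the candidate. We dovetail the verification over all candidates and output the first one confirmed.

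Termination comes from the hypotheses. Since $\calG$ is a dimension group, Proposition \ref{prop:ShenPrp} gives some ordered group homomorphisms $\phi, \theta'$ factoring $\theta$ at $\alpha$. Any such $\phi$ has a nonnegative integer matrix, and $\theta'$ is determined by labels of its values on generators, so the pair appears among our candidates. Every candidate we output genuinely factors $\theta$ at $\alpha$, because each confirmed condition is true.

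The only subtle point is that equality in $G$ and membership in $P$ are merely semi-decidable, not decidable. The search must therefore accept only on positive evidence, with no attempt to reject candidates, and dovetailing handles this. The procedure is uniform in an index of $\calG^\#$, an index of $\theta$, and $\alpha$, which is what Definition \ref{def:EffShen} requires.
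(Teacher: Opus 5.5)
Your proposal is correct and follows the paper's argument. The classical Shen property (Proposition \ref{prop:ShenPrp}) guarantees that a factoring exists, and since the maps are determined by finitely much data on standard bases, a search procedure finds one; you also make explicit what the paper's sketch leaves implicit, namely that positivity of $\theta'$ and the identity $\theta = \theta'\circ\phi$ are only semi-decidable (via the c.e.\ positive cone and the c.e.\ kernel of $G^\#$), so the search must dovetail and accept only on positive evidence.
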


\begin{proof}[Proof sketch]
Suppose $\calG^\#$ is a c.e. presentation of a dimension group.  
By Proposition \ref{prop:ShenPrp}, $\calG$ has the Shen property.  
Given a homomorphism $\theta : \mathbb{Z}^n \to \calG$ and $\alpha \in \ker \theta$, the ordered  homomorphisms required by the effective Shen property do in fact exist since $\calG$ has the (ordinary) Shen property.  As these maps are all completely determined 
by their values on the standard  bases, they can now be found by a search procedure.
\end{proof}

\subsection{\cstar-algebras}

We now attend to some preliminary developments from the computability theory 
of \cstar-algebras.  We first add to existing results on computable weak stability 
so as to compute matricial systems within c.e. presentations of AF algebras. 
These results will be crucial in our proof of the Effective Glimm Lemma. 
We then present some material on computing inductive limits of 
\cstar-algebras, and finally our formal definition of computable AF certificate.

\subsubsection{Computable weak stability results}

\begin{lemma}\label{lm:UnionEWS}
Suppose $\mathcal{R}(x_1, \ldots, x_n)$ and $\mathcal{S}(x_1, \ldots, x_n)$ are finite sets of relations 
as in Definition \ref{def:CompWklyStbl}. 
Assume further that $\mathcal{R}$ is computably weakly stable and that there is a computable $\Delta : \N \rightarrow \N$ 
so that for every \cstar-algebra $\boldA$ and all $a_1, \ldots, a_n \in \boldA$, if $\boldA \models\mathcal{R}(a_1, \ldots, a_n)$, 
and if $(\mathcal{R} \cup \mathcal{S})^\boldA(a_1, \ldots, a_n) \leq 2^{-\Delta(k)}$, then there exist 
$a_1', \ldots, a_n' \in \boldA$ so that $\boldA \models (\mathcal{R} \cup \mathcal{S})(a_1', \ldots, a_n')$ and so that 
$\max_j \norm{a_j - a_j'} < 2^{-k}$.  Then, $\mathcal{R} \cup \mathcal{S}$ is effectively weakly stable.
\end{lemma}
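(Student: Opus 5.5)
We will build a computable modulus of weak stability for $\mathcal{R} \cup \mathcal{S}$ by composing a computable modulus $g$ of weak stability for $\mathcal{R}$ with $\Delta$: first snap the tuple onto an exact solution of $\mathcal{R}$, then use $\Delta$ to snap that onto an exact solution of $\mathcal{R}\cup\mathcal{S}$. The only thing to control is how much $(\mathcal{R}\cup\mathcal{S})^\boldA$ can grow during the first move, and for this we use a computable modulus of uniform continuity for the finitely many $\star$-polynomials in $\mathcal{S}$ on the relevant bounded set.

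\textbf{Continuity modulus for $\mathcal{S}$.} Write $\mathcal{S} = \{s_i(\vec{x}) = 0\} \cup \{\norm{x_j} \leq D_j\}$. Let $C = \max_j C_j$, where the $C_j$ are the norm bounds in $\mathcal{R}$. Each $s_i$ is a rational $\star$-polynomial. Expanding monomials and using the triangle inequality, submultiplicativity and $\norm{x^*} = \norm{x}$, we get the following. If all coordinates of $\vec{a}, \vec{b}$ have norm at most $C+1$ and $\max_j \norm{a_j - b_j} \leq \eta \leq 1$, then $\norm{s_i(\vec{a}) - s_i(\vec{b})} \leq K \eta$. Here $K$ is computable from the coefficients and degrees of the $s_i$ and from $C$. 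The norm terms $\norm{x_j} - D_j$ are $1$-Lipschitz. Hence there is a computable $h : \N \to \N$ so that such $\vec{a},\vec{b}$ at distance $< 2^{-h(m)}$ satisfy $|(\mathcal{R}\cup\mathcal{S})^\boldA(\vec{a}) - (\mathcal{R}\cup\mathcal{S})^\boldA(\vec{b})| < 2^{-m}$. The $\mathcal{R}$ part of this quantity is handled by the same estimate applied to the polynomials of $\mathcal{R}$.

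\textbf{The modulus.} Given $k$, set $m = \Delta(k+1)+1$ and
\[
G(k) = \max\{\, g(\max\{k+1, h(m), 1\}),\ m \,\}.
\]
Suppose $(\mathcal{R}\cup\mathcal{S})^\boldA(\vec{a}) < 2^{-G(k)}$. Then in particular $\mathcal{R}^\boldA(\vec{a}) < 2^{-g(\cdot)}$, so by weak stability of $\mathcal{R}$ there is $\vec{b}$ with $\boldA \models \mathcal{R}(\vec{b})$. Moreover $\max_j\norm{a_j-b_j} < \min\{2^{-(k+1)}, 2^{-h(m)}, 1/2\}$.

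Both tuples lie in the ball of radius $C+1$. For $\vec{b}$ this holds because $\norm{b_j}\le C_j$. For $\vec{a}$ it holds because $\norm{a_j} - C_j < 2^{-G(k)} \le 1$. The continuity estimate then gives
\[
(\mathcal{R}\cup\mathcal{S})^\boldA(\vec{b}) < 2^{-G(k)} + 2^{-m} \leq 2^{-\Delta(k+1)}.
\]
The hypothesis on $\Delta$ now yields $\vec{c}$ with $\boldA \models (\mathcal{R}\cup\mathcal{S})(\vec{c})$ and $\max_j\norm{b_j-c_j} < 2^{-(k+1)}$. By the triangle inequality, $\max_j \norm{a_j - c_j} < 2^{-k}$. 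So $G$ is a modulus of weak stability for $\mathcal{R}\cup\mathcal{S}$, and $G$ is computable from $g$, $\Delta$, and the finite data of $\mathcal{R}, \mathcal{S}$.

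\textbf{Main obstacle.} The main obstacle is making the continuity constant $K$ uniform over all \cstar-algebras and computable. This is why we first bound norms using the norm relations of $\mathcal{R}$, so that polynomial growth is controlled. If $\mathcal{R}$ has no norm relation on some variable, we would need such a bound from elsewhere. We expect the relations arising in the paper to include norm bounds on all variables, consistent with Definition~\ref{def:CompWklyStbl}, which requires a bound for every $x_j$.
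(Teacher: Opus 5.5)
Your proposal is correct and follows essentially the same route as the paper. You use the computable modulus for $\mathcal{R}$ to move to an exact $\mathcal{R}$-solution, bound the change in the defect with a computable modulus of uniform continuity on a bounded ball, then invoke $\Delta(k+1)$ and finish with the triangle inequality. Your bookkeeping is in fact slightly more careful than the paper's on two points it passes over: you pick the ball (radius $\max_j C_j + 1$ from the bounds in $\mathcal{R}$) so that both tuples provably lie in it, and you build in the slack $m = \Delta(k+1)+1$ to absorb the initial defect $2^{-G(k)}$.
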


\begin{proof}
Let $\Delta_1$ be a computable modulus of weak stability for $\mathcal{R}$.  Let $M$ denote the maximum of the constants that 
appear in $\mathcal{S}$ (that is, the $C_j$'s).  
It follows that there is a computable $g : \N \rightarrow \N$
so that for every \cstar-algebra $\boldA$, $g$ is a modulus of uniform continuity for $\mathcal{S}^\boldA$ on 
$B(\zerovec; M)^n$.
Set $\Delta_2(k) = \Delta_1(\max\{k + 1, g(\Delta(k + 1))\})$.  

Suppose $c_1, \ldots, c_n \in \boldA$ and $(\mathcal{R} \cup \mathcal{S})^\boldA(c_1, \ldots, c_n) < 2^{-\Delta_2(k)}$.  
Let 
\[
a = \max\{1, g(\Delta(k + 1)) - k\}.
\]
Since $\Delta_2(k) \geq \Delta_1(k + a)$, there exist 
$a_1, \ldots, a_n \in A$ so that $\boldA \models \mathcal{R}(a_1, \ldots, a_n)$ and so that 
$\max_j \norm{a_j - c_j} < 2^{-(k + a)}$.  
Since $\boldA \models \mathcal{R}(a_1, \ldots, a_n)$, 
$(\mathcal{R} \cup \mathcal{S})^\boldA(a_1, \ldots, a_n) = \mathcal{S}^\boldA(a_1, \ldots, a_n)$.  
As $k + a \geq g(\Delta(k + 1))$, it follows that 
$\mathcal{S}^\boldA(a_1, \ldots, a_n) < 2^{-\Delta(k+1)}$.   
Hence, there exist $a_1', \ldots, a_n' \in \boldA$ so that 
$\boldA \models (\mathcal{R} \cup \mathcal{S})(a_1', \ldots, a_n')$ and 
$\max_j \norm{a_j - a_j'} < 2^{-(k + 1)}$. 
Therefore, $\max_j \norm{c_j - a_j} < 2^{-k}$.
\end{proof}

We note that the proof of Lemma \ref{lm:UnionEWS} is uniform in that an index of a modulus of weak stability for $\mathcal{R} \cup \mathcal{S}$ can be computed from an index of $\Delta$ and an index of a modulus of weak stability for $\mathcal{R}$.

\begin{proposition}\label{prop:MutOrthoGen}
The following are defined by computably weakly stable sets of relations. 
\begin{enumerate}
    \item For each $n \in \N$, the property of being an $n$-tuple of mutually orthogonal 
    projections. \label{prop:MutOrthoGen:Proj}

    \item For each $m, n_1, \ldots, n_m \in \N$, the property of being 
    a (unital) matricial system of type $(n_1, \ldots, n_m)$. \label{prop:MutOrthoGen:Gen}
\end{enumerate}
\end{proposition}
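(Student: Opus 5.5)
The plan is to write each property down as an explicit finite set of rational $*$-polynomial relations with norm bounds. Weak stability is then obtained by successive applications of Lemma \ref{lm:UnionEWS}, with the quantitative perturbation lemmas (Lemmas \ref{lm:842Strung}, \ref{lm:844Strung}, \ref{lm:847Strung}) supplying the computable functions $\Delta$.

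For (\ref{prop:MutOrthoGen:Proj}), let $\mathcal{R}_n$ consist of $x_j^* - x_j = 0$, $x_j^2 - x_j = 0$ and $\norm{x_j} \le 1$ for each $j$. Being a single projection is computably weakly stable, as in \cite{UHFPaper}; alternatively, Lemma \ref{lm:841Strung} applied to $\frac12(a+a^*)$ in $\mathrm{C}^*(\{a\})$ gives an explicit linear modulus. Since the variables are independent, the $n$-fold union $\mathcal{R}_n$ is computably weakly stable with the same modulus. Next set $\mathcal{S}_n = \{x_ix_j = 0 : i \neq j\}$. If the $a_j$ are genuine projections and $\max_{i\neq j}\norm{a_ia_j} < 2^{-\Delta(k)}$, choose $\Delta(k)$ computably so that $2^{-\Delta(k)} \le \delta_1(2^{-k}, n)$; a rational lower bound for $\delta_1$ is computable from its recursive definition. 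Lemma \ref{lm:844Strung} then gives mutually orthogonal projections within $2^{-k}$, and Lemma \ref{lm:UnionEWS} finishes (1).

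For (\ref{prop:MutOrthoGen:Gen}), write the matricial system relations for variables $x^s_{i,j}$. These are $x^s_{i,j}x^s_{i',j'} - \delta_{j,i'}x^s_{i,j'} = 0$ and $(x^s_{i,j})^* - x^s_{j,i} = 0$, together with $x^s_{i,j}x^{s'}_{i',j'} = 0$ for $s\neq s'$ and the bounds $\norm{x^s_{i,j}}\le 1$. In the unital case we add $\sum_{s,i}x^s_{i,i} - 1 = 0$, with the unit treated as a parameter or as an extra variable $y$ satisfying $yx = xy = x$, $y=y^*$. I would take $\mathcal{R}$ to be the relations saying the diagonal elements $x^s_{i,i}$ form $\sum_s n_s$ mutually orthogonal projections, computably weakly stable by (1). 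Then $\mathcal{S}$ is the full system. The main obstacle is producing $\Delta$: given exact orthogonal diagonal projections $q^s_i$ and approximate off-diagonal units, we must correct the off-diagonals while keeping everything exactly orthogonal. Lemma \ref{lm:847Strung} corrects a single $n\times n$ block, but the blocks must be corrected compatibly.

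I would handle the blocks as follows. Let $N = \sum_s n_s$ and $e_{i,j} := x^s_{i,j}$ for $s$ fixed. First cut down by the exact projections, replacing $x^s_{i,j}$ with $q^s_i x^s_{i,j} q^s_j$; this perturbs by $O(2^{-\Delta(k)})$, and now distinct blocks are exactly orthogonal. Inside the corner $q^s\boldA q^s$, where $q^s = \sum_i q^s_i$, apply the partial-isometry polar decomposition $v_i = w_i(w_i^*w_i)^{-1/2}$ for $w_i = q^s_i x^s_{i,1} q^s_1$. This works because $w_i^*w_i$ is close to $q^s_1$ and hence invertible in $q^s_1\boldA q^s_1$. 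Define $f^s_{i,j} = v_iv_j^*$ with $v_1 = q^s_1$. Then $f^s_{i,i}$ is a projection close to $q^s_i$ and below $q^s$, so blocks remain orthogonal. The estimates are explicit, continuous functional calculus bounds, so $\Delta$ is computable. If this bookkeeping becomes unwieldy, the fallback is to invoke Lemma \ref{lm:847Strung} inside the corner $\boldB = q^s\boldA q^s$ with $\delta_2$. For unitality, note that if $\sum_{s,i}x^s_{i,i}$ is within $<1$ of $\unit_\boldA$ and the corrected diagonals are orthogonal projections, the corrected sum is a projection within distance $<1$ of $\unit_\boldA$, hence equals $\unit_\boldA$, exactly as at the end of the proof of Lemma \ref{lm:842Strung}. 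Lemma \ref{lm:UnionEWS} then gives computable weak stability, uniformly in $m, n_1,\ldots,n_m$.
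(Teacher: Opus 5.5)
Your part (1) is the paper's argument: projections are computably weakly stable, and Lemma \ref{lm:844Strung} supplies the function $\Delta$ that Lemma \ref{lm:UnionEWS} requires. For part (2) you choose a genuinely different split, and your main line is correct.

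The paper takes $\mathcal{R}$ to say that each block $(x^s_{i,j})_{i,j}$ is an exact $n_s\times n_s$ system of matrix units; single-block weak stability is imported from \cite{FoxGoldbringHart.2024+}. It takes $\mathcal{S}$ to be the cross-block orthogonality and unit relations. Its correction step applies Lemma \ref{lm:844Strung} to the block units $p_s=\sum_i e^s_{i,i}$, and then Lemma \ref{lm:847Strung} to push each exact block into the corner $q_s\boldA q_s$.

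You instead take $\mathcal{R}$ to be only part (1), applied to the $\sum_s n_s$ diagonal variables. You then repair the off-diagonal entries by compression and polar decomposition. This works, and more cleanly than you state. Since $v_i^*v_i=q^s_1$ and $v_i=q^s_iv_i$, the elements $f^s_{i,j}=v_iv_j^*$ are exact matrix units. Moreover, $v_iv_i^*$ is a projection in $q^s_i\boldA q^s_i$ within distance $<1$ of $q^s_i$, hence equal to $q^s_i$. So your correction never moves the diagonal projections. Cross-block orthogonality and unitality are then immediate, and the single-block case is proved (as $m=1$) rather than cited. The price is an explicit bound on $\norm{(w_i^*w_i)^{-1/2}-q^s_1}$, whereas the paper's computable $\Delta$ falls out of the recursively defined $\delta_1,\delta_2$.

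Three of your side remarks do not hold as stated, though none is needed:
\begin{enumerate}
\item Lemma \ref{lm:847Strung} takes an \emph{exact} system of matrix units as input. Your fallback therefore applies only after first making each block exact, which is precisely the paper's route.
\item Lemma \ref{lm:841Strung} presupposes a projection already near $b$, so by itself it does not give weak stability of projections.
\item An extra variable $y$ with $yx=xy=x$ and $y=y^*$ is not forced to equal $\unit_\boldA$. Unitality must be expressed with $\unit_\boldA$ as a parameter.
\end{enumerate}
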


\begin{proof}
(\ref{prop:MutOrthoGen:Proj}): The property of being a projection is effectively 
weakly stable (see \cite[Examples 12]{FoxGoldbringHart.2024+}).
Define $\Delta(0, k)$ arbitarily, and for positive $n$ let 
$\Delta(n,k) = \lceil \log_2( \delta_1(2^{-k}, n)) \rceil$.
Apply Lemmas \ref{lm:844Strung} and \ref{lm:UnionEWS}.

(\ref{prop:MutOrthoGen:Gen}): We use Lemma \ref{lm:UnionEWS}. However, some extra steps must be taken 
(compared to (\ref{prop:MutOrthoGen:Proj})) to show that the required function $\Delta$ exists.  

$\Delta(0,k)$ can be defined arbitarily.  
Suppose $n, k \in \N$, and assume $n$ is positive. 
Let $k_0 = 1-\lfloor \log_2(\delta_2(2^{-k}n^{-1}, n)) \rfloor$, and 
set $\Delta(n,k) = 1 - \lfloor \log_2(n^{-2} \delta_1(2^{-k_0}, n)  ) \rfloor$.

Assume $n_1, \ldots, n_m$ are positive integers so that 
$n_1^2 + \ldots + n_m^2 = n$.  
Suppose that for each $s \in \{1, \ldots, m\}$, $(e^s_{i,j})_{i,j}$ is an $n_s \times n_s$
system of matrix units of $\boldA$, and assume also that 
$\max_{i,j,j',j'} \norm{e^s_{i,j} e^{s'}_{i',j'}} \leq 2^{-\Delta(n,k)}$ when $s \neq s'$. 
Set $p_s = \sum_i e^s_{i,i}$.  Then, 
$p_1, \ldots, p_m$ are projections, and $\max_{s \neq s'} \norm{p_s p_{s'}} \leq n^2 2^{-\Delta(n,k)}$. 
By definition, $n^2 2^{-\Delta(n,k)} < \delta_1(2^{-k_0}, n)$.  
Thus, by Lemma \ref{lm:844Strung}, there exist mutually orthogonal projections 
$q_1, \ldots, q_n \in \boldA$ so that $\max_j \norm{p_j - q_j} < 2^{-k_0}$. 

Let $\boldA_s = q_s \boldA q_s$.  
Since $\norm{p_s - q_s} < 2^{-k_0}$, 
\[
\norm{e^s_{i,j} - q_s e^s_{i,j} q_s} = \norm{p_se^s_{i,j}p_s - q_s e^s_{i,j} q_s} < 2^{-k_0}.
\]
Thus,  $\max_{s,i,j} d(e^s_{i,j}, \boldA_s) < 2^{-k_0}$. 
As $2^{-k_0} < \delta_2(2^{-k}, n)$, 
it follows from Lemma \ref{lm:847Strung} that $\boldA_s$ contains an $n_s \times n_s$ system 
$(f^s_{i,j})_{s,i,j}$ of matrix units so that $\norm{e^s_{i,j} - f^s_{i,j}} < n^{-1}2^{-k}$. 
And, since $q_1, \ldots, q_m$ are mutually orthogonal, it follows that 
$(f^s_{i,j})_{s,i,j}$ is a matricial system.  Moreover, if $\sum_{s,i} e^s_{i,i} = \unit_\boldA$, 
then $\norm{ \unit_\boldA - \sum_{s,i} f^s_{i,i}} < 1$.   
Hence, since each $f^s_{i,i}$ is a projection $\sum_{s,i} f^s_{i,i} = \unit_\boldA$.  
\end{proof}

\subsubsection{Inductive limits}

The c.e. presentations of \cstar-algebras form a category in which the morphisms 
are the computable $\star$-homomorphims.  
That is, $\psi$ is a morphism from $\boldA^\#$ to $\boldB^\#$ if it is a 
computable $\star$-homomorphism from $\boldA^\#$ to $\boldB^\#$.
There is a natural indexing of this category, but note that a number $e$ 
will always index several presentations.  However, there is no harm in identifying all presentations 
indexed by a number $e$ even if they are presentations of different algebras as there is 
an obvious computable unital $\star$-isomorphism between any two of them; namely
map the $n$-th special point to the $n$-th special point.

We present two results on computable inductive limits of \cstar-algebras that 
parallel the results in Section \ref{subsubsec:PrlmCompIndLimOrdAbl}.

\begin{corollary}\label{cor:CompIndLimAlg1}
Suppose $(\boldA_s^\#, \phi_s)_{s \in \N}$ is a computable inductive sequence in the category 
of c.e. \cstar-algebra presentations, and let $(\boldA, (\nu_s)_{s \in \N})$
be an inductive limit of $(\boldA_s, \phi_s)_{s \in \N}$. 
If $\boldA^\#$ is a c.e. presentation so that $(\boldA^\#, (\nu_s)_{s \in \N})$
is a computable inductive upper limit of $(\boldA_s^\#, \phi_s)_{s \in \N}$, 
then $(\boldA^\#, (\nu_s)_{s \in \N})$ is a computable inductive limit of 
$(\boldA_s^\#, \phi_s)_{s \in \N}$.  
\end{corollary}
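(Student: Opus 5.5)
The plan mirrors Lemma \ref{lm:IndLimPresAbl} and Corollary \ref{cor:CompIndLimOrdAbl1}. Since $(\boldA, (\nu_s)_{s \in \N})$ is already a classical inductive limit, the only thing to show is the effective half of the definition. That is, from an index of a computable inductive upper limit $(\boldB^\#, (\mu_s)_{s \in \N})$ of $(\boldA_s^\#, \phi_s)_{s \in \N}$, we must compute an $(\boldA^\#, \boldB^\#)$ index of the unique reduction $\lambda$. Classically $\lambda$ exists because $(\boldA,(\nu_s))$ is an inductive limit and $(\boldB,(\mu_s))$ is an upper limit. Moreover, by the proof of Lemma \ref{lm:IndLimAlg}, $\lambda$ is a $\star$-homomorphism, hence $1$-Lipschitz, and it satisfies $\lambda(\nu_s(a)) = \mu_s(a)$.

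To compute $\lambda$ on a generated point $q$ of $\boldA^\#$ to within $2^{-k}$, I would search for three things: a stage $s$, a generated point $a$ of $\boldA_s^\#$, and a rational $r < 2^{-k-1}$ with $\norm{q - \nu_s(a)} < r$. Such a triple exists because $\bigcup_s \ran(\nu_s)$ is dense by Lemma \ref{lm:IndLimAlg}(\ref{lm:IndLimAlg::ran}), $\nu_s$ is continuous, and the generated points of $\boldA_s^\#$ are dense. The condition can be confirmed by enumeration for the following reasons.
\begin{itemize}
\item $\nu_s$ is computable from $\boldA_s^\#$ to $\boldA^\#$ uniformly in $s$, so we can compute generated points $b$ of $\boldA^\#$ with $\norm{\nu_s(a) - b} < 2^{-m}$ for any $m$.
\item $\boldA^\#$ is right c.e., so upper bounds for $\norm{q - b}$ can be enumerated.
\end{itemize}
Adding these gives an upper bound $\norm{q - \nu_s(a)} < r$, which is exactly what the search needs to see.

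Once a witness is found, compute a generated point $c$ of $\boldB^\#$ with $\norm{\mu_s(a) - c} < 2^{-k-1}$, using computability of $\mu_s$ uniformly in $s$. Since $\lambda$ is $1$-Lipschitz and $\lambda(\nu_s(a)) = \mu_s(a)$,
\[
\norm{\lambda(q) - c} \leq \norm{q - \nu_s(a)} + \norm{\mu_s(a) - c} < 2^{-k}.
\]
Hence $\lambda$ maps generated points to uniformly computable points. By the usual $1$-Lipschitz argument, this makes $\lambda$ a computable map from $\boldA^\#$ to $\boldB^\#$, and the algorithm is uniform in the indexes involved.

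The main point needing care is that the search only uses upper bounds on norms. Right c.e.\ presentations give exactly those, so the verification is a $\Sigma^0_1$ event and the search terminates; lower bounds are never required. The remaining steps are bookkeeping of indexes.
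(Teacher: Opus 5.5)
Your proposal is correct and follows the same route as the paper's proof sketch. You search for a stage $s_0$ and a generated point $g_0$ of $\boldA_{s_0}^\#$ with $\nu_{s_0}(g_0)$ close to the given generated point, then use that the reduction $\lambda$ is $1$-Lipschitz with $\lambda\circ\nu_{s_0}=\mu_{s_0}$, and approximate $\mu_{s_0}(g_0)$ in $\boldB^\#$. Your explicit check that the search is a $\Sigma^0_1$ event needing only right c.e.\ upper bounds on norms fills in a detail the paper leaves implicit.
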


\begin{proof}[Proof sketch]
Suppose $(\boldB^\#, (\mu_s)_{s \in \N})$ is a computable inductive upper limit of 
$(\boldA_s^\#, \phi_s)_{s \in \N}$, and let $\lambda$ be the reduction of 
$(\boldA, (\nu_s)_{s \in \N})$ to $(\boldB, (\mu_s)_{s \in \N})$.
Let $g$ be a generated point of $\boldA^\#$.
Wait for $s_0 \in \N$ and a generated point $g_0$ of $\boldA_{s_0}^\#$ so that 
$\nu_{s_0}(g_0)$ is sufficiently close to $g$.  By Lemma \ref{lm:IndLimAlg}, 
such a number and generating point must exist.  Since $\lambda$ is $1$-Lipschitz,
$\lambda(g)$ is sufficiently close to $\lambda(\nu_{s_0}(g_0)) = \mu_{s_0}(g_0)$ which 
can be computed with arbitrarily good precision.
\end{proof}

The following is due to the second author \cite{Goldbring.2024+}.

\begin{corollary}\label{cor:CompIndLimAlg2}
Suppose $(\boldA_s^\#, \phi_s)_{s \in \N}$ is a computable inductive sequence in the category 
of c.e. \cstar-algebra presentations.
\begin{enumerate}
    \item $(\boldA_s^\#, \phi_s)_{s \in \N}$ has a computable inductive limit. 

    \item If $(\boldA_s^\#, \phi_s)_{s \in \N}$ is a computable inductive sequence 
    in the category of 
computable \cstar-algebra presentations, and if each $\phi_s$ is injective, then 
$(\boldA_s^\#, \phi_s)_{s \in \N}$ has a computable inductive limit in the category of 
computable \cstar-algebra presentations. \label{cor:CompIndLimAlg2::Comp}
\end{enumerate}
\end{corollary}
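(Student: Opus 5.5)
We first build the limit algebra and its presentation directly from the sequence, and then obtain the universal property from Corollary \ref{cor:CompIndLimAlg1}. Let $(\boldA, (\nu_s)_{s \in \N})$ be an inductive limit of $(\boldA_s, \phi_s)_{s \in \N}$; it exists classically, for instance via the standard construction inside $\prod_s \boldA_s / \bigoplus_s \boldA_s$. For the presentation $\boldA^\#$ we take as special points the images $\nu_s(a^{(s)}_i)$, where $a^{(s)}_i$ is the $i$-th special point of $\boldA_s^\#$, enumerated via a computable pairing of $(s,i)$. By Lemma \ref{lm:IndLimAlg}(\ref{lm:IndLimAlg::ran}) the generated points are dense, so this is a presentation.

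\emph{Step 1: $\boldA^\#$ is right c.e.} A generated point $q$ of $\boldA^\#$ is a rational $*$-polynomial in finitely many of the $\nu_{s_j}(a^{(s_j)}_{i_j})$. Let $s=\max_j s_j$. Using the uniformly computable maps $\phi_{s_j,s}$, we rewrite $q=\nu_s(q')$ with $q'$ a computable point of $\boldA_s^\#$. By Lemma \ref{lm:IndLimAlg}(\ref{lm:IndLimAlg::norm}),
\[
\norm{q}=\lim_k \norm{\phi_{s,s+k}(q')}=\inf_k \norm{\phi_{s,s+k}(q')},
\]
since the sequence is nonincreasing, each $\phi$ being $1$-Lipschitz. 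Each $\phi_{s,s+k}(q')$ is a computable point of the c.e. presentation $\boldA_{s+k}^\#$, uniformly in $k$. Hence each of its norms is right c.e.: approximate it within $2^{-m}$ by generated points, take their upper bounds and add $2^{-m}$. An infimum of a uniformly right c.e. family is right c.e., so $\norm{q}$ is right c.e. uniformly in $q$.

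\emph{Step 2: computable upper limit.} Each $\nu_s$ sends special points of $\boldA_s^\#$ to special points of $\boldA^\#$, so it is computable uniformly in $s$. Thus $(\boldA^\#, (\nu_s))$ is a computable inductive upper limit, and Corollary \ref{cor:CompIndLimAlg1} gives part (1).

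For part (\ref{cor:CompIndLimAlg2::Comp}) we also need left c.e. approximations to $\norm{q}$. Because each $\phi_s$ is injective, it is isometric (an injective $\star$-homomorphism is isometric). So $\norm{q}=\norm{q'}$ holds in $\boldA_s$, and this norm is computable from the computable presentation $\boldA_s^\#$, since $q'$ is a computable point there. Hence $\boldA^\#$ is computable, and the same Corollary \ref{cor:CompIndLimAlg1} argument, now with $\lambda$ computed into a computable presentation, yields the computable limit.

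The main obstacle is Step 1. The rewriting of $q$ as $\nu_s(q')$ must be uniform, and the monotonicity of $k\mapsto\norm{\phi_{s,s+k}(q')}$ is essential: without it the limit need not be right c.e. In part (2) the subtle point is that without injectivity the norm could drop in the limit, which would destroy left c.e. approximations.
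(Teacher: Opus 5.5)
Your proposal is correct and follows the paper's route. The paper builds $\boldA^\#$ from the images of the special points of the $\boldA_s^\#$, as in Corollary \ref{cor:CompIndLimOrdAbl2}, gets the universal property from Corollary \ref{cor:CompIndLimAlg1}, and derives part (2) from the norm formula in Lemma \ref{lm:IndLimAlg}. Your Step 1, which uses monotonicity of $k \mapsto \norm{\phi_{s,s+k}(q')}$ to show the norms are right c.e., supplies the verification the paper's sketch leaves implicit.
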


\begin{proof}[Proof sketch]
We use a technique similar to that used in the proof of Corollary \ref{cor:CompIndLimOrdAbl2}
to construct $\boldA^\#$.  Part (\ref{cor:CompIndLimAlg2::Comp}) follows from 
Lemma \ref{lm:IndLimAlg}.\ref{lm:IndLimAlg::norm}.
\end{proof}

\subsubsection{Computable AF certificates}

We conclude this section by formally defining computable AF certificates.

\begin{definition}\label{def:CompAFCert}
Suppose $(F_j, \psi_j)_{j \in \N}$ is an AF certificate for $\boldA$, and assume $\boldA^\#$ is a 
presentation of $\boldA$.  We say that $(F_j, \psi_j)_{j \in \N}$ is a \emph{computable AF certificate of $\boldA^\#$} if $(F_j)_{j \in \N}$ is computable and if each $\psi_j$ is a computable map
from $\bigoplus_{n \in F_j}M_n(\C)$
    to $\boldA^\#$.
\end{definition}

\subsection{Bratteli diagrams}\label{subsec:PreCompBrat}

We set forth our framework for the computability of Bratteli diagrams.

\begin{definition}\label{def:PresBrat}
Suppose $\BratD$ is a Bratteli diagram.
\begin{enumerate}
    \item A \emph{presentation} of $\BratD$ is a pair $(\BratD, \nu)$ where $\nu$ is a surjection
     of $\N$ onto $V_\BratD$.

    \item The \emph{kernel} of a presentation $(\BratD, \nu)$ is 
    $\{(m,n)\ :\ \nu(m) = \nu(n)\}$.
\end{enumerate}
\end{definition}


\begin{definition}\label{def:CompPresBrat}
Suppose $\BratD^\# = (\BratD, \nu)$ is a presentation of a Bratteli diagram.  We say that $\BratD^\#$ is 
\emph{computable} if it satisfies the following:
\begin{enumerate}
    \item The kernel of $\BratD^\#$ is computable. 

    \item $L_\BratD \circ \nu$ is computable. 

    \item The map $(m,n) \mapsto E_\BratD(\nu(m), \nu(n))$ 
    is computable.

    \item If $\BratD$ is labeled, then $\Lambda_\BratD \circ \nu$ is computable.

    \item The sequence $(\# L_\BratD^{-1}[\{n\}])_{n \in \N}$ is computable.
\end{enumerate}
\end{definition}

If $\BratD$ and $\BratD'$ are Bratteli diagrams, then a 
\emph{computable telescoping} of $\BratD'$ to $\BratD$ is a computable sequence of integers that 
satisfies the conditions of Definition \ref{def:Tel}.

\begin{definition}\label{def:CompEqvBrat}
Suppose $\BratD$ and $\BratD'$ are Bratteli diagrams.   We say that 
$\BratD$ and $\BratD'$ are \emph{computably equivalent} 
if there is a sequence $\BratD_0 = \BratD$, $\ldots$, $\BratD_m = \BratD'$
so that for each $j < m$, either $\BratD_j$ and $\BratD_{j+1}$ are isomorphic 
or one is a computable telescoping of the other.
\end{definition}

This concludes our discussion of preliminary developments.  We now 
attend to formally stating and proving our Effective Glimm Lemma.

\section{An effective Glimm Lemma}\label{sec:EffGlimm}

Throughout this section, $\boldA$ denotes a stably finite unital \cstar-algebra and $\boldA^\#$ denotes a c.e. presentation of $\boldA$. 

Suppose $\boldB$ is a c.e. closed $\star$-subalgebra of $\boldA^\#$.  Then 
from an $\boldA^\#$ index of $\boldB$ and an index of $\boldA^\#$, 
it is possible to compute an $\boldA^\#$ index of 
a sequence $(c_n)_{n \in \N}$ that is dense in $\boldB$.
We then define $\boldB^\#$ to be the presentation of $\boldB$ whose $n$-th special point is $c_n$. 
Since $(c_n)_{n \in \N}$ is a computable sequence of $\boldA^\#$, it follows that 
$\boldB^\#$ is a c.e. 
presentation of $\boldB$.  Furthermore, the inclusion map is a computable map 
from $\boldB^\#$ to $\boldA^\#$.  

\begin{theorem}[Effective Glimm Lemma]\label{thm:EffGlimm}
There is a computable function $\Delta:\mathbb{N}^2\to \mathbb{N}$
so that for every c.e.-closed $\star$-subalgebra $\boldB$ of $\boldA^\#$,
 all $n,k \in \N$, and every c.e.-closed $\star$-subalgebra $\boldF$ of $\boldA^\#$ for which $\unit_\boldA \in \boldF$, 
if $\dim(\boldF) = n$, and if $B(\boldB; 2^{-\Delta(n,k)})$ contains a matricial generating 
system of $\boldF$, then there is a computable unitary $u$ of $\boldA^\#$ so that 
$\norm{u - \unit_\boldA} < 2^{-k}$ and $u^*\boldF u \subseteq \boldB$. 
Furthermore, an $\boldA^\#$ index of $u$ can be computed from an index of $\boldA^\#$ and $\boldA^\#$ indexes of $\boldF$ and $\boldB$. 
\end{theorem}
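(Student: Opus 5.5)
The plan is to make the classical Glimm-type argument (cf. \cite[Lemma 8.4.8 ff.]{Strung.2021}) quantitatively explicit, and then to use c.e.-closedness to search for the witnesses. Since $\dim(\boldF)=n$, Theorem \ref{thm:ClsFinDimAlg} gives $\boldF\cong\bigoplus_{s\le m}M_{n_s}(\C)$ with $\sum_s n_s^2=n$; there are only finitely many such types. Given a matricial generating system $(e^s_{i,j})$ of $\boldF$ lying in $B(\boldB;2^{-\Delta(n,k)})$, we can assume it is unital: $\unit_\boldA\in\boldF$ and $\sum_{s,i}e^s_{i,i}$ is the unit of $\boldF$. The first step applies Lemma \ref{lm:847Strung} blockwise, combined with Lemma \ref{lm:844Strung} as in the proof of Proposition \ref{prop:MutOrthoGen}(\ref{prop:MutOrthoGen:Gen}), to get a unital matricial system $(f^s_{i,j})$ of the same type inside $\boldB$ with $\max\norm{e^s_{i,j}-f^s_{i,j}}<\eta$. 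Here $\eta$ is a computable function of $(n,k)$ to be fixed below, and $\Delta(n,k)$ is chosen through $\delta_1,\delta_2$ accordingly.

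The second step writes down the unitary explicitly, following the standard formula
\[
w=\sum_{s}\sum_{i} f^s_{i,1}e^s_{1,i}.
\]
Each summand has norm at most $1$ and lies within $2\eta$ of $e^s_{i,1}e^s_{1,i}=e^s_{i,i}$, so $\norm{w-\unit_\boldA}\le 2n\eta$. A direct computation with matrix-unit relations gives $w e^s_{i,j}=f^s_{i,j}w$, and $w^*w=\sum_{s,i} e^s_{i,1}f^s_{1,1}e^s_{1,i}$, which commutes with $\boldF$ and is close to $\unit_\boldA$. Set $u$ to be the polar part $w(w^*w)^{-1/2}$, or rather its adjoint, so that $u^*e^s_{i,j}u=f^s_{i,j}$. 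Because $w^*w$ commutes with the $e$'s, the intertwining relation survives, and hence $u^*\boldF u\subseteq\boldB$. For $\norm{w-\unit}<1/2$, the standard estimate $\norm{w(w^*w)^{-1/2}-\unit}\le C\norm{w-\unit}$ lets us choose $\eta$, and hence $\Delta(n,k)$, computably so that $\norm{u-\unit_\boldA}<2^{-k}$.

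The third step handles the effectivity. By Proposition \ref{prop:MutOrthoGen}(\ref{prop:MutOrthoGen:Gen}) and \cite[Theorem 1.12]{UHFPaper}, the set of unital matricial systems of a given type is c.e.-closed in $\boldA^\#$. The same holds relative to $\boldB^\#$, which is a c.e. presentation whose inclusion into $\boldA^\#$ is computable. Intersect with the c.e.-closed set of such systems inside $\boldF$, which generate $\boldF$; since any unital system of full type in $\boldF$ generates $\boldF$, we can search for one computably. This gives a computable $(e^s_{i,j})$ via Lemma \ref{lm:intersection}, after first dovetailing over the finitely many types to find the correct one. Next, intersect the c.e.-closed set of unital matricial systems in $\boldB$ of that type with the c.e.-open condition $\max\norm{e^s_{i,j}-f^s_{i,j}}<\eta$, whose preimage is c.e. open by Proposition \ref{prop:PreimCeOpen}. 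This intersection is nonempty by the first step, so Lemma \ref{lm:intersection} yields a computable $(f^s_{i,j})$. The difficulty is that the hypothesis only promises that some generating system is near $\boldB$; to be safe, we search for a pair $(e,f)$ jointly, with $e$ in $\boldF$ and $f$ in $\boldB$, within $\eta$ of each other. Finally, $w$ is a $\star$-polynomial in computable points, and $(w^*w)^{-1/2}$ is computable by a power series in $\unit_\boldA-w^*w$, which has norm $<1$. This uses the fact that $\unit_\boldA$ is computable \cite{UHFPaper}.

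The main obstacle is twofold. First, the quantitative perturbation estimates have to be carried out carefully enough to produce an explicit computable $\Delta$; this is where the functions $\delta_0,\delta_1,\delta_2$ come in. Second, one has to arrange that all searches terminate: the relevant sets must be nonempty and the open conditions strict. For the second point, the plan is to use $\eta/2$ in the existence argument and $\eta$ in the search.
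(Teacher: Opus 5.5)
Your argument is correct, but it builds the unitary by a different mechanism than the paper.

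\textbf{The paper's route.} It finds a computable generating system $(g^s_{i,j})$ of $\boldF$ near $\boldB$. It then finds a computable system $(h^s_{i,j})$ in $\boldB$ close to it (Lemma \ref{lm:PertMtrxGen}, which rests on the effective Lemmas \ref{lm:PertMutOrth} and \ref{lm:PertMtrxU}). Next it finds a unitary $v$ near $\unit_\boldA$ with $v^*g^s_{i,i}v=h^s_{i,i}$ (Lemma \ref{lm:PrtlIso}). Finally it sets $u=\sum_{s,i}g^s_{i,1}vh^s_{1,i}$. Because $v$ already matches the diagonal projections exactly, this $u$ is exactly unitary, and $u^*g^s_{i,j}u=h^s_{i,j}$ follows from matrix-unit algebra alone.

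\textbf{Your route.} You bypass the diagonal step. Your $w=\sum_{s,i}f^s_{i,1}e^s_{1,i}$ intertwines the two systems exactly but is only approximately unitary. Since $w^*w$ lies in the commutant of $\boldF$, its polar part still intertwines.

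\textbf{What each buys.} Your route needs neither the projection-conjugation lemma nor the chain of effective perturbation lemmas. A single joint search for $(e,f)$ in a product of c.e.-closed sets meeting a c.e.-open set carries all the effective content, and it is justified by classical existence alone. For that existence, your route via Lemmas \ref{lm:847Strung} and \ref{lm:844Strung} works as well as the paper's via Lemmas \ref{lm:842Strung} and \ref{lm:847Strung}.

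The cost is an effective square root: $(w^*w)^{-1/2}$ computed by a power series with an a priori bound on $\norm{\unit_\boldA-w^*w}$. Note that $\norm{\unit_\boldA-w^*w}<1$ is guaranteed only when $\norm{w-\unit_\boldA}<\sqrt2-1$, not for every $\norm{w-\unit_\boldA}<1/2$. This is harmless because you take $\eta$ small.

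The paper's final formula is a $\star$-polynomial, but its analysis is relocated rather than avoided. The explicit element in Lemma \ref{lm:PrtlIso} is only invertible, not unitary: for $z=\unit_\boldA-p-q+2qp$ one has $z^*z=\unit_\boldA-(p-q)^2$. So a normalization by $(z^*z)^{-1/2}$ is needed there too.

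\textbf{A caveat shared by both arguments.} The c.e.-closed sets from Proposition \ref{prop:MutOrthoGen} are cut out by equations. They therefore contain degenerate systems in which some blocks are $0$, and a unital system of that kind with $\sum_s n_s^2=n$ spans a proper subalgebra of $\boldF$. Your remark that any unital system of full type in $\boldF$ generates $\boldF$ holds only for non-degenerate systems. The same assumption underlies the paper's claim that exactly one type $(n_1,\ldots,n_m)\in P$ gives a nonempty intersection. Non-degeneracy is not an equational condition, so the search as described could return an $e$ that does not generate $\boldF$.
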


We approach this proof through a sequence of lemmas.
For the sake of the first lemma, we define a function $\Delta_1 : \N^2 \rightarrow \N$ as 
follows.  
Define $\Delta_1(0,k)$ arbitrarily. 
When $n \geq 1$, define $\Delta_1(n,k)$ to be the least natural number so that 
$2^{-\Delta_1(n,k)} < \delta_0(2^{-(k+1)}, n)$.  
We note that by induction, $\delta_0(2^{-(k+1)}, n)$ is a positive rational number when $n > 0$. 
Thus, $\Delta_1(n,k)$ is defined for all $n,k \in \N$.
Furthermore, the recursive definition of $\delta_0$ ensures that $\Delta_1$ is computable.

\begin{lemma}\label{lm:PertMutOrth}
For all $n,k \in \N$, if 
$p_1, \ldots, p_n \in B(\Proj(\boldB); 2^{-\Delta_1(n,k)})$ are mutually orthogonal computable
projections of $\boldA^\#$, then $\boldB$ contains mutually orthogonal computable projections 
$q_1, \ldots, q_n$ of $\boldA^\#$ so that 
$\max_j \norm{p_j - q_j} < 2^{-k}$.  Furthermore, if 
$\sum_j p_j = \unit_\boldA$, then $\sum_j q_j = \unit_\boldA$.
\end{lemma}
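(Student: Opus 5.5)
We will combine the classical Lemma \ref{lm:842Strung} with a computable search. Since $2^{-\Delta_1(n,k)} < \delta_0(2^{-(k+1)}, n)$ and $B(\Proj(\boldB);r)\subseteq B(\boldB;r)$, Lemma \ref{lm:842Strung} applied with $\epsilon = 2^{-(k+1)}$ gives mutually orthogonal projections $q_1', \ldots, q_n' \in \boldB$ with $\max_j \norm{p_j - q_j'} < 2^{-(k+1)}$. If $\sum_j p_j = \unit_\boldA$ we also get $\sum_j q'_j = \unit_\boldA$. So the required tuple exists classically, with room $2^{-(k+1)}$ to spare. What remains is to obtain such a tuple consisting of computable points of $\boldA^\#$.

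To do this we apply Lemma \ref{intersection} in the c.e. presentation $(\boldA^n)^\#$ (product presentation), with the following two sets.

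\emph{The c.e. closed set $C$.} Let $C$ be the set of $n$-tuples in $\boldB^n$ that are mutually orthogonal projections. In the unital case we also require the sum to equal $\unit_\boldA$. By Proposition \ref{prop:MutOrthoGen}(\ref{prop:MutOrthoGen:Proj}), being an $n$-tuple of mutually orthogonal projections is computably weakly stable. The unital variant is handled the same way, using the constant $\unit_\boldA$, which is computable from a unital index, or by treating it as the complementary $(n+1)$-st projection. Since $\boldB$ is itself a c.e. presentation, with the inclusion into $\boldA^\#$ computable, \cite[Theorem 1.12]{UHFPaper} applied in $\boldB^\#$ shows this set is c.e. closed in $(\boldB^n)^\#$. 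Pushing forward along the computable inclusion then shows it is c.e. closed in $(\boldA^n)^\#$: a rational ball of $\boldA^n$ meets $C$ iff some rational ball of $\boldB^n$ meeting $C$ has a centre strictly inside it with room to spare. This condition is c.e. because $\boldA^\#$ is right c.e., so strict upper bounds on the distances between generated points can be enumerated.

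\emph{The c.e. open set $U$.} Let $U=\{(x_j)_j : \max_j\norm{x_j - p_j} < 2^{-k}\}$. This is c.e. open in $(\boldA^n)^\#$ because each $p_j$ is computable and the norm is right c.e. Concretely, $U$ is the union of the rational balls around generated points $g$ with radius $r$ satisfying $\norm{g_j - p_j} + r < 2^{-k}$ for all $j$, witnessed by approximations of $p_j$ and upper bounds on norms.

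By the first paragraph, $U\cap C\neq\emptyset$. Lemma \ref{intersection} therefore yields a computable point $(q_1,\ldots,q_n)\in U\cap C$, and its index is computable from the indexes of $U$ and $C$, hence uniformly from the data. This tuple is as required.

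The main obstacle I expect is the c.e. closedness of $C$ relative to $\boldA^\#$, and in particular its uniformity. This requires carefully transferring \cite[Theorem 1.12]{UHFPaper} from the subpresentation $\boldB^\#$, and handling the unital constraint within the weak-stability framework of Lemma \ref{lm:UnionEWS}. If direct transfer is awkward, I would instead define $C$ as the intersection of $\boldB^n$, which is c.e. closed in $(\boldA^n)^\#$, with the c.e. closed set of orthogonal-projection tuples in $(\boldA^n)^\#$. However, intersections of c.e. closed sets need not be c.e. closed, so the route through $\boldB^\#$ is preferable.
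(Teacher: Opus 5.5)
Your proposal is correct and follows the same route as the paper. Both get classical existence from Lemma \ref{lm:842Strung}, get c.e.-closedness of the mutually orthogonal projection tuples in $\boldB^\#$ from Proposition \ref{prop:MutOrthoGen} and transfer it along the computable inclusion into $(\boldA^\#)^n$, and then apply Lemma \ref{intersection} with the c.e. open ball around $(p_1,\ldots,p_n)$.

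The one difference is the unital clause. You build $\sum_j q_j = \unit_\boldA$ into the closed set, whereas the paper deduces it afterwards by pointing to the proof of Lemma \ref{lm:842Strung}. For the computable point actually produced, that argument alone only gives $\norm{\unit_\boldA - \sum_j q_j} < n2^{-k}$, so your handling is the more careful one.
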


\begin{proof}
Suppose $p_1, \ldots, p_n \in B(\Proj(\boldB); 2^{-\Delta_1(n,k)})$
are mutually orthogonal computable projections of $\boldA^\#$.  It follows from Lemma \ref{lm:842Strung}
that there exist mutually orthogonal projections 
$q_1, \ldots, q_n \in \boldB$ so that $\max_j \norm{q_j - p_j} < 2^{-k}$. 
Let $\Omega$ denote the set of all mutually orthogonal $(a_1, \ldots, a_n) \in \boldB^n$.
By Proposition \ref{prop:MutOrthoGen}, $\Proj(\boldB)^n \cap \Omega$ is a c.e. closed 
set of $(\boldB^\#)^n$. Since the inclusion map from $\boldB^\#$ to $\boldA^\#$ is computable, 
$\Proj(\boldB) \cap \Omega$ is a c.e. closed set of $(\boldA^\#)^n$. Then, since 
$p_1, \ldots, p_n$ are computable points of $\boldA^\#$, $B((p_1, \ldots, p_n); 2^{-k})$ is a 
c.e. open set of $(\boldA^\#)^n$.  Thus, by \cite[Lemma 1.11]{UHFPaper},
$\Proj(B) \cap \Omega$ contains a computable point
$(q_1, \ldots, q_n)$ of $(\boldA^\#)^n$.  
Hence, $q_1, \ldots, q_n$ are computable points of $\boldA^\#$.  

Finally, the definitions of $\delta_0$ and $\Delta_1$ ensure that 
if $\sum_j p_j = \unit_\boldA$, then $\sum_j q_j = \unit_\boldA$.  (See proof of Lemma \ref{lm:842Strung}.)
\end{proof}

Set $\Delta_2(n,k) = n + k + 2$. 

\begin{lemma}\label{lm:PrtlIso}
Suppose $p_1$, $\ldots$, $p_n$, $q_1$, $\ldots$, $q_n$ are computable projections of $\boldA^\#$ so that 
$p_ip_j = q_iq_j = \zerovec$ whenever $i \neq j$. Assume also that $\max_j \norm{p_j - q_j} < 2^{-\Delta_2(n,k)}$. 
Then, \cstar$(\{p_1, \ldots, p_n, q_1, \ldots, q_n, \unit_\boldA\})$ contains a computable partial isometry 
$v$ of $\boldA^\#$ so that for all $j \in \{1, \ldots, n\}$, $v^*p_jv = q_j$ and 
$\max_j \norm{p_j - p_jvq_j}< 2^{-k}$. Furthermore, if $\sum_j p_j = \unit_\boldA$, 
then $v$ is unitary and $\norm{v - \unit_\boldA} < 2^{-k}$.
\end{lemma}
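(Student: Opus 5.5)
The plan is the standard construction of a partial isometry between close projections via polar decomposition, taken from the proof of \cite[Lemma 8.4.3]{Strung.2021} or the analogous lemma in \cite{Rordam.Larsen.Laustsen.2000}, carried out explicitly so that computability is visible. First I set $x = \sum_j p_j q_j$. Mutual orthogonality gives
\[
x^*x = \sum_j q_j p_j q_j,
\]
and each summand lies in $q_j\boldA q_j$. Writing $q_jp_jq_j = q_j - q_j(q_j-p_j)^2 q_j$, we get $\norm{q_jp_jq_j - q_j} \le \norm{p_j - q_j}^2 < 1$. Hence $q_jp_jq_j$ is invertible in the corner $q_j\boldA q_j$, and so $x^*x$ is invertible in $q\boldA q$, where $q=\sum_j q_j$. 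The analogous statement holds for $xx^*$ in $p\boldA p$ with $p = \sum_j p_j$.

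Next I define $w = x\,(x^*x)^{-1/2}$, with the inverse square root taken in the corner $q\boldA q$. The identity $(x^*x)^{-1/2} = \sum_j (q_jp_jq_j)^{-1/2}$ holds blockwise. Standard polar decomposition shows that $w$ is a partial isometry with $w^*w = q$ and $ww^* = p$, and that $w q_j = p_j w$. Setting $v = w^*$ gives $v^*p_jv = wp_jw^*$. Since $wq_j = p_jw$, we have $wq_jw^* = p_jww^* = p_j$; this is the wrong direction, so I would choose $v = w$, giving $v^* p_j v = w^*p_jw = w^*wq_j = q_j$ (possibly after adjusting the convention to match the statement). The element $v$ lies in \cstar$(\{p_i,q_i,\unit_\boldA\})$, because the inverse square root is a norm-limit of polynomials in $x^*x$ by continuous functional calculus on a spectrum bounded away from $0$.

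Then come the estimates. We have $p_jvq_j = p_jq_j(q_jp_jq_j)^{-1/2}$, and $\norm{(q_jp_jq_j)^{-1/2} - q_j} = O(\norm{p_j-q_j}^2)$ by a power-series bound for $(1-t)^{-1/2}$ on $[0,\eta]$. Also $\norm{p_jq_j - p_j} \le \norm{q_j - p_j}$, so $\norm{p_j - p_jvq_j} \le C\norm{p_j-q_j}$ for a small absolute constant $C$. The margin $2^{-(n+k+2)}$ is far more than enough here, and the extra $n$ is there to absorb summing $n$ error terms in the unital case. If $\sum p_j = \unit_\boldA$, then $\sum q_j = \unit$ too: $\norm{\unit - \sum q_j} \le n2^{-\Delta_2} < 1$, and a projection of norm less than $1$ is $0$. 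So $v^*v = vv^* = \unit$ and $v$ is unitary. Moreover $\norm{v - \unit} \le \sum_j \norm{p_jvq_j - p_j} + \norm{\sum_j(p_j - q_j)}$-type terms, each bounded by $2^{-\Delta_2}$ times a constant, and summing over $n$ pieces stays below $2^{-k}$ because of the $2^{-n}$ factor.

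For computability, $x$ is a computable point because it is a $\ast$-polynomial in computable points. The function $t\mapsto t^{-1/2}$ on the interval $[1-4^{-\Delta_2},1]$ is approximated uniformly by explicit rational polynomials, namely truncated binomial series in $(1-t)$ with a computable tail bound. Applying these to $x^*x$, where $q$ serves as the corner unit, yields a computable sequence converging at a known rate, so $v$ is a computable point of $\boldA^\#$ with an index computed uniformly. I expect the main obstacle to be the bookkeeping in the unital estimate $\norm{v-\unit}<2^{-k}$ with explicit constants, together with getting the direction $v^*p_jv=q_j$ right. The other steps are routine.
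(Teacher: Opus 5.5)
Your proof is correct, but it builds $v$ by a different route from the paper's.

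\textbf{The paper's route.} It works pair by pair. It takes unitaries $u_j$ with $u_jq_ju_j^*=p_j$ and $\norm{\unit_\boldA-u_j}\le\sqrt2\norm{p_j-q_j}$ from Strung's Lemma~8.3.6, and sets $v=\sum_j p_ju_jq_j$. It imports from the proof of Strung's Lemma~8.4.3 that $v$ is a partial isometry with $v^*p_jv=q_j$. The estimate $\norm{p_j-p_jvq_j}\le(1+\sqrt2)\norm{p_j-q_j}$ then follows by a triangle inequality from the bound on $\norm{\unit_\boldA-u_j}$.

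\textbf{Your route.} You take one global polar decomposition $w=x(x^*x)^{-1/2}$ of $x=\sum_jp_jq_j$, computed blockwise in the corners $q_j\boldA q_j$, and verify everything directly. You get $w^*w=q$, $ww^*=p$ and $wq_j=p_jw$. The last identity is exactly what makes $v=w$ (not $w^*$) the right choice, so no further change of convention is needed. Your estimate $\norm{p_j-p_jq_j(q_jp_jq_j)^{-1/2}}\le\norm{p_j-q_j}+\bigl((1-\norm{p_j-q_j}^2)^{-1/2}-1\bigr)$ replaces the paper's $\sqrt2$ bound.

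\textbf{Unital case.} This is the same in both arguments. It rests on the identity $\unit_\boldA-v=\sum_j(p_j-p_jvq_j)$, valid because $v=\sum_jp_jvq_j$ once $\sum_jp_j=\sum_jq_j=\unit_\boldA$, together with the factor $2^{-n}$ built into $\Delta_2$.

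\textbf{Relation between the constructions.} They are closer than they look. If $u_j$ is taken to be the unitary part of $p_jq_j+(\unit_\boldA-p_j)(\unit_\boldA-q_j)$, which is the usual choice, then $p_ju_jq_j=p_jq_j(q_jp_jq_j)^{-1/2}$. In that case the paper's $v$ is precisely your $w$.

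\textbf{What each buys.} The paper's route is shorter because it cites existing lemmas. Yours is self-contained. More importantly for this paper, it gives an explicit argument that $v$ is a computable point of $\boldA^\#$ with a uniformly computed index. That argument is the truncated binomial series in $q-x^*x$, whose norm is at most $\max_j\norm{p_j-q_j}^2$, with an explicit tail bound. The paper's proof of this lemma never addresses why $v$ is computable.
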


\begin{proof}
For each $j \in \{1, \ldots, n\}$, let $u_j = \unit_\boldA - p_j - q_j - 2q_jp_j$.  
By \cite[Lemma 8.3.6]{Strung.2021}, $u_jq_ju_j^* = p_j$ and 
$\norm{\unit_\boldA - u_j} \leq \sqrt{2} \norm{p_j - q_j}$.  
Set $v = \sum_j p_ju_jq_j$.  It is shown in the proof of \cite[Lemma 8.4.3]{Strung.2021} that 
$v$ is a partial isometry and that $v^*p_jv = q_j$ for all $j \in \{1, \ldots, n\}$.  We also have that 
\begin{eqnarray*}
\norm{p_j - p_j v q_j} & = & \norm{p_j - p_ju_jq_j} \\
& = & \norm{p_j - p_ju_jp_j + p_ju_jp_j - p_ju_jq_j} \\
& \leq & \norm{p_j}\norm{\unit_A - u_j} + \norm{p_j} \norm{u_j} \norm{p_j - q_j} \\
& \leq & \sqrt{2} \norm{p_j - q_j} + \norm{p_j - q_j}\\
& = & (1 + \sqrt{2})\norm{p_j - q_j} \\
& \leq & (1 + \sqrt{2})2^{-(n + k+2)} < 2^{-n}2^{-k} < n^{-1}2^{-k} \leq 2^{-k}.
\end{eqnarray*}

Finally, suppose $\sum_j p_j = \unit_\boldA$.  
Then, 
\[
\norm{\unit_\boldA - \sum_j q_j} = \norm{\sum_j (q_j - p_j)} \leq n\cdot2^{-n} < 1.
\]
Hence, $\sum_j q_j = \unit_\boldA$.  Moreover, since $v^*p_jv = q_j$ for all $j \in \{1, \ldots,n\}$, 
it also follows that 
$v$ is unitary.  In addition,
\begin{eqnarray*}
\norm{\unit_\boldA - v} & = & \norm{\sum_{j = 1}^n (p_j - p_ju_jq_j)}\\
& \leq & n \max_j \norm{p_j - p_ju_jq_j} \\
& < & 2^{-k}.
\end{eqnarray*}
\end{proof}

When $n,k \in \N$ and $n > 0$, define $\Delta_3(n,k)$ to be 
the smallest natural number so that $2^{-\Delta_3(n,k)} < \delta_2(2^{-(k+1)}, n)$;
define $\Delta_3(0,k)$ arbitrarily.

\begin{lemma}\label{lm:PertMtrxU}
Suppose $(e_{i,j})_{i,j}$ is a computable $n \times n$ system of matrix units of $\boldA^\#$, and assume 
$\max_{i,j} d(e_{i,j}, \boldB) \leq 2^{-\Delta_3(n,k)}$.  
Then, $\boldB$ contains a computable $n \times n$ system $(f_{i,j})_{i,j}$ of matrix units so that 
$\max_{i,j} \norm{e_{i,j} - f_{i,j} } < 2^{-k}$.  
\end{lemma}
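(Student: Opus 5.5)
The plan follows the proof of Lemma \ref{lm:PertMutOrth}: first get existence from the classical perturbation principle, then get computability from a c.e.-closed/c.e.-open intersection argument.

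\textbf{Classical existence.} Since $\Delta_3(n,k)$ is chosen so that $2^{-\Delta_3(n,k)} < \delta_2(2^{-(k+1)}, n)$, the hypothesis gives
\[
\max_{i,j} d(e_{i,j}, \boldB) \leq 2^{-\Delta_3(n,k)} < \delta_2(2^{-(k+1)}, n).
\]
Lemma \ref{lm:847Strung} then yields an $n \times n$ system of matrix units $(f'_{i,j})_{i,j}$ of $\boldB$ with $\max_{i,j}\norm{e_{i,j} - f'_{i,j}} < 2^{-(k+1)}$. We aim for $2^{-(k+1)}$ rather than $2^{-k}$ deliberately. The slack lets us work with an open ball of radius $2^{-k}$ around the computable tuple, which is guaranteed to meet the target set.

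\textbf{A c.e.-closed target set.} Let $\Omega \subseteq \boldB^{n^2}$ be the set of $n \times n$ systems of matrix units lying in $\boldB$. By Proposition \ref{prop:MutOrthoGen}(\ref{prop:MutOrthoGen:Gen}) with $m = 1$, being such a system is defined by a computably weakly stable set of relations. By \cite[Theorem 1.12]{UHFPaper}, $\Omega$ is therefore a c.e.-closed subset of $(\boldB^\#)^{n^2}$. We must check that the relations include the self-adjointness conditions $e_{i,j}^* = e_{j,i}$ and the norm bounds $\norm{x} \leq 1$; these are built into Definition \ref{def:CompMat}. Because the inclusion of $\boldB^\#$ into $\boldA^\#$ is computable, the image of $\Omega$ is also c.e.-closed in $(\boldA^\#)^{n^2}$. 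Concretely, a rational ball of $(\boldA^\#)^{n^2}$ meets $\Omega$ if and only if some rational ball of $(\boldB^\#)^{n^2}$ meets $\Omega$ and has centre, computed in $\boldA^\#$, well inside it. We can enumerate this using the right-c.e. norms of $\boldA^\#$.

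\textbf{A c.e.-open ball and the intersection.} Since each $e_{i,j}$ is a computable point of $\boldA^\#$, the ball $U = B((e_{i,j})_{i,j}; 2^{-k})$ is a c.e.-open subset of $(\boldA^\#)^{n^2}$. By the classical step, $U \cap \Omega \neq \emptyset$. Lemma \ref{intersection} (or \cite[Lemma 1.11]{UHFPaper}) then produces a computable point $(f_{i,j})_{i,j}$ of $U \cap \Omega$. Each $f_{i,j}$ is then a computable point of $\boldA^\#$ and lies in $\boldB$, the array forms a system of matrix units, and $\max_{i,j}\norm{e_{i,j}-f_{i,j}} < 2^{-k}$.

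\textbf{Main obstacle.} The one delicate step is transferring c.e.-closedness from $(\boldB^\#)^{n^2}$ to $(\boldA^\#)^{n^2}$. The same transfer was used without comment in Lemma \ref{lm:PertMutOrth}, so I would cite that argument. The construction is uniform in indexes of $\boldA^\#$, $\boldB$, and the $e_{i,j}$.
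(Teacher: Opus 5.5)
Your proposal is correct and follows the paper's proof: classical existence from Lemma \ref{lm:847Strung}, c.e.-closedness of the set of matrix-unit systems in $\boldB$ via computable weak stability, and then the intersection lemma. The only difference is bookkeeping. The paper intersects the ball $B((e_{i,j})_{i,j}; 2^{-k})$ with $\boldB^{n^2}$, which is c.e.-open in $(\boldB^\#)^{n^2}$ as a preimage under the computable inclusion; it then applies the intersection lemma inside $\boldB^\#$ and pushes the resulting computable point forward to $\boldA^\#$. That sidesteps the transfer of c.e.-closedness you flagged as delicate, though your direct argument for that transfer is fine, and your extra $2^{-(k+1)}$ slack is harmless but not needed.
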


\begin{proof}
By definition, $2^{-\Delta_3(n,k)} < \delta_2(2^{-(k+1)}, n)$.  It then follows from 
Lemma \ref{lm:847Strung}
that $\boldB$ contains an $n \times n$ system $(g_{i,j})_{i,j}$ of matrix units so that 
$\max_{i,j} \norm{e_{i,j} - g_{i,j} } < 2^{-k}$.  
However, the property of being an $n \times n$ system of matrix units is defined by a computably weakly stable 
set of relations (see \cite[Examples 12]{FoxGoldbringHart.2024+}).  
Since $e_{i,j}$ is a computable point of $\boldA^\#$, 
$B( (e_{i,j})_{i,j}; 2^{-k}) \cap \boldB^{n^2}$ is a c.e. open set of 
$(\boldB^\#)^{n^2}$.  Therefore, by \cite[Lemma 1.11]{UHFPaper}, there is a computable $n \times n$ system 
$(f_{i,j})_{i,j}$ of matrix units of $\boldB^\#$ so that $\max_{i,j} \norm{e_{i,j} - f_{i,j} } < 2^{-k}$. 
Again, since the inclusion map from $\boldB^\#$ to $\boldA^\#$ is computable, 
$(f_{i,j})_{i,j}$ is also a computable $n \times n$ system of matrix units of $\boldA^\#$.
\end{proof}

For every $n,k \in \N$, let 
\[
\Delta_4(n,k) = \max\{ \max_{m \leq k} \Delta_1(m, \max_{n' \leq n} \Delta_3(n',k) + 2), 1 + \max_{n' \leq n} \Delta_3(n', k)\}.
\]

\begin{lemma}\label{lm:PertMtrxGen}
Suppose $(e_{i,j}^s)_{s,i,j}$ is a type-$(n_1, \ldots, n_m)$ computable matricial system of $\boldA^\#$ so that 
$\max_{i,j,s} d(e^s_{i,j}, \boldB) \leq 2^{-\Delta_3(\sum_j n_j^2,k)}$.  Then, $\boldB$ contains 
a computable type-$(n_1, \ldots, n_m)$ matricial system $(f^s_{i,j})_{s,i,j}$ of $\boldA^\#$ 
so that 
$\max_{s,i,j} \norm{e^s_{i,j} - f^s_{i,s}} < 2^{-k}$.  
\end{lemma}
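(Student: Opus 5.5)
The plan follows the two-stage pattern of the proof of Lemma \ref{lm:PertMtrxU}. First, a purely classical perturbation argument produces \emph{some} type-$(n_1, \ldots, n_m)$ matricial system $(g^s_{i,j})_{s,i,j}$ in $\boldB$ with $\max_{s,i,j}\norm{e^s_{i,j} - g^s_{i,j}} < 2^{-k}$. Second, computable weak stability upgrades this existence statement to a \emph{computable} such system. Throughout, set $N = \sum_j n_j^2$, so the hypothesis reads $\max_{s,i,j} d(e^s_{i,j}, \boldB) \leq 2^{-\Delta_3(N,k)} < \delta_2(2^{-(k+1)}, N)$.

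\textbf{Step 1 (classical existence).} For each $s$, set $p_s = \sum_i e^s_{i,i}$; these are mutually orthogonal projections. Since each $e^s_{i,i}$ lies within $2^{-\Delta_3(N,k)}$ of $\boldB$, the projection $p_s$ lies within $n_s 2^{-\Delta_3(N,k)}$ of $\boldB$. I would apply Lemma \ref{lm:842Strung} to $p_1, \ldots, p_m$ to get mutually orthogonal projections $r_1, \ldots, r_m \in \boldB$ with $\norm{p_s - r_s}$ small. Next I would compress, exactly as in the proof of Proposition \ref{prop:MutOrthoGen}.(\ref{prop:MutOrthoGen:Gen}). The estimate $\norm{e^s_{i,j} - r_s e^s_{i,j} r_s} \leq 2\norm{p_s - r_s}$ shows that each block $(e^s_{i,j})_{i,j}$ lies close to the corner $r_s \boldB r_s$, which is a unital \cstar-algebra with unit $r_s$. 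Applying Lemma \ref{lm:847Strung} inside $r_s \boldA r_s$, with subalgebra $r_s\boldB r_s$ and $n = n_s$, gives an $n_s\times n_s$ system $(g^s_{i,j})_{i,j}$ of matrix units in $r_s\boldB r_s$ within $2^{-k}$ of $(e^s_{i,j})_{i,j}$. Because the $r_s$ are mutually orthogonal, blocks with distinct $s$ multiply to zero. Hence $(g^s_{i,j})_{s,i,j}$ is a type-$(n_1,\ldots,n_m)$ matricial system in $\boldB$.

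\textbf{Step 2 (computability).} By Proposition \ref{prop:MutOrthoGen}.(\ref{prop:MutOrthoGen:Gen}) and \cite[Theorem 1.12]{UHFPaper}, the set of type-$(n_1, \ldots, n_m)$ matricial systems of $\boldB$ is a c.e.-closed subset of $(\boldB^\#)^N$. Since the $e^s_{i,j}$ are computable points of $\boldA^\#$, the ball $B((e^s_{i,j})_{s,i,j}; 2^{-k}) \cap \boldB^N$ is a c.e.-open set of $(\boldB^\#)^N$. Step 1 shows that these two sets intersect. Lemma \ref{intersection} (equivalently \cite[Lemma 1.11]{UHFPaper}) then yields a computable point $(f^s_{i,j})_{s,i,j}$ in the intersection. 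Computability of the inclusion $\boldB^\# \to \boldA^\#$ makes each $f^s_{i,j}$ a computable point of $\boldA^\#$, and the indices are obtained uniformly.

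\textbf{Main obstacle.} The hard part is the constant-chasing in Step 1: one must check that the single threshold $\delta_2(2^{-(k+1)}, N)$ suffices for both applications (Lemma \ref{lm:842Strung} with $m \le N$ projections, and Lemma \ref{lm:847Strung} with size $n_s$ and error about $2^{-(k+1)}$).

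The idea is that the recursive definitions make $\delta_1(\epsilon, n)$, and hence $\delta_2(\epsilon,n)$, decrease very rapidly in $n$. When $m \geq 2$ we have $n_s^2 < N$, so by unwinding the recursion for $\delta_1$ once,
\[
\delta_2(2^{-(k+1)}, N) \leq \delta_1\bigl(2^{-(k+1)}(48(N-1))^{-1}, N - 1\bigr).
\]
This should dominate both the requirement $n_s 2^{-\Delta_3(N,k)} < \delta_0(\eta, m)$ for a suitable $\eta$ and the requirement $2\eta < \delta_2(2^{-(k+1)}, n_s)$. I would first try to prove, by induction on $n$, an explicit comparison $\delta_1(\epsilon, n) \leq \min\{\delta_0(\epsilon,n), \epsilon/n\}$, together with monotonicity of $\delta_1$ and $\delta_2$ in both arguments.

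In the degenerate case $m = 1$ no orthogonalization is needed. There the statement is exactly Lemma \ref{lm:PertMtrxU} with $n = n_1$, $n_1^2 = N$, and $\Delta_3(n_1,k)\le\Delta_3(N,k)$ by monotonicity.

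If the comparison inequalities fail for small $N$, my fallback is to handle small $N$ directly. A further alternative is to avoid Lemma \ref{lm:842Strung} and instead orthogonalize the blocks obtained by applying Lemma \ref{lm:847Strung} separately to each $s$, using Lemma \ref{lm:844Strung} on the unit projections $\sum_i g^s_{i,i}$, whose pairwise products are small.
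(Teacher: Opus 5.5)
\textbf{A genuine gap at the step you flagged.} Your two-stage architecture is sound, but the constant-chasing is not carried out, and the route you propose for it fails.

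First, the comparison $\delta_1(\epsilon,n)\le\delta_0(\epsilon,n)$ is false for every $n$; for instance $\delta_1(\epsilon,2)=\epsilon/48$ while $\delta_0(\epsilon,2)=\epsilon/96$.

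Second, and more seriously, with the threshold $2^{-\Delta_3(N,k)}$ read literally, the chain through Lemma \ref{lm:842Strung} cannot be closed. Take $n_1=\cdots=n_N=1$, so the system is just $N$ mutually orthogonal projections. Unwinding the recursions (for $N\ge 2$, $k\ge 1$) gives $\delta_0(\epsilon,N)=\epsilon/(2\cdot 12^{N-1}(N!)^2)$ and $\delta_2(2^{-(k+1)},N)=\delta_1(2^{-(k+1)},N)=2^{-(k+1)}/(48^{N-1}(N-1)!)$. Hence
\[
\frac{\delta_2(2^{-(k+1)},N)}{\delta_0(2^{-k},N)}=\frac{N\cdot N!}{4^{N-1}}\longrightarrow\infty .
\]
Since $2^{-\Delta_3(N,k)}\ge\frac12\,\delta_2(2^{-(k+1)},N)$, for large $N$ the hypothesis does not give the bound $d(p_s,\boldB)<\delta_0(2^{-k},N)$ that Lemma \ref{lm:842Strung} requires. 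No fixed constant of slack absorbs an unbounded ratio. This includes rounding to a power of $2$, and the room left by Lemma \ref{lm:847Strung} in the corners, which is at most a factor of about $4$.

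\textbf{The missing idea.} Build the threshold by composing the two requirements, rather than hoping a single $\delta_2(\cdot,N)$ dominates both. That is exactly what $\Delta_4$, defined just before the lemma, does. The paper's proof runs under $\max_{s,i,j} d(e^s_{i,j},\boldB)\le 2^{-\Delta_4(N,k)}$, and Theorem \ref{thm:EffGlimm} invokes the lemma through $\Delta_4$. So the $\Delta_3$ in the statement is a slip.

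Put $k_0=\max_{n'\le N}\Delta_3(n',k)+2$. The paper's argument then runs as follows.
\begin{enumerate}
\item $\Delta_4(N,k)\ge\Delta_1(m,k_0)$ gives, via Lemma \ref{lm:PertMutOrth}, mutually orthogonal computable $q_s\in\boldB$ with $\norm{p_s-q_s}<2^{-k_0}$.
\item $\Delta_4(N,k)\ge 1+\max_{n'\le N}\Delta_3(n',k)$ gives $d(e^s_{i,j},q_s\boldB q_s)\le 2^{-k_0+1}+2^{-\Delta_4(N,k)}\le 2^{-\Delta_3(n_s,k)}$.
\item Lemma \ref{lm:PertMtrxU} therefore applies in each corner.
\end{enumerate}
The factor $n_s$ in $d(p_s,\boldB)$, which you correctly noticed, is not tracked in the paper either. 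It is absorbed by adding $\lceil\log_2 N\rceil$ to the threshold, and the inner maximum in $\Delta_4$ should range over $m\le N$.

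\textbf{Comparison of routes, once the threshold is fixed.} Your Step 1 is then the paper's classical argument. Your Step 2 makes a single application of Lemma \ref{intersection} to the c.e.-closed set of type-$(n_1,\ldots,n_m)$ matricial systems of $\boldB^\#$ supplied by Proposition \ref{prop:MutOrthoGen}. This is a valid and somewhat cleaner alternative to the paper's stepwise effectivization, which must additionally know that each corner $q_s\boldB q_s$ is c.e.-closed.

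One small correction: apply Lemma \ref{lm:847Strung} with ambient algebra $\boldA$ and subalgebra $r_s\boldB r_s$, since the $e^s_{i,j}$ do not lie in $r_s\boldA r_s$.
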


\begin{proof}
For each $s \in \{1, \ldots, m\}$, let $p_s = \sum_i e^s_{i,i}$.  
Set $n = n_1^2 + \ldots, n_m^2$, and set $k_0 = \max_{n'\leq n} \Delta_3(n',k) + 2$.  
By definition, $\Delta_4(n,k) \geq \Delta_1(m,k_0)$.  
Hence, by Lemma \ref{lm:PertMutOrth}, $\boldB$ contains computable and mutually orthogonal projections 
$q_1, \ldots, q_m$ of $\boldA^\#$ so that $\max_i \norm{q_i - p_i} < 2^{-k_0}$.  

For each $s \in \{1, \ldots, m\}$, let $\boldB_s = q_s \boldB q_s$.  Fix $b \in \boldB$.  Then:
\begin{eqnarray*}
\norm{e^s_{i,j} - q_s b q_s} & = & \norm{p_s e^s_{i,j} p_s - q_s b q_s} \\
& \leq & \norm{p_s e^s_{i,i} p_s - p_s e^s_{i,j} q_s} + \norm{p_s e^s_{i,j} q_s - q_s b q_s} \\
& \leq & \norm{p_s - q_s} + \norm{p_s e^s_{i,j} - q_s b } \\
& \leq & \norm{p_s - q_s} + \norm{p_s e^s_{i,j} - q_s e^s_{i,j} } + \norm{q_s e^s_{i,j} - q_s b } \\
& \leq & 2\norm{p_s - q_s} + \norm{e^s_{i,j} - b}.
\end{eqnarray*}
Hence, 
\[
d(e^s_{i,j}, \boldB_s) \leq 2^{-k_0 + 1} + 2^{-\Delta_4(n,k)}.
\]
However, by the definitions of $k_0$ and $\Delta_4$, 
$2^{-k_0 + 1} + 2^{-\Delta_4(n,k)} < 2^{-\Delta_3(n_s, k)}$. 
Hence, by Lemma \ref{lm:PertMtrxU}, $\boldB_s$ contains a computable $n_s \times n_s$ system 
$(f^s_{i,j})_{i,j}$ of matrix units of $\boldA^\#$ so that 
$\max_{i,j} \norm{e^s_{i,j} - f^s_{i,j}} < 2^{-k}$ and so that $\sum_i f^s_{i,i} = q_s$.  
Since $q_1, \ldots, q_m$ are mutually orthogonal, it follows 
that $(f^s_{i,j})_{s,i,j}$ is a matricial system.
\end{proof}

We are now ready to prove the Effective Glimm Lemma.

\begin{proof}[Proof of Theorem \ref{thm:EffGlimm}]
Let $k \in \N$.  
We may define $\Delta(0,k)$ arbitrarily.  So, assume $n$ is a positive integer.  
Let $P$ denote the set of all square partitions of $n$; that is, 
$P$ is the set of all sequences $(n_1, \ldots, n_m)$ of positive integers 
so that $\sum_j n_j^2 = n$.  
We note that $P \neq \emptyset$ (set each $n_j = 1$).
Set:
\begin{eqnarray*}
k_0 & = & \lceil \log_2(n 2^{k+1}) \rceil\\
\Delta(n,k) & = & \max\{\Delta_5(n, \Delta_2(\sum_j n_j, k_0))\ :\ (n_1, \ldots, n_m) \in P\}. 
\end{eqnarray*}
We remark that $\Delta_2(\sum_j n_j, k_0) \geq k_0$.

Suppose $\boldF$ is a c.e. closed $\star$-subalgebra of $\boldA^\#$ that contains $\unit_\boldA$, and assume $\dim(\boldF) = n$.  
Further, assume $B(\boldB; 2^{-\Delta(n,k)})$ contains a matricial generating 
system of $\boldF$.  

We first show that there is a unital matricial generating system $(g^s_{i,j})_{s,i,j}$ of $\boldF$
so that $g^s_{i,s}$ is a computable point of $\boldA^\#$ uniformly in $s,i,j$ and so that 
$\max_{s,i,j} d(g^s_{i,j}, \boldB) < 2^{-\Delta(n,k)}$. 
In order to demonstrate this, when $(n_1, \ldots, n_m) \in P$, let 
$\mathcal{G}_{n_1, \ldots, n_m}$ denote the set of all type-$(n_1, \ldots, n_m)$ unital 
matricial systems $(e^s_{i,j})_{s,i,j}$ of $\boldF$.  By Proposition \ref{prop:MutOrthoGen}, $\mathcal{G}_{n_1, \ldots, n_m}$ is a 
c.e. closed set of $(\boldF^\#)^n$ and hence of $(\boldA^\#)^n$.  
Since $\boldB$ is a c.e. closed set of $\boldA^\#$, $B(\boldB; 2^{-\Delta(n,k)})$ is a c.e. open set of 
$\boldA^\#$.  It follows that the set of all $(n_1, \ldots, n_m) \in P$ so that 
$\mathcal{G}_{n_1, \ldots, n_m} \cap B(\boldB; 2^{-\Delta(n,k)}) \neq \emptyset$ is c.e. uniformly in $n,k$.
Furthermore, there is a unique $(n_1, \ldots, n_m) \in P$ so 
that $\mathcal{G}_{n_1, \ldots, n_m} \cap B(\boldB; 2^{-\Delta(n,k)}) \neq \emptyset$.
By \cite[Lemma 1.11]{UHFPaper}, this intersection contains a computable point of $(\boldF^\#)^n$ which is 
also a computable point of $(\boldA^\#)^n$.  

We now construct $u$.  Firstly, since $\Delta(n,k) \geq \Delta_4(n,  \Delta_2(\sum_j n_j; k_0))$, 
by Lemma \ref{lm:PertMtrxGen}, $\boldB$ contains a computable type-$(n_1, \ldots, n_m)$ matricial 
generating system $(h^s_{i,j})_{s,i,j}$ of $\boldA^\#$ so that 
$\max_{s,i,j} \norm{g^s_{i,j} - h^s_{i,j}} < \min\{2^{-k_0}, 2^{-\Delta_2(\sum_j n_j , k_0)}\}$. 
Hence, by Lemma \ref{lm:PrtlIso}, there is a computable unitary $v$ of $\boldA^\#$ so that 
$v^*g^s_{i,i}v = h^s_{i,j}$ for all $i,s$ and so that $\norm{v - \unit_\boldA} < 2^{-k_0}$. 
We then set $u = \sum_{i,s} g^s_{i,1} v h^s_{1,i}$.  

By direct computation, $u$ is unitary, and $u^* g^s_{i,j} u = h^s_{i,j}$.  
It only remains to show that $\norm{u - \unit_\boldA} < 2^{-k}$.  To begin, we observe that 
\[
\norm{g^s_{i,j} - g^s_{i,1} v h^s_{i,1}} \leq \norm{g^s_{i,1} - g^s_{i,1} v g^s_{1,i} } + \norm{g^s_{i,1} v g^s_{1,i} - g^s_{i,1} v h^s_{1,i} }.
\]
Next, we obtain an upper bound on the first term in the above sum as follows. 
\begin{eqnarray*}
\norm{g^s_{i,i} - g^s_{i,1} v g^s_{1,i}} & \leq & \norm{g^s_{1,i} - g^s_{i,1} \unit_\boldA g^s_{1,i}} + 
\norm{g^s_{i,1} \unit_\boldA g^s_{1,i} - g^s_{i,1} v g^s_{1,i} }\\
& \leq & \norm{\unit_\boldA - v} \\
& < & 2^{-k_0} .
\end{eqnarray*}
We then note that 
\[
\norm{g^s_{i,1} v g^s_{1,i} - g^s_{i,1} v h^s_{1,i}} \leq \norm{g^s_{1,i} - h^s_{1,i}} < 2^{-k_0}.
\]
Hence, since $\sum_{s,i} g^s_{i,i} = \unit_\boldA$, 
\begin{eqnarray*}
\norm{\unit_\boldA - u} & < & \sum_{s,i} 2^{-k_0 + 1} \\
& = & n 2^{-k_0 + 1} \\
& \leq & 2^{-k}.
\end{eqnarray*}
\end{proof}

We can now advance to prove the first of our main theorems.

\section{Proof of Main Theorem \ref{mthm:CompAFCert}}\label{sec:CompAFCert}

We begin with two lemmas concerning finite-dimensional algebras.  
The first was already proved by A. Fox \cite{fox2022computable}; 
we give an alternate proof based on our results on computable weak stability.

\begin{lemma}\label{lm:FinDim}
If $\boldA^\#$ is a c.e. presentation of a finite-dimensional and unital \cstar-algebra, 
then there is a finite multiset $F$ of positive integers and a computable unital $\star$-isomorphism $\psi$
from $\bigoplus_{n \in F} M_n(\C)$ to $\boldA^\#$.  Moreover, $F$ and an 
$(\bigoplus_{n \in F} M_n(\C), \boldA^\#)$ index of $\psi$ 
can be computed from the dimension of $\boldA$ and an index of $\boldA^\#$.
\end{lemma}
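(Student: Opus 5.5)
The plan is to locate a unital matricial generating system of $\boldA$ by a c.e. search, and then to take $\psi$ to be the map that sends standard matrix units to the matrix units found. Let $n = \dim(\boldA)$ and let $P$ be the finite, computable set of square partitions $(n_1, \ldots, n_m)$ of $n$, that is, $\sum_s n_s^2 = n$. By Theorem \ref{thm:ClsFinDimAlg}, $\boldA \cong \bigoplus_{s} M_{n_s}(\C)$ for exactly one multiset $\{n_1, \ldots, n_m\}$ with $(n_1,\ldots,n_m)\in P$, up to order. For each $(n_1,\ldots,n_m)\in P$, Proposition \ref{prop:MutOrthoGen}.\ref{prop:MutOrthoGen:Gen} together with \cite[Theorem 1.12]{UHFPaper} shows that the set $\mathcal{G}_{n_1,\ldots,n_m}$ of unital type-$(n_1,\ldots,n_m)$ matricial systems of $\boldA$ is a c.e.-closed subset of $(\boldA^\#)^n$, with an index computable uniformly from the partition and an index of $\boldA^\#$. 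We use the unital version, since $\unit_\boldA$ is a computable point; if only an ordinary index is given, we instead impose the relation $\sum e^s_{i,i}=x$ together with $x$ acting as a unit on generators, which is still weakly stable.

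The key observation is that any type-$(n_1,\ldots,n_m)$ matricial system whose entries are all nonzero consists of $n$ linearly independent elements. Since $\dim\boldA = n$, such a system spans $\boldA$ and is therefore a matricial generating system. So we only need a nonempty $\mathcal{G}_{n_1,\ldots,n_m}$ whose elements are nonzero. In a unital system each $e^s_{i,i}$ is a nonzero projection, because the $n_s\times n_s$ matrix-unit relations force all $e^s_{i,i}$ in one block to be equivalent, and their sum over all blocks is $\unit_\boldA\neq 0$. A little care is needed here: a block could in principle vanish entirely. To exclude that, we intersect $\mathcal{G}_{n_1,\ldots,n_m}$ with the c.e.-open set $\{(e^s_{i,j}) : \norm{e^s_{i,i}} > 1/2 \text{ for all } s,i\}$. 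This set is c.e.-open because the presentation is right c.e., so norms are upper semicomputable; strictly, that makes "$\norm{x}<c$" c.e.-open. The condition $\norm{e^s_{i,i}} > 1/2$ needs lower semicomputability, which fails in a merely right c.e. presentation.

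This is the main obstacle, and I would get around it with a dimension count rather than a norm condition. If some block vanishes, the remaining nonzero units form a matricial system of a partition of some $n' < n$ whose sum is $\unit_\boldA$. Such a system would generate a unital subalgebra of dimension $n'$. I would instead search over all partitions in $P$, and run the searches over all partitions in parallel. The correct partition is the one realized by the true isomorphism, and for it the relevant intersection is nonempty. Applying Lemma \ref{intersection} to $\mathcal{G}_{n_1,\ldots,n_m}$ intersected with a suitable c.e.-open set (for example $(\boldA^\#)^n$ itself, restricted by the condition that every unit lies within $1/4$ of some generated point of norm bound... ) yields a computable point. To guarantee nonvanishing, I would use that each $e^s_{i,i}$ is a projection and so has norm $0$ or $1$. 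This gives a usable test: we search for a generated point $q$ with $\norm{e^s_{i,i} - q}<1/4$ and $\norm{q}$... I expect this bookkeeping, ensuring the found system is faithful, to be the delicate point. My first attempt would be to note that $\unit_\boldA - $ (sum over the other blocks) equals the block's unit, and to test faithfulness through the trace-like count $\sum_s n_s^2 = n = \dim$. That count rules out degenerate systems once we choose the maximal $n'$.

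Once a faithful computable system $(e^s_{i,j})$ is found, let $F=\{n_1,\ldots,n_m\}$ and define $\psi$ on the standard matrix units $E^s_{i,j}$ of $\bigoplus_{s} M_{n_s}(\C)$ by $\psi(E^s_{i,j}) = e^s_{i,j}$, extended linearly. This $\psi$ is a unital $\star$-homomorphism because the relations match. It is injective because its image has dimension $n$, and hence it is an isomorphism. It is computable: a generated point of the standard presentation is a $\Q(i)$-combination of the $E^s_{i,j}$, so its image is the same combination of computable points, uniformly. All steps are uniform in $n$ and an index of $\boldA^\#$, so $F$ and an index of $\psi$ are computable from them.
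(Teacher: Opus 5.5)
Your proposal has a genuine gap, and it is the one you flag yourself without closing. Nothing in your argument guarantees that the computable point returned by Lemma~\ref{intersection} is a \emph{non-degenerate} matricial system, i.e.\ one with no vanishing block. Without that, neither the choice of $F$ nor the injectivity of $\psi$ (``its image has dimension $n$'') is justified.

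The sets $\mathcal{G}_{n_1,\ldots,n_m}$ really do contain systems with zero blocks, even for the correct type:
\begin{itemize}
\item $(E_{1,1},E_{2,2},0,0)$ is a unital type-$(1,1,1,1)$ system in $M_2(\C)$;
\item $(\unit,0,0,0)$ is a unital type-$(1,1,1,1)$ system in $\C^4$.
\end{itemize}
Lemma~\ref{intersection} only produces \emph{some} computable point of the intersection. So your parallel search can halt on $(1,1,1,1)$ for $M_2(\C)$, or on a degenerate system of the right type, and then $\psi$ has a kernel.

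None of your proposed fixes repairs this.
\begin{itemize}
\item The claim that each $e^s_{i,i}$ is nonzero because the diagonal sums to $\unit_\boldA$ is false, as you notice.
\item The set $\{\norm{e^s_{i,i}}>1/2\}$ is not c.e.-open in a right c.e.\ presentation, as you also notice.
\item ``Choose the maximal $n'$'' is not an algorithm. Here $n'$ is the total size of the nonzero blocks, and deciding which blocks are nonzero needs exactly the lower bounds on norms you lack.
\end{itemize}
The paper's proof reruns the search from the proof of Theorem~\ref{thm:EffGlimm} and passes over the same point. That proof asserts that only one partition yields a nonempty set, which the $M_2(\C)$ example refutes. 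Once a non-degenerate system is in hand, the rest of your argument (defining $\psi$ on matrix units and checking computability) is fine and matches the paper.

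In fact the missing step cannot be supplied uniformly from a c.e.\ index and the dimension alone. Here is a diagonalization.
\begin{itemize}
\item By the recursion theorem, build a presentation with special points $a_0,a_1$. Read each generated point $r$ as a polynomial in $y\in[0,1]$ via $a_0\mapsto y$, $a_1\mapsto 1$.
\item At stage $t$, enumerate rational upper bounds for $\norm{r}$ just above $\sup_{Y_t}|r|$, where $Y_t=\{0,1\}\cup\{1/k : k>t\}$.
\item Suppose the putative algorithm, run on this very index and dimension $2$, outputs a generated point $q$ promised to lie within $1/2$ of $\psi(E^1)$. Pick three points of the current $Y_t$. By pigeonhole, two of them, $z_1\neq z_2$, are such that $q$ is not simultaneously within $1/2$ of $1$ at one and within $1/2$ of $0$ at the other. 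From then on, use $\{z_1,z_2\}$ in place of $Y_t$.
\end{itemize}
Earlier bounds remain valid, since $\sup_{\{z_1,z_2\}}|r|\le\sup_{Y_t}|r|$. So in every case you get a c.e.\ presentation of a two-dimensional algebra $C(Z)$. Either $q$ never appears, or no nontrivial projection of $C(Z)$ is within $1/2$ of $q$.

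Extra input is therefore unavoidable. There are two ways to supply it.
\begin{itemize}
\item If the presentation is computable, your condition $\norm{e^s_{i,i}}>1/2$ \emph{is} c.e.-open, and your argument works as written. Only the true type, up to order, can then be found.
\item For a merely c.e.\ presentation, you still get $\psi$, but only non-uniformly. Hard-code the true type together with generated points $g^s_{i,j}$ within $1/8$ of some generating matricial system. Then intersect $\mathcal{G}_{n_1,\ldots,n_m}$ with the rational ball of radius $1/8$ about $(g^s_{i,j})$; every system in that intersection has $\norm{e^s_{i,i}}>3/4$.
\end{itemize}
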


\begin{proof}
By following the proof of Theorem \ref{thm:EffGlimm}, it is possible to compute 
positive integers $m,n_1, \ldots, n_m$ and a computable type-$(n_1, \ldots, n_m)$ 
unital matricial generating system $(g^s_{r,t})_{s,r,t}$ of $\boldA^\#$.
Let $F$ denote the multiset whose elements are $n_1, \ldots, n_m$ 
and each element is repeated according to the number of times it appears 
in $(n_1, \ldots, n_m)$.   
Let $(E^s_{r,t})_{s,r,t}$ denote the standard matricial generating system 
for $\bigoplus_{n \in F} M_n(\C)$.  Let $\psi$ be the unique $\star$-isomorphism 
of $\bigoplus_{n \in F} M_n(\C)$ onto $\boldA$ so that $\psi(E^s_{r,t}) = g^s_{r,t}$. 
It follows that $\psi$ is a computable map from $\bigoplus_{n \in F} M_n(\C)$ to 
$\boldA^\#$.
\end{proof}

\begin{lemma}\label{lm:FindF}
Assume $\boldA^\#$ is a c.e. presentation of an AF algebra.  Suppose 
$p_0, \ldots, p_n$ are computable points of $\boldA^\#$, and let $k \in \N$. 
Then $\boldA^\#$ has a c.e. closed and finite-dimensional 
$\star$-subalgebra $\boldF$ so that $\unit_\boldA \in \boldF$ and 
$\max_j d(p_j, \boldF) < 2^{-k}$.  Furthermore, 
an $\boldA^\#$ index of $\boldF$ can be computed from $k$ and 
$\boldA^\#$ indexes of $p_0$, $\ldots$, $p_n$.  
\end{lemma}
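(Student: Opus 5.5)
We will find $\boldF$ by a search over finite-dimensional subalgebras described by matricial systems. Classically, since $\boldA$ is AF with AF certificate $(F_j,\psi_j)_{j\in\N}$, the union $\bigcup_j \ran(\psi_j)$ is dense and increasing, and each $\ran(\psi_j)$ is a unital finite-dimensional $\star$-subalgebra. So there is some $j$ with $\max_i d(p_i, \ran(\psi_j)) < 2^{-k}$. Such an $\ran(\psi_j)$ is generated by a unital matricial system $(e^s_{r,t})_{s,r,t}$ of some type $(n_1,\ldots,n_m)$. The task is to locate such a system effectively, together with witnesses that the $p_i$ are close to its span.

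Concretely, I would dovetail over all types $(n_1,\ldots,n_m)$ and all rational rationals $\epsilon<2^{-k}$, considering the set
\[
\mathcal{W} = \{(e^s_{r,t})_{s,r,t} \in \boldA^{N} : (e^s_{r,t}) \text{ is a unital matricial system and } \max_i d(p_i, \operatorname{span}\{e^s_{r,t}\}) < \epsilon\},
\]
with $N=\sum_s n_s^2$. By Proposition \ref{prop:MutOrthoGen}, unital matricial systems of a fixed type form a c.e. closed set of $(\boldA^\#)^N$, because the relations are computably weakly stable; the proof of this uses that $\unit_\boldA$ is a computable point. The closeness condition is c.e. open: it says that there are coefficients $c^{i}_{s,r,t}\in\Q(i)$ with $\norm{p_i - \sum c^i_{s,r,t} e^s_{r,t}}<\epsilon$. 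Since the presentation is right c.e. and the $p_i$ are computable points, strict upper bounds on such norms are enumerable. It is therefore a union of c.e. open sets, indexed by the rational coefficient choices. We search until some type and coefficient choice makes the intersection nonempty; the classical argument guarantees this search halts. Lemma \ref{intersection} then yields a computable point of the intersection, namely a computable unital matricial system $(g^s_{r,t})$. Nonemptiness of a c.e. open $\cap$ c.e. closed set is itself c.e., since it amounts to a rational ball in the closed-set enumeration lying inside the open set.

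Let $\boldF = \mbox{\cstar}(\{g^s_{r,t}\})$. This equals the linear span of the $g^s_{r,t}$, so it is finite-dimensional, it contains $\unit_\boldA = \sum g^s_{r,r}$, and $d(p_i,\boldF)<2^{-k}$ for each $i$. For an $\boldA^\#$ index of $\boldF$ as a c.e. closed set, we enumerate a rational ball $B(q;r)$ whenever there are rational coefficients $c$ and a generated point approximation with $\norm{q - \sum c_{s,r,t} g^s_{r,t}} < r$. This works because the $g$'s are computable points and norms are right c.e. The enumerated balls are exactly those meeting $\boldF$, because rational combinations are dense in the span. All steps are uniform in $k$ and the indices of the $p_i$, given the fixed index of $\boldA^\#$ and of $\unit_\boldA$.

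The main obstacle is the middle step: justifying that the search terminates, and that Lemma \ref{intersection} applies to the parameterized open condition. In particular, one must check that the witnessing open set really is c.e. open in $(\boldA^\#)^N$ uniformly. I would handle this by fixing the coefficients before applying Lemma \ref{intersection}, so that the open set is just a preimage of a rational ball under a computable map, as in Proposition \ref{prop:PreimCeOpen}.
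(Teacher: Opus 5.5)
Your proposal is correct and follows essentially the same route as the paper: search over types $(n_1,\ldots,n_m)$ for a nonempty intersection of the c.e.\ closed set of matricial systems (via Proposition \ref{prop:MutOrthoGen}) with the c.e.\ open set of systems whose rational linear combinations approximate the $p_j$, then extract a computable point of that intersection and take the algebra it generates. Your version is somewhat more careful than the paper's write-up on three points: you restrict to unital systems, which is needed for $\unit_\boldA \in \boldF$; you use a separate coefficient vector for each $p_i$; and you actually check that the resulting $\boldF$ is c.e.\ closed with a computable index.
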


\begin{proof}
Fix a sequence $(n_1, \ldots, n_m)$ of positive integers and let $\ell = \sum_j n_j^2$. 
Define $\mathcal{M}_{n_1, \ldots, n_m}$ to be the set of all 
type-$(n_1, \ldots, n_m)$ matricial systems of $\boldA$. 
For each $\vec{a} = (a_1, \ldots, a_\ell) \in \boldA^\ell$ and every $\vec{q} = (q_1, \ldots, q_n) \in \Q(i)^\ell$, 
let $\langle \vec{q}, \vec{a} \rangle = \sum_j q_j a_j$.  
Then let $U_{n_1, \ldots, n_m}$ denote the set of all $\vec{a} \in \boldA^\ell$ for which 
there exists $\vec{q} \in \Q(i)^\ell$ so that 
$\max_j \norm{p_j - \langle \vec{a}, \vec{q} \rangle} < 2^{-k}$.

It follows from Proposition \ref{prop:MutOrthoGen} and \cite[Theorem 1.14]{UHFPaper} that 
$\mathcal{M}_{n_1, \ldots, n_m}$ is a c.e. closed set of $(\boldA^\#)^\ell$ 
uniformly in $n_1, \ldots, n_m$.  
We claim that $U_{n_1, \ldots, n_m}$ is a c.e. open set of $(\boldA^\#)^\ell$ 
uniformly in $n_1, \ldots, n_m$.  
To see this, for each $\vec{q} \in \Q(i)^\ell$, let $f_{j,\vec{q}} : \boldA^\ell \rightarrow \C$
be defined by setting $f_{j,\vec{q}}(\vec{a}) = p_j - \langle \vec{q}, \vec{a} \rangle $. 
It follows that $f_{j, \vec{q}}$ is a computable map from $(\boldA^\#)^\ell$ to $\boldA^\#$ uniformly in 
$j, \vec{q}$.  Thus, by Proposition \ref{prop:PreimCeOpen}, 
$f_{j,\vec{q}}^{-1}[B(\zerovec; 2^{-k})]$ is a c.e. open set of $(\boldA^\#)^\ell$ uniformly in $j, \vec{q}$.  
Hence, $\bigcap_j f_{j,\vec{q}}^{-1}[B(\zerovec; 2^{-k})]$ is a c.e. open set of $(\boldA^\#)^\ell$ uniformly in $\vec{q}$.
However, $U_{n_1, \ldots, n_m} = \bigcup_{\vec{q}} \bigcap_j f_{j,\vec{q}}^{-1}[B(\zerovec; 2^{-k})]$.
It follows that $U_{n_1, \ldots, n_m}$ is c.e. open uniformly in $n_1, \ldots, n_m$.  

It now follows that the set of all $(n_1, \ldots, n_m)$ so that 
$U_{n_1, \ldots, n_m} \cap \mathcal{M}_{n_1, \ldots, n_m} \neq \emptyset$ is 
c.e.  At the same time, since $\boldA$ is an AF algebra, there must exist 
$n_1, \ldots, n_m$ so that $U_{n_1, \ldots, n_m} \cap \mathcal{M}_{n_1, \ldots, n_m} \neq \emptyset$.
Therefore, it is possible to compute positive integers $m, n_1, \ldots, n_m$ so that 
$U_{n_1, \ldots, n_m} \cap \mathcal{M}_{n_1, \ldots, n_m} \neq \emptyset$ via a search procedure. 
It then follows from \cite[Lemma 1.12]{UHFPaper} that it is possible to compute 
a type-$(n_1, \ldots, n_m)$ compound generating matricial system $(f^s_{r,t})_{s,r,t}$ 
of $\boldA^\#$ that belongs to $U_{n_1, \ldots, n_m}$.  
Taking $\boldF$ to be the $\star$-algebra generated by this system yields the desired result.
\end{proof}

\begin{proof}[Proof of Main Theorem \ref{mthm:CompAFCert}]
Let $\Delta : \N^2 \rightarrow \N$ denote the function whose existence is guaranteed by the 
Effective Glimm Lemma (Theorem \ref{thm:EffGlimm}).  Without loss of generality, we assume 
$\Delta$ is increasing in each variable.

Suppose $\boldA^\#$ is a c.e. presentation of an AF algebra.
Fix an effective enumeration $(g_n)_{n \in \N}$ of the generated points of $\boldA^\#$.  

We construct a monotonically non-decreasing sequence $(\boldA_k)_{k \in \N}$ of finite-dimensional and 
c.e. closed  $\star$-subalgebras of $\boldA^\#$ so that $d(g_k, \boldA_k) < 2^{-k}$ for all $k \in \N$ and 
so that $\unit_\boldA \in \boldA_0$.  Furthermore, we ensure $(\dim(\boldA_k))_{k \in \N}$
is computable.
The existence of a computable AF certificate for $\boldA^\#$ then follows from Lemma \ref{lm:FinDim}. 

To begin, set $\boldA_0 = \{\unit_\boldA\}$.  Let $k \in \N$, and suppose $\boldA_k$ has been 
defined.  Assume also that $n := \dim(\boldA_k)$ has been computed.  
It follows from Lemma \ref{lm:FinDim} that it is possible to compute a 
matricial generating system $(e^s_{r,t})_{s,r,t}$ for $\boldA_k$;   
let $(n_1, \ldots, n_m)$ denote the type of this system. 
Then, by Lemma \ref{lm:FindF}, it is possible to compute an $\boldA^\#$ index of a 
c.e. closed and finite-dimensional unital $\star$-subalgebra $\boldF$ of $\boldA^\#$
so that $d(a, \boldF) < 2^{-\Delta(n, k + 2)}$ whenever 
\[
a \in \{g_{k + 1}\}\ \cup\ \{e^s_{r,t}\ :\ s \in \{1, \ldots, m\}\ \wedge\ r,t \in \{1, \ldots, n_s\} \}.
\]
By the Effective Glimm Theorem, it is now possible to compute an $\boldA^\#$ index of a unitary 
$u$ so that $u^* \boldA_k u \subseteq \boldF$ and so that 
$\norm{u - \unit_\boldA} < 2^{-(k+2)}$.  
We then set $\boldA_{k + 1} = u \boldF u^*$.  We now observe that 
\begin{eqnarray*}
d(g_{k + 1}, \boldA_{k + 1}) & < & \norm{g_{k + 1} - u^*g_{k + 1} u} + 2^{-\Delta(n,k+2)} \\
& \leq & \norm{g_{k + 1} - u^* g_{k + 1} } + \norm{u^* g_{k + 1} -u^*g_{k + 1} u}  + 2^{-\Delta(n,k+2)} \\
& < & 2^{-(k + 1)} + 2^{-\Delta(n, k + 2)}.
\end{eqnarray*}
As noted above, $\Delta$ is increasing in each variable.  Thus, in particular, 
$\Delta(n, k + 2) \geq k + 2$, and so $d(g_{k + 1}, \boldA_{k + 1}) < 2^{-(k + 1)}$.  
\end{proof}

We note that our proof of Main Theorem \ref{mthm:CompAFCert} is uniform.
The following is an immediate, and somewhat surprising, consequence.

\begin{corollary}\label{cor:CompPresAF}
Every c.e. presentation of a unital AF algebra is computable.
\end{corollary}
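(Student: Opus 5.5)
The idea is to use the computable AF certificate from Main Theorem \ref{mthm:CompAFCert}, which the proof produces uniformly from $\boldA^\#$. With it in hand, the norm of any generated point becomes computable from both sides, not just from above. Since $\boldA^\#$ is already c.e., we can enumerate rational upper bounds converging to $\norm{q}$ for each generated point $q$. It therefore suffices to enumerate rational lower bounds that also converge to $\norm{q}$.

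Let $(F_j,\psi_j)_{j\in\N}$ be a computable AF certificate of $\boldA^\#$. Write $\boldA_j=\ran(\psi_j)$ and $\boldC_j=\bigoplus_{n\in F_j}M_n(\C)$. Each $\boldC_j$ carries its standard computable presentation, and the operator norm of a rational element of a matrix algebra is computable. Each $\psi_j$ is a computable map into $\boldA^\#$, uniformly in $j$, and it is isometric because it is an injective $\star$-homomorphism.

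Given a generated point $q$ and an accuracy $2^{-k}$, search over $j$, over rational points $c$ of $\boldC_j$, and over stages of the approximation. We look for a certificate that $\norm{q-\psi_j(c)}<2^{-k}$. This inequality can be confirmed from the computable approximations of $\psi_j(c)$ together with the c.e.\ upper bounds on norms of generated points, since $q-b$ is generated whenever $b$ is a generated point approximating $\psi_j(c)$.

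Such $j$ and $c$ exist because $\bigcup_j\boldA_j$ is dense in $\boldA$ and the rational points are dense in each $\boldC_j$. Once they are found, $\norm{c}-2^{-k}$ is a computable lower bound for $\norm{q}$, and it is within $2^{-k+1}$ of the true value. Combined with the upper bounds, this yields $\norm{q}$ to any desired precision.

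The step needing most care is the verification that $\norm{q-\psi_j(c)}<2^{-k}$ is semi-decidable. I would handle it as follows. Compute a generated point $b$ with $\norm{\psi_j(c)-b}<2^{-(k+2)}$. Then wait until the right c.e.\ enumeration of $\norm{q-b}$ drops below $2^{-k}-2^{-(k+2)}$. Everything here is uniform, so an index of the computable presentation is obtained from an index of $\boldA^\#$ together with the index of the unit that the proof of Main Theorem \ref{mthm:CompAFCert} uses.
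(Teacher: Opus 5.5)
Your argument is correct and is essentially the paper's proof. The paper likewise takes the computable AF certificate from Main Theorem \ref{mthm:CompAFCert} and then invokes Corollary \ref{cor:CompIndLimAlg2}, whose part (\ref{cor:CompIndLimAlg2::Comp}) rests on your key observation: norms are computable in the finite-dimensional stages and are preserved by the injective maps into the limit. The only difference is packaging. The paper routes the argument through a separate computable inductive-limit presentation, which must then be compared with $\boldA^\#$ via the reduction map, whereas you compute $\norm{q}$ directly inside $\boldA^\#$ (and $\norm{c}$ alone already brackets $\norm{q}$ from both sides, so the c.e.\ upper bounds are needed only in your semi-decision step).
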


\begin{proof}
Apply Main Theorem \ref{mthm:CompAFCert} and Corollary \ref{cor:CompIndLimAlg2}.
\end{proof}

We now proceed to prove our second main theorem.

\section{Proof of Main Theorem \ref{mthm:ClsCePresDimGrp}}\label{sec:ClsCePresDimGrp}

We begin with two lemmas concerning computable certificates of dimensionality.

\begin{lemma}\label{lm:CompCertDim}
If $\calG^\#$ is a c.e. presentation of a dimension group, then 
$\calG^\#$ has a computable certificate of dimensionality.
\end{lemma}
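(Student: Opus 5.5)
We will construct the certificate by the effective version of the standard Effros--Handelman--Shen argument. The construction produces, stage by stage, a simplicial group $\Z^{n_s}$, a positive homomorphism $\phi_s:\Z^{n_s}\to\Z^{n_{s+1}}$, and a positive homomorphism $\nu_s:\Z^{n_s}\to\calG$. Each $\nu_s$ is given by $\calG^\#$ labels of the images of the standard basis vectors; by Proposition \ref{prop:CompMapZn} this makes every $\nu_s$ a computable map uniformly in $s$. We maintain the invariant $\nu_{s+1}\circ\phi_s=\nu_s$.

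To start, fix an effective enumeration $(w_t)_{t\in\N}$ of the $\calG^\#$ labels of elements of $P=\calG^+$, which is possible since $P$ is a c.e. set of $G^\#$. Fix also an effective enumeration $(v_t)_{t\in\N}$ of the kernel of $G^\#$, which is c.e. since $\calG^\#$ is c.e. Set $n_0=1$ and let $\nu_0$ send $e^1_1$ to $\zerovec_\calG$.

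At stage $s$ we have two tasks.
\begin{enumerate}
\item \emph{Absorb a positive element.} Let $g$ be the element labelled by $w_s$. Form $\Z^{n_s+1}$, with the inclusion $\Z^{n_s}\to\Z^{n_s+1}$, and extend $\nu_s$ by sending the new basis vector to $g$.
\item \emph{Kill the kernel.} Enumerate the kernel of $G^\#$ for $s$ steps. This yields a finite list of elements $\alpha\in\ker(\nu')$ of the current map $\nu'$. These can be certified because $\nu'(\alpha)$ has an explicit label, and membership in the kernel is c.e.; we enumerate pairs $(\alpha,\text{witness})$ in a dovetailed fashion.
\end{enumerate}
For each such certified $\alpha$, Lemma \ref{lm:EffShen} computes a factorization $(\phi,\theta')$ of the current map at $\alpha$. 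We compose the factorizations one after another. Composing finitely many such factorizations gives $\phi_s$ and $\nu_{s+1}$ with $\nu_{s+1}\circ\phi_s=\nu_s$, and the composite $\phi_s$ is positive.

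For dovetailing, at stage $s$ we only kill the elements $\alpha\in\Z^{n_{s'}}$ with $s'\le s$ whose kernel membership has been certified by stage $s$, pushed forward along $\phi_{s',s}$. Every element of $\ker(\nu_{s'})$ gets certified at some finite stage, and it is then sent to $0$ by $\phi_{s',t}$ for some $t$.

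Finally, verify the conditions of Lemma \ref{lm:IndLimOrdAbl}. Condition (\ref{lm:IndLimOrdGrp:ran}) holds in one direction because every element of $P$ is $w_t$ for some $t$, so it lies in $\nu_{t+1}[\Z^{n_{t+1}}_{\ge0}]$. For the reverse inclusion, each $\nu_s$ is positive, since the Shen factorizations supply ordered homomorphisms $\theta'$. For condition (\ref{lm:IndLimOrdGrp:ker}), the inclusion $\bigcup_k\ker\phi_{s,s+k}\subseteq\ker\nu_s$ follows from the invariant. The reverse inclusion is the dovetailing just described. Since $G=P-P$, it also follows that $G=\bigcup_s\ran(\nu_s)$.

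The main obstacle is the bookkeeping in condition (\ref{lm:IndLimOrdGrp:ker}). There are two sources of difficulty. First, kernel membership is only c.e. Second, after each factorization the group $\Z^{n}$ changes, so a pending $\alpha$ must be transported forward along the composite maps. Transporting is harmless: if $\nu_{s'}(\alpha)=0$, then $\nu_t(\phi_{s',t}(\alpha))=0$, and the transported element is still certified to lie in the kernel. Hence Lemma \ref{lm:EffShen} still applies at the later stage.

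Uniformity then follows because every step is a search guaranteed to terminate. Lemma \ref{lm:EffShen} relies on Proposition \ref{prop:ShenPrp}, applied to the ordered homomorphisms $\nu'$.
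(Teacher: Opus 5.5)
Your proposal is correct and follows essentially the same route as the paper. Both give an effective Effros--Handelman--Shen construction: each stage adjoins one enumerated positive element, and the effective Shen property is used to kill every enumerated element of $\ker(\nu_{s'})$ after transporting it forward along $\phi_{s',s}$. The remaining differences are bookkeeping only: the paper throttles the kernel enumerations so that at most one new element is handled per stage (one Shen factorization each), whereas you compose finitely many factorizations per stage; and the paper starts with $\nu_0(n)=ng_0$ rather than the zero map, a choice it later exploits, with $g_0$ replaced by the order unit, in Lemma \ref{lm:CompUntlCertDim}.
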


\begin{proof}
Suppose $\calG^\#$ is a c.e. presentation of a dimension group and let $(g_n)_{n \in \N}$ be an effective enumeration of the positive cone of $\calG$.
By Lemma \ref{lm:EffShen}, $\calG^\#$ has the Effective Shen Property.

For each $s \in \N$, we construct the following: 
\begin{itemize}
    \item A positive integer $n_s$. 

    \item An ordered group homomorphism $\phi_s : \Z^{n_s} \rightarrow \Z^{n_{s + 1}}$.

    \item A homomorphism $\theta_s$ from $\Z^{n_s}$ to $\calG^\#$.
\end{itemize}

We ensure the following requirements.
{
\renewcommand{\theenumi}{R-\arabic{enumi}}
\begin{enumerate}
    \item $(\calG, (\theta_s)_{s \in \N})$ is an inductive upper limit of 
    $(\Z^{n_s}, \phi_s)_{s \in \N}$. 

    \item $\calG^+ = \bigcup_{s \in \N} \theta_s[\Z^{n_s}_{\geq 0}]$. 

    \item For every $s \in \N$, $\ker(\theta_s) = \bigcup_{k \in \N} \ker(\phi_{s,s+k})$.
\end{enumerate}
}
It follows from Lemma \ref{lm:IndLimOrdAbl} that if these requirements 
are satisfied, then $(\calG^\#, (\theta_s)_{s \in \N})$ is 
an inductive limit of $(\Z^{n_s}, \phi_s)_{s \in \N}$.  

We divide the construction into stages.  At stage $s$, we define $n_s$ and $\theta_s$.
At stage $s + 1$, we define $\phi_s$.

We let $\ker_s(\theta_k)$ denote the set of 
all vectors that have been enumerated into $\ker(\theta_k)$ after $s$ steps of computation.
We control the enumeration of these kernels so as to satisfy the following for every $t \in \N$. 
{
\renewcommand{\theenumi}{C-\arabic{enumi}}
\begin{enumerate}
    \item If $t = 0$, then $\ker_t(\theta_k) = \emptyset$ for all $k \in \N$.

    \item There is 
at most one $k \in \N$ so that $\ker_{t+1}(\theta_k) \setminus \ker_t(\theta_k) \neq \emptyset$.

    \item For all $k \in \N$, if $\ker_{t+1}(\theta_k) \setminus \ker_t(\theta_k) \neq \emptyset$, 
    then $k \leq t$ and $\#(\ker_{t+1}(\theta_k) \setminus \ker_t(\theta_k)) = 1$. 
\end{enumerate}
}

In addition, we assume these enumerations are produced so that from $k,t$ it is possible to compute
an index of $\ker_t(\theta_k)$.  We note that $\#\ker_t(\theta_k) \leq t$ for all $k,t$ (since it takes at least $t$ steps of computation to produce an output of size at least $t$).

During the construction, we ensure the following invariants. 
{
\renewcommand{\theenumi}{I-\arabic{enumi}}
\begin{enumerate}
    \item $\theta_{s+1} \circ \phi_s = \theta_s$. \label{inv:comp}

    \item If $\alpha \in \ker_{t+1}(\theta_s)\setminus\ker_t(\theta_s)$
    then, $\alpha \in \ker(\phi_{s,t})$.\label{inv:ker}

    \item $g_s \in \theta_k[(\Z^{n_s})^+]$. \label{inv:sur}
\end{enumerate}
}
It follows that if these invariants are ensured, then the above requirements are satisfied.\\

\noindent\bf Stage 0:\rm\ Let $n_0  =  1$ and let $\theta_0(n)  =  ng_0$.\\

\noindent\bf Stage $s+1$:\rm\ Assume $n_0, \ldots, n_s$ and $\theta_0$, $\ldots$, $\theta_s$ 
have been defined.  If $s > 0$, then assume that $\phi_0, \ldots, \phi_{s-1}$ have been defined.
Assume all invariants have been maintained at previous stages.

Let $\iota_s$ denote the natural injection of $\Z^{n_s}$ into $\Z^{n_s} \bigoplus \Z$.
That is, $\iota_s(\vec{n}) = (\vec{n}, 0)$.
Define $\zeta_s : \Z^{n_s} \bigoplus \Z \rightarrow G$ by setting 
$\zeta_s(\vec{n}, m) = \theta_s(\vec{n}) + m g_{s+1}$.\\

\noindent\it Case 1:\rm\ $s = 0$ or  $\ker_s(\theta_k) \subseteq \ker_{s-1}(\theta_k)$ for all $k < s$ (that is, for $k < s$ nothing new was added to our approximation of the kernel of $\theta_k$ at stage $s$). \\

We set:
\begin{eqnarray*}
n_{s+1} & = & n_s + 1\\
\phi_s & = & \iota_s\\
\theta_{s+1} & = & \zeta_s.
\end{eqnarray*}
\ \\
\noindent\it Case 2:\rm\ $s > 0$ and there exists $k < s$ so that $\ker_s(\theta_k) \setminus \ker_{s-1}(\theta_k) \neq \emptyset$. \\

Because of our conditions on the enumeration of the kernels,
the number $k$ is unique, and we set $k_{s+1} = k$.  
We also define $\alpha_{s+1}$ to be the unique element in $\ker_s(\theta_k) \setminus \ker_{s-1}(\theta_k)$.
We seek to define $\phi_s$ so that $\phi_{s-1} \circ \cdots \circ \phi_k(\alpha_{s+1}) \in \ker(\phi_s)$. 
To this end, set $\beta_{s+1} = \phi_{s-1} \circ \cdots \circ \phi_k(\alpha_{s+1})$.
We note that $\beta_{s+1} \in \ker(\theta_s)$ (since all invariants have been maintained at prior stages).
Thus, by definition, $\beta_{s+1} \in \ker(\zeta_s)$.  
We apply the Effective Shen Property to $n_s$, $\zeta_s$, and $\beta_{s+1}$ to obtain a positive 
integer 
$p$, a homomorphism $\phi_s': \Z^{n_s} \bigoplus \Z \rightarrow \Z^p$, and an ordered group 
homomorphism $\theta_s' : \Z^p \rightarrow \calG$ so that 
$\beta_{s+1} \in \ker(\phi_s')$ and so that $\zeta_s = \theta_s' \circ \phi_s'$.  
We then set:
\begin{eqnarray*}
n_{s+1} & = & p \\
\theta_{s+1} & = & \theta_s' \\
\phi_s & = & \phi_s' \circ \iota_s.
\end{eqnarray*}
This completes the construction. 

We now demonstrate that all invariants are maintained at every stage of the construction.
(\ref{inv:ker}) and (\ref{inv:sur}) follow directly from the construction. 
We proceed to verify (\ref{inv:comp}).  We first note that by 
construction, $\zeta_s \circ \iota_s = \theta_s$ for all $s \in \N$. Let $s \in \N$.  

Suppose $s = 0$.  By construction, $\phi_0 = \iota_0$ and $\theta_1 = \zeta_0$.  
By definition, for each $n \in \N$, $\zeta_0(\iota_0(n)) = \zeta_0(n,0) = \theta_0(n)$. 
Hence, $\theta_1 \circ \phi_0 = \theta_0$. 

Now, suppose $s > 0$.
We first consider the case where 
$\ker_s(\theta_k) \subseteq \ker_{s-1}(\theta_k)$ for all $k < s$ (that is, Case 1 holds at stage $s+1$).  
By construction, $n_{s+1} = n_s + 1$, $\phi_s = \iota_s$ and $\theta_{s+1} = \zeta_s$.  
Thus, for all $\vec{n} \in \Z^{n_{s+1}}$, $\zeta_s(\iota_s(\vec{n})) = \zeta_s(\vec{n}, 0) = \theta_s(\vec{n})$; that is, $\theta_{s+1} \circ \phi_s = \theta_s$. 

Now, we consider the case in which there exists $k < s$ so that $\ker_s(\theta_k) \setminus \ker_{s-1}(\theta_k) \neq \emptyset$ (that is, Case 2 holds at stage $s+1$).
By construction, $\phi_s = \phi_s' \circ \iota_s$ and $\theta_{s+1} = \theta_s'$.  
Furthermore, $\zeta_s = \theta_s' \circ \phi_s'$.  
Hence, $\theta_s = \zeta_s \circ \iota_s = \theta_{s+1} \circ \phi_s$. 
\end{proof}

The proof given above is an adaptation of the classical proof as given in, for example, \cite{Durand.Perrin.2022}.  The main difference between our proof and the classical proof is that in the latter, one can simply consider a basis for $\ker(\theta_k)$ and apply the Shen property a finite number of times, once to each element of the basis.  In the effective setting, one cannot, in general, know when one has found a basis for a finitely generated free abelian group; for this reason, we have to ``revisit'' each kernel infinitely often.

\begin{lemma}\label{lm:CompUntlCertDim}
Every c.e. presentation of a unital dimension group has a unital certificate of 
dimensionality.
\end{lemma}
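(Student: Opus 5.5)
The plan is to reduce to Lemma \ref{lm:CompCertDim}, producing a computable unital certificate. Let $(\calG^\#, u)$ be a c.e. presentation of a unital dimension group. First I apply Lemma \ref{lm:CompCertDim} to $\calG^\#$ and obtain a computable certificate of dimensionality $(n_s, \phi_s, \theta_s)_{s \in \N}$. The only missing ingredients are the order units $u_s \in \Z^{n_s}$ with $\phi_s(u_s) = u_{s+1}$ and $\theta_s(u_s) = u$. Since $u \in \calG^+ = \bigcup_s \theta_s[\Z^{n_s}_{\geq 0}]$, I can search effectively for some $s_0$ and $x \in \Z^{n_{s_0}}_{\geq 0}$ with $\theta_{s_0}(x) = u$. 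This uses the c.e. kernel of $G^\#$, and the search is guaranteed to halt. After discarding the first $s_0$ terms (a computable telescoping that preserves the inductive limit), I set $x_s = \phi_{s_0,s}(x)$. These satisfy $\theta_s(x_s) = u$ and $\phi_s(x_s) = x_{s+1}$.

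The obstacle is that $x_s$ need not be an order unit of $\Z^{n_s}$: some coordinates may vanish. My fix is Lemma \ref{lm:UntlCnvx} together with Lemma \ref{lm:CompCnvxGen}. Let $H_s$ be the convex subgroup of $\Z^{n_s}$ generated by $x_s$. Concretely, $H_s$ is the coordinate subgroup spanned by the basis vectors $e_j$ with $x_s(j) > 0$, so $H_s \cong \Z^{m_s}$ simplicially, and it is computable uniformly in $s$ by Lemma \ref{lm:CompCnvxGen}. By Lemma \ref{lm:UntlCnvx}, $x_s$ is an order unit of $H_s$.

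Positivity gives $\phi_s[H_s] \subseteq H_{s+1}$: if $0 \le g \le n x_s$, then $0 \le \phi_s(g) \le n x_{s+1}$. So I define the restricted maps $\phi_s|_{H_s}$ and $\theta_s|_{H_s}$, and I take $u_s = x_s$ viewed in $\Z^{m_s}$. This yields a computable inductive sequence $((\Z^{m_s}, u_s), \phi_s|)$ of unital simplicial groups together with unital maps $\theta_s|$ into $(\calG, u)$.

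It remains to verify conditions (1) and (2) of Lemma \ref{lm:IndLimOrdAbl} for the restricted system. The kernel condition is inherited, since $\ker(\theta_s|_{H_s}) = \ker(\theta_s) \cap H_s = \bigcup_k \ker(\phi_{s,s+k}) \cap H_s$.

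The main obstacle is the positive-cone condition, and with it the need for the limit to be all of $\calG$ rather than only the convex subgroup generated by $u$. Fix $g \in \calG^+$. Because $u$ is an order unit, $g \le N u$ for some $N$. Writing $g = \theta_t(y)$ with $y \ge 0$, we have $Nu - g \in \calG^+$, so also $Nu - g = \theta_{t'}(z)$ with $z \ge 0$. Pushing both to a common level $r$ and using the kernel description, I get $\phi_{t,r}(y) + \phi_{t',r}(z) - N x_r \in \ker(\theta_r)$, which lies in $\ker(\phi_{r,r'})$ for some $r'$. Then at level $r'$ we have $0 \le \phi_{t,r'}(y) \le N x_{r'}$, so $\phi_{t,r'}(y) \in H_{r'}$. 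This shows that $g$ lies in the image of the positive cone of $H_{r'}$, and condition (2) holds. Lemma \ref{lm:IndLimOrdAbl} then identifies $(\calG, u)$ as the unital inductive limit. Every step is a uniform search, so the resulting certificate is computable.
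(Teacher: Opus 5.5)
Your proof is correct and is essentially the paper's argument: you take a computable certificate from Lemma \ref{lm:CompCertDim}, push a positive lift of $u$ forward, restrict each level to the convex (coordinate) subgroup generated by that lift, and check the two conditions of Lemma \ref{lm:IndLimOrdAbl}. The differences are minor. You obtain the lift of $u$ by a c.e.\ search followed by truncation, whereas the paper arranges $n_0 = 1$ and $\nu_0(n) = nu$ inside the construction of Lemma \ref{lm:CompCertDim}. Your positive-cone verification first writes $Nu - g$ as the image of a positive vector and only then invokes the kernel condition, which spells out a step the paper passes over quickly.
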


\begin{proof}
Suppose $(\calG, u)^\#$ is a c.e. presentation of a unital dimension group.
By Lemma \ref{lm:CompCertDim}, $\calG^\#$ has a computable certificate of 
dimensionality $(n_s, \nu_s, \phi_s)_{s \in \N}$.
By the proof of Lemma \ref{lm:CompCertDim}, we may assume 
$n_0 = 1$ and $\nu_0(n) = n u$.
Set $u_s = \phi_{0,s}(1)$.

We now construct an inductive sequence $((\Z^{m_s}, v_s), \gamma_s)_{s \in \N}$ of 
unital ordered abelian groups.  
To begin, for each $s$, let $\calH_s$ denote the convex ordered 
subgroup of $\Z^{n_s}$ generated by $u_s$, and assume $\calH_s = (H_s, H_s \cap \Z^{n_s}_{\geq 0})$.
By the uniformity of Lemma \ref{lm:CompCnvxGen}, $H_s$ is computable 
uniformly in $s$.  Define $B_s$ to be the intersection of the standard basis of $\Z^{n_s}$
and $H_s$, and set $m_s = $ the cardinality of $B_s$. 
By the proof of \cite[Proposition 3.8]{Goodearl.1986}, $B_s$ is a basis for $\calH_s$.  

For each $s \in \N$, there is a unique isomorphism $\tau_s$ from $\Z^{m_s}$ 
to $\calH_s$ so that for each $j \in \{1, \ldots, m_s\}$, 
$\tau_s(e^{m_s}_j) = e^{n_s}_k$ where $k$ is the $j$-th element of 
$\{k'\ :\ e^{n_s}_{k'} \in B_s\}$. 
Let $v_s = \tau_s^{-1}(u_s)$.  Thus, $v_s$ is an order unit of $\Z^{m_s}$.  
Let $\psi_s = \phi_s|_{H_s}$.  
Since $\phi_s(u_s) = u_{s+1}$, it follows that $\phi_s[H_s] \subseteq H_{s+1}$.
  We now set $\gamma_s = \tau_{s+1}^{-1}\psi_s\tau_s$.

Let $\iota_s$ denote the inclusion map of $\calH_s$ into $\Z^{n_s}$, 
and define $\mu_s$ to be $\nu_s\iota_s\tau_s$.  It follows that 
$((\calG,u)^\#, (\mu_s)_{s \in \N})$ is a computable upper inductive limit of 
$((\Z^{m_s}, v_s), \gamma_s)_{s \in \N}$;   
we claim it is in fact a computable inductive limit.
It suffices to show that $(\calG, (\mu_s)_{s \in \N})$ is 
an inductive limit of $(\Z^{m_s}, \gamma_s)_{s \in \N}$.  
To this end, suppose $x \in \calG^+$. By Lemma \ref{lm:IndLimOrdAbl}, 
there exists $s \in \N$ and $y \in \Z^{n_s}_{\geq 0}$ so that 
$\nu_s(y) = x$.  Since $u$ is an order unit of $\calG$, there exists a positive integer
$t$ so that $-tu \leq x \leq tu$.  Thus, $-t\nu_s(u_s) \leq \nu_s(y) \leq t\nu_s(u_s)$. 
Since $\ker(\nu_s) = \bigcup_{k \in \N} \ker(\phi_{s,s+k})$, it follows that 
there exists $k \in \N$ so that $-t\phi_{s,s+k}(u_s) \leq \phi_{s,s+k}(y) \leq t \phi_{s,s+k}(u_s)$.
However, $\phi_{s,s+k}(u_s) = u_{s+k}$ which belongs to $\calH_{s + k}$.  
Since $\calH_{s + k}$ is convex, $\phi_{s,s+k}(y) \in \calH_{s+k}$.  
Moreover, $\phi_{s,s+k}(y) \in \calH_{s + k}^+$, and so $x = \nu_{s+k}(\phi_{s,s+k}(y)) \in \nu_{s+k}[\calH_{s+k}^+]$.  Hence, $\calG^+ = \bigcup_{s \in \N} \mu_s[\calH_s^+]$.
Since $\tau_s$ and $\iota_s$ are injective, it now follows that $\ker(\mu_s) = \bigcup_{k \in \N} \ker(\gamma_{s,s+k})$.  Thus, by Lemma \ref{lm:IndLimOrdAbl}, $(\calG, (\mu_s)_{s \in \N})$ is
an inductive limit of $(\Z^{m_s}, \gamma_s)_{s \in \N}$.

It now follows that $(m_s, v_s, \mu_s, \gamma_s)_{s \in \N}$ is a computable
unital certificate of dimensionality of $(\calG, u)^\#$.
\end{proof}

We note that the proofs of the above lemmas are uniform.

\begin{proof}[Proof of Main Theorem \ref{mthm:ClsCePresDimGrp}] \ \\ \ \\
(\ref{mthm:ClsCePresDimGrp::NotUntl}): Suppose $\calG^\#$ is a c.e. presentation of a dimension group. 
By Lemma \ref{lm:CompCertDim}, $\calG^\#$ has a computable certificate of 
dimensionality 
$(n_s, \nu_s, \phi_s)_{s \in \N}$.  

Define $F_0$ to be the multiset whose only element is $1$ repeated $n_0$ times. 
Set $u_0 = v_{F_0}$ (which is $(1, \ldots, 1) \in \Z^{n_0}$), and let $\boldA_0 = \bigoplus_{n \in F_0} M_n(\C)$.   

Assume $F_s$ has been defined, $u_s = v_{F_s}$, and $\boldA_s = \bigoplus_{n \in F_s} M_n(\C)$. 
We compute an order unit $u_{s+1} \in \Z^{n_{s+1}}$ so that 
$u_{s+1} \geq \phi_0(u_s)$. We let $F_{s+1}$ be the multiset that consists of 
the integers that appear in $u_s$ and the number of times an integer is repeated in 
$F_{s+1}$ is the number of times it is repeated in $u_s$.  Set $u_{s+1} = v_{F_{s+1}}$, and 
set $\boldA_{s+1} = \bigoplus_{n \in F_{s+1}} M_n(\C)$.  
Since $\phi_s(u_s) \leq u_{s+1}$, it follows from the discussion in 
Section \ref{subsubsec:BackFinDimAlg} that there exists a $\star$-homomorphism 
$\psi_s : \boldA_s \rightarrow \boldA_{s+1}$ so that $\eta_{F_s, F_{s+1}}(\psi_s)) = \phi_s$.
Furthermore, $\psi_s$ can be computed from the information in $\phi_s$. 

By Corollary \ref{cor:CompIndLimAlg2}, 
$(\boldA_s, \psi_s)_{s \in \N}$ sequence has a computable inductive limit $(\boldA^\#, (\mu_s)_{s \in \N})$ in the category of c.e. presentations of \cstar-algebras. 
As noted in Section \ref{subsubsec:BackAF}, $\boldA$ is an AF algebra.

We now show that $(K_0(\boldA^\#), K_0(\boldA)^+)$ is computably isomorphic to $\calG^\#$.   
 It follows from \cite[Theorem 6.3.2]{Rordam.Larsen.Laustsen.2000} and 
Lemma \ref{lm:IndLimOrdAbl} that $((K_0(\boldA^\#), K_0(\boldA)^+), (K_0(\mu_s))_{s \in \N})$
is an inductive limit of $((K_0(\boldA_s), K_0(\boldA_s)^+), K_0(\psi_s))_{s \in \N}$ in the category of c.e. presentations of ordered abelian groups.
It then follows that 
$((K_0(\boldA^\#), K_0(\boldA)^+), (K_0(\mu_s)\circ \zeta_{F_s}^{-1})_{s \in \N}$ is 
an inductive limit of $(\Z^{n_s}, \phi_s)_{s \in \N}$ in the category of c.e. presentations of ordered abelian groups.
Thus, the reduction of $(\calG, (\nu_s)_{s \in \N})$ to $((K_0(\boldA^\#), K_0(\boldA)^+), (K_0(\mu_s)\circ \zeta_{F_s}^{-1})_{s \in \N}$
is an isomorphism.

(\ref{mthm:ClsCePresDimGrp::Untl}):\ Suppose $u$ is an order unit of $\calG$.
By Lemma \ref{lm:CompUntlCertDim}, 
$(\calG, u)^\#$ has a computable unital certificate of dimensionality 
$(n_s, u_s, \nu_s, \phi_s)_{s \in \N}$.
 Thus, by Definition \ref{def:UntlCertDim}
$u_s$ is an order unit for $\Z^{n_s}$ and 
$\nu_s(u_s) = u$.  
We modify the above construction so that $\eta_{F_s}(K_0(\unit_{\boldA_s})) = u_s$.
\end{proof}

\begin{corollary}
Suppose $\calG^\# = (G^\#, P)$ is a c.e. presentation of a dimension group. 
Then, $G^\#$ is computable. 
\end{corollary}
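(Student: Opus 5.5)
The plan is to move from the group to an AF algebra and back, and then use the fact (Corollary \ref{cor:CompPresAF}) that c.e. presentations of unital AF algebras are automatically computable. This would show that $G^\#$ is computably isomorphic to a computable presentation. I would then transfer computability of the kernel along that computable isomorphism.

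Rather than rely only on Corollary \ref{cor:CompPresAF}, I would argue directly from the certificate of dimensionality. By Lemma \ref{lm:CompCertDim}, $\calG^\#$ has a computable certificate $(n_s, \nu_s, \phi_s)_{s \in \N}$. By Lemma \ref{lm:IndLimOrdAbl}, $\ker(\nu_s) = \bigcup_k \ker(\phi_{s,s+k})$, which is c.e.; the goal is to make it co-c.e. as well. Since the group is a dimension group, the positive cone decides kernels: for $x \in \Z^{n_s}$, $\nu_s(x) = 0$ iff both $\nu_s(x) \in P$ and $-\nu_s(x) \in P$, because $P \cap -P = \{0\}$. This is still only c.e. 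The real complementary information must come from the \cstar-algebra side. Build the AF algebra $\boldA^\#$ as in the proof of Main Theorem \ref{mthm:ClsCePresDimGrp}(\ref{mthm:ClsCePresDimGrp::NotUntl}), as a computable inductive limit of finite-dimensional algebras. Then $\nu_s(x) \neq 0$ for $x \in \Z^{n_s}_{\geq 0}$ iff the corresponding projection $p$ in $\boldA_s$ has nonzero image in $\boldA$. Since the connecting maps $\psi_s$ are chosen injective (unital embeddings, or at least injective after discarding summands sent to zero), $\mu_s(p) \neq 0$ always when $p \neq 0$. By Corollary \ref{cor:CompIndLimAlg2}(\ref{cor:CompIndLimAlg2::Comp}), the norm $\norm{\mu_s(q)} = \lim_k \norm{\psi_{s,s+k}(q)}$ is computable, so nonzeroness of elements is c.e.

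To decide whether a label $w$ denotes $0$, first search for $s$ and $x \in \Z^{n_s}$ with $\nu_s(x) = [w]$; this search is c.e. and terminates. Write $x = x^+ - x^-$ with $x^\pm \geq 0$. Then $\nu_s(x) = 0$ iff $\nu_s(x^+) = \nu_s(x^-)$, that is, iff $K_0$-classes of projections agree in $\boldA$. By Elliott-type reasoning, or directly, for projections in AF algebras, equality of classes is Murray--von Neumann equivalence. Inequivalence is witnessed at the level of traces/dimension in a finite stage only in the limit, so I would instead use this fact: in a computable presentation, the set of pairs of projections that are \emph{not} equivalent is c.e., because equivalent projections in an AF algebra are unitarily equivalent via a unitary in some finite stage. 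Failing that, one uses $\norm{p - uqu^*} \geq$ some bound for all $u$, which is hard.

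The main obstacle is exactly this co-c.e. half. The cleaner path I would try first is to avoid algebras entirely. Use computability of the ordered $K_0$ presentation of the AF algebra $\boldA^\#$ (computable by Corollary \ref{cor:CompPresAF}), plus \cite{UHFPaper}'s result that $K_0$ of a computable presentation whose projections' equivalence relation is decidable is computable. Combine this with the computable isomorphism from Main Theorem \ref{mthm:ClsCePresDimGrp}, which transports decidability of the kernel back to $G^\#$.
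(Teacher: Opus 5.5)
The co-c.e.\ half, which you correctly single out as the crux, is never supplied, and the steps you offer toward it fail.

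The connecting maps $\psi_s$ cannot in general be chosen injective. The map $\psi_s$ has to realize $\phi_s$ on $K_0$, and a positive $\phi_s$ may send a standard basis vector to $0$, which forces $\psi_s$ to kill a summand. Discarding summands killed by the next map does not help, because what matters is the eventual kernel $\bigcup_k \ker(\psi_{s,s+k})$, which is only c.e. Hence Corollary~\ref{cor:CompIndLimAlg2}(\ref{cor:CompIndLimAlg2::Comp}) does not apply. The norm $\norm{\mu_s(q)} = \inf_k \norm{\psi_{s,s+k}(q)}$ is merely upper semicomputable: you can enumerate the projections that die, but not those that survive.

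Even when all $\psi_s$ are injective, you only learn that nonzero positive classes stay nonzero. The actual question is whether $\nu_s(x^+) = \nu_s(x^-)$, i.e.\ Murray--von Neumann equivalence in the limit. A unitary found at a finite stage witnesses equivalence, so your reasoning makes equivalence c.e., not inequivalence. The closing ``cleaner path'' needs equivalence of projections in $\boldA^\#$ to be decidable. That is the same missing co-c.e.\ half, so invoking a result from \cite{UHFPaper} with that hypothesis is circular.

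The paper argues differently and stays on the group side. It takes the computable certificate from Lemma~\ref{lm:CompCertDim} and invokes Proposition~\ref{prop:CompIndLimPresAbl}(2) to get a computable inductive limit $(H^\#, (\mu_s)_{s \in \N})$ in the category of computable presentations. It then transfers decidability along the computable reduction $G^\# \to H^\#$, which is an isomorphism. Your obstacle is hidden in that proposition: the presentation built in its proof recognizes labels of $0$ via $\bigcup_k \ker(\phi_{s,s+k})$, which is c.e.\ but in general not computable.

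In fact no argument can close the gap, because the statement as written is false. Let $K$ be a c.e.\ noncomputable set with infinite complement. Let $G = \bigoplus_{e \in \N \setminus K} \Z$ with the coordinatewise positive cone $P$; this is a dimension group, being the limit of the $\Z^n$ under the standard inclusions (Lemma~\ref{lm:IndLimOrdAbl}). Let $\nu(x_e)$ be the generator of the $e$-th summand if $e \notin K$, and $0$ if $e \in K$. Write $a_e(w)$ for the exponent sum of $x_e$ in $w$. Then $w \in \ker(\nu)$ iff $\{e : a_e(w) \neq 0\} \subseteq K$, and $w$ labels an element of $P$ iff $\{e : a_e(w) < 0\} \subseteq K$. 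Both sets are c.e., so $(G^\#, P)$ is a c.e.\ presentation of a dimension group. Yet $x_e \in \ker(\nu)$ iff $e \in K$, so $G^\#$ is not computable.

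Adding a generator for the constant function $1$ on the one-point compactification $X$ of $\N \setminus K$ gives a unital example. The analogous presentation of $C(X)$ has special points $\unit$ and, for each $e$, $\chi_{\{e\}}$ if $e \notin K$ and $0$ otherwise. It is c.e., since the norm of a generated point is a maximum over finitely many atoms, a set that only shrinks as $K$ is enumerated. It is not computable, since the norm of the $e$-th projection is $1$ or $0$ according as $e \notin K$ or $e \in K$. So Corollary~\ref{cor:CompPresAF}, on which your first plan rests, fails as well.
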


\begin{proof}
Let $(n_s, \nu_s, \phi_s)_{s \in \N}$ be a computable certificate of dimensionality 
for $\calG^\#$.  Thus, 
$(G^\#, (\nu_s)_{s \in \N})$ is a computable inductive limit of 
$(\Z^{n_s}, \phi_s)_{s \in \N}$ in the category of c.e. presentations of 
abelian groups.  However, by Proposition \ref{prop:CompIndLimPresAbl}, 
$(\Z^{n_s}, \phi_s)_{s \in \N}$ has a computable inductive limit 
$(H^\#, (\mu_s)_{s \in \N})$ in the category of computable presentations of 
abelian groups.  Since every computable presentation is also c.e., 
$(H^\#, (\mu_s)_{s \in \N})$ is also a computable inductive upper limit of 
$(\Z^{n_s}, \phi_s)_{s \in \N}$ in the category of c.e. presentations of abelian groups.
Let $\lambda$ be the reduction of $(G^\#, (\nu_s)_{s \in \N})$ to 
$(H^\#, (\mu_s)_{s \in \N}$.  
Thus, by definition of computable inductive limit, 
$\lambda$ is a computable map from $G^\#$ to $H^\#$.  

On the other hand, by Lemma \ref{lm:IndLimPresAbl2} $(H, (\mu_s)_{s \in \N})$ is an inductive limit 
of $(\Z^{n_s}, \phi_s)_{s \in \N}$ and $(G, (\nu_s)_{s \in \N})$
is as well.  Thus, $\lambda$ is the reduction of $(G, (\nu_s)_{s \in \N})$ to 
$(H, (\mu_s)_{s \in \N})$ (since reductions are unique).  
However, it then follows that $\lambda$ is an isomorphism.

Thus, $\lambda$ is a computable isomorphism from $G^\#$ to $H^\#$. 
Since $H^\#$ is computable, it follows that $G^\#$ is computable.
\end{proof}

\begin{corollary}\label{cor:CompK0AF}
If $\boldA^\#$ is a c.e. presentation of a unital AF algebra, 
then $K_0(\boldA^\#)$ is computable.
\end{corollary}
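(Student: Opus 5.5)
The plan is to reduce to the preceding corollary by showing that the presented ordered $K_0$ group of $\boldA^\#$ is a c.e. presentation of a dimension group. Then its underlying group presentation $K_0(\boldA^\#)$ is computable.

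First, $\boldA$ is a unital AF algebra, hence stably finite. By the background recalled in Section~\ref{subsubsec:BackCompKThy} and the introduction, $K_0(\boldA^\#)$ is a c.e. presentation of the abelian group $K_0(\boldA)$, and the positive cone $K_0(\boldA)^+$ is a c.e. set of $K_0(\boldA^\#)$. So $(K_0(\boldA^\#), K_0(\boldA)^+)$ is a c.e. presentation of an ordered abelian group in the sense of Definition~\ref{def:CePresOrdAbl}.

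Second, I need to know that $(K_0(\boldA), K_0(\boldA)^+)$ is a dimension group. This is classical. Take an AF certificate $(F_j,\psi_j)_{j\in\N}$ of $\boldA$. By Corollary~\ref{cor:IndLimAFCert}, $\boldA$ is the inductive limit of the finite-dimensional algebras $\bigoplus_{n\in F_j}M_n(\C)$. By continuity of $K_0$ (\cite[Theorem 6.3.2]{Rordam.Larsen.Laustsen.2000}) together with the canonical isomorphisms $\zeta_{F_j}$ onto simplicial groups, the ordered $K_0$ group is an inductive limit of simplicial groups. Hence it has a certificate of dimensionality. One could avoid this step by invoking Main Theorem~\ref{mthm:CompAFCert} to get a computable certificate, but this is unnecessary: only the existence of a certificate is needed, since the preceding corollary requires only that the group be a dimension group.

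Finally, apply the preceding corollary to $\calG^\# = (K_0(\boldA^\#), K_0(\boldA)^+)$ to conclude that $K_0(\boldA^\#)$ is computable.

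The only point requiring care is confirming that the positive cone is c.e. in $K_0(\boldA^\#)$, so that the hypothesis of that corollary is met. If this were not directly available from \cite{UHFPaper}, I would argue it as follows. A class is positive iff it is represented by a projection in some matrix algebra $M_k(\boldA)$. Projections form a computably weakly stable relation, so by \cite[Theorem 1.12]{UHFPaper} they form a c.e. closed set. One can then enumerate computable projections together with their $K_0(\boldA^\#)$ labels, uniformly in $k$.
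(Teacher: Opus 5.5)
Your proposal is correct and is essentially the paper's own argument. The paper states this corollary without proof, as an immediate consequence of the preceding corollary applied to the c.e. presentation $(K_0(\boldA^\#), K_0(\boldA)^+)$ of the dimension group $K_0(\boldA)$, taking the c.e.-ness of the positive cone from \cite{UHFPaper} as recalled in the introduction.
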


We now turn to the proof of our third main theorem.

\section{Proof of Main Theorem \ref{mthm:ClsCompPrsntAF}}\label{sec:ProofMthm3}

(\ref{mthm:ClsCompPrsntAF::CePres}) $\Rightarrow$ (\ref{mthm:ClsCompPrsntAF::CompIndSeqFinDimAlg}): This is a straightforward application of Main Theorem \ref{mthm:CompAFCert} 
and Corollary \ref{cor:IndLimAFCert}.\\

(\ref{mthm:ClsCompPrsntAF::CompIndSeqFinDimAlg}) $\Rightarrow$ (\ref{mthm:ClsCompPrsntAF::CompPres}): Suppose $(\bigoplus_{n \in F_s} M_n(\C), \psi_s)_{s \in \N}$ is a computable inductive 
sequence whose inductive limit is $\star$-isomorphic to $\boldA$. Furthermore,
assume each $\psi_s$ is unital. 
By Corollary \ref{cor:CompIndLimAlg2}, $(\bigoplus_{n \in F_s} M_n(\C), \psi_s)_{s \in \N}$ has a 
computable inductive limit $(\boldB^\#, (\nu_s)_{s \in \N})$ in the category of 
computable presentations of \cstar-algebras.  Thus, $\boldB$ is $\star$-isomorphic to 
$\boldA$.  As remarked in Section \ref{subsec:BackCompAlg}, the class of computably presentable 
\cstar-algebras is closed under isomorphism.  Hence, $\boldA$ is computably presentable.\\

(\ref{mthm:ClsCompPrsntAF::CompPres}) $\Rightarrow$ (\ref{mthm:ClsCompPrsntAF::CompPresUntlOrdAbl}):
This follows from Corollary \ref{cor:CompK0AF}.\\

(\ref{mthm:ClsCompPrsntAF::CompPresUntlOrdAbl}) $\Rightarrow$ (\ref{mthm:ClsCompPrsntAF::CompBrat}):\
Let $(\calG,u) = \uK{\boldA}$, and suppose $(\calG,u)^\#$ is a c.e. presentation.
By Main Theorem \ref{mthm:ClsCePresDimGrp}, $(\calG,u)^\#$ has a computable unital certificate
of dimensionality $(n_s, u_s, \nu_s, \gamma_s)_{s \in \N}$. 

We first show that $\boldA$ has a c.e. presentation. 

Let $F_s$ denote the multiset that consists of the integers that appear in $u_s$ where
the number of times an integer is repeated in $F_s$ is the number of times it is 
repeated in $u_s$.  Set $\boldB_s = \bigoplus_{n \in F_s} M_n(\C)$.
For each $s$, it is fairly straightforward to compute a unital $\star$-homomorphism 
$\psi_s : \boldB_s \rightarrow \boldB_{s+1}$ 
so that $\eta_{F_s, F_{s+1}}( K_0(\psi_s)) = \phi_s$. 
By Corollary \ref{cor:CompIndLimAlg2}, $(B_s, \psi_s)_{s \in \N}$ has a computable
inductive limit $(\B^\#, (\nu_s)_{s \in \N})$ in the category of 
\cstar-algebras with c.e. presentations.  

It follows that $(\uK{\boldA}, (\nu_s \circ \zeta_{F_s}^{-1})_{s \in \N})$ is an inductive limit of $(\uK{\boldB_s}, K_0(\psi_s))_{s \in \N}$ as is $(\uK{\boldB}, (K_0(\mu_s))_{s \in \N})$.
Thus, $\uK{\boldA}$ and 
$\uK{\boldB}$ are isomorphic.  Hence, by Elliott's Theorem,
$\boldB$ is $\star$-isomorphic to $\boldA$. Therefore, $\boldA$ has a c.e. presentation 
$\boldA^\#$.  

By Main Theorem \ref{mthm:CompAFCert}, $\boldA^\#$ has a computable AF certificate
$(G_s, \gamma_s)_{s \in \N}$.  It is straightforward to construct a computably presented labeled 
Bratteli diagram of this certificate.  \\

(\ref{mthm:ClsCompPrsntAF::CompBrat}) $\Rightarrow$ (\ref{mthm:ClsCompPrsntAF::CePres}):  Suppose $\BratD^\#$ is a computable presentation of labeled Bratteli diagram of 
an AF certificate $(F_s, \phi_s)_{s \in \N}$ of $\boldA$.  
By means of the labeling, it is possible to compute $F_s$ from $s$. 
Set $\boldA_s = \bigoplus_{n \in F_s} M_n(\C)$.

By means of the information provided by the edge function of $\BratD$, 
it is possible to compute from $s \in \N$ a unital $\star$-monomorphism 
$\tau_s : \boldA_s \rightarrow \boldA_{s+1}$ so that 
$K_0(\phi_{s+1}^{-1} \circ \phi_s) = K_0(\tau_s)$.  
By Corollary \ref{cor:CompIndLimAlg2}, $(\boldA_s, \tau_s)_{s \in \N}$ 
has a computable inductive limit $(\boldB^\#, (\nu_s)_{s \in \N})$ in the 
category of computably presentable \cstar-algebras. However, by construction, 
$\uK{\boldA} = \uK{\boldB}$.  
By Elliott's Theorem, $\boldA$ and $\boldB$ are $\star$-isomorphic. 
Thus, $\boldA$ is computably presentable.

\section{Proof of Main Theorem \ref{mthm:CompIso}}

Our proof relies on two very technical lemmas: one for AF algebras and one for groups.

\begin{lemma}\label{lm:CertIntr}
Suppose $(F_s, \phi_s)_{s \in \N}$ is an AF certificate for $\boldA$, and let 
$\boldG = \bigoplus_{n \in G} M_n(\C)$ where $G$ is a finite multiset of positive integers. 
Furthermore, assume $\omega : \uK{\bigoplus_{s \in F_0} M_n(\C)} \rightarrow \uK{\boldG}$
and $\chi : \uK{\boldG} \rightarrow \uK{\boldA}$ are homomorphisms 
so that $\chi \circ \omega = K_0(\phi_0)$.  
Then there exists $s_0 \in \N$ and a homomorphism 
$h : \uK{\boldG} \rightarrow \uK{\bigoplus_{n \in F_{s_0}} M_n(\C)}$ so that 
$h \circ \omega = K_0(\phi_{s_0}^{-1} \circ \phi_0)$ and $K_0(\phi_{s_0}) \circ h = \chi$.
\end{lemma}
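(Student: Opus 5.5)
This is the standard intertwining step from Elliott's argument, carried out on $K_0$ groups. Write $\boldA_s = \bigoplus_{n \in F_s} M_n(\C)$ and $\phi_{0,s} = \phi_s^{-1}\circ\phi_0$. By Corollary \ref{cor:IndLimAFCert}, $(\boldA, (\phi_s)_{s \in \N})$ is an inductive limit of $(\boldA_s, \phi_{s+1}^{-1}\circ\phi_s)_{s \in \N}$. By continuity of $K_0$ (\cite[Theorem 6.3.2]{Rordam.Larsen.Laustsen.2000}), $(K_0(\boldA), (K_0(\phi_s))_s)$ is the inductive limit of the simplicial groups $K_0(\boldA_s)\cong \Z^{|F_s|}$. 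Lemma \ref{lm:IndLimOrdAbl} then gives
\[
K_0(\boldA)^+=\bigcup_s K_0(\phi_s)[K_0(\boldA_s)^+], \qquad \ker K_0(\phi_s)=\bigcup_k \ker K_0(\phi_{s+k}^{-1}\circ\phi_s).
\]

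\textbf{Lifting $\chi$.} $K_0(\boldG)\cong\Z^{|G|}$ is free, with positive cone generated by the standard basis $e_1,\dots,e_r$. Each $\chi(e_i)$ lies in $K_0(\boldA)^+$, so by the first equality there is an $s_1$ and positive elements $x_i \in K_0(\boldA_{s_1})^+$ with $K_0(\phi_{s_1})(x_i)=\chi(e_i)$. Since the ranges increase, one $s_1$ works for all $i$. Define $h_1$ on the basis by $h_1(e_i)=x_i$. Then $h_1$ is positive and $K_0(\phi_{s_1})\circ h_1=\chi$.

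\textbf{Correcting the defects.} There are two defects, and both lie in kernels. First, $h_1\circ\omega - K_0(\phi_{0,s_1})$ is a homomorphism on the finitely generated group $K_0(\boldA_0)$, and composing it with $K_0(\phi_{s_1})$ gives $\chi\circ\omega-K_0(\phi_0)=0$. So its values on the finitely many basis vectors of $K_0(\boldA_0)$ lie in $\ker K_0(\phi_{s_1})$. Second, the unit. In the unital category $\chi$ sends $[1_\boldG]$ to $[1_\boldA]$, and $K_0(\phi_{s_1})$ sends $[1_{\boldA_{s_1}}]$ to $[1_\boldA]$ because $\phi_{s_1}$ is unital. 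Hence $h_1([1_\boldG])-[1_{\boldA_{s_1}}]$ also lies in $\ker K_0(\phi_{s_1})$. This is finitely many kernel elements in total. By the second equality, there is a $k$ such that $\phi_{s_1+k}^{-1}\circ\phi_{s_1}$ kills all of them.

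Set $s_0=s_1+k$ and $h=K_0(\phi_{s_0}^{-1}\circ\phi_{s_1})\circ h_1$. Then:
\begin{itemize}
\item $h$ is positive, being a composition of positive maps.
\item $h$ is unital, because the unit defect has been killed.
\item $h\circ\omega=K_0(\phi_{0,s_0})$, by the choice of $k$ together with the functoriality identity $\phi_{s_0}^{-1}\circ\phi_{s_1}\circ\phi_{0,s_1}=\phi_{0,s_0}$.
\item $K_0(\phi_{s_0})\circ h=K_0(\phi_{s_1})\circ h_1=\chi$, using $\phi_{s_0}\circ\phi_{s_0}^{-1}\circ\phi_{s_1}=\phi_{s_1}$ on $\boldA_{s_1}$.
\end{itemize}

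\textbf{Main obstacle.} No single step is hard. The point needing the most care is bookkeeping: all defects must be finitely many elements of a single kernel $\ker K_0(\phi_{s_1})$, so that one application of the kernel description from Lemma \ref{lm:IndLimOrdAbl} handles them simultaneously. This is also why positivity of $h$ is arranged at the lifting stage, where positive lifts of the $\chi(e_i)$ are chosen, rather than afterwards.
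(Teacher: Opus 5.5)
Your proof is correct and follows the same route as the paper. The paper passes to the inductive-limit picture via Corollary~\ref{cor:IndLimAFCert} and then cites \cite[Lemma 7.3.3]{Rordam.Larsen.Laustsen.2000} for exactly the lift-then-equalize step that you carry out by hand from the cone and kernel descriptions in Lemma~\ref{lm:IndLimOrdAbl}.

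One small simplification: your separate unit correction is redundant. Since $\omega$ is unital, $h(K_0(\unit_\boldG)) = h(\omega(K_0(\unit_{\boldA_0}))) = K_0(\phi_{s_0}^{-1}\circ\phi_0)(K_0(\unit_{\boldA_0})) = K_0(\unit_{\boldA_{s_0}})$ already follows once $h\circ\omega = K_0(\phi_{s_0}^{-1}\circ\phi_0)$ holds.
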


\begin{proof}
Set $\boldA_s = \bigoplus_{n \in F_s} M_n(\C)$, and let $\psi_s = \phi_{s + 1}^{-1} \circ \phi_s$.
By Corollary \ref{cor:IndLimAFCert}, $(\boldA, (\phi_s)_{s \in \N})$ is 
an inductive limit of $(\boldA_s, \phi_s)_{s \in \N}$.  We now apply \cite[Lemma 7.3.3]{Rordam.Larsen.Laustsen.2000}.
\end{proof}

\begin{lemma}\label{lm:CompIntr}
Suppose $(n_s, \nu_s, \phi_s)_{s \in \N}$ is a computable certificate of dimensionality 
of $\calG^\#$, and assume $(m_s, \mu_s, \psi_s)_{s \in \N}$ is a computable certificate of dimensionality of $\calG^\#$.  Suppose $s,r,m_s, m_r \in \N$ are such that 
$s > r$, $m_s > n_r$, and let $\gamma : \Z^{n_r} \rightarrow \Z^{m_s}$
be an ordered group homomorphism so that $\nu_r = \mu_s \gamma$. 
Then there exists $t > s$ so that $n_t > m_s$ and so that there exists 
an ordered group homomorphism $\delta : \Z^{m_s} \rightarrow \Z_{n_t}$ so that 
$\delta\gamma = \phi_{r,t}$ and $\mu_s = \nu_t\delta$.
\end{lemma}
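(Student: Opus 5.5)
The natural route is the standard intertwining argument for inductive limits of simplicial groups. It uses the characterization of inductive limits in Lemma \ref{lm:IndLimOrdAbl}, namely that $\calG^+ = \bigcup_t \nu_t[\Z^{n_t}_{\geq 0}]$ and $\ker(\nu_r) = \bigcup_k \ker(\phi_{r,r+k})$. Since $\Z^{m_s}$ is freely generated, a homomorphism $\delta : \Z^{m_s}\to\Z^{n_t}$ is determined by the images $\delta(e^{m_s}_j)$. We choose these images first to secure $\mu_s = \nu_t\delta$ together with positivity, and then push forward further to kill the discrepancy $\delta\gamma - \phi_{r,t}$.

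\textbf{Step 1 (lifting the generators).} For each $j \in \{1,\ldots,m_s\}$, the element $\mu_s(e^{m_s}_j)$ lies in $\calG^+$. So by Lemma \ref{lm:IndLimOrdAbl}(\ref{lm:IndLimOrdGrp:ran}) there exist $t_j$ and $y_j \in \Z^{n_{t_j}}_{\geq 0}$ with $\nu_{t_j}(y_j) = \mu_s(e^{m_s}_j)$. Take $t_1' \geq \max(\max_j t_j, s)$ and push each $y_j$ forward by $\phi_{t_j,t_1'}$. This preserves positivity and the identity $\nu_{t_1'}\phi_{t_j,t_1'} = \nu_{t_j}$. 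Define $\delta_1 : \Z^{m_s}\to\Z^{n_{t_1'}}$ by $\delta_1(e^{m_s}_j) = \phi_{t_j,t_1'}(y_j)$. Then $\delta_1$ is positive and $\nu_{t_1'}\delta_1 = \mu_s$.

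\textbf{Step 2 (correcting the composite).} Set $d_i = \delta_1\gamma(e^{n_r}_i) - \phi_{r,t_1'}(e^{n_r}_i) \in \Z^{n_{t_1'}}$. Applying $\nu_{t_1'}$ gives $\mu_s\gamma(e_i) - \nu_r(e_i) = 0$ by the hypothesis $\nu_r = \mu_s\gamma$. Hence $d_i \in \ker(\nu_{t_1'})$, and by Lemma \ref{lm:IndLimOrdAbl}(\ref{lm:IndLimOrdGrp:ker}) there is $k_i$ with $\phi_{t_1',t_1'+k_i}(d_i) = 0$. Let $t \geq t_1' + \max_i k_i$ and $\delta = \phi_{t_1',t}\delta_1$. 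This $\delta$ is positive as a composite of positive maps, and $\nu_t\delta = \nu_{t_1'}\delta_1 = \mu_s$. Moreover $\delta\gamma(e_i) = \phi_{t_1',t}\phi_{r,t_1'}(e_i) = \phi_{r,t}(e_i)$ for every $i$, so $\delta\gamma = \phi_{r,t}$.

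\textbf{Step 3 (size conditions).} We also need $t > s$ and $n_t > m_s$. The first is automatic once $t$ is enlarged, and enlarging $t$ further keeps both identities by the same push-forward reasoning. The condition $n_t > m_s$ is not automatic in general, since the ranks $n_t$ need not grow. This is the one step I expect to be delicate. I would first check whether the certificates produced by the construction in Lemma \ref{lm:CompCertDim} have unbounded $n_t$; otherwise I would treat this inequality as a bookkeeping convention mirroring the hypothesis $m_s > n_r$. If it is genuinely needed, one could replace the certificate by a computably equivalent one with strictly increasing ranks, for example by padding with zero summands that map trivially. The substantive content is Steps 1–2, and I would remark that the search for $t_j, y_j, k_i$ is effective because $\calG^+$ and the kernels are c.e. and the certificates are computable, which is what the later uniform use of this lemma requires.
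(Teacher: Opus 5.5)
Your Steps 1 and 2 are correct and follow the same route as the paper. The paper also first produces $t'$ and a positive $\delta' : \Z^{m_s} \rightarrow \Z^{n_{t'}}$ with $\nu_{t'}\delta' = \mu_s$. It then observes that $\phi_{r,t'}(g) - \delta'\gamma(g) \in \ker(\nu_{t'})$ for each standard basis vector $g$ of $\Z^{n_r}$, kills these differences by pushing forward, and sets $\delta = \phi_{t',t}\delta'$ with $t = t' + \max_g k_g$. Your Step 1 supplies a justification the paper compresses into the word ``Compute'': you lift the positive elements $\mu_s(e^{m_s}_j)$ using the positive-cone clause of Lemma \ref{lm:IndLimOrdAbl}. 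You are also right that $t > s$ comes for free by pushing forward further.

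On Step 3 your suspicion is well founded, but the problem lies in the statement, not in your argument. The clause $n_t > m_s$ is false in general. The paper's proof does not establish it either; it simply asserts that one can ``compute $n_{t'} > m_s$''.

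Here is a counterexample. Let $\calG = (\Z, \Z_{\geq 0})$ with its standard presentation.
\begin{itemize}
\item First certificate: $n_t = 1$ and $\phi_t = \nu_t = \Id$ for all $t$.
\item Second certificate: $m_0 = 1$, $\mu_0 = \Id$ and $\psi_0(a) = (a,0)$; for $s \geq 1$, $m_s = 2$, $\psi_s(a,b) = (a+b,0)$ and $\mu_s(a,b) = a+b$.
\end{itemize}
Both are computable certificates of dimensionality by Lemma \ref{lm:IndLimOrdAbl}. Each $\mu_s$ maps the positive cone onto $\Z_{\geq 0}$. For $s \geq 1$ and $k \geq 1$ we have $\ker(\mu_s) = \{(a,-a)\} = \ker(\psi_{s,s+k})$, and $\psi_{0,k}$ is injective. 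Now take $r = 0$, $s = 1$ and $\gamma(1) = (1,0)$. Then $s > r$, $m_1 = 2 > 1 = n_0$ and $\mu_1\gamma = \nu_0$, so all hypotheses hold. Yet $n_t = 1 < m_1$ for every $t$.

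So the rank clause should simply be dropped. The interleaving in the proof of Main Theorem \ref{mthm:CompIso} only needs the indices $k_s, \ell_s$ to increase, and your argument provides exactly that.

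Zero-padding would not rescue the lemma as stated, because it replaces the given certificates with different ones. Moreover, in the unital setting where the lemma is used, the padded coordinates of $\phi_t(u_t)$ would be $0$, so they would no longer carry an order unit.
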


\begin{proof}
Compute $n_{t'} > m_s$ and a positive group 
homomorphism $\delta' : \Z^{m_s} \rightarrow \Z^{n_{t'}}$ so that 
$\nu_{t'} \delta' =  \mu_s$.  

Fix a $g$ in the standard basis for $\Z^{n_r}$.  Then, 
\[
\nu_r(g) = \nu_r(g) = \mu_s \gamma(g) = \nu_{t'} \delta'\gamma(g).
\]
Thus, $\nu_{t'}\phi_{r,t'}(g) = \nu_{t'}\delta'\gamma(g)$.  
Thus, $\phi_{r,t'}(g) - \delta'\gamma(g) \in \ker(\nu_{t'})$. 
Therefore, there exists $k_g \in \N$ so that 
$\phi_{r,t'}(g) - \delta'\gamma(g) \in \ker(\phi_{t', t' + k_g})$. 
Therefore, $\phi_{r,t' + k_g}(g) = \phi_{t', t' + k_g}\delta'\gamma(g)$. 
Set $t = t' + \max_g k_g$ and $\delta = \phi_{t',t}\delta'$.
Thus, $\phi_{r,t} = \delta\gamma$, and $\nu_t \delta = \nu_t \phi_{t',t} \delta'= \nu_{t'}\delta' = \mu_s$. 
\end{proof}

\begin{proof}[Proof of Main Theorem \ref{mthm:CompIso}]
We first show that (\ref{mthm:CompIso::Pres}) and (\ref{mthm:CompIso::Grp}) 
are equivalent.

To begin, suppose $\boldA^\#$ and $\boldB^\#$ are c.e. presentations of AF algebras, and let 
$\psi$ be a computable unital $\star$-isomorphism from $\boldA^\#$ to $\boldB^\#$.
Thus, as discussed in Section \ref{subsubsec:BackCompKThy} $K_0(\psi)$ is a computable isomorphism from 
$\uK{\boldA^\#}$ to $\uK{\boldB^\#}$.  

We note that if the AF certificate in Lemma \ref{lm:CertIntr} is a computable 
AF certificate of a c.e. presentation $\boldA^\#$, then $s_0$, $h$ can be computed from 
$G$, $\omega$, $\chi$ as $h$ is determined by a finite amount of data (see the discussion
in Section \ref{subsubsec:BackFinDimAlg}).  

Suppose $\alpha$ is a computable isomorphism from 
$\uK{\boldA^\#}$ to 
$\uK{\boldB^\#}$.  By Main Theorem \ref{mthm:CompAFCert}, $\boldA^\#$ has a computable 
AF certificate $(F_s, \phi_s)_{s \in \N}$, and $\boldB^\#$ has a computable 
AF certificate $(G_s, \psi_s)_{s \in \N}$.  Set $\boldA_s = \bigoplus_{n \in F_s} M_n(\C)$, 
and set $\boldB_s = \bigoplus_{n \in G_s} M_n(\C)$.  Without loss of generality, we 
assume $\boldA_0 = \boldB_0 = \C$.  
Set $f_s = \phi_{s+1}^{-1} \circ \phi_s$, and set $g_s = \psi_{s+1}^{-1}\circ \psi_s$. 
By Corollary \ref{cor:IndLimAFCert}, $(\boldA^\#, (\phi_s)_{s \in \N})$ is an inductive limit
of $(\boldA_s, f_s)_{s \in \N}$ and $(\boldB^\#, (\psi_s)_{s \in \N})$ is an 
inductive limit of $(\boldB_s, g_s)_{s \in \N}$.

By iterating Lemma \ref{lm:CertIntr}, we compute 
increasing sequences $(n_s)_{s \in \N}$ and $(m_s)_{s \in \N}$ of natural numbers   
along with sequences $(\alpha_s)_{s \in \N}$ and $(\beta_s)_{s \in \N}$ 
that satisfy the following.
\begin{enumerate}
    \item $n_0 = 0$. 

    \item $\alpha_s$ is a computable homomorphism from 
    $\uK{\boldA_{n_s}^\#}$ to $\uK{\boldB_{m_s}^\#}$ uniformly in $s$.

    \item $\beta_s$ is a computable homomorphism from 
    $\uK{\boldB_{m_s}^\#}$ to $\uK{\boldA_{n_{s + 1}}^\#}$.

    \item $\beta_s \circ \alpha_s = f_{n_s, n_{s+1}}$ and 
    $\alpha_{s + 1} \circ \beta_s = g_{m_s, m_{s + 1}}$.
\end{enumerate}

It is now possible to compute from $s$, a unital $\star$-homomorphism 
$\sigma_s' : \boldA_{n_s} \rightarrow \boldB_{m_s}$ 
so that $K_0(\sigma_s') = \alpha_s$ (see the material in Section \ref{subsubsec:BackFinDimAlg}), 
and a unital $\star$-homomorphism $\tau_s' : \boldB_{m_s} \rightarrow \boldA_{n_{s+1}}$ 
so that $K_0(\tau_s') = \beta_s$. 

It follows from the discussion of multiplicities in Section \ref{subsubsec:BackMtrx}, 
that there exist, and we can compute, 
sequences $(u_s)_{s \in \N}$ and $(v_s)_{s \in \N}$ that satisfy the following. 
\begin{enumerate}
    \item $v_s$ is a unitary of $\boldA_{n_{s+1}}$ and $u_s$ is a unitary of $\boldB_{m_s}$. 

    \item $f_{n_s,n_{s+1}} = \Ad_{v_s} \circ \tau'_s \circ \Ad_{u_s} \circ \sigma'_s$
    and $g_{m_s, m_{s+1}} = \Ad_{u_{s+1}} \circ \sigma_{s+1}' \circ \Ad_{v_s} \circ \tau_s'$.
\end{enumerate}
Set $\sigma_s = \Ad_{u_s} \circ \sigma'_s$ and set 
$\tau_s = \Ad_{v_s} \circ \tau_s'$.  
Thus, for each $s \in \N$, $K_0(\sigma_s) = \alpha_s$ and $K_0(\tau_s) = \beta_s$. 
Furthermore, $f_{n_s, n_{s+1}} = \tau_s \circ \sigma_s$ and 
$g_{m_s, m_{s+1}} = \sigma_{s+1} \circ \tau_s$.  

It now follows that $(\boldA^\#, (\phi_{n_{s+1}}\circ\tau_s)_{s \in \N})$ is an inductive 
upper limit of $(\boldB_{m_s}, g_{m_s, m_{s+1}})_{s \in \N}$ in the category of c.e. presentations
of unital \cstar-algebras.  It follows from Lemma \ref{lm:IndLimAlg} that 
$(\boldB^\#, (\psi_{m_s})_{s \in \N})$ is an inductive limit of 
$(\boldB_{m_s}, g_{m_s, m_{s+1}})_{s \in \N}$ in this category.  So, let $\lambda$ be the reduction of 
$(\boldB^\#, (\psi_{m_s})_{s \in \N})$ to $(\boldA^\#, (\phi_{n_{s+1}} \circ \tau_s)_{s \in \N})$. 
Hence, $\lambda \circ \psi_{m_s} = \phi_{n_{s+1}} \circ \tau_s$. 

At the same time, $(\boldB^\#, (\psi_{m_s} \circ \sigma_s)_{s \in \N})$ is an upper inductive 
limit of $(\boldA_s, f_{n_s, n_{s+1}})_{s \in \N}$, and $(\boldA^\#, (\phi_{n_s})_{s\in \N})$
is an inductive limit of this inductive sequence. 
So, let $\lambda'$ be the reduction from $(\boldA^\#, (\phi_{n_s})_{s \in \N})$ to 
$(\boldB^\#, (\psi_{m_s} \circ \sigma_s)_{s\in \N})$.
Hence, $\lambda' \circ \phi_{n_s} = \psi_{m_s} \circ \sigma_s$. 
Therefore, 
\[
\lambda\lambda'\phi_{n_s} = \lambda\psi_{m_s}\sigma_s = \phi_{n_{s+1}}\tau_s \sigma_s = \phi_{n_{s+1}} f_{n_s, n_{s + 1}} = \phi_{n_s}.
\]
Thus, $\lambda\lambda'$ is the identity on $\bigcup_{s \in \N} \ran(\phi_{n_s})$; hence it is the 
identity on $\boldA$ as well. 
Similarly, $\lambda'\lambda$ is the identity on $\boldB$, and $\lambda$ is an isomorphism. 
By the continuity of $K_0$ (see, e.g. \cite[Section 6.3]{Rordam.Larsen.Laustsen.2000}), $\lambda = K_0(\alpha)$.

Now we show that 
(\ref{mthm:CompIso::Pres}) and (\ref{mthm:CompIso::Brat}) are equivalent.
For one direction, it suffices to show that if $\calC_0 = (F_s,\phi_s)_{s \in \N}$
and $\calC_1 = (G_s, \psi_s)_{s in \N}$ are computable AF certificates for $\boldA^\#$, 
then every Bratteli diagram of $\calC_0$ is computably equivalent to every 
Bratteli diagram of $\calC_1$. 
Set $\boldA_s = \bigoplus_{n \in F_s} M_n(\C)$, and set $\boldB_s = \bigoplus_{n \in F_s} M_n(\C)$. 
In addition, set $\sigma_s = \phi_{s+1}^{-1} \circ \phi_s$, and set 
$\tau_s = \psi_{s+1}^{-1} \circ \psi_s$. 
It suffices to show that the standard Bratteli diagram of 
$(\boldA_s, \sigma_s)_{s \in \N}$ is computably equivalent to the 
standard Bratteli diagram of $(\boldB_s, \tau_s)$; denote these 
Bratteli diagrams by $\BratD_0$ and $\BratD_1$ respectively.

Let $(\Z^{m_s}, u_s)$ denote the co-domain of $\zeta_{F_s}$, and let 
$(\Z^{n_s}, v_s)$ denote the co-domain of $\zeta_{G_s}$. 
Set $\gamma_s = \eta_{F_s, F_{s+1}}(K_0(\sigma_s))$, and set 
$\tau_s = \eta_{G_s, G_{s+1}}(\tau_s))$. 
Let $\mu_s = K_0(\phi_s) \circ \zeta_{F_s}^{-1}$, and let 
$\nu_s =  K_0(\psi_s) \circ \zeta_{G_s}^{-1}$. 
Hence, $(m_s, u_s, \mu_s, \gamma_s)_{s \in \N}$ is a computable 
unital certificate of dimensionality of $\uK{\boldA^\#}$ as is 
$(n_s, v_s, \nu_s, \kappa_s)_{S \in \ N}$.  

We now construct computable increasing sequences $(k_s)_{s \in \N}$ and $(\ell_s)_{s \in \N}$
of natural numbers and an inductive sequence $((\Z^{x_s}, w_s), \rho_s)_{s \in \N}$
that satisfy the following. 
\begin{enumerate}
    \item $x_{2s} = m_{k_s}$ and $x_{2s + 1} = n_{\ell_s}$. 

    \item $w_{2s} = u_{k_s}$ and $w_{2s + 1} = v_{\ell_s}$.

    \item $\sigma_{m_{k_s}, m_{k_{s + 1}}} = \rho_{2s, 2(s + 1)}$. 

    \item $\tau_{n_{\ell_s}, n_{\ell_{s+1}}} = \rho_{2s+1, 2s + 3}$.
\end{enumerate}
Without loss of generality, we assume $m_0 = n_0 = u_0 = v_0 = 1$.
Thus, $u_s = \sigma_{0,s}(1)$ and $v_s = \tau_{0,s}(1)$.
Set $k_0 = 0$. Let $\ell_0 = 1$ and set $x_0 = 1$.
Let $x_1 = n_1$.  
Let $\delta_0(n) = n v_1$.
We now define the remaining terms of the desired sequences by iterating Lemma \ref{lm:CompIntr}.
Since $\sigma_s$ and $\psi_s$ are unit-preserving, the constructed maps
$\rho_0, \rho_1, \ldots$ are as well.

It follows fairly straightforwardly that $\BratD_0$ is a Bratteli diagram of 
$((\Z^{m_s}, u_s), \gamma_s)_{s \in \N}$ and that $\BratD_1$ is a Bratteli diagram 
of $((\Z^{n_s}, v_s), \kappa_s)_{s \in \N}$.
The sequences we have constructed witness that the standard Bratteli diagrams of 
$((\Z^{m_s}, u_s), \gamma_s)_{s \in \N}$, $((\Z^{x_s}, w_s), \rho_s)_{s \in \N}$, 
and $((\Z^{n_s}, v_s), \kappa_s)_{s \in \N}$ are computably equivalent. 
Hence, $\BratD_0$ and $\BratD_1$ are computably equivalent.

Conversely, suppose $\BratD_\boldA$ is a labeled Bratteli diagram for 
a computable AF certificate $(F_s, \phi_s)_{s \in \N}$ 
of $\boldA^\#$, and assume $\BratD_\boldB$ is a labeled Bratteli diagram for 
a computable AF certificate $(G_s, \psi_s)_{s \in \N}$ of $\boldB^\#$.
Furthermore, assume $\BratD_\boldA$ and $\BratD_\boldB$ are computably
equivalent.  

By Definition \ref{def:CompEqvBrat}, there is a finite sequence 
$\BratD_0 = \BratD_\boldA$, $\ldots$, $\BratD_m = \BratD_\boldB$ of labeled Bratteli 
diagrams so that for each $j < m$, one of $\BratD_j$ and $\BratD_{j+1}$ 
is a computable reduction of the other or they are isomorphic. 
Starting with $\BratD_0$, each of these reductions yields another computable AF certificate 
for $\boldA^\#$, and isomorphisms yield the same AF certificate; let $\calC_0, \ldots, \calC_m$ be the resulting sequence of computable 
AF certificates for $\boldA^\#$ with $\calC_0 = \calC_A$.
Since $\BratD_m = \BratD_\boldB$, it follows that $\calC_m$ has the form 
$(G_s, \gamma_s)_{s \in \N}$.  Set $C_s = \bigoplus_{n \in G_s} M_n(\C)$.
Since $\BratD_m = \BratD_\boldB$, it then follows that 
$K_0(\gamma_{s+1}^{-1} \circ \gamma_s) = K_0(\psi_{s+1}^{-1} \circ \psi_s)$. 
Hence, it follows that $\uK{\boldA^\#}$ is 
computably isomorphic to $\uK{\boldB^\#}$. 
Thus, by what has just been shown, $\boldA^\#$ is computably $\star$-isomorphic 
to $\boldB^\#$.
\end{proof}

We note that all parts of the above proof are uniform which we can exploit 
as follows.  
Recall that a structure is \emph{computably categorical} if all of its computable presentations 
are computably isomorphic. 
The following stands in contrast to our result in \cite{UHFPaper} that all 
UHF algebras are computably categorical.

\begin{corollary}\label{cor:NotCompCatAF}

\

\begin{enumerate}
    \item The AF algebra $C(\omega +1 )$ is not computably categorical.

    \item There is a unital dimension group that has two c.e. presentations that are 
    not computably isomorphic.

    \item There exist two computably presentable Bratteli diagrams that are
    equivalent but not computably equivalent.
\end{enumerate}
\end{corollary}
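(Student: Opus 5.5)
The plan is to reduce all three parts to a single fact about the unital dimension group $(\calG,u) = \uK{C(\omega+1)}$, and then transfer it to algebras and to diagrams via the uniform equivalences in Main Theorems \ref{mthm:ClsCePresDimGrp} and \ref{mthm:CompIso}. Concretely, $K_0(C(\omega+1))$ is the group $C(\omega+1,\Z)$ of continuous integer-valued functions on $\omega+1$, ordered pointwise, with order unit the constant function $1$. Equivalently, it is the group of eventually constant integer sequences, ordered pointwise, with unit $(1,1,\ldots)$. The minimal positive elements, i.e.\ the classes of minimal projections, are exactly the indicator functions $\chi_{\{n\}}$ of the isolated points $n<\omega$.

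The first step, which I expect to be the main obstacle, is to build two computable presentations of $(\calG,u)$ that are not computably isomorphic. Take the standard presentation $\calG_0^\#$, in which the atoms $\chi_{\{n\}}$ are labeled by the generators $x_n$ and the unit by a further generator; in it the set of atoms is computable. For the second presentation $\calG_1^\#$, fix a noncomputable c.e.\ set $K$ with enumeration $(K_s)$ and present the group via a computable inductive sequence of simplicial groups $(\Z^{n_s},\phi_s)$, as in Lemma \ref{lm:CompCertDim}. At stage $s$ the $n$-th designated coordinate is an atom candidate. If $n$ enters $K$ at stage $s$, we split that coordinate into two atoms, so the element it represents is no longer minimal. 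The inductive limit is still $C(\omega+1,\Z)$ with unit $1$: infinitely many atoms accumulate at a single point, and the Bratteli diagram only refines clopen partitions. The kernel of this presentation is computable, because the bonding maps are injective, as in the proof of Corollary \ref{cor:CompK0AF}.

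In $\calG_1^\#$, the label of the $n$-th candidate names an atom if and only if $n\notin K$. A computable isomorphism carries atoms to atoms and non-atoms to non-atoms, and in $\calG_0^\#$ the set of labels of atoms is decidable. So a computable isomorphism $\calG_1^\#\to\calG_0^\#$ would make $\overline{K}$ decidable, which is impossible. The delicate points are two. First, the construction must keep the limit isomorphic to $C(\omega+1,\Z)$, which we check by identifying the limit Stone space as one-point compactifications. Second, the atom test in $\calG_0^\#$ must really be computable, which holds because the atoms there are exactly the labels equal to some $x_n$ modulo the computable kernel. This proves part (2).

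For part (1), apply Main Theorem \ref{mthm:ClsCePresDimGrp}(\ref{mthm:ClsCePresDimGrp::Untl}) to $\calG_0^\#$ and to $\calG_1^\#$, obtaining c.e.\ presentations $\boldA_0^\#$ and $\boldA_1^\#$ of unital AF algebras whose unital $K_0$ presentations are computably isomorphic to $\calG_0^\#$ and $\calG_1^\#$ respectively. Both algebras have unital $K_0$ isomorphic to $(\calG,u)$, so by Elliott's Theorem both are $\star$-isomorphic to $C(\omega+1)$. By Corollary \ref{cor:CompPresAF} both presentations are computable. If $\boldA_0^\#$ and $\boldA_1^\#$ were computably $\star$-isomorphic, then Main Theorem \ref{mthm:CompIso} would make their unital $K_0$ groups computably isomorphic, and hence $\calG_0^\#$ and $\calG_1^\#$ would be computably isomorphic, contradicting part (2).

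For part (3), take computable AF certificates of $\boldA_0^\#$ and $\boldA_1^\#$ from Main Theorem \ref{mthm:CompAFCert}, and take their labeled Bratteli diagrams, which are computably presentable as in the proof of Main Theorem \ref{mthm:ClsCompPrsntAF}. The two diagrams are classically equivalent, since the algebras are isomorphic and by Bratteli's theorem. They are not computably equivalent, because by Main Theorem \ref{mthm:CompIso}((\ref{mthm:CompIso::Brat})$\Rightarrow$(\ref{mthm:CompIso::Pres})) that would yield a computable $\star$-isomorphism, contradicting part (1).
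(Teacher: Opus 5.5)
Your argument is correct, but it runs in the opposite direction from the paper's.

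The paper proves only item (1) directly, and it does so topologically. It elaborates the classical proof that $(\N,<)$ is not computably categorical into two computably compact presentations of $\omega+1$ that are not computably homeomorphic. It then appeals to McNicholl's work on evaluative presentations to transfer this to $C(\omega+1)$. Items (2) and (3) are read off from Main Theorem~\ref{mthm:CompIso} via the implications (\ref{mthm:CompIso::Grp})$\Rightarrow$(\ref{mthm:CompIso::Pres}) and (\ref{mthm:CompIso::Brat})$\Rightarrow$(\ref{mthm:CompIso::Pres}).

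You instead take (2) as the primitive fact and prove it combinatorially inside $K_0$. Your invariant is the set of atoms, i.e.\ classes of minimal projections, i.e.\ isolated points of $\omega+1$. This set is decidable in the standard presentation. On your computable list of candidates in the second presentation it is the complement of a noncomputable c.e.\ set. You then lift to algebras using Main Theorem~\ref{mthm:ClsCePresDimGrp}(\ref{mthm:ClsCePresDimGrp::Untl}), Elliott's Theorem and Corollary~\ref{cor:CompPresAF}. This needs only the easy functorial direction (\ref{mthm:CompIso::Pres})$\Rightarrow$(\ref{mthm:CompIso::Grp}) of Main Theorem~\ref{mthm:CompIso}. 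The paper's derivation of (2) from (1) uses the harder intertwining direction (\ref{mthm:CompIso::Grp})$\Rightarrow$(\ref{mthm:CompIso::Pres}). For (3), both routes use (\ref{mthm:CompIso::Brat})$\Rightarrow$(\ref{mthm:CompIso::Pres}).

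Your route is self-contained relative to the paper and makes the key step fully explicit, at the cost of relying on the effective Effros--Handelman--Shen theorem. The paper's route is shorter given the external computable-topology results, but leaves its central step as ``elaborating on the proof.''

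Two cosmetic points:
\begin{itemize}
\item Computability of the kernel of $\calG_1^\#$ is never needed; c.e.\ suffices both for (2) and for applying Main Theorem~\ref{mthm:ClsCePresDimGrp}. Also, Corollary~\ref{cor:CompK0AF} is not the right pointer for that fact; Proposition~\ref{prop:CompIndLimPresAbl}(2) is.
\item For (3), Main Theorem~\ref{mthm:CompIso}(\ref{mthm:CompIso::Brat}) quantifies over all pairs of diagrams. So either conclude only that some pair is not computably equivalent, which is all the corollary asks for, or note that the paper's proof of (\ref{mthm:CompIso::Brat})$\Rightarrow$(\ref{mthm:CompIso::Pres}) actually works for a single pair.
\end{itemize}
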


\begin{proof}[Proof sketch]
Only the first item requires proof.
It is well-known that the linear order $(\N, <)$ is not computably categorical.
By elaborating on the proof of this result, we obtain that 
there are computably compact presentations of $\omega + 1$ that are not 
computably homeomorphic.  It then follows from the results in \cite{mcnicholl2024evaluative}
that $C(\omega + 1)$ is not computably categorical.  As $\omega + 1$ is totally disconnected, 
$C(\omega + 1)$ is an AF algebra.
\end{proof}

\section{Proof of Main Theorem \ref{mthm:CompEqvCat}}\label{sec:CompEqvCat}

\subsection{A preliminary lemma}

We first prove the following which is made possible by Main Theorem \ref{mthm:CompAFCert}.

\begin{lemma}\label{lm:CompHm}
Suppose $\boldA^\#$ and $\boldB^\#$ are c.e. presentations of 
unital AF algebras, and let $\rho$ be a 
computable homomorphism from $\uK{\boldA^\#}$ to $\uK{\boldB^\#}$. 
Then there is a computable unital $\star$-homomorphism 
$\tau$ from $\boldA^\#$ to $\boldB^\#$ 
so that $K_0(\tau) = \rho$.
\end{lemma}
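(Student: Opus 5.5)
We will prove Lemma \ref{lm:CompHm} by building $\tau$ on a computable AF certificate of $\boldA^\#$, in the manner of the proof of Main Theorem \ref{mthm:CompIso}. The three tasks are to lift $\rho$ stagewise into finite stages of $\boldB$, to correct those lifts by unitaries so that they are compatible, and to pass to the limit.

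\emph{Certificates and lifting $\rho$ on finite stages.} By Main Theorem \ref{mthm:CompAFCert}, fix computable AF certificates $(F_s,\phi_s)_{s\in\N}$ of $\boldA^\#$ and $(G_s,\psi_s)_{s\in\N}$ of $\boldB^\#$. Write $\boldA_s=\bigoplus_{n\in F_s}M_n(\C)$, $\boldB_s=\bigoplus_{n\in G_s}M_n(\C)$, $f_s=\phi_{s+1}^{-1}\circ\phi_s$ and $g_s=\psi_{s+1}^{-1}\circ\psi_s$. For each $s$, the map $\rho\circ K_0(\phi_s)$ sends the finitely many standard generators of $\uK{\boldA_s}$ into $\uK{\boldB}$. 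Since $\rho$ is computable and $K_0(\boldB^\#)$ is computable (Corollary \ref{cor:CompK0AF}), we can search for $t$ and a positive, unit-preserving homomorphism $h_s:\uK{\boldA_s}\to\uK{\boldB_t}$ with $K_0(\psi_t)\circ h_s=\rho\circ K_0(\phi_s)$. Such $t$ and $h_s$ exist by Lemma \ref{lm:IndLimOrdAbl}, which gives that positive elements lift and that kernels die at finite stages; this is essentially Lemma \ref{lm:CertIntr} applied with $\chi=\rho\circ K_0(\phi_s)$. We iterate this search, enlarging $t$ as needed, to get increasing $m_s$ and $h_s:\uK{\boldA_s}\to\uK{\boldB_{m_s}}$ satisfying the compatibility
\[
h_{s+1}\circ K_0(f_s)=K_0(g_{m_s,m_{s+1}})\circ h_s .
\]
To obtain this, note that the two sides agree after composing with $K_0(\psi_{m_{s+1}})$. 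Their difference therefore lies in $\ker K_0(\psi_{m_{s+1}})$, so it vanishes after pushing to a later stage, and we wait until it does. Since everything is determined by finitely many integer vectors, all of this is a computable search.

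\emph{Unitary correction.} By Section \ref{subsubsec:BackFinDimAlg}, each $h_s$ is realized by a unital $\star$-homomorphism $\sigma'_s:\boldA_s\to\boldB_{m_s}$, computed from the multiplicities as block-diagonal embeddings $\embed_{n,\ell,k}$. We then fix the $\sigma_s$ inductively so that
\[
\sigma_{s+1}\circ f_s=g_{m_s,m_{s+1}}\circ\sigma_s .
\]
Both sides are unital $\star$-homomorphisms $\boldA_s\to\boldB_{m_{s+1}}$ with the same $K_0$. By the uniqueness up to $\Ad_U$ from Section \ref{subsubsec:BackMtrx}, applied summand by summand, there is a unitary $w\in\boldB_{m_{s+1}}$ with $\Ad_w\circ\sigma'_{s+1}\circ f_s=g_{m_s,m_{s+1}}\circ\sigma_s$. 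We set $\sigma_{s+1}=\Ad_w\circ\sigma'_{s+1}$. Such a $w$ can be computed exactly, with algebraic or rational entries, as a permutation-type unitary that matches the images of the matrix units of the two embeddings. Then $(\boldB^\#,(\psi_{m_s}\circ\sigma_s)_{s\in\N})$ is a computable inductive upper limit of $(\boldA_s,f_s)_{s\in\N}$, since the maps $\psi_{m_s}$ and $\sigma_s$ are uniformly computable.

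\emph{Passing to the limit.} By Corollary \ref{cor:IndLimAFCert}, $(\boldA,(\phi_s)_{s\in\N})$ is an inductive limit, and by Corollary \ref{cor:CompIndLimAlg1} it is a computable one. Hence the reduction $\tau$ with $\tau\circ\phi_s=\psi_{m_s}\circ\sigma_s$ is a computable unital $\star$-homomorphism $\boldA^\#\to\boldB^\#$. Then $K_0(\tau)\circ K_0(\phi_s)=K_0(\psi_{m_s})\circ h_s=\rho\circ K_0(\phi_s)$ for every $s$, and since the ranges of the $K_0(\phi_s)$ exhaust $K_0(\boldA)$ (Lemma \ref{lm:IndLimAbl}), $K_0(\tau)=\rho$.

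\emph{Main obstacle.} I expect the compatibility search in the first step to be the hardest part. One has to check that equality of positive unital maps can be certified at a finite stage, which uses only the c.e. kernel characterization in Lemma \ref{lm:IndLimOrdAbl}. One must also ensure that each $h_s$ is positive and unit-preserving into a simplicial group, so that it is realizable by a unital $\star$-homomorphism. The unitary step, by contrast, is routine once the multiplicities agree.
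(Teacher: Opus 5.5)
Your proposal is correct. Its skeleton matches the paper's: lift $\rho\circ K_0(\phi_s)$ to a finite stage of $\boldB$, realize the lift by a unital $\star$-homomorphism, and take the computable reduction out of the inductive limit $(\boldA^\#,(\phi_s)_{s\in\N})$.

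The real difference is that you make the stagewise maps compatible. You first push $h_{s+1}$ to a later stage until $h_{s+1}\circ K_0(f_s)=K_0(g_{m_s,m_{s+1}})\circ h_s$ holds exactly, and then you conjugate by a unitary of $\boldB_{m_{s+1}}$. The paper instead chooses each $\tau_s\colon\boldA_s\to\boldB$ independently, subject only to $K_0(\tau_s)=\rho\circ K_0(\phi_s)$. It then asserts that $(\boldB^\#,(\tau_s)_{s\in\N})$ is an inductive upper limit, i.e.\ that $\tau_{s+1}\circ\phi_{s+1}^{-1}\circ\phi_s=\tau_s$. Equality of $K_0$ only makes these two maps unitarily equivalent, not equal. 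So that assertion needs exactly the intertwining correction you supply, which is the same device the paper itself uses in the proof of Main Theorem \ref{mthm:CompIso}.

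Your search also makes explicit that each lift is positive and unit-preserving. The paper glosses over this when it passes through $K_0$ of a finite-dimensional subalgebra $\boldC\subseteq\boldB$, whose $K_0$ need not embed in $K_0(\boldB)$. Your route costs one extra step but gives a complete argument.

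One small correction: $w$ need not be a permutation-type unitary with algebraic or rational entries. The maps $f_s$ and $g_{m_s,m_{s+1}}$ are arbitrary computable unital $\star$-homomorphisms, so the matrix units' images have merely computable entries. The unitary is nevertheless uniformly computable as a point, which is all the argument needs. Write $\alpha=\sigma'_{s+1}\circ f_s$ and $\beta=g_{m_s,m_{s+1}}\circ\sigma_s$, and let $(e^j_{ik})$ be the standard matrix units of $\boldA_s$. Set $w=\sum_{j,i}\beta(e^j_{i1})\,v_j\,\alpha(e^j_{1i})$, where $v_j$ is a partial isometry with $v_j^*v_j=\alpha(e^j_{11})$ and $v_jv_j^*=\beta(e^j_{11})$. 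Each $v_j$ is computable because these two projections have known, equal ranks in every summand of $\boldB_{m_{s+1}}$.
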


\begin{proof}
Let $(F_s, \phi_s)_{s \in \N}$ be a computable AF certificate for $\boldA^\#$, 
and let $(G_s, \sigma_s)_{s \in \N}$ be a computable AF certificate for $\boldB^\#$.
Set $\boldA_s = \bigoplus_{n \in F_s} M_n(\C)$, and set 
$\boldB_s = \bigoplus_{n \in G_s} M_n(\C)$. 
Let $\psi_s = \phi_{s+1}^{-1} \circ \phi_s$.

Fix $s \in \N$.  By Lemma \ref{lm:IndLimOrdAbl}, there exists $s_0 \in \N$ so that 
$\ran(\rho \circ K_0(\phi_s)) \subseteq \ran(K_0(\sigma_{s_0}))$.
It is possible to compute $s_0$ from $s$.
Set $\boldC = \ran(\sigma_{s_0})$, and let $\kappa = \rho \circ K_0(\phi_s)$. 
Thus, $\kappa$ is a homomorphism from $K_0(\boldA_s)$ to $K_0(\boldC)$.
Since $\sigma_{s_0}$ is injective, by Lemma \ref{lm:FinDim}, it is possible
to compute a matricial generating system for $\boldC$.
It then follows from the material in Section \ref{subsubsec:BackMtrx}
that we can now compute a unital $\star$-homomorphism $\tau_s$ so that 
$K_0(\tau_s) = \kappa$. 

It now follows that $(\boldB^\#, (\tau_s)_{s \in \N})$ is an inductive upper limit 
of $(\boldA_s, \psi_s)_{s \in \N}$.  Hence, we can compute the reduction 
$\tau$ of $(\boldA^\#, (\phi_s)_{s \in \N})$ to $(\boldB^\#, (\tau_s)_{s \in \N})$. 
Thus, $\tau \circ \phi_s = \tau_s$.  
Since $\rho \circ K_0(\phi_s) = K_0(\tau_s)$, it now follows that 
$K_0(\tau) \circ K_0(\phi_s) = \rho \circ K_0(\phi_s)$, and so 
$K_0(\tau) = \rho$.
\end{proof}

\subsection{Computable functors and equivalences}

In order to prove our final main theorem, we lay out what we mean by 
a computable equivalence of categories.  
Suppose $\Cat_0$ and $\Cat_1$ are categories for which we have fixed computable indexings.

\begin{definition}\label{def:CompFnctr}
Let $F$ be a multi-valued functor from $\Cat_0$ to $\Cat_1$.  We say that $F$ is 
\emph{computable} if there exist computable functions $\alpha : \N \rightarrow \N$ and 
$\alpha' : \N^3 \rightarrow \N$ that satisfy the following:
\begin{enumerate}
    \item If $e \in \N$ indexes an object $A$ of $\Cat_0$, then 
    $\alpha(e)$ indexes a value of $F(A)$. 

    \item If $(e_0, e_1, e)$ indexes a morphism $f : A \rightarrow B$ of $\Cat_0$, 
    then $(\alpha(e_0), \alpha(e_1), \alpha'(e_0,e_1,e))$ indexes 
    a value of $F(f)$.
\end{enumerate}
We say that $(\alpha, \alpha')$ \emph{witnesses} $F$ is computable.
\end{definition}

We note that every computable multi-valued functor is effectively essentially 
single-valued in the sense that from indexes $e,e'$ of an object $A$ it is possible 
to compute an index of a computable isomorphism between the objects indexed by 
$\alpha(e)$ and $\alpha(e')$.  Hence, we occasionally treat them as single-valued.

\begin{definition}\label{def:CompEqvCat}
A \emph{computable equivalence of $\Cat_0$ and $\Cat_1$} is a 
quadruple $(F, G, \calE_0, \calE_1)$ for which there exist  
$\alpha$, $\alpha'$, $\beta$, $\beta'$, $\gamma_0$, $\gamma_1$ that satisfy the following:
\begin{enumerate}
    \item $(\alpha, \alpha')$ witnesses that $F$ is a computable functor from $\Cat_0$ to $\Cat_1$.

    \item $(\beta, \beta')$ witnesses that $G$ is a a computable functor from $\Cat_1$ to $\Cat_0$.

    \item $\calE_0$ is a natural multi-valued isomorphism of $GF$ and $\Id_{\Cat_0}$, 
    and for each $e \in \N$, if $e$ indexes an object $A$ of $\Cat_0$, then 
    $(e, \beta(\alpha(e)), \gamma_0(e))$ indexes a value of $\calE_0(A)$.

    \item $\calE_1$ is a natural multi-valued isomorphism of $FG$ and $\Id_{\Cat_1}$, 
    and for each $e \in \N$, if $e$ indexes an object $B$ of $\Cat_1$, then 
    $(e, \alpha(\beta(e)), \gamma_1(e))$ indexes a value of $\calE_1(B)$.
\end{enumerate}
If these conditions are satisfied, then we say that $(\alpha, \alpha', \beta, \beta', \gamma_0, \gamma_1)$ \emph{witnesses} that $(F, G, \calE_0, \calE_1)$ is a computable 
equivalence of categories.
\end{definition}

Again, it follows that these computable natural isomorphisms are effectively essentially
single-valued.  

\subsection{The categories}

We now set forth formal definitions of the categories considered in Main Theorem \ref{mthm:CompEqvCat}
and their indexings.

\subsubsection{The category of c.e. presentations of unital AF algebras}

The objects in this category are c.e. presentations of unital AF algebras; we have alredy defined the 
indexing of these objects.

Suppose $\boldA_0^\#$ and $\boldA_1^\#$ are c.e. presentations of unital AF algebras.
A \emph{morphism} from $\boldA_0^\#$ to $\boldA_1^\#$ in this category is an approximately unitarily 
equivalence class of a computable unital $\star$-homomorphism from $\boldA_0^\#$ to 
$\boldA_1^\#$. 
We say that $(e_0, e_1, e)$ indexes a morphism $\Psi$ from $\boldA_0^\#$ to 
$\boldA_1^\#$ if $e_j$ indexes $\boldA_j^\#$ and $e$ is an 
$(\boldA_0^\#, \boldA_1^\#)$ index of an element of $\Psi$.

\subsubsection{The category of c.e. presentations of unital dimension groups}

The objects in this category are c.e. presentations of unital dimension groups. 
The morphisms are the computable homomorphisms.  We have already described 
the indexing of the objects in this category.

\subsection{Proof of computable equivalence}

Throughout the rest of this section, $\Cat_0$ denotes the category of c.e. presentations of unital
dimension groups, and $\Cat_1$ denotes the category of c.e. presentations of unital AF algebras.

We set $G = \uKnoarg$.  There exists $(\beta, \beta')$ that witnesses $G$ is a computable functor.

We construct $F$, $\calE_0$, $\calE_1$ so that $(F, G, \calE_0, \calE_1)$ is a 
computable equivalence of 
$\Cat_0$ and $\Cat_1$.
We simultaneously construct $\alpha$, $\alpha'$, 
$\gamma_0$, $\gamma_1$ so that 
$(\alpha, \alpha',  \beta, \beta', \gamma_0, \gamma_1)$
witnesses $(F, G, \calE_0, \calE_1)$ is a computable equivalence of categories.

\subsubsection{Construction of $F$, $\alpha$, $\alpha'$}\label{subsubsec:CnstrF}

Suppose $A = (\calG, u)^\#$ is an object of $\Cat_0$, and let $x$ be an index of $A$. 
By the uniformity of Lemma \ref{lm:CompUntlCertDim}, it is possible to compute an index $\alpha_0(x)$
of a computable unital certificate of dimensionality $(m_s, u_s, \epsilon_s, \phi_s)_{s \in \N}$
of $A$.  

For each $s \in \N$, let $H_s$ be the multiset of positive integers so that the number of 
times an integer appears in $H_s$ is the number of times it appears in $u_s$. 
Set $\boldA_s = \bigoplus_{n \in H_s} M_n (\C)$.  It is now 
 possible to compute for each $s$ a unital $*$-homomorphism 
$\sigma_s : \boldA_s \rightarrow \boldA_{s+1}$ so that 
$\eta_{H_s, H_{s+1}}(K_0(\sigma_s)) = \phi_s$.  
From $x$, it is possible to compute an index $\alpha_1(x)$ of 
$(\boldA_s, \sigma_s)_{s \in \N}$.
By the uniformity of Corollary \ref{cor:CompIndLimAlg2}, it is now possible to compute an index 
$\alpha_2(x)$ of a computable inductive limit $(\boldA^\dagger, (\psi_s)_{s \in \N})$
of $(\boldA_s, \sigma_s)_{s \in \N}$.  
From $\alpha_2(x)$, we can compute an index $\alpha(x)$ of 
$\boldA^\dagger$.  We declare $\boldA^\dagger$ to be a value of $F(A)$.

Now, for each $j \in \{0,1\}$, suppose $A_j = (\calG_j, u_j)^\#$ is an object 
of $\Cat_0$, and let $x_j$ be an index of $A_j$.  Assume 
$(x_0, x_1, y)$ is an index of a morphism $\delta$ from $A_0$ to $A_1$. 
Thus, $x_j$ is an index of $A_j$, and $y$ is a $(A_0, A_1)$ index of $\delta$.
Assume that for $j \in \{0,1\}$, $\alpha_0(x_j)$ indexes 
$(m_{j,s}, u_{j,s}, \epsilon_{j,s}, \phi_{j,s})_{s \in \N}$.
Thus, $((\calG_j, u_j)^\#, (\epsilon_{j,s})_{s \in \N})$ is a computable 
inductive limit of $((\Z^{m_{j,s}}, u_{j,s}), \phi_{j,s})_{s \in \N}$. 
Assume $\alpha_1(x_j)$ indexes $(\boldA_{j,s}, \sigma_{j,s})_{s \in \ N}$, 
and assume $\alpha_2(x_j)$ indexes $(\boldA_j^\dagger, (\psi_{j,s})_{s \in \N})$.
Thus, $\boldA_j^\dagger$ is indexed by $\alpha(x_j)$ for $j \in \{0,1\}$. 

For the moment, fix $j \in \{0,1\}$.  We observe that 
$((\calG_j, u_j)^\#, (\epsilon_{j,s} \circ \zeta_s)_{s \in \N})$
is a computable inductive limit of $(\uK{\boldA_{j,s}}, K_0(\sigma_{j,s}))_{s \in \N}$
as is $(\uK{\boldA^\dagger_j}, (K_0(\psi_{j,s}))_{s \in \N})$.
Hence, it is now possible to compute an index $\gamma_0(x_j)$ of 
the reduction $\lambda_j$ from $(\uK{\boldA^\dagger_j}, (K_0(\psi_{j,s}))_{s \in \N}$
to $((\calG_j, u_j)^\#, (\epsilon_{j,s} \circ \zeta_s)_{s \in \N})$ 
which is an isomorphism from $\uK{\boldA^\dagger}$ to $(\calG_j, u_j)^\#$. 
We can now compute a $(\uK{\boldA_0^\dagger}, \uK{\boldA_1^\dagger})$ 
index $\alpha_0'(x_0, x_1, y)$ of $\rho:=\lambda^{-2}\delta\lambda_1$.
By Lemma \ref{lm:CompHm}, it is now possible to compute an
$(\boldA_0^\dagger, \boldA_1^\dagger)$ 
index $\alpha'(x_0, x_1, y)$ of a unital $\star$-homomorphism 
$\tau$ so that $K_0(\tau) = \rho$.
We declare $\tau$ to be a value of $F(\delta)$.

\subsubsection{Construction of $\calE_j$ and $\gamma_j$}

In the course of the above process, we constructed $\gamma_0$ and $\calE_0$
so that $\calE_0$ is a natural isomorphism from $GF$ to $\Id_{\Cat_0}$; 
namely declare $\lambda_j$ to be a value of $\calE_0(A_j)$.

We now construct $\gamma_1$ and $\calE_1$.  Assume the notation of Section \ref{subsubsec:CnstrF}.
To begin, suppose $z$ is a unital index of a c.e. presentation $\boldA^\#$ of a unital AF algebra.
Let $B = \boldA^\#$.  Set $x = \beta(z)$, and set 
$A = (\calG, u)^\# = G(B) = \uK{\boldA^\#}$.  
We now consider $\boldA^\dagger = F(A)$ which is indexed by $\alpha(x) = \alpha(\beta(z))$. 
We make two observations.  First, by construction 
$(A, (\epsilon_s)_{s \in \N})$ is a computable inductive limit of 
$((\Z^{m_s}, u_s), \phi_s)_{s \in \N}$.  
Second, also by construction, $(\uK{\boldA^\dagger}, (K_0(\psi_s) \circ \zeta_{F_s}^{-1})_{s \in \N})$
is also a computable inductive limit of this inductive sequence. 
Thus, we can compute the reduction $\lambda$ of 
$(\uK{\boldA^\dagger}, (K_0(\psi_s) \circ \zeta_{F_s}^{-1})_{s \in \N})$ to 
$(A, (\epsilon_s)_{s \in \N})$ which is also an isomorphism 
from $\uK{\boldA^\dagger}$ to $\uK{\boldA^\#}$.  By the uniformity of 
Main Theorem \ref{mthm:CompIso}, 
it is now possible to compute an index $\gamma_1(z)$ of a unital $\star$-isomorphism 
$\tau$ from $\boldA^\dagger$ to $\boldA^\#$, and we declare $\tau$ to be a value 
of $\calE_1(B)$.

As the morphisms of $\Cat_1$ are approximate unitary equivalence classes, 
it follows that $\calE_1$ is a natural isomorphism of $FG$ and $\Id_{\Cat_1}$ (see, for example, \cite[Exercise 7.6]{Rordam.Larsen.Laustsen.2000}).

\section{Applications to index set and isomorphism problems}\label{sec:IndxIso}

We now use our main theorems to resolve some questions concerning index set 
and isomorphism problems for AF and UHF algebras.  We first discuss the meaning of 
these kinds of problems and how they relate to classification problems in mathematics. 

Suppose $\calC$ is some class of mathematical structures.  The 
\emph{index set problem} for $\calC$ is the complexity of determining if 
a natural number indexes a computable (or c.e.) presentation of 
a structure in $\calC$.  This models how hard it is to discern if a 
given structure belongs to $\calC$. 
Here, complexity of a set is measured by its position in 
the arithmetical hierarchy, which measures the number 
of alternating quantifiers required to define the set. 
For example, a $\Pi^0_2$ set is one that can be defined by an expression of the 
form $\forall y \exists z R(x,y,z)$ where $R$ is a computable predicate. 
$\Pi_2^0$ complete sets are $\Pi_2^0$ sets that are `as hard as possible'.
We refer the reader to standard sources such as \cite{Cooper.2004} 
for a more precise definition (which can also be gleaned from the proofs below). 

We let $(\phi_e)_{e \in \N}$ be an effective enumeration of all computable partial 
functions from $\N$ into $\N$.  The set $\Tot$ is defined to be 
$\{e \in \N\ :\ \dom(\phi_e) = \N\}$.  It is well-known that $\Tot$ is 
$\Pi^0_2$ complete.  Let $\phi_{e,s}(x) = \phi_e(x)$ if the computation of
$\phi_e$ on $x$ halts in at most $s$ steps; otherwise $\phi_{e,s}(x)$ is undefined.

We begin with the index problem for all \cstar-algebras. 
The proof of the following is very similar to the proof of \cite[Theorem 2.6]{Brown.McNicholl.Melnikov.2020}; we leave the details to the reader.

\begin{theorem}\label{thm:IndxCstar}
The set of all indexes for c.e. presentation of \cstar-algebras 
is $\Pi^0_2$ complete as is the set of all indexes of 
computable presentations of \cstar-algebras. 
\end{theorem}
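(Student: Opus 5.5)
We prove the upper bound and the hardness separately, both for c.e. presentations and for computable presentations. The model is the argument of \cite[Theorem 2.6]{Brown.McNicholl.Melnikov.2020} for Banach spaces.

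\emph{Upper bound.} An index $e$ codes a procedure that, given a generated point $q$ (coded as a rational $*$-polynomial in the special points), produces rational approximations. For $e$ to index a c.e. presentation of some \cstar-algebra, we need a totality condition and a consistency condition.
\begin{enumerate}
\item Totality: for every $q$ and every $n$, the $n$-th term of the sequence enumerated on input $q$ eventually appears.
\item Consistency: the limit values $\norm{q}$ define a \cstar-seminorm on the free $*$-algebra over $\Q(i)$, so that completing the quotient by the null space gives a \cstar-algebra.
\end{enumerate}
Totality is $\Pi^0_2$. Consistency amounts to universally quantified closed conditions on the limits: the triangle inequality, $\norm{pq}\le\norm{p}\norm{q}$, $\norm{q^*q}=\norm{q}^2$, homogeneity, and monotonicity of each enumerated sequence.

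For a right c.e. limit $r_q=\inf_n r_{q,n}$, an inequality of the form $r_p\le r_q+r_{q'}$ is $\Pi^0_2$: for all $n,n'$ there is $m$ with $r_{p,m}<r_{q,n}+r_{q',n'}+2^{-k}$. The \cstar-identity is an equality between limits, so it splits into two inequalities, each $\Pi^0_2$ after a quantifier rearrangement of the same kind. An infinite conjunction of $\Pi^0_2$ conditions is still $\Pi^0_2$. For computable presentations, totality plus the Cauchy condition $|r_{q,k}-r_{q,k'}|\le 2^{-k}+2^{-k'}$ is $\Pi^0_1$ relative to totality, and the algebraic identities become $\Pi^0_1$ once totality holds. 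So that index set is $\Pi^0_2$ as well.

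One point needs care: the \cstar-identity imposed only on generated points must pass to the completion. It does, by density and continuity, so it suffices to require it on generated points. Here we rely on the standard fact that a $*$-algebra seminorm satisfying the \cstar-identity on a dense $*$-subalgebra yields a \cstar-algebra on completion.

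\emph{Hardness.} We reduce $\Tot$ uniformly. Given $e$, we build a presentation whose special points are $a_0,a_1,\dots$, all intended to be orthogonal projections in $\ell^\infty$, and whose norms are computed by the algorithm in the obvious way. The norm of a polynomial in the $a_i$ is read off diagonal values: for a polynomial $p(a_0,\ldots,a_n)$ this gives $\max_{i\le n}|p_i|$ together with the constant term, computed exactly. The twist is that when answering queries involving $a_x$, the algorithm first waits for $\phi_e(x)$ to converge. If $e\in\Tot$, the output is a computable, hence c.e., presentation of $c_0$ or its unitization. If $e\notin\Tot$, some query never returns an output, so $e$ indexes no presentation of either kind.

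The map $e\mapsto$ (index of this algorithm) is computable by the s-m-n theorem, which gives $\Pi^0_2$-hardness simultaneously for both index sets. The main obstacle is the upper-bound bookkeeping for right c.e. presentations: checking that the equalities required of limits (the \cstar-identity and well-definedness across different codes of the same polynomial) are genuinely $\Pi^0_2$ and not $\Pi^0_3$. We handle this by writing each equality as two inequalities between right c.e. reals. The inequality of the form ``limit $\le$ right c.e. expression'' is the harder direction, and we check it via the product and monotonicity bounds.
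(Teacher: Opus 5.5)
Your proposal is correct and is essentially the argument the paper delegates to \cite[Theorem 2.6]{Brown.McNicholl.Melnikov.2020}. The upper bound comes from totality together with a uniform conjunction of $\Pi^0_2$ conditions comparing right c.e. (or computable) limits. Hardness comes from making the norm algorithm wait for $\phi_e(x)$ before answering any query that mentions $a_x$.

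Only small bookkeeping fixes are needed:
\begin{itemize}
\item The definition of a computable presentation requires the strict bound $|\norm{q}-r|<2^{-k}$, which your Cauchy condition alone does not ensure. Add ``for each $k$ there is $k'$ with $|r_{q,k}-r_{q,k'}|+2^{-k'}<2^{-k}$''; this keeps the set $\Pi^0_2$.
\item In the c.e. case you should also require that every enumerated value is nonnegative, so that each limit is a genuine norm value.
\item The two index sets index different kinds of algorithms, and $|c|$ for $c\in\Q(i)$ is usually irrational. So you need two s-m-n reductions that output rational approximations (from above in the c.e. case), not exact values.
\end{itemize}
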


\begin{theorem}\label{thm:IndxAF}
The index set problem for AF algebras is $\Pi^0_2$ complete.
\end{theorem}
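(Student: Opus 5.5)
The proof has two parts: an upper bound showing the index set is $\Pi^0_2$, and a hardness reduction from $\Tot$. Throughout, I read ``index set problem for AF algebras'' as the set of indexes $e$ of c.e. (equivalently, by Corollary \ref{cor:CompPresAF}, computable) presentations of unital AF algebras. I intend to work with unital indexes, so that $\unit_\boldA$ comes with the index.

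\textbf{Upper bound.} By Theorem \ref{thm:IndxCstar}, the condition ``$e$ indexes a c.e. presentation of a \cstar-algebra'' is $\Pi^0_2$. Being unital, given an index for $\unit_\boldA$, is $\Pi^0_1$ over the presentation: it is the closed condition $\norm{\unit q - q} = 0 = \norm{q\unit - q}$ for all generated $q$. The key step is to show that being AF is $\Pi^0_2$ relative to this. The plan is to use the local characterization of AF algebras: a separable \cstar-algebra is AF iff for every finite set of its elements and every $\epsilon > 0$ there is a finite-dimensional subalgebra within $\epsilon$ of each element. Since this only needs to be checked on generated points, it becomes
\[
\forall k\,\forall n\,\exists (n_1,\ldots,n_m)\,\exists \text{ tuple } \vec{a}.
\]
Here $\vec{a}$ is a tuple of generated points that is $2^{-\Delta}$-close to satisfying the matricial-system relations of type $(n_1,\ldots,n_m)$, and it $2^{-k}$-approximates $g_0,\ldots,g_n$ by rational combinations. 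By the computable weak stability of Proposition \ref{prop:MutOrthoGen}, such an approximate system lies near a genuine matricial system, and I will choose $\Delta$ large enough that the errors sum below $2^{-k+1}$. In a right c.e. presentation the relation ``$\norm{q} < r$'' is $\Sigma^0_1$, so the existential part is $\Sigma^0_1$ and the whole condition is $\Pi^0_2$. The main obstacle is soundness: weak stability gives a genuine matricial system near $\vec{a}$, and the approximation error for the $g_j$ also picks up a factor from the size of the rational coefficients. I plan to handle this by quantifying over the coefficient bound as well and choosing $\Delta$ depending on it, which is a computable adjustment.

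\textbf{Hardness.} Given $e$, I will uniformly build a c.e. presentation $\boldA_e^\#$ that is AF iff $e \in \Tot$. The safe option is to make $\boldA_e^\#$ fail to be a \cstar-algebra presentation at all when $e\notin\Tot$, reusing the reduction behind Theorem \ref{thm:IndxCstar}. That reduction (as in \cite[Theorem 2.6]{Brown.McNicholl.Melnikov.2020}) constructs presentations in which the norm data become inconsistent exactly when $\phi_e$ diverges somewhere. I will run it starting from the standard computable presentation of $\C$, or of the CAR algebra, so that in the total case the result is a valid presentation of an AF algebra. Concretely, special point $a_x$ is declared equal to $0$, via the right c.e. upper bounds on $\norm{a_x}$ shrinking to $0$, once $\phi_{e}(x)$ converges. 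Before convergence, its norm is held only above a lower bound that is forced into contradiction if convergence never happens. The lower bound can be encoded through the relations the reduction uses, since a right c.e. presentation only provides upper approximations.

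If this encoding is awkward, I will instead have $a_x$ be a projection that is split into more matrix units each time $\phi_{e,s}(x)$ fails to converge. In the divergent case this would produce a non-AF object, for instance a nonzero non-unital one or one without enough projections. I expect the cleanest route is the first one, invoking the proof of Theorem \ref{thm:IndxCstar} essentially verbatim.
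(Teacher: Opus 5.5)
Your upper bound is essentially the paper's argument. The paper also quantifies over a finite set of generated points and $k$. It then asks for an approximate matricial system of generated points, certified via Proposition \ref{prop:MutOrthoGen}, plus rational coefficients of bounded size, and concludes with the local characterization \cite[Proposition 7.2.2]{Rordam.Larsen.Laustsen.2000}. Your coefficient bound plays exactly the role of the paper's $M$. One small slip: in a right c.e. presentation only upper approximations are enumerated. So closed conditions such as $\norm{\unit q - q} = 0$, and the validity of the index for $\unit_\boldA$, are $\Pi^0_2$ rather than $\Pi^0_1$. This does not affect the bound.

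For hardness you take a genuinely different route. You make the output fail to index any \cstar-algebra when $\phi_e$ is partial. So your hardness is just the generic hardness of Theorem \ref{thm:IndxCstar}, grafted onto one fixed computable AF algebra.

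This is legitimate under the paper's definition of the index set problem. Your mechanism should be stated concretely, since right c.e. data cannot directly enforce lower bounds. For example:
\begin{itemize}
\item keep the upper approximations of $\norm{a_x}$ at $1$ until $\phi_e(x)$ converges;
\item let those of $\norm{a_x^*a_x}$, and of every other generated point, descend at once to their values with $a_x = 0$.
\end{itemize}
If $\phi_e(x)$ diverges, the limiting norms then violate the \cstar-identity.

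The paper instead builds a computably compact $A \subseteq [0,1]$ by removing the middle-third intervals $I_m$ only when $\phi_e(m)$ halts. Thus $A$ is the Cantor set if $\phi_e$ is total and contains an interval otherwise. The output is always a genuine computable presentation of $C(A)$, and $C(A)$ is AF exactly when $A$ is totally disconnected.

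What the paper's route buys is a stronger fact: AF-ness itself is $\Pi^0_2$-hard even among valid computable presentations of commutative algebras. Your reduction says nothing about this, since every class containing one computably presentable algebra inherits hardness from Theorem \ref{thm:IndxCstar} in your way. What your route buys is simplicity.

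Do not use your fallback. Repeatedly splitting a projection into finer matrix units still gives an inductive limit of finite-dimensional algebras in the divergent case, hence an AF algebra. The obstruction has to come from something like the interval in the spectrum used in the paper.
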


\begin{proof}
We say that $(a_{i,j}^s)_{s,i,j}$ is a \emph{type-$(n_1, \ldots, n_m)$ system}
if for each $s \in \{1, \ldots, m\}$, $(a_{i,j}^s)_{i,j}$ is an 
$n_s \times n_s$ array.

Suppose $e$ is an index of a c.e. presentation $\boldA^\#$ of a \cstar-algebra.  Let $m$, $n_1$, $\ldots$, $n_m$
be positive integers.  
When $m, n_1, \ldots, n_m$ are positive integers, let $\calS_{(k,n_1, \ldots, n_m)}[\boldA^\#]$
denote the set of all type-$(n_1, \ldots, n_m)$ arrays $(g^s_{i,j})_{s,i,j}$ of 
generated points of $\boldA^\#$ for which there exists a type-$(n_1, \ldots, n_m)$ 
matricial generating system of $\boldA$ so that $\norm{g^s_{i,j} - f^s_{i,j}}< 2^{-k}$
for all $s,i,j$.  It follows from Proposition \ref{prop:MutOrthoGen} 
that $\calS_{(k,n_1, \ldots, n_m)}[\boldA^\#]$
is c.e. uniformly in $e, k, m, n_1, \ldots, n_m$. 

Now, when $M \in \N$, let $\calS_{k,M, n_1, \ldots, n_m}'[\boldA^\#]$ denote the set
of all finite sets $F$ of generated points of $\boldA^\#$ so that for each $a \in F$, 
there is a type-$(n_1, \ldots, n_m)$ system $(\alpha^s_{i,j})_{s,i,j}$ 
of rational scalars so that $\sum_{s,i,j} |\alpha^s_{i,j}| < 2^{-k}$ and so that 
$\norm{a - \sum_{s,i,j} \alpha^s_{i,j} g^s_{i,j}} < 2^{-k}$.  
It follows that $\calS_{k,M, n_1, \ldots, n_m}'[\boldA^\#]$ is c.e. uniformly in 
$e,k,M, n_1, \ldots, n_m$.

It now follows from \cite[Proposition 7.2.2]{Rordam.Larsen.Laustsen.2000} that $\boldA^\#$ is AF if and only if 
for every finite set $F$ of generated points of $\boldA^\#$ and every $k \in \N$, 
there exists $M, k_1 \in \N$ so that $F \in \calS_{k_1,M, n_1, \ldots, n_m}'[\boldA^\#]$
and so that $2^{-k_1}(1 + M) < 2^{-k}$.  Thus, the index set problem for AF algebras is a $\Pi^0_2$ set.

For the lower bound, we create a computable reduction from $\Tot$ to the index set problem for AF algebras.  Let $f$ be a partial computable function.  For each $n \in \mathbb{N}$, let $I_n$ be the union of the intervals removed in step $n$ of the middle-thirds construction of the Cantor set.  We now carry out the following procedure.  Begin with $A_0 = [0,1]$.  At step $n$, begin the computation of $f(n)$ and for each $m < n$ advance the computation of $f(m)$ by one step.  If computations $m_1, \ldots, m_k$ terminate at step $n$, then set $A_n = A_{n-1} \setminus \bigcup_{j=1}^kI_{m_j}$ (if no computations terminate at step $N$, set $A_n = A_{n-1}$).  Let $A = \bigcap_{n=1}^{\infty}A_n$.  Then $A$ is a computably compact space (see \cite[Definition 2.5]{Downey.Melnikov.2023}), so the abelian \cstar-algebra $C(A)$ has a computable presentation \cite{fox2022computable}.  It is fairly easy to see (on the basis of, say, \cite[Proposition 7.2.2]{Rordam.Larsen.Laustsen.2000}) that $C(A)$ is an AF algebra if and only if $A$ is totally disconnected. By construction, $A$ is totally disconnected if and only if $f$ is a total function.
\end{proof}

The \emph{CAR algebra} is the \cstar-algebra obtained via the inductive limit of 
$(M_{2^n}(\C), \phi_n)_{n \in \N}$, where $\phi_n$ is the diagonal embedding of 
$M_{2^n}(\C)$ into $M_{2^{n + 1}}(\C)$.
A \cstar-algebra is \emph{simple} if it has no non-trivial proper closed two-sided ideals.

\begin{lemma}\label{lm:IndxAF}
From $e \in \N$, it is possible to compute an index of a c.e. presentation 
$\boldA^\#$ of an AF algebra so that the following are equivalent: 
\begin{enumerate}
    \item{$e \in \Tot$.}
 
    \item{$\boldA$ is isomorphic to the CAR algebra}.

    \item{$\boldA$ is UHF.}

    \item{$\boldA$ is simple.}
\end{enumerate}
\end{lemma}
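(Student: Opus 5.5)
We build, uniformly in $e$, a computable inductive sequence of finite-dimensional algebras with unital bonding maps. By Corollary \ref{cor:CompIndLimAlg2} this yields a c.e. presentation $\boldA^\#$ of its inductive limit, with an index computable from $e$. The intended behaviour is that the limit is the CAR algebra exactly when $\phi_e$ is total. If some $\phi_e(m)$ diverges, we want the limit to acquire a nontrivial ideal, and hence to be non-simple and not UHF.

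The cleanest way to do this is at the level of unital dimension groups, then pass to algebras via Main Theorem \ref{mthm:ClsCePresDimGrp} (or directly via the construction in its proof, since we already have a computable unital certificate). We define a computable sequence $((\Z^{n_s},u_s),\phi_s)_{s\in\N}$ of unital simplicial groups by stages. Alongside it we maintain a counter $m$, the least argument whose computation we are currently waiting on. At stage $s$ we run $\phi_{e,s}(m)$.

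If $\phi_{e,s}(m)$ has not halted, the next level is $\Z^2$ with a ``waiting'' bonding map. This map sends the old generator(s) into the new first coordinate with multiplicity $2$ and the new second coordinate with multiplicity $0$, except that the second summand is fed only from itself. Concretely, on $\Z^2$ we use the matrix $\begin{pmatrix}2&0\\1&1\end{pmatrix}$, so that the pattern corresponds to the algebra map $M_a\oplus M_b\to M_{2a}\oplus M_{a+b}$. Iterated forever, this has the summand fed only by the first coordinate, giving an order ideal. Equivalently, the Bratteli diagram is not ``eventually connected everywhere'', and in the limit the ideal generated by the first summand's tower is proper.

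If $\phi_{e,s}(m)$ halts, we collapse: the next level is $\Z$ and the bonding map is $(x,y)\mapsto 2(x+y)$, suitably scaled. We then increment $m$.

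When $\phi_e$ is total, collapses happen infinitely often. Telescoping to the collapse levels, the sequence consists of maps $\Z\to\Z$ multiplying by a power of $2$. The limit group is then $\Z[1/2]$ with unit $1$, so by Elliott's Theorem \ref{thm:Elliott} $\boldA$ is the CAR algebra, hence UHF and simple. The multiplicities need to be arranged so that every map is multiplication by a power of $2$, with all labels powers of $2$. When $\phi_e$ is not total, from some stage on we never collapse, and the tail is the $\Z^2$ system above. Its limit has the proper nonzero order ideal coming from the first coordinate's image, which corresponds to the closed ideal of the AF algebra generated by the first summands. So $\boldA$ is not simple, and since UHF algebras are simple it is not UHF, nor the CAR algebra. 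This yields the implications (1)$\Rightarrow$(2)$\Rightarrow$(3)$\Rightarrow$(4)$\Rightarrow$(1).

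The main obstacle is verifying non-simplicity in the divergent case, i.e.\ identifying an ideal of the limit from the Bratteli diagram. We would use the standard correspondence between closed ideals of an AF algebra and directed hereditary subsets of its Bratteli diagram (equivalently, order ideals of $K_0$). Under this correspondence, the set of vertices from the second summand at all levels past the last collapse is directed and hereditary. We must double-check the direction of the matrix so that this set is genuinely closed under successors; swapping rows if necessary handles this. Computability of the sequence and uniformity in $e$ are routine, since each stage runs only finitely many steps of $\phi_e$.
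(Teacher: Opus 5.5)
Your overall strategy is sound and is essentially the paper's. Levels have a single summand when the computation makes progress and two summands while it stalls. When $\phi_e$ is total you telescope to the CAR diagram; when it is not, a set of vertices closed under successors gives a proper ideal. The paper differs in three details: it tracks progress through $m_{e,s}$, it wires a stalled level as two disjoint towers (so in the divergent case its algebra is eventually $M_p\oplus M_q$), and it obtains $\boldA^\#$ from a computable labeled Bratteli diagram via the uniformity of Main Theorem \ref{mthm:ClsCompPrsntAF}. Your direct use of Corollary \ref{cor:CompIndLimAlg2} on a computable sequence of unital injective maps is equally good.

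However, two steps fail as written.

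\emph{The ideal.} With $(x,y)\mapsto(2x,x+y)$, i.e.\ $M_a\oplus M_b\to M_{2a}\oplus M_{a+b}$, the first summand feeds \emph{both} summands. The image of $(1,0)$ is $(2,1)$, an order unit, so the order ideal (and the closed ideal) generated by the first summands is everything. The proper nonzero ideal is $\overline{\bigcup_{s\geq s_0}\nu_s(0\oplus M_{b_s})}$, where $\nu_s$ is the canonical map into $\boldA$ and $s_0$ is the first level after the last collapse. It comes from the second-summand vertices, which form a set closed under successors that also contains every vertex all of whose successors it contains. This is what your last paragraph says, so no row swap is needed, but the earlier sentences attributing the ideal to the first summand are wrong.

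\emph{The total case.} Your map out of a one-summand level is not pinned down: multiplicity $0$ into the second coordinate would give a summand of size $0$. If it is $x\mapsto(2x,x)$ (the first column of your matrix), then after $j$ further waiting steps you have $(2^{j+1}x,(2^{j+1}-1)x)$. The collapse $(x,y)\mapsto 2(x+y)$ then multiplies by $2(2^{j+2}-1)$, e.g.\ by $6$ when $j=0$. So in the total case you would get a UHF algebra with odd primes in its supernatural number, not the CAR algebra, and (1)$\Rightarrow$(2) fails. Rescaling by a common integer cannot remove the odd parts.

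A choice that works is:
\begin{itemize}
\item $x\mapsto(x,x)$ out of a one-summand level;
\item your waiting map, which sends $(c,c)$ to $(2c,2c)$;
\item $(x,y)\mapsto x+y$ to collapse after a waiting run;
\item $x\mapsto 2x$ for a collapse taken directly at a one-summand level.
\end{itemize}
All units are then $2^k$ or $(2^k,2^k)$, every map is unital and injective, and between consecutive one-summand levels the composite is multiplication by $2^j$ with $j\geq 1$. So when $\phi_e$ is total the ordered $K_0$ is $(\Z[1/2],1)$ and Theorem \ref{thm:Elliott} gives the CAR algebra. The divergent tail is still your waiting map, so that case is unaffected.

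Keep the factor of at least $2$ at \emph{every} collapse. In the paper's diagram, consecutive one-vertex levels are joined by a single edge, so its telescoping to the CAR diagram tacitly needs infinitely many stages at which $m_{e,s}$ does not increase. Your scheme gets infinitely many doublings from the collapses alone.
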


\begin{proof}
Let $e \in \N$. 
For every $s \in \N$, let 
$m_{e,s} = \max\{y \in \N\ :\ \forall x < y\ x \in \dom(\phi_{e,s})\}$.
Thus, $m_{e,0} = 0$.  
Let:
\begin{eqnarray*}
\mathfrak{p}_0 & = & \{0\}\\
\mathfrak{p}_{s+1} & = & \left\{ 
\begin{array}{cc}
\{0\} & m_{e,s} < m_{e,s+1} \\
\{0,1\} & m_{e,s} = m_{e,s+1} \\
\end{array}
\right.
\end{eqnarray*}
We define a labeled Bratteli diagram $\BratD_e$ as follows. 
Set $V_{\BratD_e} = \bigcup_{s \in \N} \{s\} \times \mathfrak{p}_s$, and let 
$L_{\BratD_e}(s,t) = s$.
If $(s,1) \not \in V_{\BratD_e}$, then set $E_{\BratD_e}((s,0), v) = 1$ 
for all $v \in L^{-1}[\{s + 1\}]$.  
Suppose $(s,1) \in V_{\BratD_e}$.  Set 
$E_{\BratD_e}((s,0), (s+1, 0)) = E_{\BratD_e}((s,1), (s + 1, j)  =  1$ 
where $j$ is the largest number so that $(s+1,j) \in V_{\BratD_e}$.
For all other pairs $(v,v')$ of vertices we set $E_{\BratD_e}(v,v') = 0$.
We set $\Lambda_{\BratD_e}(0,0) = 1$.  For all other vertices $v$, 
we set $\Lambda_{\BratD_e}(v) = \sum_{v'} \Lambda_{\BratD_e}(v')$ where 
$v'$ ranges over all vertices so that $E_{\BratD_e}(v',v) > 0$.

It is now straightforward to construct a computable presentation $\BratD_e^\#$
of $\BratD_e$.  Furthermore, it is straightforward to construct an AF algebra $\boldA$
whose labeled Bratteli diagram is $\BratD_e$.  By the uniformity of Main Theorem \ref{mthm:ClsCompPrsntAF}, it is now possible to compute an index of a c.e. 
presentation $\boldA^\#$ of $\boldA$.  

Suppose $\phi_e$ is total.  
In this case, by appropriately telescoping the diagram $\BratD_e$ we obtain a diagram where every level has exactly one vertex and between each pair of adjacent levels there are exactly two edges.  This Bratteli diagram corresponds to the CAR algebra (see \cite[Example III.2.4]{Davidson.1996}), which is UHF and, therefore, simple (\cite[Corollary III.5.3]{Davidson.1996}).

On the other hand, suppose $\phi_e$ is not total, and let $s_0$ be the least number 
so that $m_{e,s_0} = m_{e,s}$ for all $s \geq s_0$. 
This entails that for all $s > s_0$, $P_{\BratD_e}((s_0 + 1, 0), (s, 1)) = 0$.
This implies that the associated AF algebra is not simple (see \cite[Corollary III.4.3]{Davidson.1996}) and therefore is not a UHF algebra (and hence is not $\star$-isomorphic to the CAR
algebra).
\end{proof}

\begin{theorem}\label{thm:IndxUHF}
The index set problem for UHF algebras is $\Pi^0_2$ complete.
\end{theorem}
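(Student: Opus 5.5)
We prove the two bounds separately. The lower bound is immediate from Lemma \ref{lm:IndxAF}. Given $e$, that lemma computes an index of a c.e. presentation $\boldA^\#$ of an AF algebra with $e \in \Tot$ if and only if $\boldA$ is UHF. This map is a computable reduction of $\Tot$ to the UHF index set. Since $\Tot$ is $\Pi^0_2$ complete, the UHF index set is $\Pi^0_2$-hard.

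For the upper bound, we write membership as a conjunction of $\Pi^0_2$ conditions. First, $e$ must index a c.e. presentation of an AF algebra, which is $\Pi^0_2$ by Theorem \ref{thm:IndxAF}. Second, the algebra must be UHF. Classically, a (unital) AF algebra is UHF if and only if it is the closure of an increasing union of full matrix subalgebras containing the unit. Equivalently, for every finite set of generated points and every $k$, there is a unital copy of some $M_n(\C)$ whose $2^{-k}$-neighbourhood contains those points. Indeed, one can take a single matrix block: by Glimm-type perturbation, approximation by matrix subalgebras at all scales yields a UHF structure. So we express UHF-ness as follows:
\[
\forall F\ \forall k\ \exists n\ \exists (g_{i,j})\ [\,(g_{i,j}) \in \calS_{(k',n)}[\boldA^\#] \text{ and each } a\in F \text{ is within } 2^{-k} \text{ of a rational combination of the } g_{i,j}\,].
\]
The matrix-unit approximants must also be near a unital system. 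The inner condition is c.e. uniformly in the parameters. This uses Proposition \ref{prop:MutOrthoGen} for unital systems of matrix units together with the same c.e. bookkeeping as in the proof of Theorem \ref{thm:IndxAF}. The whole condition is therefore $\Pi^0_2$, and intersecting with the $\Pi^0_2$ AF condition keeps it $\Pi^0_2$.

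The main obstacle is justifying the classical characterization. We must show that if every finite subset is approximable within every tolerance by a unital full matrix subalgebra, then $\boldA$ is UHF. The step I would try first is this. Given a unital matrix subalgebra $\boldF_1$ and the next approximating $\boldF_2$ chosen to approximate the matrix units of $\boldF_1$ within $2^{-\Delta}$, apply the Effective Glimm Lemma (Theorem \ref{thm:EffGlimm}) classically. This gives a unitary $u$ near $\unit_\boldA$ with $u^*\boldF_1u \subseteq \boldF_2$. We then conjugate as in the proof of Main Theorem \ref{mthm:CompAFCert}. This builds an increasing chain of unital full matrix algebras with dense union, so $\boldA$ is UHF. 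Conversely, UHF algebras clearly satisfy the condition.

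A simpler alternative avoids the chain-building. Once we know $\boldA$ is AF, $\boldA$ is UHF if and only if some (equivalently, every) Bratteli diagram has, after telescoping, a single vertex per level. This in turn is equivalent to the approximation condition above. Either route makes the predicate $\Pi^0_2$, which completes the upper bound.
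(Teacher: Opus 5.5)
Your proposal is correct and follows the paper's proof. The lower bound is exactly the reduction from $\Tot$ given by Lemma \ref{lm:IndxAF}, and the upper bound is the paper's ``argue as in Theorem \ref{thm:IndxAF} using local matriciality''; the only difference is that the paper cites Glimm's characterization (a separable unital \cstar-algebra is UHF iff it is locally matricial), whereas you re-derive it from the Glimm perturbation lemma and the chain-building in Main Theorem \ref{mthm:CompAFCert}.

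Drop the parenthetical ``equivalently, every'' in your Bratteli-diagram alternative. Telescoping never changes the vertex set of a retained level. The CAR algebra is also the limit of $M_{2^n}(\C)\oplus M_{2^n}(\C)$ under $(a,b)\mapsto(\mathrm{diag}(a,b),\mathrm{diag}(a,b))$, and that diagram has two vertices on every level. So only the ``some diagram up to equivalence'' version is true, and it does not by itself give a $\Pi^0_2$ test on the computed diagram.
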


\begin{proof}
To see that the index set problem for UHF algebras is a $\Pi_2^0$-set, argue as in the proof of Theorem \ref{thm:IndxAF}, using that a separable unital \cstar-algebra is UHF if and only if it is locally matricial (see \cite[Theorem 1.13]{Glimm.1960}).

For the lower bound, Lemma \ref{lm:IndxAF} gives a many-one reduction from $\Tot$ to the index set problem for UHF algebras.
\end{proof}

\begin{theorem}
The index set problem for simple AF algebras is $\Pi^0_2$ complete.
\end{theorem}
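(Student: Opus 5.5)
The plan is to prove both an upper bound and a lower bound, modeled on Theorems \ref{thm:IndxAF} and \ref{thm:IndxUHF}.

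For the lower bound, the work is already done by Lemma \ref{lm:IndxAF}. From $e$ we compute an index of a c.e. presentation $\boldA^\#$ of an AF algebra such that $e \in \Tot$ if and only if $\boldA$ is simple. The map $e \mapsto$ (that index) is therefore a many-one reduction of $\Tot$ to the index set of simple AF algebras. Since $\Tot$ is $\Pi^0_2$ complete, the index set is $\Pi^0_2$ hard.

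For the upper bound, the set is an intersection of two sets. The first is the index set of AF algebras, which is $\Pi^0_2$ by Theorem \ref{thm:IndxAF}. The second is the set of indexes whose algebra is simple. It suffices to show that, for an index of a c.e. presentation of an AF algebra, simplicity is a $\Pi^0_2$ condition. My plan is to avoid quantifying over ideals and instead use a property that uses only finitely many quantifiers over generated points.

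The criterion I would use: a unital \cstar-algebra is simple if and only if for every nonzero $a$ there exist finitely many $x_i, y_i$ with $\norm{\unit_\boldA - \sum_i x_i a y_i} < 1$. The sum is then invertible, so the ideal generated by $a$ is the whole algebra. It is enough to test $a$ ranging over generated points, via approximation, once we handle the issue of knowing $a \neq 0$.

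Concretely, I would express simplicity as follows. For every generated point $g$ and every $k$, either $\norm{g} < 2^{-k}$, or there exist generated points $x_1,\dots,x_r,y_1,\dots,y_r$ with $\norm{\unit_\boldA - \sum_i x_i g y_i} < 1$. The norm bound must be scaled so that the condition survives small perturbations; for example, require $\norm{x_i}, \norm{y_i}$ bounded and use a nearby point to $g$.

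Recall the algebra in question is a unital AF algebra. By Corollary \ref{cor:CompPresAF} its presentation is computable, and $\unit_\boldA$ is a computable point given a unital index. The relevant point is how the two norm conditions sit in the hierarchy. The condition "$\norm{q} > 2^{-k-1}$" is $\Sigma^0_1$ for computable presentations. The condition "$\norm{\unit - \sum x_i g y_i} < 1$" is also $\Sigma^0_1$.

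Using this, I rewrite the criterion as: for all $g, k$, either $\norm{g} < 2^{-k}$ (which is $\Sigma^0_1$ in a c.e. presentation, since right c.e. norms make strict upper bounds c.e.), or $\exists$ witnesses ($\Sigma^0_1$). The whole condition is then $\forall(\Sigma^0_1 \vee \Sigma^0_1)$, which is $\Pi^0_2$.

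Correctness of the criterion runs in two directions. If $\boldA$ is simple and $g \neq 0$, the algebraic ideal generated by $g$ is dense, so it contains a point within $1$ of $\unit_\boldA$, and this can be approximated by generated-point expressions. Conversely, if $\boldA$ is not simple, take a proper closed ideal $I \neq 0$. This ideal contains a nonzero element, hence elements of norm $1$, which can be approximated by generated points $g$ with $\norm{g} > 2^{-k}$. We then need $\norm{\unit - \sum x_i g y_i} \geq 1 - \epsilon$ uniformly. To get this, I would replace the threshold $1$ by $1/2$ and use the fact that the distance from $\unit$ to $I$ is $1$.

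I expect the main obstacle to be this perturbation bookkeeping. Specifically, $g$ is only close to $I$, not in it, so I must bound $\norm{x_i}, \norm{y_i}$ in terms of witnesses to keep $\sum x_i g y_i$ close to $I$. I would handle this by quantifying in the $\Pi^0_2$ statement over $g$ together with a uniform bound, along the lines of the $\calS'$ sets in the proof of Theorem \ref{thm:IndxAF}.

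The uniformity needed for the AF part is given by Theorem \ref{thm:IndxAF}, and we may restrict to unital indexes as in the paper's setting. Combining the upper and lower bounds gives $\Pi^0_2$ completeness.
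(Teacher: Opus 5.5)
Your lower bound is the paper's: the reduction of $\Tot$ via Lemma \ref{lm:IndxAF}. Your upper bound, however, has a genuine gap. Your criterion need not fail for a non-simple algebra.

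It only tests generated points $g$, and a proper closed ideal $I$ need not contain any nonzero generated point. Meanwhile, a generated point that is merely close to $I$ may generate all of $\boldA$. For instance, present $\C\oplus\C$ by the single special point $(e,\pi)$. Every generated point has the form $(p(e),p(\pi))$ with $p\in\Q(i)[x]$. By transcendence, every nonzero generated point has both coordinates nonzero and is therefore invertible. So your condition holds for all $g$ and $k$, although this is a computable presentation of a non-simple AF algebra.

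Bounding the witnesses, as you propose, does not repair this. A bound fixed in terms of $\norm{g}$ and $k$ breaks the simple direction. In the CAR algebra, if $g$ is a projection of trace $2^{-n}$ and $\norm{\unit-\sum_i x_i g y_i}<1/2$, applying the trace gives $\sum_i\norm{x_i}\norm{y_i}>2^{n-1}$. A bound chosen existentially together with the witnesses constrains nothing, and the example above still satisfies the condition. The estimate $\norm{\unit-\sum_i x_igy_i}\geq 1-M\,d(g,I)$, with $M=\sum_i\norm{x_i}\norm{y_i}$, is useless once $M$ is unbounded unless $d(g,I)=0$.

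The paper avoids this by using the AF structure. Uniformly in the index, Main Theorem \ref{mthm:ClsCompPrsntAF} gives a computable labeled Bratteli diagram. Then $\boldA$ is simple if and only if for every vertex $v$ at level $n$ there is $m>n$ such that $v$ is joined by paths to every vertex at level $m$, which is visibly $\Pi^0_2$.

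To keep your analytic route, the missing idea is to test an element that lies \emph{exactly} in the ideal. Replace $g$ by $c=(g^*g-\epsilon)_+$, which is a computable point uniformly in $g$ and rational $\epsilon>0$. If $b\in I$ is positive with $\norm{g^*g-b}<\epsilon$, then the image of $g^*g$ in $\boldA/I$ has norm less than $\epsilon$. Hence $c\in I$, and $\norm{\unit-\sum_i x_icy_i}\geq 1$ for all $x_i,y_i$.

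Consider the condition: for all generated $g$, rational $\epsilon>0$ and $k$, either $\norm{g^*g}<\epsilon+2^{-k}$, or there are generated $x_i,y_i$ with $\norm{\unit-\sum_i x_icy_i}<1$. This condition is $\Pi^0_2$ and characterizes simplicity. For the non-simple direction, take $\norm{b}=1$, $g$ close to $b^{1/2}$, $\epsilon=1/4$ and $k=2$. With this fix, the simplicity part of the upper bound needs no AF structure at all.
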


\begin{proof}
For the upper bound, by Theorem \ref{thm:IndxAF}, expressing that an algebra is AF is $\Pi^0_2$.  
Suppose $e$ is an index of a c.e. presentation $\boldA^\#$ of a \cstar-algebra. 
By Main Theorem \ref{mthm:ClsCompPrsntAF}, from $e$ it is possible to compute a computable
presentation $\BratD^\#$ of a labelled Bratteli diagram of $\boldA^\#$.
By \cite[Corollary III.4.3]{Davidson.1996} $\boldA$ is simple
if and only if for every $n \in \N$ and every $v \in L^{-1}[\{n\}]$, there is an $m > n$ such that there is a path from $v$ to every vertex in $L^{-1}[\{m\}$.  This is clearly a $\Pi^0_2$ assertion.

For the lower bound, Lemma \ref{lm:IndxAF} gives a computable reduction from $\Tot$ to the index set problem for simple AF algebras.
\end{proof}

We now turn to isomorphism problems.  The isomorphism problem for a class $\calC$ of structures
is the problem of determining if two natural numbers index presentations of isomorphic 
structures in the class.  Isomorphism problems thus correspond to sets of pairs of natural
numbers, and their complexity is still measured by their position in the arithmetical hierarchy.
These problems model the complexity of the classification problem for $\calC$.  
We begin with the isomorphism problem for UHF algebras.

\begin{theorem}\label{thm:IsoUHF}
The isomorphism problem for UHF algebras is $\Pi^0_2$ complete.
\end{theorem}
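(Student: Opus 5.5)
Two bounds are needed: the set of pairs $(e,e')$ indexing c.e. presentations of UHF algebras with $\boldA\cong\boldB$ is $\Pi^0_2$, and $\Tot$ reduces to it.

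\textbf{Upper bound.} By Theorem \ref{thm:IndxUHF}, the condition that $e$ and $e'$ both index UHF algebras is $\Pi^0_2$. It therefore suffices to show that, for two UHF presentations, isomorphism is $\Pi^0_2$ relative to a uniformly computable invariant. I will use the supernatural number. Since a UHF algebra has totally ordered $K_0$, by Glimm it is determined by the set $S(\boldA)=\{n\ :\ M_n(\C)\hookrightarrow \boldA \text{ unitally}\}$.

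The first claim is that $S(\boldA)$ is c.e. uniformly in $e$. Indeed, $n\in S(\boldA)$ iff there is a unital $n\times n$ system of matrix units in $\boldA$. By Proposition \ref{prop:MutOrthoGen}, such systems form a c.e. closed set of $(\boldA^\#)^{n^2}$, uniformly in $n$, and nonemptiness of a c.e. closed set is $\Sigma^0_1$. Alternatively, one can compute a Bratteli diagram via Main Theorem \ref{mthm:ClsCompPrsntAF} and read off the labels: $n\in S(\boldA)$ iff $n$ divides some label $\Lambda(s,0)$.

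Then $\boldA\cong\boldB$ iff $S(\boldA)=S(\boldB)$, i.e.\ $\forall n\,(n\in S(\boldA)\leftrightarrow n\in S(\boldB))$. Each biconditional between $\Sigma^0_1$ statements is $\Delta^0_2$, so the whole condition is $\Pi^0_2$. Combined with the $\Pi^0_2$ index-set condition, the isomorphism problem is $\Pi^0_2$.

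\textbf{Lower bound.} I plan to reduce $\Tot$ via Lemma \ref{lm:IndxAF} style diagrams, but the algebras must stay UHF. Given $e$, build a computable Bratteli diagram with one vertex per level. Let the multiplicity at level $s+1$ be $3$ if $m_{e,s}<m_{e,s+1}$ (a new convergence) and $2$ otherwise. The resulting algebra is UHF with supernatural number $2^a3^b$. Here $b=\infty$ iff $\phi_e$ is total, since $m_{e,s}$ increases infinitely often iff $\phi_e$ is total. Also $a=\infty$ always, because infinitely many stages show no increase: each increase of $m_{e,s}$ forces a fresh halting computation, so I will arrange stages so that increases alternate with non-increases, for instance by only allowing an increase at even stages.

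Pair each such algebra with a fixed computable presentation of $M_{2^\infty}\otimes M_{3^\infty}$, whose supernatural number is $2^\infty3^\infty$. The pair is isomorphic iff $e\in\Tot$. By the uniformity of Main Theorem \ref{mthm:ClsCompPrsntAF}, indexes of c.e. presentations are computable from $e$.

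\textbf{Main obstacle.} The delicate step is the upper bound: we need that a unital copy of $M_n$ embeds iff $n$ divides the supernatural number, and that this membership is genuinely $\Sigma^0_1$ uniformly. I will verify this through the effective matrix-unit search together with the classical Glimm classification.
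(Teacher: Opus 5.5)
Your proof is correct, but both halves take a different route from the paper.

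\textbf{Upper bound.} The paper goes through the effective $K_0$ functor. It computes presentations of the $K_0$ groups, then computes certificates of dimensionality for them (Lemmas \ref{lm:CompCertDim} and \ref{lm:CompUntlCertDim}), then lower-semicomputes the supernatural numbers from those certificates and compares them. You bypass $K_0$ and the certificate construction entirely. You enumerate $S(\boldA)$ directly by searching for unital $n\times n$ matrix-unit systems, using only the computable weak stability in Proposition \ref{prop:MutOrthoGen} together with the c.e.-closedness result of \cite{UHFPaper}. Lower-semicomputing a supernatural number amounts to enumerating its finite divisors, so the final $\forall n(\Sigma^0_1\leftrightarrow\Sigma^0_1)$ count is the same. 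Your route is considerably more elementary, and you make explicit the $\Pi^0_2$ membership conjunct from Theorem \ref{thm:IndxUHF}, which the paper leaves implicit. The paper's route instead shows that its new machinery computes the classifying invariant.

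\textbf{Lower bound.} The paper encodes the halting times of $\phi_e$ into the exponents of all primes in two supernatural numbers, with $\infty$ versus $0$ at non-halting inputs. It then invokes the result of \cite{UHFPaper} that realizes lower semicomputable supernatural numbers by c.e. presentations. You instead compare a single varying algebra with supernatural number $2^\infty 3^b$ against a fixed $M_{6^\infty}$, built from a computable one-vertex-per-level Bratteli diagram. This needs only Corollary \ref{cor:CompIndLimAlg2}. It also keeps every algebra infinite-dimensional, whereas the paper's $\calE_1$ can be a finite supernatural number when $\dom(\phi_e)$ is finite.

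There are a few small slips to fix.
\begin{enumerate}
\item Your reason that $a=\infty$ (``each increase forces a fresh halting computation'') does not work alone: for a fast total function, $m_{e,s}$ can increase at every stage. What makes $a=\infty$ is the interleaving of forced multiplicity-$2$ levels, and that device does suffice.
\item Your alternative Bratteli test, ``$n$ divides some label $\Lambda(s,0)$'', presupposes one vertex per level. A diagram of a computable AF certificate of a UHF algebra need not have this. For example, $M_3\oplus M_1\subseteq M_4\subseteq M_{2^\infty}$ produces a label $3$, yet $3$ does not divide $2^\infty$. The correct test is that $n$ divides every label on some level.
\item Searching for \emph{unital} matrix units uses $\unit_\boldA$ as a computable point, so you are implicitly working with unital indexes. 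The paper's route makes the same assumption, since it reads the supernatural number off $K_0(\unit_\boldA)$.
\end{enumerate}
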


\begin{proof}
Suppose $\boldA^\#$ and $\boldB^\#$ are computable presentations of UHF algebras. 
For the upper bound, first compute c.e. presentations of their $K_0$ groups.  Then compute
certificates of dimensionality for their $K_0$ groups.  Then lower semicompute 
their supernatural numbers from these certificates.  Testing equality of the supernatural
numbers is easily seen to be $\Pi^0_2$.  

For the lower bound, we show that there is a many-one reduction from $\Tot$ to the isomorphism problem for UHF algebras.  Let $(p_n)_{n \in \N}$ be the increasing enumeration of the prime numbers. 
For each $n \in \dom(\phi_e)$, let $\calE_0(n) = \calE_1(n) =$ the least $s \in \N$ so that 
$n \in \dom(\phi_{e,s})$. When $n \not \in \dom(\phi_e)$, let 
$\calE_0(n) = \infty$ and let $\calE_1(n) = 0$.  
It follows that $\calE_0$, $\calE_1$ are lower semi-computable uniformly in $e$.
By the results in \cite{UHFPaper}, it is now possible to compute for each $j \in \{0,1\}$
a c.e. presentation $\boldA_j^\#$ of a UHF algebra whose supernatural number is $\calE_j$.
We now note that $\boldA_0$ is $\star$-isomorphic to $\boldA_1$ if and only if 
$\calE_0 = \calE_1$ (see, for example, \cite[Section 7.4]{Rordam.Larsen.Laustsen.2000}). However, by construction, $\calE_0 = \calE_1$ if and only 
if $\phi_e$ is total.
\end{proof}

While the complexity of the index set problem for AF algebras is fairly low, 
the complexity of the isomorphism problem for this class is as high as possible, 
namely, it is $\Sigma^1_1$ complete.  The $\Sigma^1_1$ sets are those that 
can be defined by expressions of the form 
$\exists f \forall y R(x,f,y)$ where $f$ ranges over all functions from $\N$ into 
$\N$ (that is, over Baire space) and $R$ is a computable predicate.  
Every isomorphism problem is at worst $\Sigma^1_1$ (search through all possible
functions and check if any are isomorphisms).  If an isomorphism problem is 
$\Sigma^1_1$ complete, this is interpretated as saying that no effective 
classification is possible.

\begin{theorem}\label{thm:IsoAF}
The isomorphism problem for commutative AF algebras is $\Sigma_1^1$ complete.
\end{theorem}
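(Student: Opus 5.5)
The upper bound is immediate: as remarked above, every isomorphism problem is $\Sigma^1_1$. We need a uniform way to produce indexes of computable presentations of commutative AF algebras that are isomorphic exactly when two given indexes lie in a fixed $\Sigma^1_1$ complete set. We will transfer a known $\Sigma^1_1$ completeness result through Gelfand duality and Stone duality. A unital commutative AF algebra is $C(X)$ for a compact metrizable totally disconnected space $X$, i.e.\ a Stone space. The algebra $C(X)$ is determined up to $\star$-isomorphism by $X$ up to homeomorphism, and hence by the Boolean algebra $\mathrm{Clop}(X)$ of clopen sets up to isomorphism. We will use the classical result that the isomorphism problem for computable Boolean algebras is $\Sigma^1_1$ complete; this holds because Boolean algebras are complete for isomorphism among computable structures, in the sense of Friedman--Stanley and Goncharov--Knight and coauthors.

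The reduction proceeds in four steps. Given an index $e$ of a computable Boolean algebra $\mathcal{B}_e$, first uniformly produce an AF certificate, or equivalently a computable Bratteli diagram. Enumerate $\mathcal{B}_e$ and, at stage $s$, form the finite Boolean subalgebra $\mathcal{B}_{e,s}$ generated by the first $s$ elements. Its atoms are computable from $s$, since $\mathcal{B}_e$ is computable: we can decide which Boolean combinations of the generators are zero. Then build the inductive sequence $(\C^{n_s}, \psi_s)_{s\in\N}$, where $n_s$ is the number of atoms and $\psi_s$ is the unital inclusion sending the indicator of an atom to the sum of the indicators of the atoms below it at stage $s+1$. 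This is a computable inductive sequence of finite-dimensional algebras with unital bonding maps, all of whose summands are $1\times 1$.

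Second, apply Corollary~\ref{cor:CompIndLimAlg2}(\ref{cor:CompIndLimAlg2::Comp}), or equivalently the implication (\ref{mthm:ClsCompPrsntAF::CompIndSeqFinDimAlg})$\Rightarrow$(\ref{mthm:ClsCompPrsntAF::CompPres}) of Main Theorem~\ref{mthm:ClsCompPrsntAF}. Since the bonding maps are injective, this yields, uniformly in $e$, an index of a computable presentation of the limit $\boldA_e$. The limit $\boldA_e$ is commutative, being a closure of an increasing union of commutative algebras, and it is AF and unital.

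Third, we must verify that $\boldA_e \cong C(\mathrm{Stone}(\mathcal{B}_e))$. This follows because the projections of the limit correspond exactly to $\bigcup_s \mathcal{B}_{e,s} = \mathcal{B}_e$. Concretely, $K_0(\boldA_e)^+$ with its unit is the group of integer-valued continuous functions generated by $\mathcal{B}_e$, and the projection lattice is $\mathcal{B}_e$. The map $e \mapsto$ (index of $\boldA_e$) is therefore computable, and
\[
\boldA_e\cong\boldA_{e'} \iff \mathrm{Stone}(\mathcal{B}_e)\cong\mathrm{Stone}(\mathcal{B}_{e'}) \iff \mathcal{B}_e\cong\mathcal{B}_{e'}.
\]
In this chain, the first equivalence is Gelfand duality and the second is Stone duality; alternatively, both follow from recovering $\mathcal{B}$ as $\Proj(\boldA)$ with its lattice operations.

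Fourth, restrict to indexes $e$ that actually index Boolean algebras; this is harmless for many-one completeness of the isomorphism relation on such pairs. Composing the reduction with the known $\Sigma^1_1$ completeness then gives hardness. If only the version for computable presentations is wanted, we are done; the c.e.\ version follows by Corollary~\ref{cor:CompPresAF}.

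The main obstacle is to invoke the right Boolean algebra completeness result, and to handle the uniformity of atoms: one must ensure $n_s\ge 1$ and that nonzero atoms are decided correctly, which uses computability, not mere c.e.-ness, of $\mathcal{B}_e$. If instead we only have completeness for computable trees or linear orders, we would first pass to interval Boolean algebras, which are also known to be $\Sigma^1_1$ complete for isomorphism.
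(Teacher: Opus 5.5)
Your argument is correct and uses the same reduction as the paper: send a computable Boolean algebra $B$ to $C(X_B)$, obtain $C(X_{B_0})\cong C(X_{B_1}) \iff B_0\cong B_1$ from Gelfand and Stone duality, and import $\Sigma^1_1$ completeness from Goncharov--Knight. You differ from the paper in how you make $B\mapsto C(X_B)$ effective. The paper goes through computable topology. It computes a computably compact presentation of the Stone space (Bazhenov--Harrison-Trainor--Melnikov) and then a presentation of $C(X_B)$ from it (Burton et al., Fox). You instead stay inside the paper's own framework. You read off the computable inductive sequence $(\C^{n_s},\psi_s)_{s\in\N}$ from the atoms of the finite subalgebras $\mathcal{B}_{e,s}$ and apply Corollary \ref{cor:CompIndLimAlg2}(\ref{cor:CompIndLimAlg2::Comp}). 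Your route is more self-contained and exhibits the AF structure (indeed a computable Bratteli diagram) explicitly, but you then have to verify by hand that the limit is $C(X_B)$. The paper's route is a short citation chain but relies on heavier external machinery. For that verification, the cleanest argument is Lemma \ref{lm:IndLimAlg}. Send the indicator of an atom $a$ to $\chi_{\hat a}\in C(X_B)$. These maps are compatible with the $\psi_s$ and injective, hence isometric. The union of their ranges is a unital self-adjoint subalgebra separating points, hence dense by Stone--Weierstrass. Your projection-lattice claim is also true, but as written it is asserted rather than proved. In a commutative \cstar-algebra two projections at distance less than $1$ coincide, so by Lemma \ref{lm:841Strung} every projection of the limit already lies in some finite stage.
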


\begin{proof}
For each Boolean algebra $B$, let $X_B$ denote the Stone space of $B$.
From an index of a computable presentation $B^\#$ of a Boolean algebra, 
it is possible to compute an index of a computably compact presentation 
$X_B$ of the Stone space of $B$ (see \cite{Bazhenov.2023}).
From this index, it is possible to compute an index of a presentation of 
$C(X_B)$ (see \cite{burton2024computable}, \cite{fox2022computable}).
By Gelfand Duality, $C(X_{B_0})$ is $\star$-isomorphic to 
$C(X_{B_1})$ if and only if $X_{B_0}$ is homeomorphic to $X_{B_1}$.  
By the Stone Duality Theorem, $X_{B_0}$ is homeomorphic to $X_{B_1}$ if and only if 
$B_0$ is isomorphic to $B_1$.

Thus, we have defined a many-one reduction from the isomorphism problem for Boolean algebras to the isomorphism problem for commutative AF algebras; 
since the former problem is known to be $\Sigma_1^1$ complete (see \cite[Theorem 4.4(d)]{GoncharovKnight.2002}), so is the latter.
\end{proof}

\begin{corollary}\label{cor:IsoUntlDimGrp}
The isomorphism problem for unital dimension groups is $\Sigma_1^1$ complete. 
\end{corollary}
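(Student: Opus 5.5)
Since Theorem \ref{thm:IsoAF} already gives $\Sigma^1_1$-hardness for commutative AF algebras, the plan is to transfer this hardness through the $K_0$ functor. For the upper bound, note that the isomorphism problem for any class of countable structures given by c.e. presentations is $\Sigma^1_1$. Indeed, two c.e. presentations $(\calG_0,u_0)^\#$ and $(\calG_1,u_1)^\#$ present isomorphic unital ordered groups if and only if there exists a function $f:F_\omega\to F_\omega$ on labels such that several arithmetical conditions hold. These conditions are: $f$ respects the kernels and the group operation, it induces a bijection, it maps the positive cone onto the positive cone, and it sends a label of $u_0$ to a label of $u_1$. Each of these conditions is arithmetical, since the kernels and cones are c.e. and equality is c.e. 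Prefixing the existential function quantifier therefore gives a $\Sigma^1_1$ definition. The index set of unital dimension groups should also be arithmetical (or at worst $\Sigma^1_1$), for example via the effective Shen property and interpolation, so restricting to it does not raise the complexity.

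For hardness, I would give a computable many-one reduction from the isomorphism problem for commutative AF algebras. Given an index of a computable presentation $\boldA^\#$ of a unital AF algebra, we compute an index of $\uK{\boldA^\#}$. By the computability of the $K_0$ functor from Section \ref{subsubsec:BackCompKThy}, an index of the presented group $K_0(\boldA^\#)$ and of its c.e. positive cone can be computed from a unital index of $\boldA^\#$. The order unit $K_0(\unit_\boldA)$ is obtained from the computable point $\unit_\boldA$, which is a projection. One caveat is that a unital index (and not merely a presentation index) is needed to obtain the unit uniformly. In the reduction of Theorem \ref{thm:IsoAF}, the presentations of $C(X_B)$ come with a computable constant function $1$, so a unital index is available uniformly. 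By Elliott's Theorem \ref{thm:Elliott}, $\boldA_0\cong\boldA_1$ if and only if $\uK{\boldA_0}\cong\uK{\boldA_1}$; the converse direction holds by functoriality. Composing with the reduction from Boolean algebras yields a computable map $e\mapsto$ an index of a c.e. presentation of a unital dimension group such that isomorphism is preserved and reflected.

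The main obstacle I anticipate is uniformity: we must extract a unital index of $C(X_B)^\#$ from the index of $B^\#$, and then a label of $K_0(\unit)$. If needed, one can bypass $\boldA$ entirely and directly compute $K_0(C(X_B))\cong C(X_B,\Z)$ with unit $1$. This group is the group generated by the characteristic functions of clopen sets, that is, of elements of $B$. We can present it from $B^\#$ as the free abelian group on $B$ modulo $[a\vee b]=[a]+[b]$ for disjoint $a,b$. The positive cone is generated by the $[a]$, and the unit is $[1_B]$. Isomorphism of these unital groups is equivalent to isomorphism of the Boolean algebras, since $B$ is recovered as the interval $[0,u]$ in $C(X_B,\Z)$. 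This gives a direct and clearly uniform reduction.
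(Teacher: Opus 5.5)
Your proposal is correct and follows the route the paper intends. The upper bound is the generic $\Sigma^1_1$ one, and hardness comes from composing the Boolean-algebra-to-$C(X_B)$ reduction of Theorem~\ref{thm:IsoAF} with the computable functor $\uKnoarg$, using Elliott's Theorem and functoriality so that isomorphism is both preserved and reflected. Your fallback of presenting $C(X_B,\Z)$ with unit $[1_B]$ directly from $B^\#$ and recovering $B$ as the interval $[0,u]$ is also valid, and it avoids both the unital-index uniformity issue and any appeal to Elliott's Theorem.
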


\begin{corollary}\label{cor:EqvLblBrat}
The equivalence
problem for labeled Bratteli diagrams is $\Sigma_1^1$ complete.  
\end{corollary}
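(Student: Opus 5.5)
The plan is to bound the equivalence problem from above by a direct $\Sigma^1_1$ definition, and from below by reducing the isomorphism problem for commutative AF algebras (Theorem \ref{thm:IsoAF}) to it through the computable translations of Main Theorem \ref{mthm:ClsCompPrsntAF}.

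\textbf{Upper bound.} Two computably presented labeled Bratteli diagrams $\BratD^\#,\BratD'^\#$ are equivalent exactly when their associated AF algebras are isomorphic. Equivalently, some finite chain of isomorphisms and telescopings links them. This can be written as a single existential quantifier over Baire space: there exist a finite length $m$, diagrams $\BratD_1,\dots,\BratD_{m-1}$ coded as reals, and for each consecutive pair either an isomorphism or a telescoping sequence $(n_k)$. The condition checked on these witnesses is arithmetical, since vertex sets, levels, edges, labels and the path counts $P_\BratD$ are all computable from the coded data. Hence the equivalence problem is $\Sigma^1_1$.

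Alternatively, by the classical theory, equivalence is the same as the existence of an intertwining ``zig-zag'' diagram: subsequences of levels together with positive homomorphisms between the simplicial groups at alternate levels, commuting with the path-count maps and preserving labels. This is visibly of the form $\exists f\,\forall y\, R$.

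\textbf{Lower bound.} Given indexes $e_0,e_1$ of computable presentations of commutative AF algebras $C(X_{B_0}),C(X_{B_1})$ (as produced in the proof of Theorem \ref{thm:IsoAF}), I would proceed in two steps.
\begin{enumerate}
\item Use the uniformity of Main Theorem \ref{mthm:CompAFCert} to compute computable AF certificates.
\item Use the uniform direction (\ref{mthm:ClsCompPrsntAF::CePres})$\Rightarrow$(\ref{mthm:ClsCompPrsntAF::CompBrat}) of Main Theorem \ref{mthm:ClsCompPrsntAF} to compute indexes of computable presentations of labeled Bratteli diagrams $\BratD_0^\#,\BratD_1^\#$ of these certificates.
\end{enumerate}
The classical Bratteli--Elliott theorem then gives $\BratD_0\sim\BratD_1$ iff the algebras are $\star$-isomorphic. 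Two labeled Bratteli diagrams are equivalent iff their unital dimension groups are isomorphic, and by Elliott's Theorem \ref{thm:Elliott} this holds iff the algebras are isomorphic. This yields a computable many-one reduction from a $\Sigma^1_1$ complete problem, and hence from Boolean algebra isomorphism.

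\textbf{Main obstacle.} The delicate step is justifying that equivalence of labeled diagrams, as defined via chains of isomorphisms and telescopings, coincides with isomorphism of the associated unital dimension groups. For this I would cite the standard result (Bratteli; see Davidson, or Durand--Perrin) and check that the labels are preserved along the intertwining. I also need the reduction to be uniform in the indexes, which the paper's uniformity remarks supply.
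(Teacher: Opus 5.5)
Your proposal is correct in outline, but it takes a different route from the paper.

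\textbf{How the paper argues.} The paper never passes through the algebras. It reduces from Corollary~\ref{cor:IsoUntlDimGrp} (isomorphism of unital dimension groups). From an index of $(\calG,u)^\#$ it computes a unital certificate of dimensionality via Lemma~\ref{lm:CompUntlCertDim}. It then reads off a diagram $\BratD_e$ directly from the connecting maps, adding a root vertex $(0,1)$ whose edges to level~$1$ encode the order unit. Finally it cites the classical fact (Durand--Perrin) that the unital dimension groups are isomorphic iff these rooted diagrams are equivalent.

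\textbf{How your route differs.} You start from Theorem~\ref{thm:IsoAF} and run Main Theorem~\ref{mthm:CompAFCert} to obtain computable AF certificates. The unit is carried by the labels (matrix sizes) rather than by a root, and you appeal to Bratteli's theorem on the algebra side. You also need the unit of $C(X_B)$ to be uniformly computable, which is harmless here.

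\textbf{What each buys.} The paper's route avoids the Effective Glimm Lemma, and its root normalizes level~$0$ automatically. Yours produces genuinely labeled diagrams, which matches the wording of the statement more literally. You also give an explicit $\Sigma^1_1$ upper bound: an existential quantifier over a finite chain of coded diagrams and witnesses, followed by an arithmetical check. The paper leaves that bound implicit.

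\textbf{A point you must make explicit.} Under Definition~\ref{def:Tel}, telescopings have $n_0=0$ and preserve labels, and isomorphisms preserve levels and labels. So the labeled level~$0$ is an invariant of equivalence. Consequently, ``$\BratD_0\sim\BratD_1$ iff the algebras are isomorphic'' is false for arbitrary certificates. For example, $M_2(\C)$ has certificates with $F_0=\{2\}$ and with $F_0=\{1,1\}$, and their labeled diagrams are not equivalent. The claim does hold when both certificates start with $F_0=\{1\}$. That is true of the certificates produced in the proof of Main Theorem~\ref{mthm:CompAFCert}, which begins with $\boldA_0=\C\unit_\boldA$. With that normalization the usual interleaving argument goes through, and your reduction is valid.

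The same caveat makes the first sentence of your upper-bound paragraph not literally right. Your actual upper-bound argument uses only the definition of equivalence, so it is unaffected.
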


\begin{proof}
Suppose $e$ is an index of a c.e. presentation $(\calG, u)^\#$ of a 
unital dimension group. 
By the uniformity of Lemma \ref{lm:CompUntlCertDim}, it is possible to compute from $e$ an index 
of a unital certificate of dimensionality $(n_s, u_s, \mu_s, \phi_s)_{s \in \N}$
of $(\calG, u)^\#$.  Define a Bratteli diagram $\BratD_e$ by setting:
\begin{eqnarray*}
V_{\BratD_e} & = & \{(s+1, j)\ :\ s \in \N\ \wedge\ j \in \{1, \ldots, n_s\} \} \cup \{(0, 1)\}\\
L_{\BratD_e}(s,j) & = & s\\
E_{\BratD_e}((s+1,j), (s+2, j')) & = & \phi_s(e^{n_s}_j)(j')\\
E_{\BratD_e}((0,1), (1, j)) & = & u_1(j)
\end{eqnarray*}
It is straightforward to compute an index of a computable presentation of $\BratD_e^\#$ 
from $e$.  

Now, suppose that for each $j \in \{0,1\}$, $e_j$ is an index of a c.e. presentation 
$(\calG_j, u_j)^\#$ of a unital dimension group.  It is well-known 
that $(\calG_0, u_0)$ is isomorphic to $(\calG_1, u_1)$ if and only if 
$\BratD_{e_0}$ is equivalent to $\BratD_{e_1}$ (see \cite{Durand.Perrin.2022}).
The corollary now follows from Corollary \ref{cor:IsoUntlDimGrp}.
\end{proof}

The previous two theorems show that the isomorphism problem for AF algebras is substantially more difficult than the isomorphism problem for UHF algebras, confirming the intuition that the former class is indeed more complicated than the latter.  That being said, since UHF algebras are simple while commutative ones are not, a more satisfying confirmation of this aforementioned intuition would be to show that the isomorphism problem for the class of \emph{simple} AF algebras is also $\Sigma_1^1$ complete.  This raises the following question:

\begin{question}
    What is the complexity of the isomorphism problem for simple AF algebras?
\end{question}

\section*{Acknowledgments} \label{sec:ack} 
Some of the results in this paper were obtained during a SQuaRE at the American Institute of Mathematics.  We thank AIM for providing a supportive and mathematically rich environment for collaboration.  We also thank Russell Miller for several helpful discussions.
\bibliographystyle{amsplain}
\bibliography{paperbib}

@book {Durand.Perrin.2022,
    AUTHOR = {Durand, Fabien and Perrin, Dominique},
     TITLE = {Dimension groups and dynamical systems---substitutions,
              {B}ratteli diagrams and {C}antor systems},
    SERIES = {Cambridge Studies in Advanced Mathematics},
 PUBLISHER = {Cambridge University Press, Cambridge},
      YEAR = {2022},
     PAGES = {vii+584}
}

@book {Goodearl.1986,
    AUTHOR = {Goodearl, K. R.},
     TITLE = {Partially ordered abelian groups with interpolation},
    SERIES = {Mathematical Surveys and Monographs},
    VOLUME = {20},
 PUBLISHER = {American Mathematical Society, Providence, RI},
      YEAR = {1986},
     PAGES = {xxii+336}
}

@unpublished{burton2024computable,
  title={Computable {G}elfand Duality},
  author={Burton, Peter and Eagle, Christopher J and Fox, Alec and Goldbring, Isaac and Harrison-Trainor, Matthew and McNicholl, Timothy H and Melnikov, Alexander and Thewmorakot, Teerawat},
  note={To appear in Proceedings of the American Mathematical Society; 
        arXiv preprint arXiv:2402.16672}
}

@article {Downey.Melnikov.2023,
    AUTHOR = {Downey, Rodney G. and Melnikov, Alexander G.},
     TITLE = {Computably compact metric spaces},
   JOURNAL = {Bull. Symb. Log.},
  FJOURNAL = {The Bulletin of Symbolic Logic},
    VOLUME = {29},
      YEAR = {2023},
    NUMBER = {2},
     PAGES = {170--263},
      ISSN = {1079-8986,1943-5894},
   MRCLASS = {03D45 (03C57 03D78)},
  MRNUMBER = {4616053},
MRREVIEWER = {Nikolay\ Bazhenov},
       DOI = {10.1017/bsl.2023.16},
       URL = {https://doi.org/10.1017/bsl.2023.16},
}

@book{Cooper.2004,
	Author = {Cooper, S. Barry},
	Publisher = {Chapman \& Hall/CRC, Boca Raton, FL},
	Title = {Computability theory},
	Year = {2004}}

@book{Davidson.1996,
    AUTHOR = {Davidson, Kenneth R.},
     TITLE = {{$\rm C^*$}-algebras by example},
    SERIES = {Fields Institute Monographs},
    VOLUME = {6},
 PUBLISHER = {American Mathematical Society, Providence, RI},
      YEAR = {1996},
     PAGES = {xiv+309}
}

@unpublished{EMST,
author={Franklin, Johanna F. and Goldbring, Isaac. and McNicholl, Timothy H.},
title="Effective metric structure theory",
publisher="Springer-Verlag",
note="To appear."
}

@book {Strung.2021,
    AUTHOR = {Strung, Karen R.},
     TITLE = {An introduction to {${\rm C}^*$}-algebras and the
              classification program},
    SERIES = {Advanced Courses in Mathematics. CRM Barcelona},
      NOTE = {Edited and with a foreword by Francesc Perera},
 PUBLISHER = {Birkh\"auser/Springer, Cham},
      YEAR = {[2021] \copyright 2021},
     PAGES = {xiii+322}
}

@article {FarahEtAll.2021,
    AUTHOR = {Farah, Ilijas and Hart, Bradd and Lupini, Martino and Robert,
              Leonel and Tikuisis, Aaron and Vignati, Alessandro and Winter,
              Wilhelm},
     TITLE = {Model theory of {$\rm C^*$}-algebras},
   JOURNAL = {Mem. Amer. Math. Soc.},
  FJOURNAL = {Memoirs of the American Mathematical Society},
    VOLUME = {271},
      YEAR = {2021},
    NUMBER = {1324},
     PAGES = {viii+127}
}

@article{fox2022computable,
    AUTHOR = {Fox, Alec},
     TITLE = {Computable presentations of {${\rm C}^*$}-algebras},
   JOURNAL = {J. Symb. Log.},
  FJOURNAL = {The Journal of Symbolic Logic},
    VOLUME = {89},
      YEAR = {2024},
    NUMBER = {3},
     PAGES = {1313--1338}
}

@article {Brown.McNicholl.Melnikov.2020,
    AUTHOR = {Brown, Tyler A. and McNicholl, Timothy H. and Melnikov,
              Alexander G.},
     TITLE = {On the complexity of classifying {L}ebesgue spaces},
   JOURNAL = {J. Symb. Log.},
  FJOURNAL = {The Journal of Symbolic Logic},
    VOLUME = {85},
      YEAR = {2020},
    NUMBER = {3},
     PAGES = {1254--1288}
     }

@unpublished{FoxGoldbringHart.2024+,
author = {Alec Fox and Isaac Goldbring and Bradd Hart},
title = {Locally universal {$\rm C^*$}-algebras with computable presentations},
note="Preprint available at https://arxiv.org/pdf/2303.02301"
}

@article {Glimm.1960,
    AUTHOR = {Glimm, James G.},
     TITLE = {On a certain class of operator algebras},
   JOURNAL = {Trans. Amer. Math. Soc.},
  FJOURNAL = {Transactions of the American Mathematical Society},
    VOLUME = {95},
      YEAR = {1960},
     PAGES = {318--340}
}

@unpublished{Goldbring.2024+,
  title={Computably strongly self-absorbing {$\rm C^*$}-algebras},
  author={Goldbring, Isaac},
  note={arXiv preprint arXiv:2409.18834},
  year={2024}
}

@article{mcnicholl2024evaluative,
    AUTHOR = {McNicholl, Timothy H.},
     TITLE = {Evaluative presentations},
   JOURNAL = {J. Logic Comput.},
  FJOURNAL = {Journal of Logic and Computation},
    VOLUME = {35},
      YEAR = {2025},
    NUMBER = {5},
     PAGES = {Paper No. exaf036, 11}
}

@book {Rordam.Larsen.Laustsen.2000,
    AUTHOR = {R{\o}rdam, M. and Larsen, F. and Laustsen, N.},
     TITLE = {An introduction to {$K$}-theory for {$\rm C^*$}-algebras},
    SERIES = {London Mathematical Society Student Texts},
    VOLUME = {49},
 PUBLISHER = {Cambridge University Press, Cambridge},
      YEAR = {2000},
     PAGES = {xii+242}
}

@unpublished{UHFPaper,
    Author = {Eagle, C.J. and Goldbring, I. and McNicholl, T.H. and Miller, R.},
    Title = {Computable {$K$}-theory for {C}*-algebras: {UHF} algebras},
Note = {To appear in Transactions of the American Mathematical Society.  Preprint available at https://arxiv.org/abs/2501.08526}
}

@article {Bratteli.1972,
    AUTHOR = {Bratteli, Ola},
     TITLE = {Inductive limits of finite dimensional {$C\sp{\ast}
              $}-algebras},
   JOURNAL = {Trans. Amer. Math. Soc.},
  FJOURNAL = {Transactions of the American Mathematical Society},
    VOLUME = {171},
      YEAR = {1972},
     PAGES = {195--234}
}

@article{Bazhenov.2023,
    AUTHOR = {Bazhenov, Nikolay and Harrison-Trainor, Matthew and Melnikov,
              Alexander},
     TITLE = {Computable {S}tone spaces},
   JOURNAL = {Ann. Pure Appl. Logic},
  FJOURNAL = {Annals of Pure and Applied Logic},
    VOLUME = {174},
      YEAR = {2023},
    NUMBER = {9},
     PAGES = {Paper No. 103304, 25}
}

@article{GoncharovKnight.2002,
title = {Computable structure and non-structure theorems},
journal = {Algebra and Logic},
volume = {41},
pages = {351--373},
year = {2002},
author = {Sergey S. Goncharov and Julia F. Knight}
}

@article {Elliott.1976,
    AUTHOR = {Elliott, George A.},
     TITLE = {On the classification of inductive limits of sequences of
              semisimple finite-dimensional algebras},
   JOURNAL = {J. Algebra},
  FJOURNAL = {Journal of Algebra},
    VOLUME = {38},
      YEAR = {1976},
    NUMBER = {1},
     PAGES = {29--44}
}

\end{document}